\numberwithin{equation}{section} 
\numberwithin{figure}{section} 
\def\widebar{\accentset{{\cc@style\underline{\mskip15mu}}}}
\def\ft#1{{\mathsf #1}}
\def\chow{{\mathscr X}}
\def\hcoY{{\mathscr Y}}
\def\Hes{{\mathscr H}}
\def\UU{{\mathscr U}}
\def\Zpq{{\mathscr Z}}
\def\zpq{\hspace{-0.1cm}\Zpq}
\def\Vs{{\mathscr V}}
\def\hchow{{\mathaccent20\chow}}
\def\Prt{{\mathscr P}}
\def\Lrho{\text{\large{\mbox{$\rho\hskip-0pt$}}}}
\def\Lpi{\text{\large{\mbox{$\pi\hskip-0pt$}}}}
\def\Lp{\text{\large{\mbox{$p\hskip-1pt$}}}}
\def\Lq{\text{\large{\mbox{$q\hskip-1pt$}}}}
\font\eu=eusm10 at 10pt
\def\eF{\text{\eu F}}
\def\eG{\text{\eu G}}
\def\eQ{\text{\eu W}}
\def\eS{\text{\eu U}}
\newcommand{\vcorr}[3][1]{%
  \begingroup
    \tabcolsep=.5\tabcolsep
    \sbox0{%
      \begin{tabular}[b]{@{}l}%
        #3%
         \tabularnewline
      \end{tabular}%
    }%
    \settoheight{\dimen0 }{%
      \rotatebox{#2}{%
        \copy0 %
        \kern-\tabcolsep
      }%
    }%
    \rule{0pt}{#1\dimen0}%
    \setlength{\wd0 }{1em}%
    \setlength{\ht0 }{1em}%
    \rotatebox{#2}{\usebox{0}}%
  \endgroup
}
\newenvironment{fcaption}{\begin{list}{}{
\setlength{\leftmargin}{35pt}
\setlength{\rightmargin}{35pt}
\setlength{\labelsep}{5pt}
}}{\end{list}}
\newtheorem{thm}{Theorem}[subsection]
\newtheorem{prop}[thm]{Proposition}
\newtheorem{lem}[thm]{Lemma}
\theoremstyle{definition}
\newtheorem{defn}[thm]{Definition}
\newtheorem*{ackn}{Acknowledgements}
\newtheorem{conv}[thm]{Convention}
\theoremstyle{remark}
\newtheorem*{rem}{Remark}
\global\long\def\sA{\mathcal{A}}
 \global\long\def\sB{\mathcal{B}}
 \global\long\def\sC{\mathcal{C}}
 \global\long\def\sD{\mathcal{D}}
 \global\long\def\sE{\mathcal{E}}
 \global\long\def\sF{\eF}
 \global\long\def\sG{\eG}
 \global\long\def\sH{\mathcal{H}}
 \global\long\def\sI{\mathcal{I}}
 \global\long\def\sJ{\mathcal{J}}
 \global\long\def\sK{\mathcal{K}}
 \global\long\def\sO{\mathcal{O}}
 \global\long\def\sP{\mathcal{P}}
 \global\long\def\sS{\mathcal{S}}
 \global\long\def\sQ{\mathcal{Q}}
 \global\long\def\sT{\mathcal{T}}
 \global\long\def\sV{\mathcal{V}}
 \global\long\def\mC{\mathbb{C}}
 \global\long\def\mP{\mathbb{P}}
 \global\long\def\mQ{\mathbb{Q}}
 \global\long\def\mZ{\mathbb{Z}}
 \global\long\def\Ima{\mathrm{Im}\,}
 \global\long\def\Bs{\mathrm{Bs}\,}
 \global\long\def\Hom{\mathrm{Hom}}
 \global\long\def\id{\mathrm{id}}
 \global\long\def\Ext{\mathrm{Ext}}
 \global\long\def\pr{\mathrm{pr}}
 \global\long\def\Sing{\mathrm{Sing}\,}
 \global\long\def\Supp{\mathrm{Supp}\,}
 \global\long\def\SL{\mathrm{SL}\,}
 \global\long\def\rank{\mathrm{rank}\,}
 \global\long\def\rG{\mathrm{G}}
 \global\long\def\nS{\textsc{S}}
 \global\long\def\nT{\textsc{T}}
 \global\long\def\nU{\textsc{U}}
\def\hW{{\mathaccent20W}}
\def\hB{{\mathaccent20B}}
\def\mylabel#1{\label{#1}}
\title{Derived categories of \\
linear sections of ${\ft S}^2 \mP^3$}
\begin{document}

\begin{center}
\textbf{\Large Derived Categories of 
Artin-Mumford double solids}
\par\end{center}{\Large \par}

$\;$

\begin{center}
Shinobu Hosono and Hiromichi Takagi 
\par\end{center}

\vspace{5pt}



$\;$

\begin{fcaption} {\small  \item 

Abstract. 
We consider 
the derived category of 
an Artin-Mumford quartic double solid 
blown-up at ten ordinary double points.
We show that it has a semi-orthogonal decomposition
containing the derived category of 
the Enriques surface of a Reye congruence.
This answers affirmatively  
a conjecture by Ingalls and Kuznetsov. 
}\end{fcaption}

\vspace{0.5cm}


\markboth
{Hosono and Takagi}{Derived categories of Artin-Mumford double solids}


\section{{\bf Introduction}}

\date{2015, 6/07, at home.}

Throughout this paper, we work over $\mC$, the complex number field.
We fix a four dimensional vector space $V$, and denote by $V^*$ the dual vector space of $V$,
and by $\mP(V)$
the projectivization of $V$.

\subsection{Classical backgrounds and motivations}

An {\it Artin-Mumford quartic double solid} 
is the double cover $Y$ of the $3$-dimensional projective space
branched along a quartic surface.
$Y$ is singular at ten ordinary double points.
Artin and Mumford showed
that it is unirational but irrational \cite{AM}.
They established its irrationality 
by showing its desingularization has a nonzero torsion in the Brauer group.
In \cite{Co}, Cossec pointed out 
that an Artin-Mumford quartic double solid is paired with
the so-called {\it Enriques surface of a Reye congruence}, which we denote by $X$ in this paper. The history of Enriques surfaces of Reye congruences
goes back to the work by Reye \cite{Reye} in the 19th century,
and later they were intensively studied by Fano \cite{Fano1, Fano2}.

The relationship between $X$ and $Y$
can be described in the following diagram (the section \ref{LHPD}):
\begin{equation}
\label{eq:XYbasic}
\xymatrix{Z\ar[r]\ar[d] & Y\\
X\subset \mathrm{G}(2,4),}
\end{equation}
where $Z\to \mathrm{G}(2,4)$ is the blow-up along $X$ and
$Z\to Y$ is a generically conic bundle.
$X\subset \mathrm{G}(2,4)$ is called a congruence since
classically a congruence means  
a $2$-dimensional family of lines in $\mP^3$.
$X\subset \mathrm{G}(2,4)$
is called the {\it Fano model} of the Enriques surface of
a Reye congruence.
Beauville recalculated in \cite{Be2}
the torsion part of 
the Brauer group of 
$X$ by comparing $H^2(X,\mZ)$ and $H^3(\widetilde{Y},\mZ)$ by using the diagram
(\ref{eq:XYbasic}),
where $\widetilde{Y}$ is the blow-up of $Y$ at the ten ordinary double points.
 
Recently, Ingalls and Kuznetsov \cite[Thm.~4.3]{IK} showed
that the derived categories of 
$X$ and $\widetilde{Y}$ have respective
semi-orthogonal decompositions
with a certain triangulated subcategory in common.
They also conjectured [ibid., Conj.~4.2]
that 
the derived category of $\widetilde{Y}$ 
has a semi-orthogonal decomposition containing the derived category of $X$.
The main result of this paper is an affirmative
solution to this conjecture.

\subsection{Statement of the main result}
\label{state}
Let us denote by $y_1, \dots, y_{10}$ the
ten ordinary double points of $Y$ and 
by
$F_i\simeq \mP^1\times \mP^1$ $(i=1,\dots, 10)$
the exceptional divisors over $y_i$
of the blow-up $\widetilde{Y}\to Y$.
We also denote by $\iota_i\colon F_i\hookrightarrow \widetilde{Y}$
the closed embeddings.
We define a categorical resolution $\sD_{Y}$ 
of $\sD^b({Y})$ 
in the sense of Kuznetsov \cite{Lef}.
For this, 
we
choose a dual Lefschetz decomposition of $\sD^b(F_i)$;
\[
\sD^b(F_i)=\langle \sA_1(-1,-1), \sA_0\rangle,
\]
where 
\[
\text{$\sA_0=\langle \sO_{F_i}(0,-1), \sO_{F_i}(-1,0), \sO_{F_i}\rangle$,
$\sA_1=\langle \sO_{F_i}\rangle$},
\]
with $(-1,-1)$ meaning the twist by $\sO_{F_i}(-1,-1)$.
The categorical resolution 
\[
\sD_{Y}\subset \sD^b(\widetilde{Y})
\] 
of $\sD^b(Y)$
with respect to this dual Lefschetz decomposition is defined to be 
the left orthogonal to the subcategory
$\langle \{{\iota_i}_*\sO_{F_i}(-1,-1)\}_{i=1}^{10}\rangle.$ 
Therefore we have the following semi-orthogonal decomposition of 
$\sD^b(\widetilde{Y})$:
\begin{equation*}
\label{eq:DY}
\sD^b(\widetilde{Y})=
\langle \{{\iota_i}_*\sO_{F_i}(-1,-1)\}_{i=1}^{10},\sD_Y\rangle.
\end{equation*}
Let 
$\sO_{\widetilde{Y}}(1)$ be the pull-back of $\sO_{\mP^3}(1)$
by the composite $\widetilde{Y}\to Y\to \mP^3$.

\begin{thm}
\label{thm:main}
Suppose that the branch locus $H$ of $Y\to \mP^3$ does not contain a line.
Then there exists a fully faithful Fourier-Mukai functor $\Phi_1\colon \sD^b(X)\to \sD_{Y}$ 
giving the following semi-orthogonal decomposition of the categorical resolution $\sD_Y$\,$:$ 
\begin{equation*}
\label{eq:main1}
\sD_{Y}=\langle \Phi_1(\sD^b(X)), \sO_{\widetilde{Y}}(1), \sO_{\widetilde{Y}}(2)\rangle.
\end{equation*} 
\end{thm}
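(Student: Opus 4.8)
The plan is to view the whole configuration as a pair of dual linear sections of ${\ft S}^2\mP^3$ (the locus of rank $\le 2$ symmetric forms in $\mP(S^2V^*)$) and to obtain the decomposition by comparing two semi-orthogonal decompositions of a single derived category, in the spirit of homological projective duality. The roof $Z$ of diagram (\ref{eq:XYbasic}) carries two structures at once. Since $Z\to \mathrm{G}(2,V)$ is the blow-up along the smooth surface $X$ of codimension two, Orlov's blow-up formula gives
\[
\sD^b(Z)=\langle \Psi(\sD^b(X)),\; \sigma^*\sD^b(\mathrm{G}(2,V))\rangle ,
\]
with $\Psi$ fully faithful and $\sigma$ the blow-down. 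On the other hand $Z\to Y$ is (generically) a conic bundle, equivalently the ruling data of the quadric-surface bundle over the web $\mP^3=\mP(W)$, and is governed by the even Clifford algebra $\sB_0$ whose nontrivial two-torsion Brauer class is exactly the one detected by Artin--Mumford. Kuznetsov's quadric-bundle formalism expresses $\sD^b(Z)$ through a primitive Clifford factor $\sD^b(Y,\sB_0)$ together with ambient Lefschetz blocks pulled back from $\mP(W)$. Matching the two pictures should identify $\Psi(\sD^b(X))$ with the primitive factor, while the ambient blocks collapse, after mutation, to the two line bundles $\sO_{\widetilde Y}(1),\sO_{\widetilde Y}(2)$.

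The obstruction to running this formally is that $Z\to Y$ is only generically a conic bundle and $Y$ is singular at the ten points $y_i$, so the clean Clifford decomposition breaks down there; this is precisely why the target is the categorical resolution $\sD_Y\subset\sD^b(\widetilde Y)$ rather than $\sD^b(Y)$. I would resolve $Z\to Y$ through $\widetilde Y$ and the explicit birational models $\hcoY_3,\widetilde{\hcoY},\overline{\hcoY}$ of the preamble diagrams, so that after the chain of blow-ups, (anti-)flips and the divisorial contraction the fibration becomes flat; at each elementary step the derived category is controlled by a standard formula (Orlov, Bondal--Orlov, Kawamata for flops), and I would transport the Fourier--Mukai kernel along the chain. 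Concretely $\Phi_1$ is the functor whose kernel is the push-forward to $X\times\widetilde Y$ of a twist of the structure sheaf of the (resolved) exceptional $\mP^1$-bundle $E\to X$ inside $Z$. Full faithfulness then follows either by exhibiting $\Phi_1$ as a composite of fully faithful functors (the embedding $\Psi$, the identification with the primitive Clifford component, and the tracked flop-equivalences), or, failing a clean factorization, by verifying Bridgeland's spanning-class criterion on the point objects $\sO_x$ $(x\in X)$: one checks that $\Ext^\bullet(\Phi_1\sO_x,\Phi_1\sO_{x'})$ reproduces the diagonal $\Ext$-algebra of $X$.

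It then remains to assemble the three blocks into the asserted decomposition of $\sD_Y$. First, $\Phi_1(\sD^b(X))$ must lie in the left orthogonal of $\langle (\iota_i)_*\sO_{F_i}(-1,-1)\rangle_{i=1}^{10}$, hence inside $\sD_Y$; this is a $\Hom$-vanishing computation between the kernel and the objects $(\iota_i)_*\sO_{F_i}(-1,-1)$, and the chosen dual Lefschetz decomposition $\sD^b(F_i)=\langle\sA_1(-1,-1),\sA_0\rangle$ is exactly what forces it. Semi-orthogonality of $\Phi_1(\sD^b(X))$ against $\sO_{\widetilde Y}(1)$ and $\sO_{\widetilde Y}(2)$ reduces, after adjunction, to cohomology vanishing on $X$ and along the fibres of $E\to X$. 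Generation of all of $\sD_Y$ follows from the corresponding statement for $\sD^b(Z)$, by comparing the two decompositions, together with an Euler-characteristic and rank count. The hypothesis that the branch quartic $H$ contains no line enters here to guarantee that $X$ is smooth, that the ten nodes are honest ordinary double points, and that the conic-bundle discriminant is reduced with the expected local normal form, so that both the Clifford formalism and the local resolution behave as required.

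I expect essentially all of the difficulty to concentrate at the ten ordinary double points and in matching the local degeneration with the categorical resolution. Away from the nodes everything is governed by honest homological projective duality and the Clifford decomposition; over each $y_i$ the quadric drops to rank two, a surface $F_i\simeq\mP^1\times\mP^1$ is created in $\widetilde Y$, and one must check that the transported kernel, after all the (anti-)flips, meets $F_i$ in exactly the subobject data encoded by $\sA_0,\sA_1$ and is orthogonal to $(\iota_i)_*\sO_{F_i}(-1,-1)$. Getting the twists exactly right, so that the two residual ambient blocks are $\sO_{\widetilde Y}(1),\sO_{\widetilde Y}(2)$ rather than mutated variants, is the delicate bookkeeping for which the explicit birational models of the diagrams are indispensable, and it is there that the bulk of the proof will lie.
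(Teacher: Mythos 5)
Your central strategy---decomposing $\sD^b(Z)$ once via Orlov's blow-up formula along $X\subset\mathrm{G}(2,V)$ and once via Kuznetsov's Clifford-algebra formalism for the conic bundle $Z\to Y$, and then matching the two decompositions---is precisely the strategy of Ingalls and Kuznetsov, and it is known to prove only the \emph{weaker} statement (Theorem \ref{thm:IK} here): an equivalence $\sA_X\simeq\sA_Y$ between the orthogonal complement of the ten line bundles $\sO_X(-\delta_i)$ in $\sD^b(X)$ and a corresponding piece of $\sD^b(\widetilde Y)$. In that comparison the ten exceptional objects $\sO_X(-\delta_i)$ are consumed in matching objects supported on the exceptional quadrics $F_i$, so what survives is not all of $\sD^b(X)$. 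The content of the conjecture being proved is exactly the upgrade from $\sA_X$ to the full $\sD^b(X)$, and the match-two-decompositions route does not supply it. Relatedly, your proposed kernel---a twist of the structure sheaf of the exceptional family $E\to X$ pushed to $\widetilde Y\times X$---is the ``naive'' kernel built from the family of curves $\Delta_1\to X$, and the paper states explicitly that this does \emph{not} give a fully faithful functor. The actual kernel $\sP_1$ is a rank-two reflexive sheaf obtained as (the dual of) the second syzygy of the twisted ideal sheaf $\sJ(L)$ of $\Delta'$ on the universal hyperplane-section family $\Vs\subset\widetilde{\hcoY}\times\hchow$; producing this modification, its locally free resolution (Theorem \ref{thm:newker}), and its flatness over $X$ is where the real work lies, and none of it is visible from your outline.

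Two further points. First, fully faithfulness in the paper is not obtained by factoring $\Phi_1$ through known equivalences; it is a direct verification of the Bondal--Orlov criterion via a descending ladder of linear sections $Y_1\subset\cdots\subset Y_7=\widetilde{\hcoY}$ and the cohomology vanishings coming from the resolution of ${\iota_{\Vs}}_*\sP$---your spanning-class fallback is the right instinct, but it only works for the corrected kernel. Second, the hypothesis that the branch quartic contains no line is not used for smoothness of $X$ or for the nodes being ordinary: it is needed to show that the fibers of $D\to\hchow$ over points of $X$ are zero-dimensional, hence that the kernel $\sP_1$ is \emph{flat} over $X$ (Propositions \ref{lem:DeltaDflat} and \ref{prop:flatP1}); without flatness the Bondal--Orlov criterion is not even applicable. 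The generation step also differs from what you sketch: rather than an Euler-characteristic count against $\sD^b(Z)$, the paper takes an object $A$ in the residual orthogonal, kills its restriction to a family of orthogonal surface sections $S\subset\widetilde Y$ using the auxiliary pair $(W,S)$ and Theorem \ref{thm:main2}, concludes $A$ is supported on $\bigcup F_i$, and then eliminates it using the explicit computation $\Phi_1(\sO_X(-\delta_i))=\sO_{F_i}(0,-1)[-1]$.
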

The method of the proof of Theorem \ref{thm:main} is similar to that of our previous work \cite{HoTa4}, which is based on our fundamenatal papers \cite{Geom, DerSym}.
By our method, we also reproduce \cite[Thm.~4.3]{IK}
in the subsection \ref{sub:IK}.

Recently, in \cite{IKP},
the authors studied
smooth projective $3$-folds
whose derived categories include
those of Enriques surfaces, and discuss
irrationality of such $3$-folds in view of the theory of homological mirror symmetry. 
Theorem \ref{thm:main} and the affirmative solution to \cite[Conj.~1]{Or2}
would imply immediately that
the motif of $X$ with rational coefficient 
is a direct summand of 
that of $\widetilde{Y}$.

The key idea to our proof of Theorem \ref{thm:main}
is to construct the kernel of
the Fourier-Mukai functor $\Phi_1$.
Taking  
the images by $Z\to Y$ of fibers of
the blow-up $Z\to \mathrm{G}(2,4)$ and modifying them slightly on $\widetilde{Y}$,
we may construct a family of curves in $\widetilde{Y}$
parameterized by $X$ ($\Delta_1\to X$ defined in the subsection \ref{sub:IK}).
A naive candidate of the kernel
is the ideal sheaf in $\sO_{\widetilde{Y}\times X}$
of this family of curves.
It, however, turns out that this does not give a fully faithful functor.
Our kernel of $\Phi_1$ is a certain modification of this ideal sheaf.

\subsection{Orthogonal linear section and homological projective duality}
\label{sub:intro3}
In our proof of Theorem \ref{thm:main},
we compute the images of several objects by the functor $\Phi_1$
along the theory of {\it homological projective duality}
due to Kuznetsov in \cite{HPD1}.

Let us consider the pair $X$ and $Y$ from this viewpoint.
Then $X$ is a subvariety of the second symmetric product $\chow:={\ft S}^2 \mP(V)$ of $\mP(V)$ embedded by the Chow form into $\mP({\ft S}^2 V)$.
$Y$ is a subvariety of the double cover $\hcoY$ of the dual projective
space $\mP({\ft S}^2 V^*)$ branched along the quartic hypersurface which is
the locus of corank $\geq 1$ quadrics in $\mP(V)$.
We have studied $\chow$ and $\hcoY$ as (double covers of) symmetric 
determinantal loci and obtained their basic results in \cite{Geom}.
For $X$ and $Y$, there exists a four-dimensional vector subspace
$L_4\subset {\ft S}^2 V^*$ such that $Y$ is the pull-back of
$P_3:=\mP(L_4)\simeq \mP^3$ by the double covering $\hcoY\to \mP({\ft S}^2 V^*)$,
and $X=\chow\cap \mP(L_4^{\perp})$,
where $L_4^{\perp}$ is the orthogonal space to $L_4$
with respect to the dual pairing of ${\ft S}^2 V$ and ${\ft S}^2 V^*$. 
We remark that the embedding $X\subset \mP(L_4^{\perp})\simeq \mP^5$ is
different from the Fano model;
it is called the {\it Cayley model} of the Reye congruence.
From this viewpoint, the pair $X$ and $Y$ is an example of 
{\it orthogonal linear sections}, which were initially 
employed systematically by Mukai \cite{Mukai} in his studies of Fano $3$-folds and $K3$ surfaces.

The main result and the constructions in this paper
may be regarded a step to establish
that
$\chow$ and $\hcoY$ are homologically projective dual.
We refer the precise definition of homological projective duality to 
\cite{HPD1}.
Here we only mention that,
if two varieties $\Sigma$ and $\Sigma^*$ 
with respective morphisms to the dual projective spaces $\mP^N$ and $(\mP^N)^*$
are homological projective dual,
then the relationships between the derived categories of {\it any} orthogonal linear sections are established simultaneously.
Therefore, once $\chow$ and $\hcoY$ are shown to be homologically projective dual to each other,
Theorem \ref{thm:main} will follow immediately from [ibid., Thm.~6.3].

We construct the kernel of the functor $\Phi_1$ by restricting 
to $\widetilde{Y}\times X$ 
a certain rank two reflexive sheaf $\sP$
on $\Vs\subset \widetilde{\hcoY}\times \hchow$,
where $\widetilde{\hcoY}$ and $\hchow$ are
suitable desingularizations of 
$\hcoY$ and $\chow$ respectively constructed in \cite{Geom}, and $\Vs$ 
is the pull-back of the universal family of hyperplane sections.
The sheaf $\sP$ is constructed naturally from
the geometry of Grassmannians.
We also construct a locally free resolution
of ${\iota_{\Vs}}_*\sP$ (Theorem \ref{thm:newker}),
where $\iota_{\Vs}$ is the natural closed embedding 
$\Vs\hookrightarrow \widetilde{\hcoY}\times \hchow$.
This resolution enables us to 
compute the images of several objects by the functor $\Phi_1$.

The definition of homological projective duality 
requires
special types of semi-orthogonal decomposition
called (dual) Lefschetz decompositions in 
(noncommutative resolutions of) the derived categories
of varieties.
As a strong evidence of 
the homological projective duality between $\hcoY$ and $\chow$,
we can read off (dual) Lefschetz collections
in the derived categories of $\widetilde{\hcoY}$ and $\hchow$
from the locally free resolution of ${\iota_{\Vs}}_*\sP$
(\cite[Cor.~3.3 and 5.11]{DerSym}).

\subsection{Relations with previous works}
Homological projective duality is 
a powerful guiding principle in describing derived categories
of projective varieties but it is often hard to show a plausible candidate
of a pair of variety $\Sigma$ and $\Sigma^*$ is actually homologically projective dual.
On the contrary, for such a candidate $\Sigma$ and $\Sigma^*$,
it is usually less hard to establish the relationships between
the derived categories of individual orthogonal linear sections.
One such example is the Grassmann-Pfaffian derived equivalence \cite{BC}.
We gave another example in \cite{HoTa4} after the fundamental works
\cite{HoTa1}--\cite{DerSym}, 
by showing the derived equivalence between
the smooth Calabi-Yau threefold of a Reye congruence and
its orthogonal linear section which is also a Calabi-Yau threefold. 

\subsection{Structure of the paper}
We state general results in the section \ref{basic}.
In the section \ref{Orthogonal},
we review the constructions of (double) symmetric loci and
introduce $X$ and $Y$ as orthogonal linear sections of 
$\chow$ and $\hcoY$.
In the section \ref{Orthogonal}, we also introduce another pair 
$W$ and $S$ of orthogonal linear sections,
where $W$ is a $3$-dimensional linear section of $\chow$,
and $S$ is a $2$-dimensional linear section of $\hcoY$.
The plausible homological projective duality of $\chow$ and $\hcoY$
indicates a close relation between the derived categories of 
$W$ and $S$. We establish this relation in the section \ref{section:mut} (Theorem \ref{thm:main2}). 
$W$ and $S$ also play crucial roles in our proof of Theorem \ref{thm:main}.
In the section \ref{BirYZ1},
we review the constructions of birational models of $\hcoY$ and 
introduce generically conic bundles over them, which are used in the construction of the sheaf $\sP$ in the section \ref{section:Corr}.
We obtain a locally free resolution of ${\iota_{\Vs}}_*\sP$
in the appendix \ref{app:B}.
In the section \ref{section:BC},
we prove Theorem \ref{thm:main} and give a new proof of
the result by Ingalls and Kuznetsov.

\begin{ackn}
We are grateful to Professor Igor Dolgachev for his comments on Reye congruences,
and to Professor Yukinobu Toda for his answer to our question about derived category. 

We have been involved for long in the research of orthogonal linear sections 
under strong influences of Professor Shigeru Mukai's work. 
We thank Professor Shigeru Mukai for valuable comments on the subjects
related to the orthogonal linear sections.
We would like to dedicate this article to him 
with our deep gratitude and admiration.

This paper is supported in part by 
Grant-in Aid for Young Scientists (B 20740005, H.T.). 
\end{ackn}

\begin{conv}
\label{conv:chow}
Throughout the article,
we consider several varieties $\Sigma$ with morphisms $\Sigma\to \mP({\ft S}^2 V)$, $\Sigma\to \mP({\ft S}^2 V^*)$, or $\Sigma\to \mathrm{G}(2,V)$.
We denote by (without suffix)
\[
H, M, L
\]
the pull-backs of 
$\sO_{\mP({\ft S}^2 V)}(1)$,
$\sO_{\mP({\ft S}^2 V^*)}(1)$,
and
$\sO_{\mathrm{G}(2,V)}(1)$, respectively
if 
$\Sigma$ has a morphism to $\mP({\ft S}^2 V)$, $\mP({\ft S}^2 V^*)$, 
or $\mathrm{G}(2,V)$.
\end{conv}

\section{{\bf{Basic general results}}}
\label{basic}

\begin{thm}{\rm {\bf (Grothendieck-Verdier duality)}}
\label{cla:duality}
Let $f\colon X\to Y$ be a proper morphism of smooth varieties $X$~and~$Y$.
Set $n:=\dim X-\dim Y$. We have the following functorial isomorphism\,$:$
For $\mathcal{F}^{\bullet}\in\sD^{b}(X)$ and $\mathcal{E}^{\bullet}\in\sD^{b}(Y)$,
\[
Rf_{*}R\sH om(\mathcal{F}^{\bullet},Lf^{*}\sE^{\bullet}\otimes\omega_{X/Y}[n])\simeq R\sH om(Rf_{*}\mathcal{F}^{\bullet},\mathcal{E}^{\bullet}).\]
 In particular, if $\mathcal{E}^{\bullet}$ and $\mathcal{F}^{\bullet}$
are locally free $($we write them simply $\mathcal{E}$ and~$\mathcal{F})$
and if $R^{\bullet}f_{*}\mathcal{F}=f_*\mathcal{F}$, then \[
R^{\bullet+n}f_{*}(\mathcal{F}^{*}\otimes f^{*}\sE\otimes\omega_{X/Y})\simeq\sE xt^{\bullet}(f_{*}\mathcal{F},\mathcal{E}).\]

\end{thm}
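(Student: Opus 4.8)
The statement is Grothendieck--Verdier duality, and the natural route is through the twisted inverse image functor $f^{!}$. The plan is to deduce the first displayed isomorphism from the abstract duality formalism and then to specialize to the locally free situation. Since $f$ is proper, $Rf_{*}\colon\sD^{b}(X)\to\sD^{b}(Y)$ admits a right adjoint $f^{!}$, and the sheafified (local) form of the adjunction supplies a functorial isomorphism
\[
Rf_{*}R\sH om_{X}(\mathcal{F}^{\bullet},f^{!}\mathcal{E}^{\bullet})\simeq R\sH om_{Y}(Rf_{*}\mathcal{F}^{\bullet},\mathcal{E}^{\bullet});
\]
applying $R\Gamma(Y,-)$ recovers the usual adjunction $\Hom(Rf_{*}\mathcal{F}^{\bullet},\mathcal{E}^{\bullet})\simeq\Hom(\mathcal{F}^{\bullet},f^{!}\mathcal{E}^{\bullet})$. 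Thus everything reduces to the explicit identification $f^{!}\mathcal{E}^{\bullet}\simeq Lf^{*}\mathcal{E}^{\bullet}\otimes\omega_{X/Y}[n]$.

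To obtain this identification I would factor $f$ through its graph,
\[
X\xrightarrow{\;\gamma\;}X\times Y\xrightarrow{\;p\;}Y,
\]
with $\gamma=(\id,f)$ and $p$ the second projection. As $X$ is smooth, $\gamma$ is a regular closed embedding of codimension $\dim Y$, while $p$ is smooth of relative dimension $\dim X$. Using $f^{!}=\gamma^{!}p^{!}$ together with $p^{!}(-)=Lp^{*}(-)\otimes\omega_{X\times Y/Y}[\dim X]$ (with $\omega_{X\times Y/Y}=\pr_{X}^{*}\omega_{X}$) and $\gamma^{!}(-)=L\gamma^{*}(-)\otimes\det N_{\gamma}[-\dim Y]$ (via the Koszul resolution of $\gamma_{*}\sO_{X}$), the shifts add to $\dim X-\dim Y=n$, and the adjunction formula $\omega_{X}\simeq\gamma^{*}\omega_{X\times Y}\otimes\det N_{\gamma}$ gives $\det N_{\gamma}\simeq f^{*}\omega_{Y}^{-1}$, so the twisting line bundles collapse to $\omega_{X/Y}=\omega_{X}\otimes f^{*}\omega_{Y}^{-1}$. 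Substituting this into the local duality isomorphism produces the first displayed formula of the theorem.

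For the second assertion I would specialize. When $\mathcal{F}$ is locally free, $R\sH om_{X}(\mathcal{F},-)=\mathcal{F}^{*}\otimes(-)$, and when $\mathcal{E}$ is locally free, $Lf^{*}\mathcal{E}=f^{*}\mathcal{E}$; the left-hand side therefore becomes $Rf_{*}(\mathcal{F}^{*}\otimes f^{*}\mathcal{E}\otimes\omega_{X/Y})[n]$, whose degree-$i$ cohomology sheaf is $R^{i+n}f_{*}(\mathcal{F}^{*}\otimes f^{*}\mathcal{E}\otimes\omega_{X/Y})$. On the right-hand side, the hypothesis $R^{\bullet}f_{*}\mathcal{F}=f_{*}\mathcal{F}$ means that $Rf_{*}\mathcal{F}$ is concentrated in degree zero as the single sheaf $f_{*}\mathcal{F}$, so the degree-$i$ cohomology sheaf of $R\sH om_{Y}(f_{*}\mathcal{F},\mathcal{E})$ is $\sE xt^{i}(f_{*}\mathcal{F},\mathcal{E})$. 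Comparing cohomology sheaves in each degree yields the stated isomorphism.

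The substantive input, and the main obstacle, is the explicit identification of $f^{!}$: the existence of the right adjoint and the local duality isomorphism are the deep parts of the theory and would be cited from the standard references, whereas the graph factorization argument demands care with the two relative dualizing complexes and their shifts, in particular with the Koszul computation of $\gamma^{!}$. Once $f^{!}\mathcal{E}^{\bullet}\simeq Lf^{*}\mathcal{E}^{\bullet}\otimes\omega_{X/Y}[n]$ is established, both displayed formulas follow by purely formal manipulation of cohomology sheaves.
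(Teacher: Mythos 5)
Your argument is correct, but it is worth noting that the paper does not actually prove this statement: its ``proof'' is the single citation \cite[Thm.~3.34]{Huy}, so any comparison is between your sketch and the standard theory behind that reference. What you have written is essentially the canonical derivation: you isolate the two deep inputs (existence of the right adjoint $f^{!}$ of $Rf_{*}$ for proper $f$, and the sheaf-level duality isomorphism $Rf_{*}R\sH om(\mathcal{F}^{\bullet},f^{!}\mathcal{E}^{\bullet})\simeq R\sH om(Rf_{*}\mathcal{F}^{\bullet},\mathcal{E}^{\bullet})$, which is strictly stronger than the mere Hom-adjunction and must be quoted from Grothendieck duality), and then reduce the theorem to the identification $f^{!}(-)\simeq Lf^{*}(-)\otimes\omega_{X/Y}[n]$. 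Your graph-factorization computation of $f^{!}$ is correct in all its details: $\gamma$ is a regular embedding of codimension $\dim Y$ because both $X$ and $X\times Y$ are smooth, the shifts $[\dim X]$ and $[-\dim Y]$ combine to $[n]$, and the adjunction formula does give $\det N_{\gamma}\simeq f^{*}\omega_{Y}^{-1}$, collapsing the twists to $\omega_{X/Y}$. The passage to the second displayed formula by taking cohomology sheaves, using that $Rf_{*}\mathcal{F}$ is concentrated in degree zero, is also correct. In short, your proposal supplies the proof that the paper outsources; the only caveat is that the two results you cite as black boxes are the entire content of the theorem, so the added value of the write-up is the explicit and correctly executed computation of the relative dualizing complex.
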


\begin{proof} See \cite[Thm.~3.34]{Huy}. \end{proof}

For our proof of the fully faithfulness of $\Phi_1$
in Theorem \ref{thm:main}, we use the following fundamental result:

\begin{thm} \label{thm:D} Let $X$ and $Y$ be smooth projective
varieties and $\sP$ a coherent sheaf on $X\times Y$ flat over $X$.
Then the Fourier-Mukai functor $\Phi_{\sP}\colon\sD^{b}(X)\to\sD^{b}(Y)$
with the kernel $\sP$ 
is fully faithful
if and only if the following two conditions are satisfied\,$:$ \global\long\def\labelenumi{\textup{(\roman{enumi})}}
 
\begin{enumerate}
\item For any point $x\in X$, it holds $\Hom(\sP_{x},\sP_{x})\simeq\mC$,
and 
\item if $x_{1}\not=x_{2}$, then $\Ext^{i}(\sP_{x_{1}},\sP_{x_{2}})=0$
for any $i$. 
\end{enumerate}
Moreover, under these conditions, $\Phi_{\sP}$ is an equivalence
of triangulated categories if and only if $\dim X=\dim Y$ and $\sP\otimes\pr_{1}^{*}\omega_{X}\simeq\sP\otimes\pr_{2}^{*}\omega_{Y}$.

In particular, if $\dim X=\dim Y$, $\omega_{X}\simeq\sO_{X}$ and
$\omega_{Y}\simeq\sO_{Y}$, then $\Phi_{\sP}$ is fully faithful if
and only if it is an equivalence. \end{thm}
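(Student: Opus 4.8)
The plan is to deduce the statement from the spanning-class formalism for Fourier--Mukai functors, reducing it to a computation on skyscraper sheaves. First I would record that $\Phi_{\sP}$ admits both a left and a right adjoint: since $\sP$ is a sheaf on the product of smooth projective varieties, Grothendieck--Verdier duality (Theorem~\ref{cla:duality}) produces Fourier--Mukai kernels $\sP_R=\sP^{\vee}\otimes\pr_1^{*}\omega_X[\dim X]$ and $\sP_L=\sP^{\vee}\otimes\pr_2^{*}\omega_Y[\dim Y]$ computing the right adjoint $\Phi_{\sP_R}$ and the left adjoint $\Phi_{\sP_L}$ of $\Phi_{\sP}$. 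With both adjoints in hand I would invoke the general principle that an exact functor $F$ between triangulated categories possessing both adjoints is fully faithful as soon as the induced maps $\Hom(A,B[i])\to\Hom(FA,FB[i])$ are bijective for all $A,B$ in a spanning class $\Omega$ and all $i$. The natural choice is $\Omega=\{\sO_x : x\in X\}$, the skyscrapers at closed points, which form a spanning class of $\sD^{b}(X)$ because $X$ is smooth and projective.

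Next I would carry out the computation on $\Omega$. Flatness of $\sP$ over $X$ gives $\Phi_{\sP}(\sO_x)=\sP_x$, the underived restriction of $\sP$ to $\{x\}\times Y$ viewed as a sheaf on $Y$. On the source side one has $\Hom(\sO_{x_1},\sO_{x_2}[i])=0$ whenever $x_1\neq x_2$ (disjoint supports), while $\Hom(\sO_x,\sO_x[i])\cong\textstyle\bigwedge^{i}T_xX$. Thus the criterion demands, for all $i$, that $\Ext^{i}(\sP_{x_1},\sP_{x_2})=0$ for $x_1\neq x_2$ and $\Ext^{i}(\sP_x,\sP_x)\cong\bigwedge^{i}T_xX$. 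Hypothesis (ii) is precisely the first family of vanishings, hypothesis (i) is the case $x_1=x_2$, $i=0$, and $\Ext^{i}(\sP_x,\sP_x)=0$ for $i<0$ holds automatically since $\sP_x$ is a genuine sheaf.

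The main obstacle is to upgrade (i) and (ii) to the matching in every cohomological degree, that is, to recover $\Ext^{i}(\sP_x,\sP_x)\cong\bigwedge^{i}T_xX$ for $1\le i\le\dim X$ and the vanishing for $i>\dim X$, which are not formal consequences of (i) and (ii). The efficient route is through the right adjoint: by adjunction $\Hom(\sP_{x_1},\sP_{x_2}[i])=\Hom(\sO_{x_1},Q_{x_2}[i])$ with $Q_{x_2}:=\Phi_{\sP_R}\Phi_{\sP}(\sO_{x_2})$. Hypothesis (ii) forces $Q_{x_2}$ to be set-theoretically supported at $x_2$, and then the normalization (i), together with the vanishing of low and high self-$\Ext$ groups, pins down the unit morphism $\sO_{x_2}\to Q_{x_2}$ as an isomorphism by a d\'evissage on the cohomology sheaves of $Q_{x_2}$ (bounding them from below and above using both adjoints and Serre duality on $X$). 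Since the map induced by $\Phi_{\sP}$ on $\Hom$-spaces is post-composition with this unit, its bijectivity in all degrees follows at once. This is the delicate heart of the Bondal--Orlov/Bridgeland argument and the step I expect to require the most care.

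Finally, for the equivalence statement I would use that a fully faithful Fourier--Mukai functor between derived categories of smooth projective varieties is an equivalence exactly when it commutes with the Serre functors and the target is indecomposable (which holds as $Y$ is connected). The Serre functors are $S_X=(-)\otimes\omega_X[\dim X]$ and $S_Y=(-)\otimes\omega_Y[\dim Y]$, and on kernels the relation $\Phi_{\sP}\circ S_X\cong S_Y\circ\Phi_{\sP}$ translates into matching the shifts, $\dim X=\dim Y$, together with $\sP\otimes\pr_1^{*}\omega_X\cong\sP\otimes\pr_2^{*}\omega_Y$. This is exactly the stated criterion, and the concluding ``in particular'' is immediate: when $\dim X=\dim Y$ and $\omega_X\cong\sO_X$, $\omega_Y\cong\sO_Y$, the kernel identity holds trivially, so full faithfulness and equivalence coincide.
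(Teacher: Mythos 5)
The paper gives no proof of this statement beyond citing Bondal--Orlov, Bridgeland, and Huybrechts, and your proposal is a faithful reconstruction of exactly that standard argument: both adjoints via Grothendieck--Verdier duality, the spanning class of skyscrapers, the d\'evissage on $Q_x=\Phi_{\sP_R}\Phi_{\sP}(\sO_x)$, and the Serre-functor criterion for equivalence. The step you rightly single out as delicate --- showing the unit $\sO_x\to Q_x$ is an isomorphism, which requires bounding the cohomological amplitude of $Q_x$ from both sides using the two adjoints and Serre duality (the high-degree vanishing $\Ext^{i}(\sP_x,\sP_x)=0$ for $i>\dim X$ is not among the hypotheses and must be derived there) --- is indeed where all the work in the cited proofs lies.
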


\begin{proof} See \cite[Thm.~1.1]{BO}, \cite[Thm.~1.1]{B},
\cite[Cor.~7.5 and Prop.~7.6]{Huy}. \end{proof}

We also need the following results of 
the derived categories of del Pezzo surfaces:
\begin{thm}
\label{thm:delPezzo}
Any exceptional collection in the derived category of
a del Pezzo surface is contained in a full exceptional collection,
and any full exceptional collection has the same length.
Moreover, any exceptional object is a locally free sheaf.
\end{thm}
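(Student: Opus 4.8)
The plan is to establish the three assertions in turn; the first is purely $K$-theoretic, while the last two rest on the structure theory of exceptional sheaves on del Pezzo surfaces. Throughout, $X$ denotes a del Pezzo surface, so $X$ is rational, $-K_X$ is ample, $K_0(X)$ is a free abelian group of rank $\dim_{\mQ}H^*(X,\mQ)$, and the Serre functor is $S(-)=(-)\otimes\omega_X[2]$. For the statement on lengths, recall that a full exceptional collection $(E_1,\dots,E_n)$ yields a semiorthogonal decomposition $\sD^b(X)=\langle\langle E_1\rangle,\dots,\langle E_n\rangle\rangle$ with each component $\langle E_i\rangle\simeq\sD^b(\mathrm{pt})$, since $E_i$ is exceptional. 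A semiorthogonal decomposition induces a direct sum decomposition on Grothendieck groups, so $K_0(X)=\bigoplus_{i=1}^{n}\mZ\cdot[E_i]$ and the classes $[E_i]$ form a $\mZ$-basis. Hence $n=\rank K_0(X)$, a number independent of the collection, and all full exceptional collections have the same length.

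To see that an exceptional object $E$ is, up to shift, a locally free sheaf, I would first show it is a pure sheaf and then analyse that sheaf. Writing $a=\min\{i:\sH^i(E)\neq0\}$ and $b=\max\{i:\sH^i(E)\neq0\}$, the natural starting point is the spectral sequence
\[
\bigoplus_{j-i=p}\Ext^{q}(\sH^i(E),\sH^j(E))\;\Longrightarrow\;\Ext^{p+q}(E,E),
\]
whose two extreme surviving corners are $\Hom(\sH^b(E),\sH^a(E))$ in total degree $a-b$ and $\Ext^2(\sH^a(E),\sH^b(E))$ in total degree $b-a+2$ (no differential enters or leaves either corner, and $\Ext^{>2}$ vanishes on a surface). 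If $a<b$ these must vanish by exceptionality; pushing this through the intermediate cohomology sheaves, together with Serre duality $\Ext^2(\sH^a(E),\sH^b(E))\simeq\Hom(\sH^b(E),\sH^a(E)\otimes\omega_X)^{\ast}$ and a Riemann--Roch comparison with $\chi(E,E)=1$, forces $a=b$, so $E\simeq F[-a]$ for a single coherent sheaf $F$. This purity reduction is one of the two technical cores of the argument, as closing it genuinely uses the del Pezzo structure rather than merely formal vanishing. Once $E$ is a sheaf $F$, it is simple ($\Hom(F,F)=\mC$) and rigid ($\Ext^1(F,F)=0$) with $\Ext^2(F,F)=0$, and local freeness follows from surface geometry: a non-zero torsion subsheaf of $F$, or a non-zero cokernel of the canonical map $F\to F^{\ast\ast}$ into its reflexive hull (which is locally free on the smooth surface $X$), would contribute to $\Hom(F,F)$ or $\Ext^1(F,F)$ beyond what exceptionality allows, so $F$ is reflexive and therefore locally free.

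For the extension statement I would argue by descending induction on $n-k$. Given an exceptional collection $(E_1,\dots,E_k)$ with $k<n=\rank K_0(X)$, it spans an admissible subcategory $\sA=\langle E_1,\dots,E_k\rangle$, giving $\sD^b(X)=\langle{}^{\perp}\!\sA,\sA\rangle$ with ${}^{\perp}\!\sA$ admissible and of $K$-theoretic rank $n-k>0$, hence non-zero. It suffices to produce an exceptional object $E_0\in{}^{\perp}\!\sA$, for then $(E_0,E_1,\dots,E_k)$ is a longer exceptional collection and induction finishes the job. To construct $E_0$ I would work with the Euler pairing $\chi$, which is unimodular on $K_0(X)$ (in the basis given by any full exceptional collection it is upper triangular with units on the diagonal); on the rank-$(n-k)$ sublattice cut out by ${}^{\perp}\!\sA$ one selects a class $v$ with $\chi(v,v)=1$, and then realises $v$ by an exceptional locally free sheaf lying in ${}^{\perp}\!\sA$, invoking the existence and essential uniqueness of exceptional bundles with prescribed numerical invariants on del Pezzo surfaces.

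The main obstacle is precisely this last existence step: an arbitrary non-zero admissible subcategory need not contain any exceptional object, and it is only the special geometry of del Pezzo surfaces---rationality, ampleness of $-K_X$, and the resulting unimodularity and good behaviour of the Euler pairing---that guarantees rigid simple bundles with a prescribed class exist and can be found inside the orthogonal complement. I would therefore isolate this as a lemma, proved along the lines of the Kuleshov--Orlov construction of exceptional sheaves, the remaining parts of the argument being the formal $K$-theoretic bookkeeping and the spectral-sequence analysis sketched above.
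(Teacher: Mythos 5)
The paper does not actually prove this theorem: its ``proof'' is the citation ``See [Ru], [KO, Thm.~6.11, \S 7]'', so what you are really being asked to reconstruct is Rudakov (for the quadric $\mP^1\times\mP^1$, which is the only case the paper uses) and Kuleshov--Orlov. Your $K$-theoretic argument that all full exceptional collections have the same length is correct and standard. The other two parts have genuine gaps. For local freeness, the implication ``simple, rigid, $\Ext^2=0$ $\Rightarrow$ reflexive $\Rightarrow$ locally free'' is false because nothing you invoke excludes torsion sheaves: if $C\subset X$ is a $(-1)$-curve and $\iota\colon C\hookrightarrow X$ the inclusion, then $\iota_*\sO_C(d)$ has $\Hom=\mC$ and $\Ext^1=\Ext^2=0$ (both are computed from $H^{\bullet}(C,N_{C/X})=H^{\bullet}(\mP^1,\sO_{\mP^1}(-1))=0$), so it is an exceptional sheaf that is not locally free --- such objects even occur in natural full exceptional collections coming from Orlov's blow-up formula. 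The Kuleshov--Orlov classification in fact says ``locally free \emph{or} of the form $\iota_*\sO_C(d)$''; the theorem as literally stated only holds for $\mP^2$ and $\mP^1\times\mP^1$ (which is all the paper needs, since it applies it to $F_i\simeq\mP^1\times\mP^1$), and your argument, which never uses the absence of $(-1)$-curves, would ``prove'' a false statement on a degree-two del Pezzo surface. The purity step is also only asserted: the surviving corners give $\Hom(\sH^b,\sH^a)=0$ and, via Serre duality, $\Hom(\sH^b,\sH^a\otimes\omega_X)=0$, but pairs of nonzero sheaves satisfying both vanishings are abundant, so ``forces $a=b$'' is precisely the nontrivial content of the cited results, not a formal consequence of $\chi(E,E)=1$.

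The extension statement is where the proposal breaks down most seriously. Producing an exceptional object in ${}^{\perp}\sA$ \emph{is} the theorem; choosing $v$ with $\chi(v,v)=1$ in the corresponding sublattice and ``realising $v$ by an exceptional bundle lying in ${}^{\perp}\sA$'' restates the goal rather than achieving it. There is no general principle that a unimodular class in an admissible subcategory is represented by an exceptional object (this fails for general surfaces, and even on del Pezzo surfaces the existence theorems for exceptional bundles with prescribed invariants give no control on membership in a given orthogonal complement). The actual proofs of Rudakov and Kuleshov--Orlov proceed differently, through the mutation/helix theory of exceptional collections and the constructibility (transitivity of the braid-group action) of exceptional collections on these surfaces. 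Since you explicitly defer this step to ``the Kuleshov--Orlov construction'', your proposal ultimately leans on the same reference the paper cites for the one step that carries all of the difficulty.
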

\begin{proof}
See \cite{Ru}, \cite[Thm.~6.11, \S 7]{KO}.
\end{proof}

\section{{\bf Orthogonal linear sections of symmetric determinantal loci}}
\label{Orthogonal}
\subsection{Symmetric determinantal loci}
We quickly review some basic definitions and properties of
symmetric determinantal loci from \cite{Geom}.

\begin{defn}
We define $\nS_r\subset \mP({\ft S}^2 V^*)$ to be the locus 
of quadrics in $\mP(V)$ of rank at most $r$.
Taking a basis of $V$, $\nS_r$ is defined by $(r+1)\times (r+1)$ minors
of the generic $(n+1)\times (n+1)$ symmetric matrix.
We call $\nS_r$ the {\it symmetric determinantal locus of rank at most $r$}.
We call a point of $\nS_r\setminus \nS_{r-1}$ {\it a rank $r$ point}.

Similarly we define the symmetric determinantal locus $\nS_r^*$ in 
the dual projective space $\mP(\ft{S}^2V)$.   
\end{defn}

We have introduced the Springer type resolution $\Lp_{\widetilde{\nS}_r}\colon \widetilde{\nS}_r\to \nS_r$, which is a projective bundle over $\rG(n+1-r,V)$.
Using this, we have derived several properties of $\nS_r$,
for which we refer to \cite[\S 2.1]{Geom}.

In case $r$ is even, we have defined the double cover $\nT_r$ of $\nS_r$
by the following construction.

Let \begin{equation}
0\to{\eQ}_{\frac r2}^{*}\to V^{*}\otimes\sO_{\mathrm{G}(n-\frac r2+1,V)}\to\eS_{n-\frac r2+1}^{*}\to0\label{eq:Q*S}\end{equation}
 be the dual of the universal exact sequence on $\mathrm{G}(n-\frac r2+1,V)$,
where $\eQ_{\frac r2}$ is the universal quotient bundle of rank $\frac r2$
and $\eS_{n-\frac r2+1}$ is the universal subbundle of rank $n-\frac r2+1$. 
For brevity, we often omit the subscripts writing them by $\eS$ and $\eQ$.
Taking the second symmetric product, we obtain the following surjection: $\ft{S}^{2}V^{*}\otimes\sO_{\mathrm{G}(n-\frac r2-1,V)}\to\ft{S}^{2}\eS^{*}.$
Let $\sE^{*}$ be the kernel of this surjection, and consider the
following exact sequence: \begin{equation}
0\to\sE^{*}\to\ft{S}^{2}V^{*}\otimes\sO_{\mathrm{G}(n-\frac r2+1,V)}\to\ft{S}^{2}\eS^{*}\to0.\label{eq:sE0}\end{equation}
Now we set 
\[
\nU_r:=\mP(\sE^{*}),
\]
and denote by $\Lrho_{\nU_r}$ the projection
$\nU_r\to\mathrm{G}(n-\frac r2+1,V)$. By (\ref{eq:sE0}), $\nU_r$
is contained in $\mathrm{G}(n-\frac r2+1,V)\times\mP(\ft{S}^{2}V^{*})$.
Considering $\mP(\ft{S}^{2}V^{*})$
as the parameter space of quadrics in $\mP(V)$,
we see that the fiber of $\sE^{*}$ over $[\Pi]\in \mathrm{G}(n-\frac r2+1,V)$ 
parameterizes quadrics
in $\mP(V)$ containing the $(n-\frac r2)$-plane $\mP(\Pi)$.
Therefore \[
\nU_r=\{([\Pi],[Q])\mid\mP(\Pi)\subset Q\}\subset\mathrm{G}(n-\frac r2+1,V)\times\mP(\ft{S}^{2}V^{*}). \]
 Note that $Q$ in $([\Pi],[Q])\in\nU_r$ is a quadric of rank at most $r$
since quadrics contain $(n-\frac r2)$-planes only when their ranks are 
at most $r$. 
Hence the
symmetric determinantal locus $\nS_r$ is the image of the natural projection $\nU_r\to\mP(\ft{S}^{2}V^{*})$.
\begin{defn}
We let \[
\xymatrix{ & \nU_r\;\ar[r]^{\;\;\Lpi_{\nU_r}\;\;} & \;\nT_r\;\ar[r]^{\;\;\Lrho\,_{\nT_r}\;\;} & \;\nS_r}
\]
be the Stein factorization of $\nU_r\to\nS_r$. 
We have seen that $\nT_r\to \nS_r$ is a double cover branched along $\nS_{r-1}$.
We call $\nT_r$ {\it the double symmetric determinantal loci of rank at most $r$}.
\end{defn}

In this paper, we only consider $\nS^*_2$ and $\nT_4$ with $n=3$.
For them, we introduce the following set of notation:

\noindent
$\bullet\ \chow:=\nS^*_2,\, \hchow:=\widetilde{\nS}^*_2,\, f\colon \hchow\to \chow,\,
g\colon \hchow\to \rG(2,V), \sF:=\eS_2.$
We note that $\hchow\simeq \mP({\ft S}^2 \sF)$ (\cite[\S 2.1, \S 3.2]{Geom}),
and $\nS^*_1=v_2(\mP(V))$, where $v_2(\mP(V))$ is the second Veronese variety.

\noindent $\bullet \ \UU:=\widetilde{\nS}_4,\, \hcoY:=\nT_4,\, \Zpq:=\nU_4,$
$\xymatrix{\Zpq\;\ar[r]^{\;\;\Lpi_{\Zpq}\;\;} & 
\;\hcoY\;\ar[r]^-{\Lrho_{\hcoY}} & \;\nS_4=\mP({\ft S}^2 V^*)}.$
We maintain the notation $\nS_1,\nS_2,\nS_3\subset \mP({\ft S}^2 V^*)$.

\vspace{5pt}

We denote by $L_r$ a linear subspace of ${\ft S}^2 V^*$
of dimension $r+1$,
and by
$L_r^{\perp}$ the orthogonal space to $L_r$
with respect to the dual pairing of ${\ft S}^2 V$ and ${\ft S}^2 V^*$.
Then we consider the following mutually orthogonal subspaces:
\[
P_r:=\mP(L_r)\subset \mP({\ft S}^2 V^*),
\]
and
\[
P_r^{\perp}:=\mP(L_r^{\perp})\subset \mP({\ft S}^2 V).
\]
\subsection{Enriques surface $X$ and Artin-Mumford quartic double solid $Y$} 
\label{section:XY}

Let us take a $3$-plane 
$P_3\subset \mP({\ft S}^2 V^*)$
and set
\begin{align*}
X&:=\chow\cap P_3^{\perp},\\
Y&:= \text{the pull-back in $\hcoY$ of $P_3\simeq \mP^3$.}
\end{align*}
We say that $X$ and $Y$ are {\it orthogonal} to each other.

We put the following generality assumptions: 
\begin{equation}
\label{eq:XYcond}
\begin{cases}
\text{$X$ is a smooth surface,}\\
\text{$\nS_2\cap P_3$ consists of $10$ points 
in $\nS_2\setminus \nS_1$,}\\
\text{$\Sing (\nS_3\cap P_3)=\nS_3\cap P_3$,\ and}\\
\text{$\nS_3\cap P_3$ does not contain a line}.
\end{cases}
\end{equation}
Here we note that $\nS_2$ is a determinantal variety of degree 10
since $\nS_2={\ft S}^2 \mP(V^*)=\mP(V^*)\times \mP(V^*)/\mZ_2$.
We also
remark that the last condition of (\ref{eq:XYcond}) will be needed to
show the flatness of the kernel of $\Phi_1$
(Propositions \ref{lem:DeltaDflat} and \ref{prop:flatP1}).

$X$ is an Enriques surface of degree $10$.
Since $X$ is smooth,
$X$ is disjoint from $\Sing \hchow$, and hence
we can consider $X$ to be contained in ${{\hchow}}$.
As a subvariety of $\chow$ or $\hchow$,
$X$ parameterizes the sets of two distinct points $(x_1, x_2)$
of $\mP(V)$ such that $x_1$ and $x_2$ are mutually orthogonal 
with respect to all the quadratic forms corresponding to 
the quadrics in $P_3$.
Moreover, $X$ is mapped by $g\colon \hchow\to \mathrm{G}(2,V)$
onto its image isomorphically (we prove this in the proof of 
Proposition \ref{prop:flop} (1)), and hence 
we can also consider $X$ to be contained in $\mathrm{G}(2,V)$.
By the existence of this embedding into $\mathrm{G}(2,V)$,
$X$ is called the {\it Enriques surface of a Reye congruence}, or
simply a Reye congruence \cite{Co}.
In the proof of Proposition \ref{prop:flop} (1) below,
we will describe the lines in $\mP(V)$ which are parameterized by $X$.

Now we turn to $Y$.
Let us define \[
{\rm H}:=\nS_3\cap P_3.
\] 
\begin{prop}
\begin{enumerate}[$(1)$]
\item The singularities of ${\rm H}$ are $10$ ordinary
double points at $\nS_2\cap P_{3}$.
\item $y\in Y$ corresponds to a pair $(Q_{y},q_{y})$ of quadric
$Q_{y}$ in $P_{3}$ and a connected family $q_{y}$ of lines in $Q_{y}$
where $\Lq_{\hcoY}(y)=[Q_{y}]\in \mP({\ft S}^2 V^*)$.
\item
$Y$ is a del Pezzo threefold with only $10$ ordinary double points
at \[
\left\{ y_{1},...,y_{10}\right\} :=\nS_2\cap Y.\]
\end{enumerate}
\end{prop}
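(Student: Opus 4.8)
The plan is to establish the three parts in order, deriving the singularity analysis in (3) from the local model obtained in (1), while (2) is read off from the construction of $\hcoY=\nT_4$ by Stein factorization. First I would prove (1) by a local computation of the discriminant $\nS_3$ at a rank-$2$ point. Fix $p\in\nS_2\cap P_3$; by the generality assumptions $p$ is a smooth point of $\nS_2$ (recall $\Sing\nS_2=\nS_1$) of corank $2$. Writing the generic symmetric matrix near $p$ in block form $M=\left(\begin{smallmatrix} A & B\\ {}^t\!B & C\end{smallmatrix}\right)$, with $A$ the invertible $2\times2$ block carrying the rank, the Schur identity $\det M=\det A\cdot\det S$, $S:=C-{}^t\!B\,A^{-1}B$, shows that after inverting the unit $\det A$ the hypersurface $\nS_3$ is cut out locally by $\det S=0$. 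Since $\rank M=2+\rank S$, the three entries $s_{11},s_{12},s_{22}$ of the symmetric $2\times 2$ matrix $S$ vanish exactly on $\nS_2$ and, as $\nS_2$ has codimension $3$ and is smooth at $p$, they form a coordinate system transverse to $\nS_2$. Because $\nS_2$ has degree $10$, the assumption that $\nS_2\cap P_3$ is a set of $10$ (reduced) points forces $P_3$ to meet $\nS_2$ transversally, so $s_{11},s_{12},s_{22}$ restrict to local analytic coordinates $(u,v,w)$ on $P_3$ near $p$; hence $\mathrm{H}=\nS_3\cap P_3$ is locally $\{uw-v^2=0\}$, an ordinary double point.

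For (2), I would invoke the construction recalled above: $\hcoY=\nT_4$ is the Stein factorization of $\Zpq=\nU_4\to\nS_4=\mP({\ft S}^2V^*)$, whose fibre over $[Q]$ is $\{[\Pi]\mid \mP(\Pi)\subset Q\}$, the scheme of lines contained in the quadric $Q$. A smooth quadric surface carries two rulings, so this scheme has two connected components, while a quadric of rank $\le 3$ carries a single connected family of lines; accordingly $\Lrho_{\hcoY}\colon\hcoY\to\nS_4$ has two points over smooth quadrics and one over $\nS_3$, in agreement with its being branched along $\nS_3$. Thus a point of $\hcoY$ over $[Q]$ is exactly the datum of a connected family of lines on $Q$, and restricting to $[Q]\in P_3$ gives the description $y=(Q_y,q_y)\in Y$ with $\Lq_{\hcoY}(y)=[Q_y]$.

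For (3), the map $Y\to P_3\simeq\mP^3$ is the double cover branched along the reduced quartic $\mathrm{H}$, so $Y$ is normal and Gorenstein, and the double-cover formula $\omega_Y=\Lq_{\hcoY}^*\bigl(\omega_{P_3}\otimes\sO_{P_3}(2)\bigr)$—valid in codimension one where $\mathrm{H}$ is smooth, hence everywhere by reflexivity—gives $-K_Y\simeq\sO_Y(2)=2\,\sO_Y(1)$, where $\sO_Y(1)$ is the pullback of $\sO_{P_3}(1)$. As $\sO_Y(1)$ is ample with $\sO_Y(1)^3=2$, this exhibits $Y$ as a del Pezzo threefold of degree $2$. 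For the singularities, $Y$ is smooth over the smooth locus of $\mathrm{H}$, so any singular point lies over $\Sing\mathrm{H}=\nS_2\cap P_3$; over such a node the local equation $t^2=uw-v^2$ from (1) presents $Y$ as a rank-$4$ quadric cone in $\mC^4$, i.e. an ordinary double point. Since $\nS_2\cap P_3$ lies in the branch locus $\mathrm{H}$, each node has a unique preimage, so these ten points $\{y_1,\dots,y_{10}\}$ are precisely $\nS_2\cap Y$.

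The hard part will be (1): one must arrange the block decomposition so that the Schur complement is genuinely a function of coordinates transverse to $\nS_2$, and then use the transversality of $P_3$ to $\nS_2$ to promote $s_{11},s_{12},s_{22}$ to local coordinates on $P_3$. Once this local model is in hand, parts (2) and (3) reduce to bookkeeping over the Stein factorization defining $\nT_4$ and the standard double-cover formulas.
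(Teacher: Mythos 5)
Your proposal is correct and follows essentially the same route as the paper: (1) from the transversality of $P_3$ with $\nS_2$ at the ten rank-two points, (2) from the Stein factorization $\Zpq\to\hcoY\to\mP({\ft S}^2V^*)$, and (3) from the double-cover structure of $Y\to P_3$ branched along $\mathrm{H}$. The paper's proof is a three-line citation of these facts; your Schur-complement local model and the double-cover canonical bundle computation simply supply the details it leaves implicit.
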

\begin{proof}
(1) follows since $P_{3}$ intersects transversely with
$\nS_2$ at 10 points in $\nS_2\setminus \nS_1$. 
(2) follows from the definition of $Y$ and the
Stein factorization $\Zpq\to\hcoY\to\mP({\ft S}^2 V^*)$. We have 
(3) since $Y$ is a double cover
of $P_{3}$ branched along ${\rm H}$. 
\end{proof}
$Y$ is called an {\it Artin-Mumford (AM) quartic double solid}, which is
discovered in \cite{AM} as an example of irrational unirational $3$-fold.

Let 
\[
Z:=\Lpi_{\Zpq}^{-1}(Y)\subset \Zpq.
\]
Then, by \cite[Prop.~3.7]{Geom},
the 2nd and 3rd assumptions of (\ref{eq:XYcond}),
$Z$ is a smooth fourfold, and
$Z\to Y$ is a $\mP^1$-fibration outside
$10$ points $y_1,\dots, y_{10}$ and
the fibers over these points are the unions of two planes 
\[
\mathrm{P}^{(1)}_i\cup_{1pt} \mathrm{P}^{(2)}_i.
\] 

As we see in the introduction,
the blow-up 
\[
\widetilde{Y}\to Y
\]
at $10$ ordinary double points
$y_1,\dots, y_{10}$
 plays a central role 
for Theorem \ref{thm:main}.

\vspace{5pt}

In the next subsection,
we introduce another pair of orthogonal linear sections $W$ and $S$
of $\chow$ and $\hcoY$ respectively,
which play important roles in the proof of Theorem \ref{thm:main}.

\subsection{Enriques-Fano threefold $W$ and del Pezzo surface $S$ of degree two}\label{subsection:SW}
Let us take a $2$-plane $P_2\subset \mP({\ft S}^2 V^*)$
and set
\begin{align*}
W&:=\chow\cap P_2^{\perp}, \\
S&:= \text{the pull-back in $\hcoY$ of $P_2\simeq \mP^2$.}
\end{align*}
We say that $W$ and $S$ are {\it orthogonal} to each other.

We put the following generality assumptions: 
\begin{equation}
\label{eq:P2}
\begin{cases}
\text{$\Sing \chow\cap P_2^{\perp}$ consists of $8$ points $w_1,\dots,w_8$,}\\
\text{$\Sing W=\Sing \chow\cap P_2^{\perp}$, and}\\
\text{$\Gamma:=\nS_3\cap P_2$ is a smooth plane quartic curve.}
\end{cases}
\end{equation}
Here we note that $\deg \Sing \chow=\deg v_2(\mP(V))=8$ (\cite[\S 2.1]{Geom}). 

If $\Sing \chow\cap P_2^{\perp}$
consists of $8$ points, then 
\[
\text{$W$ has only $\frac 12(1,1,1)$-singularities
at $w_1,\dots,w_8$}
\]
since $P_2^{\perp}$ cuts $\Sing \chow$ transversely and
$\chow$ has only $\frac 12(1,1,1)$-singularities along $\Sing \chow$
(\cite[\S 2.1]{Geom}). 
$W$ is an example of {\it Enriques-Fano threefolds}, which is a $\mQ$-Fano threefold containing an Enriques surface as a hyperplane section, and was initially studied by Fano \cite{Fano3}.
See \cite{Conte, CM, Ba, San1} for modern treatments of this class of $3$-folds.
We define 
\[
\text{$\hW:=$ the pull-back in $\hchow$ of $W$}.
\] 
$\hW$ is a smooth threefold
and the birational morphism $\hW\to W$ is the blow-up at $w_1,\dots,w_8$. 
By \cite[(2.4)]{Geom},
we derive 
\begin{equation}
\label{eq:2HL}
2(H-L)=\sum_{i=1}^{8} E_i
\end{equation}
on $\hW$ since the restriction of the $f$-exceptional divisor is 
$\sum_{i=1}^{8} E_i$.
$S$ is the double cover of $P_2$ branched along the smooth quartic curve $\Gamma$
and then is a smooth del Pezzo surface of degree two.

Let \[
Z_S:=\Lpi_{\Zpq}^{-1}(S)\subset \Zpq,
\]
which is a smooth threefold with
a $\mP^1$-bundle structure over $S$ by the choice of $P_2$ as
in (\ref{eq:P2}).
The fiber over a point $s\in S$ parameterizes
lines contained in the rank three or four quadric corresponding to the image of $s$ on $P_2$.  

\section{{\bf Derived categories of Enriques-Fano threefolds and degree two del Pezzo surfaces}}
\label{section:mut}

Let $W$ and $S$ be as in the subsection \ref{subsection:SW}.
In this section, we establish a relationship between
the derived categories of $W$ and $S$ as indicated by
the plausible homological projective duality of $\chow$ and $\hcoY$
(Theorem \ref{thm:main2}).
 
\subsection{Linear duality between $\hchow$ and $\Zpq$}
\label{LHPD}

We rewrite the exact sequence (\ref{eq:sE0}) by new notation:
\begin{equation}
0\to\sE^{*}\to\ft{S}^{2}V^{*}\otimes\sO_{\mathrm{G}(2,V)}\to\ft{S}^{2}\sF^{*}\to0.\label{eq:sE0new}\end{equation}
This means that
the fibers of ${\ft S}^2 \sF$ and $\sE$ over a point of $\mathrm{G}(2,V)$
are the orthogonal spaces to each other when we consider them
as subspaces in ${\ft S}^2 V$ and ${\ft S}^2 V^*$ respectively.
In this sense, the pair ${\ft S}^2 \sF$ and $\sE$ is an example of 
{\it orthogonal bundles}.

In \cite[\S 8]{HPD1}, Kuznetsov establishes 
the homological projective duality between
a projective bundle $\mP(\sV)$ over a smooth base $S$
and its orthogonal bundle $\mP(\sV^{\perp})$ for a 
globally generated vector bundle $\sV$ on $S$.
He call this duality {\it linear duality} in \cite{ICM}.
This situation is ubiquitous and is quite useful to understand several
relationships of derived categories.
We do not review his result but we show in the following proposition that 
this framework is suitable for describing the relationships
between $X=\mP({\ft S}^2 \sF)\cap f^{-1}(P_3^{\perp})$
and $Z=\mP(\sE)\cap (\Lrho_{\hcoY}\circ \Lpi_{\Zpq})^{-1}(P_3)$
(as in the subsection \ref{section:XY}),
and between 
$\hW=\mP({\ft S}^2 \sF)\cap f^{-1}(P_2^{\perp})$ 
and $Z_S=\mP(\sE)\cap (\Lrho_{\hcoY}\circ \Lpi_{\Zpq})^{-1}(P_2)$
(as in the subsection \ref{subsection:SW}),
respectively. In particular, in the assertion (1) of the following proposition,
we derive the classical diagram (\ref{eq:XYbasic}) from the framework
presented in the section \ref{Orthogonal}.
The equivalence $\sD^b(\hW)\simeq \sD^b(Z_S)$
given in the assertion (2) plays important roles
in our proofs of \ref{thm:main2} in the next subsection.

\begin{prop}
\label{prop:flop}
\begin{enumerate}[$(1)$]
\item
$X$ is mapped isomorphically onto its image by $g\colon \hchow\to \mathrm{G}(2,V)$. We denote by $X$ its image in $\mathrm{G}(2,V)$. 
$Z\to \mathrm{G}(2,V)$ is the blow-up along $X\subset \mathrm{G}(2,V)$ and
the fiber of $Z\to \mathrm{G}(2,V)$ over a point $[l]\in X$
represents exactly the pencil $\subset P_3$ of quadrics containing $l$.
Moreover,
$X$ may be characterized as a subvariety of $\mathrm{G}(2,V)\,;$
\begin{equation*}
\label{eq:subset1}
\text{$X=\{[l]\in \mathrm{G}(2,V)\mid$ quadrics $\in P_3$
containing $l$
form a pencil$\}$.}
\end{equation*}
\item
There exists the following diagram\,$:$
\[
\xymatrix{
& \hW\ar[dl]\ar@{-->}[r]^{\text{flop}} & Z_S\ar[dr]\\
W & & & S,}
\]
where $\hW\dashrightarrow Z_S$ is an Atiyah's flop, namely,
a flop whose exceptional curves are mutually disjoint, and
have normal bundles $\sO_{\mP^1}(-1)^{\oplus 2}$.
The flop induces an equivalence of the derived categories
$\sD^b(\hW)\simeq \sD^b(Z_S)$.
\end{enumerate}
\end{prop}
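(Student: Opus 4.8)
The plan is to reduce both assertions to a single linear-algebra computation over $\rG(2,V)$, exploiting the orthogonal-bundle picture of the preceding subsection. Over a line $\mP(\Pi)$ (with $\Pi\subset V$ two-dimensional, so that $\sF_{[\Pi]}=\Pi$), restriction of quadratic forms to $\Pi$ defines, for each linear system $P_r=\mP(L_r)$, a morphism of vector bundles
\[
\phi_{L_r}\colon L_r\otimes\sO_{\rG(2,V)}\longrightarrow {\ft S}^2\sF^{*},
\]
which for $L_r={\ft S}^2V^{*}$ is the surjection of (\ref{eq:sE0new}). By construction the fibre $\ker\phi_{L_r}$ at $[\Pi]$ is the space of quadrics in $P_r$ through $\mP(\Pi)$, while $\ker\phi_{L_r}^{*}$ at $[\Pi]$ is ${\ft S}^2\Pi\cap L_r^{\perp}$. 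Hence, along $g\colon\hchow\to\rG(2,V)$ and $\Lpi_{\Zpq}$, the fibre of $Z$ (resp.\ $Z_S$) over $[\Pi]$ is $\mP(\ker\phi_{L_r})$, and the fibre of $X$ (resp.\ $\hW$) over $[\Pi]$ is $\mP(\ker\phi_{L_r}^{*})$. This fibrewise form of Kuznetsov's linear duality is the engine for everything below.

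\textbf{Part (1).} Here $\phi:=\phi_{L_4}$ is a map of bundles of ranks $4$ and $3$, so its degeneracy loci are $D_2(\phi)=\{\rank\phi\le2\}$ (expected codimension $2$) and $D_1(\phi)=\{\rank\phi\le1\}$ (expected codimension $6$). Since the latter exceeds $\dim\rG(2,V)=4$, genericity forces $D_1(\phi)=\emptyset$; thus $\rank\phi\equiv2$ along $X=D_2(\phi)$ and $\ker\phi^{*}$ is a line subbundle there. Consequently $[\Pi]\mapsto\mP(\ker\phi_{[\Pi]}^{*})$ is a morphism $D_2(\phi)\to\hchow$ inverse to $g|_X$, identifying $X$ with its image $D_2(\phi)\subset\rG(2,V)$ (smooth by the first hypothesis of (\ref{eq:XYcond})); this gives the isomorphism statement and the characterization of $X$ as the lines admitting a pencil of containing quadrics in $P_3$. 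For the blow-up claim, the fibre $\mP(\ker\phi_{[\Pi]})$ of $Z\to\rG(2,V)$ is a single point off $X$ and that asserted pencil $\mP^1$ over $X$; since $Z$ is smooth (by (\ref{eq:XYcond}) and \cite[Prop.~3.7]{Geom}), $Z\to\rG(2,V)$ is a projective birational morphism of smooth fourfolds, an isomorphism outside the smooth codimension-two centre $X$, restricting to a $\mP^1$-bundle on the exceptional divisor. The standard criterion for blow-ups along smooth centres then identifies $Z$ with the blow-up of $\rG(2,V)$ along $X$.

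\textbf{Part (2).} Now $\psi:=\phi_{L_3}$ is a square $3\times3$ bundle map, so $D_2(\psi)=\{\det\psi=0\}$ is a divisor $D\subset\rG(2,V)$ and $D_1(\psi)$ has expected codimension $4$, hence is a finite set; by (\ref{eq:P2}) it consists of exactly eight points (in bijection with the singular points $w_1,\dots,w_8$ of $W$, consistently with $\deg\Sing\chow=8$). The two kernel descriptions realize $\hW$ (via $\ker\psi^{*}\subset{\ft S}^2\sF$) and $Z_S$ (via $\ker\psi\subset L_3\otimes\sO$) as small resolutions of $D$: each is an isomorphism over $D\setminus D_1(\psi)$, where the relevant kernel is a line, and carries a single $\mP^1$ over each of the eight points of $D_1(\psi)$, where the kernel jumps to rank two. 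The local computation at these eight points shows that, under (\ref{eq:P2}), the corank-two degeneration of $\psi$ makes $D$ acquire an ordinary double point at each, and that the two distinct rulings coming from $\ker\psi$ and $\ker\psi^{*}$ give the two small resolutions, whose exceptional loci are the expected disjoint $(-1,-1)$-curves. Therefore $\hW\dashrightarrow Z_S$ over $D$ is the Atiyah flop of these eight curves, and the equivalence $\sD^b(\hW)\simeq\sD^b(Z_S)$ follows from the Bondal--Orlov (and Bridgeland) theorem on derived categories under three-dimensional flops.

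The main obstacle is precisely this local analysis in Part (2): one must verify that the rank-$\le1$ locus of $\psi$ produces genuine ordinary double points of $D$ (and no worse singularity), and that $\ker\psi$ and $\ker\psi^{*}$ genuinely select the two \emph{different} small resolutions, so that the modification is a nontrivial flop rather than an isomorphism. This is exactly where the transversality built into (\ref{eq:P2}) — the eight transverse points of $\Sing\chow\cap P_2^{\perp}$ and the smoothness of $\Gamma=\nS_3\cap P_2$ — is indispensable, and where the abstractly-defined $\hW$ and $Z_S$ of the paper must be matched carefully with the kernel resolutions. The remaining points are routine: the retraction argument making $g|_X$ an isomorphism and the application of the blow-up criterion in Part (1) are both straightforward once $D_1(\phi)=\emptyset$ and the smoothness of $X$ and $Z$ are in hand.
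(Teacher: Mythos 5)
Your Part (1) is essentially the paper's argument in different clothing: the fibrewise kernel of $\phi_{L_4}$ is exactly the intersection $E_s\cap L_4$ of the paper's key dimension count, and the conclusion via the blow-up criterion for a birational morphism of smooth varieties with $\mP^1$-fibres over a smooth codimension-two centre is the same (the paper cites Ando). The only soft spot is that ``genericity forces $D_1(\phi)=\emptyset$'' is not an argument — expected codimension exceeding the dimension proves nothing without transversality — but the correct justification (a two-dimensional $\ker\phi^*_{[\Pi]}$ would force $\mP(\ker\phi^*_{[\Pi]})$ to meet the exceptional conic of $\hchow\to\chow$ in the fibre $\mP^2$, making $X$ singular) is implicit in your parenthetical appeal to the smoothness hypothesis, so this is repairable.

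Part (2), however, contains a genuine error in the step you yourself identify as the crux. The degeneracy locus $D_1(\psi)=\{\rank\psi\le 1\}$ is \emph{not} a set of eight points in bijection with $w_1,\dots,w_8$; indeed the $w_i$ are points of $\mP(V)$ and do not even determine points of $\rG(2,V)$. A line $[l]$ lies in $D_1(\psi)$ iff the quadrics of $P_2$ containing $l$ form a pencil; since $w_1,\dots,w_8$ are the base points of the net $P_2$, this happens precisely for the $\binom{8}{2}=28$ lines $l_{ij}$ joining pairs $w_i,w_j$ (a quadric through $w_i$ and $w_j$ contains $l_{ij}$ iff one further linear condition holds, so the kernel of the restriction map $L_3\to \ft{S}^2(l_{ij})^*$ is two-dimensional there). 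The Thom--Porteous count $c_2^2-c_1c_3$ of $\ft{S}^2\sF^*$ on $\rG(2,4)$ gives $28$, not $8$, and the paper's own remark identifies the images of the flopped curves with the $28$ bitangents of the quartic $\nS_3\cap P_2$. What the eight points $w_i$ actually produce are the eight exceptional planes $E_i\simeq\mP^2$ of $\hW\to W$, whose images $\overline{E}_i\subset\overline{W}$ are two-dimensional; the flopping curves sit over the pairwise intersection points $\overline{E}_i\cap\overline{E}_j=[l_{ij}]$. Beyond the miscount, your assertion that ``the local computation shows'' each such point is an ordinary double point of $\overline{W}$ with the two kernels selecting the two small resolutions is exactly the content that needs proof and is not supplied; the paper circumvents any local analysis by a global route (crepancy from $-K_{\hW}=-K_{Z_S}=L$, then proving $E_i'\to S$ is birational and finite, so $E_i'$ is a degree-two del Pezzo surface on which the flopped curves are $(-1)$-curves, whence normal bundle $\sO_{\mP^1}(-1)^{\oplus 2}$). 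As written, your Part (2) would also contradict the paper's Lemma \ref{prop:Delta2descrip} and the $28$-bitangent remark downstream.
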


\begin{proof}
The second claim of (2) follows immediately from [ibid, Cor.~8.3]
or \cite{BO}. Therefore we have only to show (1) and the first claim of (2).

Let $s\in \mathrm{G}(2,V)$ be any point, and
$F_s$ and $E_s$ the fibers of ${\ft S}^2 \sF$ and $\sE$ at $s$, respectively.
By the exact sequence (\ref{eq:sE0new}) defining $\sE$,
we see that $F_s\subset {\ft S}^2 V$ and $E_s\subset {\ft S}^2 V^*$ 
are mutually orthogonal. Thus we will write $F_s=E^{\perp}_s$.
We recall that
$L_{r+1}$ is the $(r+1)$-dimensional subspace of ${\ft S}^2 V^*$ such that $\mP(L_{r+1})=P_r$.
Note that $s\in \mathrm{G}(2,V)$ is contained in 
the image of $(\Lrho_{\hcoY}\circ \Lpi_{\Zpq})^{-1}(P_r)$ under the morphism
$\Zpq\to \mathrm{G}(2,V)$ if and only if 
$\dim (E_s\cap L_{r+1})\geq 1$, and a similar assertion holds for $\hchow$.

We show the following key equality: 
\begin{equation}
\label{eq:perp}
\dim (E_s^{\perp} \cap L_{r+1}^{\perp})=2-r+\dim (E_s\cap L_{r+1}).
\end{equation}
Indeed, we have 
$\dim (E_s^{\perp} \cap L_{r+1}^{\perp})=\dim {\ft S}^2 V-\dim (E_s+L_{r+1})=
\dim {\ft S}^2 V-\dim E_s-\dim L_{r+1}+\dim (E_s\cap L_{r+1})=2-r+\dim (E_s\cap L_{r+1})$.

\vspace{5pt}

For (1), we set $r=3$.
Since $\dim (E_s^{\perp} \cap L_{4}^{\perp})\geq 0$ for any $s\in \mathrm{G}(2,V)$, we have $\dim (E_s\cap L_{4})\geq 1$ by (\ref{eq:perp}).
This implies that $Z\to \mathrm{G}(2,V)$ is surjective.
Therefore $Z\to \mathrm{G}(2,V)$ is generically finite since
$\dim Z=\dim \mathrm{G}(2,V)$, and moreover,
this is birational since the fiber of $Z\to \mathrm{G}(2,V)$ over $s$
is linear in $\mP(E_s)$, and hence is one point if it is $0$-dimensional.
Note that $Z\to \mathrm{G}(2,V)$ has a positive dimensional fiber
over $s$ if and only if 
$\dim (E_s\cap L_{4})\geq 2$.
By (\ref{eq:perp}),
this is equivalent to that
$\dim (E_s^{\perp} \cap L_{4}^{\perp})\geq 1$, namely,
$s$ is contained in the image of 
$X=\hchow\cap f^{-1}(L^{\perp}_4)$ under the morphism $\hchow\to \mathrm{G}(2,V)$.
We show that $\dim (E_s^{\perp} \cap L_{4}^{\perp})\geq 2$
is impossible.
Indeed, if $\dim (E_s^{\perp} \cap L_{4}^{\perp})\geq 2$,
then
$(E_s^{\perp} \cap L_{4}^{\perp})$
would intersect the $f$-exceptional divisor, 
and $X\subset \chow$ would be singular, a contradiction to the assumption in
(\ref{eq:XYcond}). 
Therefore $X\subset \hchow$ isomorphically mapped by $g$ onto its image (which we also denote by $X$) and any positive dimensional fiber of
$Z\to \mathrm{G}(2,V)$ is $1$-dimensional and then is a line.
Then
we conclude
$Z\to \mathrm{G}(2,V)$ is the blow-up along $X$ 
by \cite[Theorem 2.3]{A}. We have also proved
other assertions of (1).

\vspace{5pt}

For (2), we set $r=2$.
By (\ref{eq:perp}),
$\dim (E_s^{\perp} \cap L_3^{\perp})=\dim (E_s \cap L_3)$.
Therefore 
the images on $\mathrm{G}(2,V)$ of $\hW$ and 
$Z_S$ are equal, which we denote by $\overline{W}$,
and the dimensions of 
the fibers of $\hW\to \overline{W}$ and $Z_S\to \overline{W}$ over a point $s$
are the same. We show the assertion by several steps.

\vspace{5pt}

The arguments in the following steps are more or less standard
in explicit birational geometry. Here we give an outline.
In Steps 1--3, we will
show that $\hW\to \overline{W}$ and $Z_S\to \overline{W}$
are flopping contractions.
In the remaining steps (Steps 4--6), 
we will show 
$Z_S\to \overline{W}$ is actually an
Atiyah's flopping contraction.
Then 
so is $\hW\to \overline{W}$ 
by symmetry of a flop.
Let $E'_i\subset Z_S$ be the strict transform of $E_i$ ($1\leq i\leq 8$).
To show that 
$Z_S\to \overline{W}$ is an
Atiyah's flopping contraction,
a key point 
is showing that the map $Z_S\to S$ induces an isomorphism 
$E'_i\simeq S$. For this, it suffices to show that
the induced morphism $E'_i\simeq S$ is birational (Step 4) and finite (Step 5)
by the Zariski main theorem. 

\vspace{5pt}

\noindent {\bf Step 1.}
$\hW\to \overline{W}$ and $Z_S\to \overline{W}$ are birational and crepant.
\vspace{5pt}

Note that $-K_{\hW}=L$ and $-K_{Z_S}=L$
since $-K_{\hchow}=3H+L$ and 
$-K_{\Zpq}=6M+L$ by the canonical bundle formula of
projective bundle (we follow Convention \ref{conv:chow}).
By standard computation, we see that $(-K_{\hW})^3>0$ and $(-K_{Z_S})^3>0$. 
Therefore
both $\hW\to \overline{W}$ and 
$Z_S\to \overline{W}$ are generically finite and crepant, and 
by a similar argument to the one showing
that $Z\to \mathrm{G}(2,V)$ is birational,
we see that they are birational.

\vspace{5pt}

\noindent {\bf Step 2.}
Any non-trivial fibers of 
$\hW\to \overline{W}$ and $Z_S\to \overline{W}$ are copies of $\mP^1$.

\vspace{5pt}

Indeed, if otherwise, $\mP^2$ would appear as a non-trivial fiber of 
$\hW\to \overline{W}$. We may disprove this situation 
similarly to the argument in the proof of (1) above.

\vspace{5pt}

\noindent {\bf Step 3.}
$\hW\to \overline{W}$ and $Z_S\to \overline{W}$ are nontrivial
morphisms.

\vspace{5pt}

We have only to exhibit
positive dimensional fibers of $\hW\to \overline{W}$.
Note that the image on $W$ of each $f|_{\hW}$-exceptional divisor $E_i$
represents a double points $2w_i$ with some $w_i\in \mP(V)$.
Therefore the image $\overline{E}_i\subset \overline{W}$ of $E_i$
is equal to $\{[l]\in \mathrm{G}(2,V)\mid w_i\in l\}$.
For each distinct $i$ and $j$, $\overline{E}_i$ and $\overline{E}_j$
intersects at one point
$[l_{ij}]$, where $l_{ij}$ is the line joining $w_i$ and $w_j$.
On the other hand, $E_i$ and $E_j$ are disjoint.
Therefore $\hW\to \overline{W}$ has a positive dimensional fiber
over $[l_{ij}]$.

\vspace{5pt}

\noindent {\bf Step 4.}
$E'_i\to S$ is birational. 

\vspace{5pt}

Indeed, for the fiber $q$ 
of $Z_S\to S$ over a general rank four point $s$,
$E'_i\cap q$ represents lines 
on the rank four quadric
belonging to one connected family
and passing through $x_i$.
There is only one such a line, thus $E'_i\cap q$ consists of one point,
which means $E'_i\to S$ is birational.

\vspace{5pt}

\noindent {\bf Step 5.}
$E'_i\to S$ is finite.

\vspace{5pt}

Indeed, if otherwise, then $E'_i$ contains one fiber $q$ of 
$Z_S\to S$ over a point $s\in S$.
Since $q$ parameterizes lines on the quadric $Q$ corresponding to $s$ and
passing through $w_i$, $Q$ must be of rank three, and $w_i$ is the vertex of 
$Q$. By the orthogonality between $W$ and $P_2$,
$Q$ corresponds to a hyperplane section $H_Q\subset \mP({\ft S}^2 V)$
containing $W$. Since $w_i$ is the vertex of $Q$, 
$H_Q$ is tangent to
$v_2(\mP(V))$ at $w_i$ by 
the projective duality between 
$\Hes$ and $v_2(\mP(V))$.
However, by choosing two hyperplane $H_1$ and $H_2$ such that
$W=\chow \cap H_Q \cap H_1 \cap H_2$, we see that
$v_2(\mP(V))\cap H_Q\cap H_1\cap H_2$ is singular at $w_i$,
a contradiction to the assumption (\ref{eq:P2}).

\vspace{5pt}

\noindent {\bf Step 6.}
$Z_S\to \overline{W}$ is an
Atiyah's flopping contraction.

\vspace{5pt}

Since any flopping curve intersects some $E_i$ and 
is contained in no $E_i$'s,
any flopped curve is contained in some $E'_i$.
By Step 3, we see that $E'_i$ contains at least seven flopped curves
corresponding to seven flopping curves over $[l_{ij}]$'s with $j\not =i$.
Since $E'_i$ is a smooth del Pezzo surface of degree two by Steps 4 and 5,
we see that exceptional curves of $\hW\to \overline{W}$ are only $28$
flopping curves over $[l_{ij}]$'s, and the flopped curve over $[l_{ij}]$
is a $(-1)$-curve on $E'_i$. Then we see that its normal bundle on $Z_S$
is $\sO_{\mP^1}(-1)^{\oplus 2}$.  
\end{proof}

\begin{rem}~

\begin{enumerate}
\item
By Proposition \ref{prop:flop} (1), we immediately see that $\rho(Z)=2$.
Thus $Z\to Y$ is a Mori fiber space, and then
$Y$ is $\mQ$-factorial by \cite[Lem.~5-1-5]{KMM}
(see also \cite{E} for another proof of this fact.
We are grateful to Professor I.~Cheltsov for this information).
\item
The image on $S$ of the $28$ flopped curves
are mapped by the double cover $S\to P_2$ to
the famous $28$ bitangent lines of the quartic curve $\nS_3\cap P_2$.
\end{enumerate}
\end{rem}

\vspace{5pt}

\subsection{Derived categories of $W$ and $S$}

We have obtained the equivalence $\sD^b(\hW)\simeq \sD^b(Z_S)$
in Proposition \ref{prop:flop} (2).
We will establish the relation between $\sD^b(W)$ and $\sD^b(S)$
through this equivalence by relating $\sD^b(W)$ and $\sD^b(\hW)$,
and $\sD^b(Z_S)$ and $\sD^b(S)$, respectively.

The relation between 
$\sD^b(W)$ and $\sD^b(\hW)$ is given by 
a categorical resolution 
of $\sD^b({W})$.
Denote by $j_i\colon E_i\hookrightarrow \hW$
the closed embeddings.
We define a categorical resolution $\sD_{W}$ 
of $\sD^b({W})$ similarly to the case of $Y$ as in Theorem \ref{thm:main}.
We choose a dual Lefschetz decomposition of $\sD^b(E_i)$;
\[
\sD^b(E_i)=\langle \sB_1(-2), \sB_0\rangle,
\]
where 
\[
\sB_0=\langle \sO_{E_i}, \sO_{E_i}(1)\rangle,\,
\sB_1=\langle \sO_{E_i}\rangle.
\]
The categorical resolution 
\[
\sD_{W}\subset \sD^b(\hW)
\] 
of $\sD^b(W)$ with respect to this dual Lefschetz decomposition is defined to be 
the left orthogonal to the subcategory
$\langle\{ {j_i}_*\sO_{E_i}(-2)\}_{i=1}^{8}\rangle$. 
Therefore we have the following semi-orthogonal decomposition of 
$\sD^b(\hW)$:
\begin{equation}
\label{eq:DW}
\sD^b(\hW)=\langle \{ {j_i}_*\sO_{E_i}(-2)\}_{i=1}^{8},\sD_W\rangle.
\end{equation}

To establish the relation between $\sD^b(Z_S)$ and $\sD^b(S)$,
we review the results of 
the derived categories of quadric fibrations \cite{Quad}.
Let $\sC\ell_0$ and $\sC\ell_1$ be the sheaves of the even and the odd part of the Clifford algebra on $P_2$, respectively,
associated with the family of quadrics over $P_2$.
By [ibid.], 
$\sD^b(P_2,\sC\ell_0)$ admits the following semi-orthogonal decomposition:
\[
\sD^b(P_2,\sC\ell_0)=\langle \sD^b(w_1),\dots, \sD^b(w_8),\sC\ell_3,\sC\ell_4
\rangle,
\]
where $\sC\ell_3:=\sC\ell_1\otimes \sO_{P_2}(1)$ and
$\sC\ell_4:=\sC\ell_0\otimes \sO_{P_2}(2)$.

The relation between $\sD^b(Z_S)$ and $\sD^b(S)$
is given by the following proposition,
which can be shown by applying the same arguments as the proofs of 
\cite[Thm.~1.1]{Lines} and \cite[Thm.~5.5 and Cor.~5.8]{IK}
to $Z_S\to S$. The arguments are simplified since 
$Z_S\to S$ is a $\mP^1$-bundle. So we omit the proof of it.

\begin{prop}
\label{prop:cliff}
The pull-back functor $\sD^b(S)\to \sD^b(Z_S)$ is fully faithful.
We denote by $\langle \sC_S,\sO_{Z_S}(M)\rangle$
the image of $\sD^b(S)$ by the pull-back functor.
There exist a fully faithful functor $\sD^b(P_2,\sC\ell_0)\to \sD^b(Z_S)$
such that the image $\sS_3$ of $\sC\ell_3$ is isomorphic to
$\sG(M)$, and the image $\sS_4$ of $\sC\ell_4$
is a non-trivial extension of 
$\sO_{Z_S}(L+M)$ by $\sO_{Z_S}(2M)$.
Moreover, $\sD^b(Z_S)$ admits 
the following semi-orthogonal decomposition\,$:$
\begin{equation}
\label{eq:0st}
\sD^b(Z_S)=\langle \sC_S,\sO_{Z_S}(M), \sD^b(w_1),\dots, \sD^b(w_8), \sS_3,\sS_4\rangle,
\end{equation}
where we denote the image of $\sD^b(w_i)$ by the same symbol.
\end{prop}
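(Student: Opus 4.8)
The plan is to exploit the two morphisms carried by $Z_S$: the $\mP^1$-bundle $\pi\colon Z_S\to S$ coming from the choice of a ruling, and the morphism $Z_S\to P_2$ realizing $Z_S$ as the relative variety of lines of the quadric surface fibration $\sQ\to P_2$ associated with the family of quadrics over $P_2$. Because $\Gamma=\nS_3\cap P_2$ is a smooth quartic, $P_2$ avoids the rank-two locus $\nS_2=\Sing\nS_3$, so $\sQ\to P_2$ has only simple degeneration along $\Gamma$; its discriminant double cover is exactly the smooth del Pezzo surface $S\to P_2$, and the even Clifford sheaf $\sC\ell_0$ is an Azumaya algebra over $S$. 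This is precisely the geometry treated in \cite[Thm.~1.1]{Lines} and \cite[Thm.~5.5,~Cor.~5.8]{IK}, and my proof reproduces those arguments, simplified by the fact that $\pi$ is an honest $\mP^1$-bundle and not a degenerating conic bundle.

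First I would produce the left half of (\ref{eq:0st}). Since $S$ is a rational surface, $Z_S\to S$ is the projectivization $\mP(\sV)$ of a rank-two bundle $\sV$ on $S$, so Orlov's projective bundle theorem shows that $\pi^*$ is fully faithful and yields
\[
\sD^b(Z_S)=\langle \pi^*\sD^b(S),\ \pi^*\sD^b(S)\otimes\sO_{Z_S/S}(1)\rangle,
\]
where $\sO_{Z_S/S}(1)$ is a relative hyperplane bundle. This already gives the first assertion. To split off $\sO_{Z_S}(M)$, observe that $\sO_{Z_S}(M)=\pi^*\sO_S(M)$ and that $\sO_S(M)=-K_S$, the anticanonical bundle pulling back $\sO_{P_2}(1)$, which is exceptional since $S$ is rational. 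Hence $\sD^b(S)=\langle \sC_S',\sO_S(M)\rangle$ with $\sC_S':={}^{\perp}\langle\sO_S(M)\rangle$, and applying $\pi^*$ gives $\pi^*\sD^b(S)=\langle \sC_S,\sO_{Z_S}(M)\rangle$ with $\sC_S:=\pi^*\sC_S'$.

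Next I would identify the right half $\pi^*\sD^b(S)\otimes\sO_{Z_S/S}(1)$ with the image of a fully faithful functor from $\sD^b(P_2,\sC\ell_0)$. Since $\sC\ell_0$ is Azumaya over $S$ with center $\sO_S$ and $S$ is rational (so its Brauer class is trivial), one has $\sD^b(P_2,\sC\ell_0)=\sD^b(S,\sC\ell_0)\simeq\sD^b(S)$, and this category maps to the weight-one summand of $\pi$; concretely, following \cite{Lines}, the functor $\sD^b(P_2,\sC\ell_0)\to\sD^b(Z_S)$ is a Fourier--Mukai transform built from the universal line (the spinor construction) on the incidence $Z_S\times_{P_2}\sQ$. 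Feeding the decomposition $\sD^b(P_2,\sC\ell_0)=\langle\sD^b(w_1),\dots,\sD^b(w_8),\sC\ell_3,\sC\ell_4\rangle$ quoted above through this functor produces the remaining blocks of (\ref{eq:0st}), with $\sS_3,\sS_4$ the images of $\sC\ell_3,\sC\ell_4$; semiorthogonality between the two halves, and hence the full decomposition, follows from the projection formula and base change along $\pi$.

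The main work, and the step I expect to be hardest, is the explicit identification of $\sS_3$ and $\sS_4$, following \cite[Cor.~5.8]{IK}. Tracking $\sC\ell_3=\sC\ell_1\otimes\sO_{P_2}(1)$ through the spinor kernel and the relative twist should yield the relative spinor bundle, which one matches with $\sG(M)$ by a fibrewise computation; tracking $\sC\ell_4=\sC\ell_0\otimes\sO_{P_2}(2)$ gives $\sS_4$. To verify that $\sS_4$ is the asserted non-trivial extension $0\to\sO_{Z_S}(2M)\to\sS_4\to\sO_{Z_S}(L+M)\to0$, I would compute $\Ext^1(\sO_{Z_S}(L+M),\sO_{Z_S}(2M))=H^1(Z_S,\sO_{Z_S}(M-L))$ by pushing forward along $\pi$ (on each fibre $\sO_{Z_S}(M-L)$ restricts to $\sO_{\mP^1}(-2)$, so $R^1\pi_*\sO_{Z_S}(M-L)$ is a line bundle on $S$), and then show that the extension class produced by the functor is the non-zero one. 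Pinning down these twists in the polarizations $M$ and $L$ on $Z_S$, together with the non-vanishing of the extension class, is the delicate part; everything else is formal once the $\mP^1$-bundle and Clifford pictures are set up.
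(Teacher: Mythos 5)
Your proposal is correct and takes essentially the same route the paper intends: the paper omits the proof of this proposition, stating only that it follows by applying the arguments of the cited results of Kuznetsov and Ingalls--Kuznetsov to $Z_S\to S$ (simplified because $\pi\colon Z_S\to S$ is an honest $\mP^1$-bundle), which is exactly the Orlov projective-bundle decomposition plus Clifford-algebra/spinor identification that you reconstruct, including the correct $\Ext^1$ computation $H^1(Z_S,\sO_{Z_S}(M-L))$ for the extension defining $\sS_4$. The only minor imprecision is terminological: $\sC\ell_0$ is a sheaf of algebras on $P_2$ whose pushforward description as an Azumaya algebra lives on the discriminant double cover $S$, but this does not affect your argument.
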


We will compare the decompositions
(\ref{eq:DW}) and (\ref{eq:0st}) by the flop equivalence 
$\sD^b(\hW)\simeq \sD^b(Z_S)$ and series of mutations.
Then we obtain
\begin{thm}
\label{thm:main2}
We define the triangulated subcategories $\sC_S$ and $\sC_{W}$
of $\sD^b(S)$ and $\sD_{W}$, respectively 
by the following semi-orthogonal decompositions\,$:$
\begin{equation}
\label{eq:WS}
\begin{cases}
\sD^b(S)=\langle \sC_S, \sO_S(M)\rangle,\\
\sD_{W}=\langle \sO_{\hW}(-H), \sO_{\hW}(-L),
\sF, \sC_{W}\rangle,
\end{cases}
\end{equation}
where we follow Convention $\ref{conv:chow}$, and
we denote by $\sF$ the pull-back of $\sF$ by the morphism $g|_{\hW}$.
Then there exists an equivalence $\sC_{W}\simeq \sC_S$. 
\end{thm}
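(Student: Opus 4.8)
The plan is to use the flop equivalence $\Phi\colon \sD^b(\hW)\xrightarrow{\sim}\sD^b(Z_S)$ of Proposition \ref{prop:flop} (2) to transport the semi-orthogonal decomposition of $\sD^b(\hW)$ coming from (\ref{eq:DW}) and the definition of $\sC_W$ in (\ref{eq:WS}), and then to transform it by a sequence of mutations into the decomposition (\ref{eq:0st}) of $\sD^b(Z_S)$. Concretely, refining (\ref{eq:DW}) by (\ref{eq:WS}) gives
\[
\sD^b(\hW)=\big\langle \{{j_i}_*\sO_{E_i}(-2)\}_{i=1}^8,\ \sO_{\hW}(-H),\ \sO_{\hW}(-L),\ \sF,\ \sC_W\big\rangle,
\]
while (\ref{eq:0st}), together with the image $\langle \sC_S,\sO_{Z_S}(M)\rangle$ of the pull-back of $\sD^b(S)=\langle \sC_S,\sO_S(M)\rangle$, reads
\[
\sD^b(Z_S)=\big\langle \sC_S,\ \sO_{Z_S}(M),\ \sD^b(w_1),\dots,\sD^b(w_8),\ \sS_3,\ \sS_4\big\rangle.
\]
Both are full decompositions into a surface-sized core together with eleven exceptional pieces (counting each $\sD^b(w_i)\simeq \sD^b(\mathrm{pt})$ as one), so the goal is to match the eleven exceptional pieces under $\Phi$ and a chain of mutations, after which the two cores $\Phi(\sC_W)$ and $\sC_S$ must coincide.

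First I would record the behaviour of $\Phi$ and of mutations on the explicit generators. Since the flop is an isomorphism away from the mutually disjoint flopping curves over the points $[l_{ij}]$, the images of the line bundles $\sO_{\hW}(-H),\sO_{\hW}(-L),\sF$ are again assembled from line bundles, which I would pin down using $-K_{\hW}=-K_{Z_S}=L$, the canonical bundle formulas for $\hchow=\mP({\ft S}^2\sF)$ and $\Zpq=\mP(\sE)$, and the relation $2(H-L)=\sum_{i=1}^8 E_i$ of (\ref{eq:2HL}); the expected outcome is that, after the appropriate mutations, they occupy the slots of $\sO_{Z_S}(M),\sS_3,\sS_4$ (recall $\sS_3\simeq \sG(M)$ and $\sS_4$ is the extension of $\sO_{Z_S}(L+M)$ by $\sO_{Z_S}(2M)$). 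For the eight divisorial objects ${j_i}_*\sO_{E_i}(-2)$, supported on the exceptional $\mP^2$'s $E_i$ which become the del Pezzo surfaces $E'_i\simeq S$ after the flop (Steps 4--6 of Proposition \ref{prop:flop}), mutation past the line bundles is expected to collapse their support and produce the point-objects $\sD^b(w_i)$; tracking this collapse is the crux of the computation.

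With these images in hand, I would carry out the mutations explicitly: slide the eight objects $\Phi({j_i}_*\sO_{E_i}(-2))$ into the block of the $\sD^b(w_i)$, and move the three line-bundle generators to the right into the positions of $\sO_{Z_S}(M),\sS_3,\sS_4$. Since a semi-orthogonal decomposition is determined by its ordered components, once eleven of the twelve pieces are matched with those of (\ref{eq:0st}) the remaining leftmost piece of the mutated collection is forced to be $\sC_S$. As $\Phi$ and every mutation functor are equivalences onto their images, composing them yields the desired equivalence $\sC_W\simeq \sC_S$.

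The main obstacle is exactly the explicit determination of $\Phi$ and of the intermediate mutations on the non-locally-free generators --- the divisorial objects ${j_i}_*\sO_{E_i}(-2)$ and the Clifford-type objects $\sS_3,\sS_4$ --- and the verification that the prescribed mutations reproduce the generators of (\ref{eq:0st}) on the nose rather than up to twists or non-split extensions. This requires working with the concrete realization of $\Phi$ through the fibre product $\hW\times_{\overline{W}}Z_S$ and a careful bookkeeping of intersection numbers on $\hW$ and $Z_S$. Once these images and the intermediate collections are pinned down, the identification of the two cores is formal.
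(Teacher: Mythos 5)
Your overall strategy --- transport the decomposition of $\sD^b(\hW)$ to $\sD^b(Z_S)$ through the flop equivalence of Proposition \ref{prop:flop} (2) and then mutate until it can be compared with (\ref{eq:0st}) --- is the same as the paper's. But the matching you propose is not the one the paper carries out, and the step you yourself flag as ``the crux'' --- showing that the eight divisorial objects $\Phi({j_i}_*\sO_{E_i}(-2))$, whose images are supported on the two-dimensional del Pezzo surfaces $E_i'\simeq S$, collapse under mutation onto the eight point-like components $\sD^b(w_i)$ of the Clifford decomposition --- is left entirely unexecuted, with no indication of how it would be done. The paper never performs this identification: it first mutates $\sD^b(w_1),\dots,\sD^b(w_8),\sS_3,\sS_4$ in (\ref{eq:0st}) to the left end to obtain (\ref{eq:1st}), and separately mutates the transported decomposition into (\ref{eq:mutfin}), so that in both decompositions the eight-object block sits at the extreme left and only the components $\sS_3(-L),\sS_4(-L),\ast,\sO_{Z_S}(M)$ need to be aligned. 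This is precisely how the proof sidesteps the support-collapsing problem you would otherwise have to solve.

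The computations you defer are also where most of the work lies, and they are not routine: the image of $\sO_{\hW}(-H)$ under the flop (Lemma \ref{lem:ZSdec}, computed via the common blow-up resolving the flop), the relation $M=-H+3L$ on $Z_S$ (Lemma \ref{lem:HML}, which needs $E_i'\simeq S$ and the intersection numbers with the flopped curves), and above all the identification (\ref{eq:left}) of the left mutation $\mathbb{L}_{\sO_{Z_S}(-L)}\sO_{Z_S}(-H+L)$ with $\sS_4(H-5L)$, which rests on the cohomology computation (\ref{eq:H-F}) showing that $H^{\bullet}(Z_S,\sO_{Z_S}(H-\sum_i E_i'))$ is one-dimensional in degree one and zero otherwise, so that the mutation triangle reproduces the \emph{nontrivial} extension defining $\sS_4$ rather than a direct sum. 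As written, your proposal is a plausible road map but not a proof: the decisive steps are either missing or aimed at a harder target than the argument actually requires.
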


\noindent {\bf Proof of Theorem \ref{thm:main2}.}
Recall that $-K_{Z_S}=L$ (see Step 1 of the proof of Proposition \ref{prop:flop} (2)).
Then, by mutating 
\[
\sD^b(w_1),\dots, \sD^b(w_8), \sS_3,\sS_4
\] 
in (\ref{eq:0st}) to the left end, we obtain
\begin{equation}
\label{eq:1st}
\sD^b(Z_S)=\langle \sD^b(w_1),\dots, \sD^b(w_8), \sS_3(-L),\sS_4(-L),\sC_S,\sO_{Z_S}(M)\rangle,
\end{equation}
where we denote the image of $\sD^b(w_i)$ by the same symbol again.

Now we will construct another semi-orthogonal decomposition (\ref{eq:mutfin}) of $\sD^b(Z_S)$.
(\ref{eq:DW}) and
(\ref{eq:WS}) immediately give the following semi-orthogonal decomposition
of $\sD^b(\hW)$: 
\[
\sD^b(\hW)=\langle \{\sO_{E_i}(-2)\}_{1\leq i\leq 8}, 
\sO_{\hW}(-H), \sO_{\hW}(-L), \sF, \sC_{W}\rangle.
\]
By the flop equivalence $\sD^b(\hW)\simeq \sD^b(Z_S)$, 
we transform this decomposition to $\sD^b(Z_S)$ as follows:
\begin{lem}
\label{lem:ZSdec}
\begin{equation}
\label{eq:ZS}
\sD^b(Z_S)=\langle \delta, 
\sO_{Z_S}(-H), \sO_{Z_S}(-L), \sF, \sC_{Z_S}\rangle,
\end{equation}
where we denote by $\delta$ and $\sC_{Z_S}$ the images of 
the categories $\{\sO_{E_i}(-2)\}_{1\leq i\leq 8}$ and
$\sC_W$, respectively, and by $H$ the strict transform of $H$ on $Z_S$.
\end{lem}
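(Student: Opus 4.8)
\textbf{Proof plan for Lemma \ref{lem:ZSdec}.}
The plan is to present the flop equivalence $\Phi\colon \sD^b(\hW)\to \sD^b(Z_S)$ of Proposition \ref{prop:flop} (2) as a Fourier--Mukai functor with kernel the structure sheaf $\sO_{\hW\times_{\overline{W}}Z_S}$ of the fibre product over the common base $\overline{W}$ of the two contractions; for an Atiyah flop this is the content of \cite{BO}. Since $\Phi$ is an equivalence of triangulated categories, it automatically carries the semi-orthogonal decomposition of $\sD^b(\hW)$ coming from (\ref{eq:DW}) and (\ref{eq:WS}) to a semi-orthogonal decomposition of $\sD^b(Z_S)$ of the same shape; here the subcategories $\{\sO_{E_i}(-2)\}_{1\le i\le 8}$ and $\sC_W$ are sent to $\delta$ and $\sC_{Z_S}$ by definition. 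Thus the whole content of the lemma is to identify the images of the three remaining objects $\sO_{\hW}(-H)$, $\sO_{\hW}(-L)$ and $\sF$.

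Two of these are formal. Because its kernel is supported on a scheme over $\overline{W}$, the functor $\Phi$ is $\overline{W}$-linear: writing $h\colon \hW\to \overline{W}$ and $h'\colon Z_S\to \overline{W}$ for the contractions, the projection formula gives $\Phi(\sA\otimes h^{*}\sM)\simeq \Phi(\sA)\otimes (h')^{*}\sM$ for $\sM\in \sD^b(\overline{W})$, and in particular $\Phi(\sO_{\hW})\simeq \sO_{Z_S}$. Now $L$ and $\sF$ are pulled back from $\rG(2,V)$ through the factorisations $\hW\to \overline{W}\hookrightarrow \rG(2,V)$ and $Z_S\to \overline{W}\hookrightarrow \rG(2,V)$, hence are pulled back from $\overline{W}$; applying $\overline{W}$-linearity to $\sA=\sO_{\hW}$ therefore yields $\Phi(\sO_{\hW}(-L))\simeq \sO_{Z_S}(-L)$ and $\Phi(\sF)\simeq \sF$ at once.

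The remaining object $\sO_{\hW}(-H)$ is not pulled back from $\overline{W}$, and here I would first record the degree of $-H$ on a flopping curve. The flopping curves are the fibres $C_{ij}$ of $\hW\to \overline{W}$ over the points $[l_{ij}]$ singled out in Step 3 of the proof of Proposition \ref{prop:flop} (2); since $E_i$ and $E_j$ are disjoint on $\hW$ while $C_{ij}$ joins them, the configuration forces $E_i\cdot C_{ij}=E_j\cdot C_{ij}=1$ and $E_k\cdot C_{ij}=0$ for $k\ne i,j$. Together with $L\cdot C_{ij}=0$ and the relation $2(H-L)=\sum_{i=1}^{8}E_i$ of (\ref{eq:2HL}), this gives $H\cdot C_{ij}=1$, so $\sO_{\hW}(-H)$ restricts to $\sO_{\mP^1}(-1)$ on every flopping curve.

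Finally I would identify $\Phi(\sO_{\hW}(-H))$ with the strict transform $\sO_{Z_S}(-H)$. Away from the flopped curves $\Phi$ is induced by the isomorphism of the flop, so $\Phi(\sO_{\hW}(-H))$ already coincides there with the strict-transform line bundle; the only issue is a possible correction supported on the flopped curves. Restricting the Fourier--Mukai computation to the exceptional $\mP^1\times \mP^1\subset \hW\times_{\overline{W}}Z_S$ lying over $[l_{ij}]$, the pullback of $\sO_{\hW}(-H)$ is $\sO(-1,0)$, and pushing forward along the flopping factor gives $R\Gamma(\mP^1,\sO_{\mP^1}(-1))=0$. Hence no such correction appears and $\Phi(\sO_{\hW}(-H))$ is the honest line bundle $\sO_{Z_S}(-H)$, completing the identification of (\ref{eq:ZS}). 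This vanishing step --- equivalently, the fact that $-H$ meets each flopping curve in the degree $-1$ for which an Atiyah flop sends a line bundle to its strict transform without shift --- is the crux; the rest is either formal ($\overline{W}$-linearity) or a matter of definition.
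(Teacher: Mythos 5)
Your proposal is correct and follows essentially the same route as the paper: the images of $\sO_{\hW}(-L)$ and $\sF$ are identified because they are pulled back from $\overline{W}$, and the image of $\sO_{\hW}(-H)$ is computed by pushing through the resolution of the flop using $H\cdot(\text{flopping curve})=1$, your vanishing $R\Gamma(\mP^1,\sO_{\mP^1}(-1))=0$ on the exceptional quadrics being equivalent to the paper's statement that $R^{\bullet}q_*\sO_{\widehat{W}'}(\sum G_i)$ is $\sO_{Z_S}$ concentrated in degree $0$. The only addition is that you derive $H\cdot C_{ij}=1$ from $2(H-L)=\sum E_i$ rather than asserting it, which is a welcome (if slightly informal, at the step $E_i\cdot C_{ij}=1$) supplement.
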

\begin{proof}
We have only to verify
the images of
the sheaves $\sO_{\hW}(-H), \sO_{\hW}(-L), \sF$ are
the sheaves $\sO_{Z_S}(-H), \sO_{Z_S}(-L), \sF$, respectively.
The assertion is clear for $\sO_{\hW}(-L), \sF$ since they are the pull-backs
of the corresponding sheaves on $\overline{W}\subset \mathrm{G}(2,V)$.
Let $p\colon \widehat{W}'\to \widehat{W}$ be the blow-up
along the flopping curves and
$q\colon \widehat{W}'\to Z_S$ be the blow-up
along the flopped curves.
Let $l_i\subset \hW$ ($1\leq i\leq 28$)
be the flopping curves and $G_i$ the $p$-exceptional divisors over $l_i$.
Then we have $p^*H=q^*H-\sum_{i=1}^{28} G_i$ since $H\cdot l_i=1$.
Therefore we have
\[
R^{\bullet}q_*p^*\sO_{\hW}(-H)=
\sO_{Z_S}(-H)\otimes R^{\bullet}q_*\sO_{\widehat{W}'}(\sum _{i=1}^{28} G_i).
\]
This implies the assertion for $\sO_{\hW}(-H)$ since
$q_*\sO_{\widehat{W}'}(\sum _{i=1}^{28} G_i)\simeq \sO_{Z_S}$ and
\[
\text{$R^{\bullet}q_*\sO_{\widehat{W}'}(\sum _{i=1}^{28} G_i)=0$ for
$\bullet>0$}
\] by a standard fact on the blow-up.
\end{proof}

We will perform a series of mutations
starting from (\ref{eq:ZS}) (cf.~\cite[\S 5.3]{IK}).

\begin{lem}
\label{lem:HML}
It holds that $M=-H+3L$ on $\hW$ and $Z_S$.
\end{lem}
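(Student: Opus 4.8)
The plan is to prove the relation on $Z_S$, where $Z_S\to S$ is a $\mP^1$-bundle, and then transport it to $\hW$ through the flop. Since $\hW\dashrightarrow Z_S$ is an isomorphism in codimension one, it induces a group isomorphism $\Pic(\hW)\simeq\Pic(Z_S)$ sending each of $H$, $L$ and $E_i$ to its strict transform (these are exactly the classes $H$, $L$, $E_i'$ on $Z_S$ in the sense of Lemma \ref{lem:ZSdec}), so a linear relation among $H,L,M$ on one side holds on the other. Thus it suffices to show that $D:=M+H-3L$ vanishes in $\Pic(Z_S)$.

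First I would show $D$ is pulled back from $S$. Writing $f_S$ for the class of a fibre of $Z_S\to S$, we have $M\cdot f_S=0$ (as $M$ is the pull-back of $\sO_{\mP^2}(1)$ along $Z_S\to S\to P_2$) and $L\cdot f_S=2$ (since $L=-K_{Z_S}$ by Step 1 of Proposition \ref{prop:flop} and $-K\cdot f_S=2$ on a $\mP^1$-bundle). For $H\cdot f_S$ I would transport the relation (\ref{eq:2HL}), i.e.\ $2(H-L)=\sum_{i=1}^{8}E_i'$ on $Z_S$, and use that each $E_i'\simeq S$ is a section of $Z_S\to S$ (Steps 4--5 of Proposition \ref{prop:flop}), so $E_i'\cdot f_S=1$; intersecting gives $2(H\cdot f_S-2)=8$, hence $H\cdot f_S=6$ and $D\cdot f_S=0$. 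Therefore $D=\Lpi^{*}D_S$ for some $D_S\in\Pic(S)$.

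It then remains to identify $D_S$, which I would do by restricting to one section $E_i'\simeq S$. Here $M|_{E_i'}=-K_S$ and $L|_{E_i'}$ is the pull-back $h$ of the Pl\"ucker class on the linear plane $\overline E_i\simeq\mP^2\subset\rG(2,V)$. Viewing $E_i'\to\overline E_i$ as the blow-down of the seven flopped curves over the points $[l_{ij}]$ $(j\ne i)$, so that $E_i'\simeq\mathrm{Bl}_{7}\mP^2$ with exceptional curves $e_j$, one has $M|_{E_i'}=-K_{E_i'}=3h-\sum_j e_j$. Restricting $2(H-L)=\sum_k E_k'$ to $E_i'$, together with $E_i'|_{E_i'}=L|_{E_i'}+K_{E_i'}=-2h+\sum_j e_j$ and $E_k'|_{E_i'}=e_k$ for $k\ne i$ (distinct sections meeting along the flopped curve over $[l_{ik}]$), yields $H|_{E_i'}=\sum_j e_j$. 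Hence $D|_{E_i'}=(3h-\sum_j e_j)+\sum_j e_j-3h=0$, so $D_S=0$ and $D=0$.

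The main obstacle is control of the transplanted class $H$ on $Z_S$: unlike $L$ and $M$ it is not intrinsic to the fibration $Z_S\to S$, and both $H\cdot f_S$ and $H|_{E_i'}$ rest on the transported form of (\ref{eq:2HL}) and on the fine structure of the Atiyah flop of Proposition \ref{prop:flop} --- namely that the $E_i'$ are sections isomorphic to $S$, that $E_i'\to\overline E_i$ contracts exactly the seven flopped $(-1)$-curves, and that distinct $E_i',E_k'$ meet along a single flopped curve. The remaining inputs ($M=\Lpi^{*}(-K_S)$ and $L=-K_{Z_S}$) are routine, so once this flop geometry is in hand the computation closes.
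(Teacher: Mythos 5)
Your argument is correct and follows essentially the same route as the paper: reduce to $Z_S$, use the fibre intersection numbers $L\cdot f_S=2$, $E_i'\cdot f_S=1$ (hence $H\cdot f_S=6$) to see that $M+H-3L$ is pulled back from $S$, and then identify it by restricting to a section $E_i'\simeq S$. The only cosmetic difference is that the paper computes $H|_{E_i'}=\sum_j e_j$ directly from $H|_{E_i}=\sO_{\mP^2}$ on $\hW$ and $H\cdot(\text{flopped curve})=-1$, whereas you obtain it by restricting $2(H-L)=\sum_k E_k'$ to $E_i'$ (which additionally needs the multiplicity-one statement $E_k'|_{E_i'}=e_k$, easily checked since $E_k'\cdot e_k=-E_k\cdot(\text{flopping curve})=-1=e_k^2$).
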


\begin{proof}
It suffices to show the assertion on $Z_S$ since
$\hW$ and $Z_S$ are isomorphic in codimension one.
Transforming (\ref{eq:2HL}) on $Z_S$,
we obtain
\begin{equation}
\label{eq:HLF}
2(H-L)=\sum_{i=1}^{8} E'_i,
\end{equation}
where $E'_i$ is the strict transform of $E_i$.
Let $q$ be a general fiber of $Z_S\to S$. Then 
$L\cdot q=-K_{Z_S}\cdot q=2$ since
$Z_S\to S$ is a $\mP^1$-bundle, and
$E'_i\cdot q=1$ for any $i$ since $E'_i$ is a section of $Z_S\to S$
(Steps 4 and 5 in the proof of Proposition \ref{prop:flop} (2)).
Thus, by (\ref{eq:HLF}),
we have $H\cdot q=6$, and then
$-H+3L$ is the pull-back of a divisor $D$ on $S$.
Since $E'_i\simeq S$,
we have $(-H+3L)|_{E'_i}=D$.
Note that the flop induces the blow-up $E'_i\to E_i\simeq \mP^2$ at seven points.
We denote by $l_1,\dots, l_7$ the exceptional curves, which are also flopped curves on $Z_S$. We also denote by $l$ the pull-back of a line on $\mP^2$.
Then $H\cdot l_i=-1$ and $L\cdot l_i=0$.
We also note that, on $\hW$,
$H|_{E_i}=\sO_{\mP^2}$ and $L|_{E_i}=\sO_{\mP^2}(1)$.
Therefore, on $Z_S$, we have
$H|_{E_i}=\sum_{i=1}^{7} l_i$ and 
$L|_{E_i}=l$.
Consequently,
we have $D=3l-\sum_{i=1}^{7} l_i=-K_S=M$.
\end{proof}

For simplicity of notation, we denote the images of $\delta$ and $\sC_{Z_S}$
by the same symbols in the sequel.
We consider the following sequence of mutations:

\begin{itemize}
\item
We mutate $\sF$ to the left of $\sO_{Z_S}(-L)$. Then we have
\[
\sD^b(Z_S)=\langle \delta, \sO_{Z_S}(-H),
\sS_3(H-5L), \sO_{Z_S}(-L), \sC_{Z_S}\rangle,
\]
where we note $\sG^*(-L)=\sS_3(H-5L)$ by Proposition \ref{prop:cliff}
and Lemma \ref{lem:HML}.
\item
We mutate the block $\delta$ to the right of
$\sO_{Z_S}(-H)$. Then we have
\[
\sD^b(Z_S)=\langle \sO_{Z_S}(-H), \delta,
\sS_3(H-5L), \sO_{Z_S}(-L), \sC_{Z_S}\rangle.
\]
\item
We mutate $\sO_{Z_S}(-H)$ to the right end. Then, by $-K_{Z_S}=L$, we have
\[
\sD^b(Z_S)=\langle \delta,
\sS_3(H-5L), \sO_{Z_S}(-L), \sC_{Z_S}, \sO_{Z_S}(-H+L)\rangle.
\]
\item
We mutate $\sC_{Z_S}$ to the right of $\sO_{Z_S}(-H+L)$. Then we have
\[
\sD^b(Z_S)=\langle \delta,
\sS_3(H-5L), \sO_{Z_S}(-L), \sO_{Z_S}(-H+L),\sC_{Z_S}\rangle.
\]
\item
We mutate $\sO_{Z_S}(-H+L)$ to the left of 
$\sO_{Z_S}(-L)$. 
We will show
\begin{equation}
\label{eq:left}
\mathbb{L}_{\sO_{Z_S}(-L)} \sO_{Z_S}(-H+L)=\sS_4(H-5L).
\end{equation}
Since
$\sS_4(H-5L)$ is a nontrivial extension 
of $\sO_{Z_S}(-L)$ by $\sO_{Z_S}(-H+L)$ \cite[Cor.~3.3]{Lines},
it suffices to show
\begin{equation}
\label{eq:H-F}
\text{$H^{\bullet}(Z_S, \sO_{Z_S}(H-\sum_{i=1}^{8} E'_i))=0$ for $\bullet\not =1$
and $\simeq \mC$ for $\bullet=1$}
\end{equation}
since $-H+2L=H-\sum_{i=1}^{8} E'_i$ by (\ref{eq:HLF}).
Consider the exact sequence;
\[
0\to \sO_{Z_S}(H-\sum_{i=1}^{8} E'_i)\to
\sO_{Z_S}(H)\to \oplus_{i=1}^{8} \sO_{E'_i}\to 0.
\]
By a similar argument to the proof of Lemma \ref{lem:ZSdec},
we have
$H^{\bullet}(Z_S,\sO_{Z_S}(H))\simeq H^{\bullet}(\hW,\sO_{\hW}(H))$,
where the latter is zero for $\bullet>0$, and is $7$-dimensional for $\bullet=0$. 
We also see that
$H^0(Z_S, \sO_{Z_S}(H-\sum_{i=1}^{8} E'_i))=0$.
Indeed, it suffices to show 
$H^0(\hW, \sO_{\hW}(H-\sum_{i=1}^{8} E_i))=0$ since $Z_S$ and $\hW$ are
isomorphic in codimension one.
If $H^0(\hW, \sO_{\hW}(H-\sum_{i=1}^{8} E_i))\not =0$,
then there exists a hyperplane in $P_2^{\perp}\simeq \mP^6$
passing through $w_1,\dots, w_8$, which is a contradiction since
they are linearly independent.

Therefore, since 
$H^0(E'_i, \sO_{E'_i})\simeq \mC$ and $H^{\bullet}(E'_i, \sO_{E'_i})=0$ for
$\bullet>0$, we obtain (\ref{eq:H-F}).
Thus we have shown (\ref{eq:left}) and we obtain
\[
\sD^b(Z_S)=\langle \delta,
\sS_3(H-5L), \sS_4(H-5L), \sO_{Z_S}(-L), \sC_{Z_S}\rangle.
\]
\item
We mutate $\sC_{Z_S}$ to the left of $\sO_{Z_S}(-L)$. Then we have
\[
\sD^b(Z_S)=\langle \delta,
\sS_3(H-5L), \sS_4(H-5L), \sC_{Z_S}, \sO_{Z_S}(-L)\rangle.
\]
\item
Finally, we twist $-H+4L$. Then, by Lemma \ref{lem:HML}, we obtain
\begin{equation}
\label{eq:mutfin}
\sD^b(Z_S)=\langle \delta,
\sS_3(-L), \sS_4(-L), \sC_{Z_S}, \sO_{Z_S}(M)\rangle.
\end{equation}

\end{itemize}  

Now, by comparing (\ref{eq:1st}) and (\ref{eq:mutfin}),
we obtain an equivalence $\sC_W\simeq \sC_{Z_S}\simeq \sC_S$ as desired.
We have finished our proof of Theorem \ref{thm:main2}.
$\hfill\square$

\section{{\bf Birational geometry of $\hcoY$ and $\Zpq$}}
\label{BirYZ1}

\subsection{Birational geometry of $\hcoY$}

We quickly review the birational geometry of $\hcoY$
obtained in \cite[\S 4]{Geom} with slightly different notation.

Let $\widehat{\hcoY}:=\mathrm{G}(3,\wedge^2 V)$
and $\Prt_{\rho},\, \Prt_{\sigma}\subset \widehat{\hcoY}$
the smooth subvarieties parameterizing $\rho$-planes and $\sigma$-planes
respectively in $\rG(2,V)\subset \mP(\wedge^2 V)$
(see \cite[\S 4.1]{Geom} for the definitions of 
$\rho$-planes and $\sigma$-planes).
In \cite[\S 4.1, \S 4.5]{Geom},
we have shown that $\Prt_{\rho}\simeq \mP(V)$ and $\Prt_{\sigma}\simeq \mP(V^*)$. 
We denote by $\hcoY_0$
the Hilbert scheme of conics in $\rG(2,V)$.

\begin{thm}
There is a commutative diagram of birational maps as follow\,$:$
\[
\xymatrix{&   \hcoY_0\ar[d]\\
& \widetilde{\hcoY}\ar[dl]_{\Lp_{\widetilde{\hcoY}}}\ar[dr]^{\Lrho_{\widetilde{\hcoY}}} & \\
 \widehat{\hcoY} & &\hcoY,}
\]
where
\begin{itemize}
\item
$\Lp_{\widetilde{\hcoY}}\colon \widetilde{\hcoY}\to \widehat{\hcoY}$
is the blow-up along $\Prt_{\rho}$, for which we denote by $F_{\rho}$ the exceptional divisor,
\item
$\Lrho_{\widetilde{\hcoY}}\colon \widetilde{\hcoY}\to \hcoY$ is an extremal divisorial contraction, which is described in detail in \cite[\S 4.6]{Geom},
\item
$\hcoY_0\to \widetilde{\hcoY}$ is the blow-up 
along the strict transform of $\Prt_{\sigma}$.
\end{itemize}
\end{thm}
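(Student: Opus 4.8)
The plan is to treat the whole diagram as a resolution of a single birational map, namely the map $\hcoY \dashrightarrow \widehat{\hcoY} = \mathrm{G}(3,\wedge^2 V)$ defined by conics in the Plücker quadric $\mathrm{G}(2,V)\subset \mP(\wedge^2 V)=\mP^5$, and to read off all three morphisms from the geometry of conics in $\mathrm{G}(2,V)$. The detailed local computations are those of \cite[\S 4]{Geom}; I would mainly translate them into the present notation.

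First I would pin down the correspondence. A general point of $\hcoY$ lies over a rank-four quadric $Q\subset \mP(V)$ and records one of its two rulings; the lines of that ruling form a $\mP^1\subset \mathrm{G}(2,V)$ embedded as a smooth conic $C$, and its linear span $\langle C\rangle\in \mathrm{G}(3,\wedge^2 V)$ defines the rational map $\hcoY \dashrightarrow \widehat{\hcoY}$. Conversely a general $\mP(U)\in\widehat{\hcoY}$ is not contained in $\mathrm{G}(2,V)$, hence meets it in a genuine conic, whose lines sweep out a smooth quadric surface together with a distinguished ruling; this recovers a point of $\hcoY$. I would make this explicit on the open loci where $Q$ has rank four and $\mP(U)\not\subset \mathrm{G}(2,V)$, and locate where it breaks down.

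The common resolution is the Hilbert scheme of conics $\hcoY_0$, carrying the span morphism $\hcoY_0\to\widehat{\hcoY}$ sending a conic to its plane. Its fibres are the crux: if $\mP(U)\not\subset\mathrm{G}(2,V)$ then $\mP(U)\cap \mathrm{G}(2,V)$ is already a conic, so it is the unique conic of $\mathrm{G}(2,V)$ with span $\mP(U)$ and the fibre is a point; but over a plane contained in $\mathrm{G}(2,V)$ every conic of that plane is a conic of $\mathrm{G}(2,V)$, so the fibre jumps to the $\mP^5$ of conics in $\mP^2$. The planes contained in $\mathrm{G}(2,V)$ are exactly the two families $\Prt_\rho\simeq\mP(V)$ and $\Prt_\sigma\simeq\mP(V^*)$, each of codimension $6=9-3$ in $\widehat{\hcoY}$, so each jump $\mP^5$ has the dimension of the projectivized normal bundle of a blow-up. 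I would then show that $\hcoY_0\to\widehat{\hcoY}$ factors first through the blow-up $\Lp_{\widetilde{\hcoY}}\colon\widetilde{\hcoY}\to\widehat{\hcoY}$ along $\Prt_\rho$, with exceptional divisor $F_\rho$, and that the residual exceptional locus lies over the strict transform of $\Prt_\sigma$, so that $\hcoY_0\to\widetilde{\hcoY}$ is the blow-up of that (smooth) centre. To certify that these maps are genuine blow-ups and not merely birational contractions with $\mP^5$-fibres, I would compute the normal bundles of the two plane families and invoke a blow-up recognition criterion of the same kind already used in Proposition \ref{prop:flop}, namely \cite[Theorem 2.3]{A}.

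For the last arrow I would identify $\Lrho_{\widetilde{\hcoY}}\colon\widetilde{\hcoY}\to\hcoY$ as the contraction of the divisor lying over $\Sing\hcoY$, i.e. over the rank $\le 3$ locus, and verify that it is extremal and divisorial with the structure recorded in \cite[\S 4.6]{Geom}; here the Springer-type resolution $\UU=\widetilde{\nS}_4\to\nS_4$ and the determinantal description of $\hcoY$ from \cite{Geom} reduce everything to the linear algebra of symmetric $4\times 4$ matrices and their rulings. Commutativity is then automatic, since every arrow is defined through the same universal conic over $\mathrm{G}(2,V)$. The main obstacle is the local analysis along $\Prt_\rho$ and $\Prt_\sigma$: one must show that the span morphism degenerates as two successive smooth blow-ups in the stated order — in particular that the strict transform of $\Prt_\sigma$ remains smooth after blowing up $\Prt_\rho$ — and that $\Lrho_{\widetilde{\hcoY}}$ is a single extremal divisorial contraction rather than something reducible. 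I expect both to follow from the explicit charts and normal-bundle data of \cite[\S 4]{Geom}, which is why I would lean on those computations rather than reproduce them.
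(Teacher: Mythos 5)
The first thing to note is that the paper does not actually prove this theorem: it is stated as a review of results from the companion paper [Geom, \S 4], with the structure of $\Lrho_{\widetilde{\hcoY}}$ explicitly deferred to [Geom, \S 4.6]. So there is no in-paper argument to match yours against; what can be assessed is whether your reconstruction is geometrically sound. Its skeleton is right and consistent with everything the paper uses later: resolve the span map $\hcoY\dashrightarrow\widehat{\hcoY}=\rG(3,\wedge^2 V)$ by the Hilbert scheme of conics $\hcoY_0$, observe that the fibre of the span morphism is a point unless the plane lies in $\rG(2,V)$, in which case it jumps to the $\mP^5$ of conics in that plane, and note that $\Prt_\rho\simeq\mP(V)$ and $\Prt_\sigma\simeq\mP(V^*)$ each have codimension $6$ in $\widehat{\hcoY}$, matching the projectivized normal bundle of a blow-up. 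Like the paper, you ultimately lean on the local computations of [Geom, \S 4], which is fair.

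Two concrete points in the parts you do sketch need repair. First, you describe $\Lrho_{\widetilde{\hcoY}}$ as contracting the divisor lying over ``$\Sing\hcoY$, i.e.\ over the rank $\le 3$ locus.'' But $\hcoY\to\mP({\ft S}^2 V^*)$ is branched along $\nS_3$ and is singular precisely over $\Sing\nS_3=\nS_2$, so $\Sing\hcoY$ sits over the rank $\le 2$ locus; the preimage of $\nS_3$ in $\hcoY$ is the (divisorial) ramification locus and is not contracted, whereas the exceptional divisor of $\Lrho_{\widetilde{\hcoY}}$ maps onto the six-dimensional locus over $\nS_2$ with $\mP^1\times\mP^1$ fibres --- exactly what restricts to $\sum_i F_i$ on $\widetilde{Y}$. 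Getting the centre of this contraction right is part of the content of the theorem. Second, the blow-up recognition criterion you invoke, \cite[Theorem 2.3]{A}, is the one used in Proposition \ref{prop:flop} (1) for a contraction whose nontrivial fibres are one-dimensional; it cannot certify that a morphism with $\mP^5$-fibres over a codimension-six centre is a blow-up. For $\Lp_{\widetilde{\hcoY}}$ and $\hcoY_0\to\widetilde{\hcoY}$ one needs either the universal property of blowing up or a direct identification of the exceptional divisors with $\mP(N_{\Prt_\rho/\widehat{\hcoY}})$ (resp.\ the analogue for $\Prt_\sigma$). Finally, the genuinely delicate existence statement --- that $\widetilde{\hcoY}\to\hcoY$ is already a morphism before $\Prt_\sigma$ is blown up, i.e.\ that all $\sigma$-conics in a fixed $\sigma$-plane determine one and the same point of $\hcoY$ (the corresponding rank-one double plane) --- is not settled by ``commutativity is automatic''; it is precisely the point where the deferred computation does the work, and your outline should at least isolate it as the claim to be verified.
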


\vspace{5pt}

In the subsequent part of this section, 
we will construct generically conic bundles
$\widehat{\Zpq}\to \widehat{\hcoY}$ and
$\widetilde{\Zpq}\to \widetilde{\hcoY}$,
which are birational to
$\Zpq\to \hcoY$.
These bundles lead us to construct the kernel
of the sheaf $\sP$ (mentioned in the subsection \ref{sub:intro3}) with locally free resolutions
in the next section \ref{section:Corr} and Appendix \ref{app:B}.

\subsection{Generically conic bundle $\Lpi_{\widehat{\Zpq}}\colon \widehat{\Zpq}\to \widehat{\hcoY}$}

We construct a family $\widehat{\Zpq}$ of
$\tau$-conics, and $\rho$- and $\sigma$-planes
over $\widehat{\hcoY}$
(for the definitions of $\tau$-, $\rho$-, $\sigma$-conics,
we refer to \cite[\S 4.1]{Geom}).
 
Let $\sS$ be the universal subbundle of rank three on $\widehat{\hcoY}$.
We consider
\[
\mP(\sS)
=\{([\mathrm{P}],t)\mid t\in \mathrm{P}\}\subset
\widehat{\hcoY}\times \mP(\wedge^2 V).
\] 
The natural map ${\Lpi_{\mP(\sS)}}\colon \mP(\sS) \to 
\widehat{\hcoY}$
is nothing but the universal 
family of planes in $\mP({\wedge^2} V)$;
\begin{equation}
\label{eq:diag3}
\xymatrix{
& \mP(\sS)\ar[dl]_{\Lrho_{\mP(\sS)}}\ar[dr]^{\Lpi_{\mP(\sS)}}&\\
{\mP({\wedge^2 V})} & & \widehat{\hcoY}.}
\end{equation}

We restrict the diagram (\ref{eq:diag3})
to $\mathrm{G}(2,V)\subset \mP({\wedge^2} V)$ and
set \[
\widehat{\Zpq}:=\mP(\sS)\cap (\widehat{\hcoY}\times \mathrm{G}(2,V))
\subset
\widehat{\hcoY}\times \mathrm{G}(2,V),
\]
Then we obtain 
\begin{equation}
\label{eq:diagr4}
\xymatrix{
& {\widehat{\Zpq}}\ar[dl]_{\Lrho_{\widehat{\Zpq}}}\ar[dr]^{\Lpi_{\widehat{\Zpq}}}&\\
{\mathrm{G}(2,V)} & & \widehat{\hcoY},}
\end{equation}
which is
clearly a family of
$\tau$-conics, and $\rho$- and $\sigma$-planes
over $\widehat{\hcoY}$.

Let $\sQ$ be the universal quotient bundle of rank three on $\widehat{\hcoY}$.
We note that
\[
H^0(\widehat{\hcoY}\times \mathrm{G}(2,V),
{\sQ} 
\boxtimes \sO_{\mathrm{G}(2,V)}(1))
\simeq \wedge^2 V\otimes
\wedge^2 V^*
\simeq
\Hom(\wedge^2 V, \wedge^2 V).
\]
Therefore
$H^0(\widehat{\hcoY}\times \mathrm{G}(2,V), 
{\sQ} 
\boxtimes \sO_{\mathrm{G}(2,V)}(1))$
has a unique nonzero $\SL(V)$-invariant section
up to constant corresponding to  
the identity of $\Hom(\wedge^2 V, \wedge^2 V)$.
Since
\[
\widehat{\Zpq}=\{([\mathrm{P}],[l])\mid [l]\in \mathrm{P}\}\subset
\widehat{\hcoY}\times \mathrm{G}(2,V),
\]
where $l$ is a line in $\mP(V)$ and $\mathrm{P}$ is a plane in
$\mP(\wedge^2 V)$,
it is standard to see the following proposition, for which we omit
a proof:
\begin{prop}\label{prop:ci}
$\widehat{\Zpq}$ is the complete intersection in
$\widehat{\hcoY}\times \mathrm{G}(2,V)$ with respect to
the unique nonzero $\SL(V)$-invariant section
of $H^0(\widehat{\hcoY}\times \mathrm{G}(2,V), 
{\sQ} 
\boxtimes \sO_{\mathrm{G}(2,V)}(1))$
up to constant.
\end{prop}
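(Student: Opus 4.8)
The plan is to write the invariant section down by hand as a composite of canonical bundle maps, to recognize that its zero scheme is literally $\widehat{\Zpq}$, and to confirm by a dimension count that this zero scheme has codimension equal to $\rank(\sQ \boxtimes \sO_{\mathrm{G}(2,V)}(1)) = 3$, so that the section is regular and $\widehat{\Zpq}$ is a genuine complete intersection.

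First I would construct the section upstairs on $\widehat{\hcoY} \times \mP(\wedge^2 V)$. On $\widehat{\hcoY} = \mathrm{G}(3,\wedge^2 V)$ there is the universal exact sequence
\[
0 \to \sS \to \wedge^2 V \otimes \sO_{\widehat{\hcoY}} \to \sQ \to 0,
\]
and on $\mP(\wedge^2 V)$ the tautological inclusion $\sO_{\mP(\wedge^2 V)}(-1) \hookrightarrow \wedge^2 V \otimes \sO$. On the product their composite
\[
\sO(-1) \hookrightarrow \wedge^2 V \otimes \sO \twoheadrightarrow \sQ
\]
is a global section $\sigma$ of $\sQ \boxtimes \sO_{\mP(\wedge^2 V)}(1)$ whose value at $([\mathrm{P}],t)$ is the image of (a lift of) $t$ in $\sQ_{[\mathrm{P}]} = \wedge^2 V / P$; this is the standard realization of the incidence variety $\mP(\sS) = \{([\mathrm{P}],t) : t \in \mathrm{P}\}$ as the scheme-theoretic zero locus $Z(\sigma)$. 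Restricting $\sigma$ along $\widehat{\hcoY} \times \mathrm{G}(2,V) \hookrightarrow \widehat{\hcoY} \times \mP(\wedge^2 V)$ gives the section $s$ of $\sQ \boxtimes \sO_{\mathrm{G}(2,V)}(1)$, and since restricting a section to a closed subscheme computes the scheme-theoretic intersection of its zero locus,
\[
\widehat{\Zpq} = \mP(\sS) \cap (\widehat{\hcoY} \times \mathrm{G}(2,V)) = Z(\sigma) \cap (\widehat{\hcoY} \times \mathrm{G}(2,V)) = Z(s).
\]
Thus the zero scheme of $s$ is exactly $\widehat{\Zpq}$, with no separate reducedness check needed.

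Next I would identify $s$ with the invariant section. Both the tautological inclusion on $\mP(\wedge^2 V)$ and the universal quotient on $\widehat{\hcoY}$ are canonical, hence $\SL(V)$-equivariant, so $s$ is $\SL(V)$-invariant; and $s \neq 0$ because a generic line $l$ has $\wedge^2 U_l \notin P$ (here $U_l \subset V$ is the plane with $\mP(U_l) = l$). Since the already-computed isomorphism $H^0(\sQ \boxtimes \sO_{\mathrm{G}(2,V)}(1)) \simeq \Hom(\wedge^2 V, \wedge^2 V)$ has a one-dimensional space of invariants spanned by $\id$, the section $s$ is a scalar multiple of this invariant section. Concretely, writing $\id = \sum_i e_i \otimes e_i^*$ and evaluating at $([\mathrm{P}],[l])$ returns the class of $\sum_i e_i\, e_i^*(\wedge^2 U_l) = \wedge^2 U_l$ in $\wedge^2 V / P$, which is precisely $s([\mathrm{P}],[l])$.

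Finally I would check regularity by a dimension count, which is the only step with genuine content. The projection $\widehat{\Zpq} \to \mathrm{G}(2,V)$ exhibits $\widehat{\Zpq}$ as the relative Grassmannian of $2$-planes in the rank-$5$ quotient $(\wedge^2 V \otimes \sO)/\sO_{\mathrm{G}(2,V)}(-1)$, since a $3$-plane $P \ni \wedge^2 U_l$ is the same as a $2$-plane $P/\langle \wedge^2 U_l\rangle$ in that quotient. Hence $\widehat{\Zpq}$ is smooth irreducible of dimension $\dim \mathrm{G}(2,V) + \dim \mathrm{G}(2,5) = 4 + 6 = 10$, so $\codim \widehat{\Zpq} = 13 - 10 = 3 = \rank(\sQ \boxtimes \sO_{\mathrm{G}(2,V)}(1))$ inside $\widehat{\hcoY} \times \mathrm{G}(2,V)$. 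Therefore $s$ is a regular section and $\widehat{\Zpq} = Z(s)$ is a complete intersection cut out by the (unique up to scalar) $\SL(V)$-invariant section, as claimed. The construction and equivariance being formal, there is no serious obstacle here—consistent with the statement being routine—and the codimension equality, the one thing to verify, is settled at once by the Grassmann-bundle description.
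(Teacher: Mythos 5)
Your proof is correct; the paper itself omits the argument as standard, and what you have written is exactly the standard argument it has in mind (the invariant section is the composite $\sO(-1)\hookrightarrow \wedge^2 V\otimes\sO\twoheadrightarrow\sQ$, its zero scheme is the incidence variety by construction, and the codimension equals the rank $3$ by the $\mathrm{G}(2,5)$-bundle description, which is the content of Proposition \ref{cla:G25}). Nothing is missing.
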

We set \[
\Zpq_{\rho}:=\Lpi_{\widehat{\Zpq}}^{-1}(\Prt_{\rho})\simeq
\mP(\sS|_{\Prt_{\rho}}),\
\Zpq_{\sigma}:=\Lpi_{\widehat{\Zpq}}^{-1}(\Prt_{\sigma})\simeq
\mP(\sS|_{\Prt_{\sigma}}).
\]
Then 
the subfamilies $\Zpq_{\rho}\to \Prt_{\rho}$
and
$\Zpq_{\sigma}\to \Prt_{\sigma}$
are the universal family of 
$\rho$- and $\sigma$-planes respectively.

We also prepare
some properties of $\widehat{\Zpq}$ for later use.
\begin{prop}
\mylabel{cla:G25}
The fiber of $\Lrho_{\widehat{\Zpq}}\colon \widehat{\Zpq}\to \mathrm{G}(2,V)$
over a point $[V_2]\in \mathrm{G}(2,V)$
parameterizes planes in $\mP(\wedge^2 V)$ containing $[\wedge^2 V_2]$.
In particular, $\Lrho_{\widehat{\Zpq}}$ is a $\mathrm{G}(2,5)$-bundle
and $\widehat{\Zpq}$ is smooth.
\end{prop}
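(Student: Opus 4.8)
The plan is to read off the fibers of $\Lrho_{\widehat{\Zpq}}$ directly from the definition and then recognize them, globally, as the fibers of a relative Grassmann bundle. First I would unwind the defining description $\widehat{\Zpq}=\{([\mathrm{P}],[l])\mid [l]\in \mathrm{P}\}$: a point of the fiber $\Lrho_{\widehat{\Zpq}}^{-1}([V_2])$ is a pair $([\mathrm{P}],[V_2])$ whose plane $\mathrm{P}\subset \mP(\wedge^2 V)$ passes through the Plücker point $[\wedge^2 V_2]$ of $[V_2]\in \mathrm{G}(2,V)$. Writing $\ell:=\langle \wedge^2 V_2\rangle\subset \wedge^2 V$ for the corresponding line, the fiber is therefore exactly the set of $3$-dimensional subspaces $P\subset \wedge^2 V$ with $\ell\subset P$, which is the first assertion. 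Sending $P\mapsto P/\ell$ gives a bijection between such $P$ and the $2$-dimensional subspaces of the $5$-dimensional quotient $\wedge^2 V/\ell$, so each fiber is isomorphic to $\mathrm{G}(2,5)$.

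To promote this fiberwise statement to a genuine $\mathrm{G}(2,5)$-bundle structure, I would work over $\mathrm{G}(2,V)$ with the tautological rank-two subbundle $\sF\subset V\otimes \sO_{\mathrm{G}(2,V)}$. Its second exterior power $\wedge^2 \sF$ is a line subbundle of $\wedge^2 V\otimes \sO_{\mathrm{G}(2,V)}$, the inclusion being fiberwise nonzero since $\wedge^2 V_2\neq 0$ for every $2$-plane $V_2$; hence the quotient $\sW:=(\wedge^2 V\otimes \sO_{\mathrm{G}(2,V)})/\wedge^2 \sF$ is locally free of rank $5$. I then claim that $\widehat{\Zpq}$ is canonically isomorphic over $\mathrm{G}(2,V)$ to the relative Grassmann bundle $\mathrm{G}(2,\sW)$.

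I would build the isomorphism using universal properties on both sides. Pulling back the tautological rank-three subbundle $\sS$ on $\widehat{\hcoY}$ along $\Lpi_{\widehat{\Zpq}}$ yields a rank-three subbundle $\sS|_{\widehat{\Zpq}}\subset \wedge^2 V\otimes \sO_{\widehat{\Zpq}}$ which, by the defining condition of $\widehat{\Zpq}$, contains the pulled-back line $\Lrho_{\widehat{\Zpq}}^*(\wedge^2 \sF)$; the quotient $\sS|_{\widehat{\Zpq}}/\Lrho_{\widehat{\Zpq}}^*(\wedge^2 \sF)$ is then a rank-two subbundle of $\Lrho_{\widehat{\Zpq}}^*\sW$, and this classifies a morphism $\widehat{\Zpq}\to \mathrm{G}(2,\sW)$ over $\mathrm{G}(2,V)$. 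In the reverse direction, a rank-two subbundle of $\sW$ pulls back to a rank-three subbundle of $\wedge^2 V\otimes \sO$ containing $\wedge^2 \sF$, i.e. a point of $\widehat{\hcoY}$ lying over the given line, hence a point of $\widehat{\Zpq}$. The two constructions are mutually inverse because they induce the fiberwise bijection $P\mapsto P/\ell$ established above. This exhibits $\Lrho_{\widehat{\Zpq}}$ as a $\mathrm{G}(2,5)$-bundle, and smoothness of $\widehat{\Zpq}$ follows immediately, since a relative Grassmann bundle over the smooth variety $\mathrm{G}(2,V)$ is smooth.

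The one step meriting genuine care — and the point I would treat as the main (if modest) obstacle — is precisely this upgrade from the set-theoretic bijection $P\mapsto P/\ell$ to a scheme-theoretic isomorphism $\widehat{\Zpq}\simeq \mathrm{G}(2,\sW)$: one must check that the map defined through $\sS|_{\widehat{\Zpq}}$ and its inverse are bona fide morphisms of $\mathrm{G}(2,V)$-schemes, which amounts to confirming that $\wedge^2 \sF$ factors as a subbundle of $\sS|_{\widehat{\Zpq}}$ so that the relevant quotients stay locally free. Everything else is a direct unwinding of the definition of $\widehat{\Zpq}$ together with standard facts about Grassmann bundles, which is why the authors state the proposition as ``standard.''
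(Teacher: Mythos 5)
Your proof is correct; the paper explicitly declines to give one (``This assertion is almost clear, so we omit a proof''), and your argument --- reading off the fiber over $[V_2]$ as the planes of $\mP(\wedge^2 V)$ through the Pl\"ucker point $[\wedge^2 V_2]$ and then globalizing via the universal property of the relative Grassmannian $\mathrm{G}\bigl(2,(\wedge^2 V\otimes\sO_{\mathrm{G}(2,V)})/\wedge^2\sF\bigr)$ --- is exactly the standard argument the authors intend. Your identification of the one point needing care (that $\wedge^2\sF$ pulls back to a line subbundle of $\sS|_{\widehat{\Zpq}}$ so the relevant quotients are locally free and the fiberwise bijection $P\mapsto P/\ell$ upgrades to an isomorphism of $\mathrm{G}(2,V)$-schemes) is apt, and smoothness then follows as you say.
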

This assertion is almost clear, so we omit a proof.

Restricting the diagram (\ref{eq:diagr4}) over $\Prt_{\rho}$, we have
\begin{equation*}
\xymatrix{
& {{\Zpq}_{\rho}}\ar[dl]_{\Lrho_{{\Zpq}_{\rho}}}\ar[dr]^{\Lpi_{{\Zpq}_{\rho}}}&
\\ {\mathrm{G}(2,V)} & & \Prt_{\rho},}
\end{equation*}
where 
we set 
\[
\Lrho_{\Zpq_{\rho}}:=\Lrho_{\widehat{\Zpq}}|_{\Zpq_{\rho}},\,
\Lpi_{\Zpq_{\rho}}:=\Lpi_{\widehat{\Zpq}}|_{\Zpq_{\rho}}.
\]

\begin{prop}
\label{cla:P1bdl}
$\Lrho_{\Zpq_{\rho}}
\colon \Zpq_{\rho}\to \mathrm{G}(2,V)$
is a $\mP^1$-bundle.
As a subbundle of $\Lrho_{\widehat{\Zpq}}\colon \widehat{\Zpq}\to \mathrm{G}(2,V)$,
a fiber of $\Lrho_{\Zpq_{\rho}}$ is a conic in $\mathrm{G}(2,5)$.

\end{prop}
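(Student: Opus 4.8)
The plan is to make the description of $\Zpq_\rho$ completely explicit and then read off both assertions directly. First I would recall from \cite[\S 4.1, \S 4.5]{Geom} that under $\Prt_\rho\simeq \mP(V)$ a point $v\in \mP(V)$ corresponds to the $\rho$-plane
\[
\mathrm{P}_v=\{[l]\in \rG(2,V)\mid v\in l\}=\mP(v\wedge V)\subset \rG(2,V)\subset\mP(\wedge^2 V),
\]
where $v\wedge V\subset \wedge^2 V$ is the $3$-dimensional image of the injection $V/\langle v\rangle\hookrightarrow \wedge^2V$, $\bar w\mapsto v\wedge w$. Since every $\rho$-plane is already contained in $\rG(2,V)$, the intersection defining $\widehat\Zpq$ is vacuous along $\Prt_\rho$, so $\Zpq_\rho=\mP(\sS|_{\Prt_\rho})$ is just the incidence variety
\[
\Zpq_\rho=\{(v,[V_2])\mid v\in \mP(V_2)\}\subset \mP(V)\times \rG(2,V),
\]
with $\Lpi_{\Zpq_\rho}$ the first projection (recording $\mathrm{P}_v$) and $\Lrho_{\Zpq_\rho}$ the second (recording $[l]=[V_2]$).

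For the first assertion I would simply observe that the second projection exhibits $\Zpq_\rho$ as the flag variety of a point inside a line in $\mP(V)$, that is, as the projectivization $\mP(\sR)$ of the tautological rank-two subbundle $\sR$ on $\rG(2,V)$, whose fibre over $[V_2]$ is $\mP(V_2)\simeq \mP^1$. This is manifestly a $\mP^1$-bundle, so the first claim follows with no further work. (A dimension count $\dim \Zpq_\rho=5$ is consistent from both descriptions $\mP(\sS|_{\Prt_\rho})$ and $\mP(\sR)$.)

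The second assertion requires locating this $\mP^1$ inside the $\rG(2,5)$-fibre of $\Lrho_{\widehat\Zpq}$ over $[V_2]$. By Proposition \ref{cla:G25} that fibre is $\rG(2,\wedge^2V/\wedge^2 V_2)\simeq \rG(2,5)$, a $3$-plane $\mathrm{P}$ through $[\wedge^2 V_2]$ being recorded by its image modulo $\wedge^2 V_2$. For $v\in V_2$ one has $\wedge^2 V_2\subset v\wedge V$, and $\mathrm{P}_v$ maps to the $2$-plane
\[
(v\wedge V)/\wedge^2 V_2=\Ima\big(\mu_v\colon V/V_2\to \wedge^2 V/\wedge^2 V_2,\ \bar w\mapsto \overline{v\wedge w}\big),
\]
with $\mu_v$ injective for $v\neq 0$ (if $v\wedge w\in\wedge^2 V_2$ then $w\in V_2$). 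The key point is that $\mu_v$ depends \emph{linearly} on $v$; since $\dim V/V_2=2$, the induced Plücker point $\wedge^2\mu_v$ is a \emph{quadratic} function of $v$, so $[v]\mapsto[\wedge^2\mu_v]$ is a morphism $\mP(V_2)\to \mP(\wedge^2(\wedge^2 V/\wedge^2 V_2))$ of degree at most two.

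Finally I would pin the degree down by a one-line coordinate check, which is the only step that needs genuine verification: for $V=\langle e_1,\dots,e_4\rangle$, $V_2=\langle e_1,e_2\rangle$ and $v=ae_1+be_2$, the image $2$-plane is spanned by $a\,\overline{e_1\wedge e_3}+b\,\overline{e_2\wedge e_3}$ and $a\,\overline{e_1\wedge e_4}+b\,\overline{e_2\wedge e_4}$, whose wedge is $a^2(f_1\wedge f_2)+ab(f_1\wedge f_4-f_2\wedge f_3)+b^2(f_3\wedge f_4)$ in the obvious basis, the three coefficient bivectors being linearly independent. Hence the image is the rational normal curve $[a:b]\mapsto[a^2:ab:b^2]$, a smooth conic spanning a plane, which is exactly the asserted conic in $\rG(2,5)$. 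The main obstacle is precisely this last computation, namely ruling out that the quadratic Plücker map collapses to a line or a point; the explicit coordinates settle it immediately.
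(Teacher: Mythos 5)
Your proposal is correct and follows essentially the same route as the paper: both identify the fiber of $\Lrho_{\Zpq_{\rho}}$ over $[V_2]$ with the line $\mP(V_2)\subset\mP(V)\simeq\Prt_{\rho}$ (a $\rho$-plane $\mathrm{P}_{V_1}$ contains $[\wedge^2V_2]$ iff $V_1\subset V_2$), and both see the conic as the image of this line under a Veronese-type map into the $\rG(2,5)$-fiber. The only difference is that the paper dispatches the conic claim in one line by citing $\Prt_{\rho}=v_2(\mP(V))$, whereas you verify the quadratic nature of $[v]\mapsto[\wedge^2\mu_v]$ by an explicit (and correct) Pl\"ucker computation; your version is more self-contained but proves the same fact.
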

\begin{proof}
Let $[\wedge^2 V_2]$ be a point of $\mathrm{G}(2,V)$.
Then a $\rho$-plane ${\rm P}_{V_1}$ contains 
$[\wedge^2 V_2]$ if and only if $V_1\subset V_2$.
Therefore $\Lrho_{\Zpq_{\rho}}^{-1}([\wedge^2 V_2])$
is isomorphic to the line $\mP(V_2)\subset \mP(V)\simeq \Prt_{\rho}$,
and this is a conic 
in $\mathrm{G}(2,\wedge^2 V/\wedge^2 V_2)$
since $\Prt_{\rho}= v_2(\mP(V))$.
\end{proof}

\subsection{Generically conic bundle $\Lpi_{\widetilde{\Zpq}}\colon\widetilde{\Zpq}\to \widetilde{\hcoY}$}
\label{Z2Y2}

We may also construct 
a generically conic bundle $\widetilde{\Zpq}\to \widetilde{\hcoY}$
from
$\widehat{\Zpq}\to \widehat{\hcoY}$.

We recall the diagrams (\ref{eq:diag3}) and (\ref{eq:diagr4}).
In the $\mP^5$-bundle
$\mP(\wedge^2 V) \times \widetilde{\hcoY}$ over $\widetilde{\hcoY}$,
we consider a $\mP^2$-bundle
\[
\mP(\Lrho_{\widetilde{\hcoY}}^*\sS)=\mP(\sS) \times_{\widehat{\hcoY}} \widetilde{\hcoY}\to \widetilde{\hcoY}.
\]
$\mP(\Lrho_{\widetilde{\hcoY}}^*\sS)\to \mP(\sS)$ 
is the blow-up
along the pull-back of
$\Prt_{\rho}$ in 
$\mP(\sS)$ and
the exceptional divisor of
$\mP(\Lrho_{\widetilde{\hcoY}}^*\sS)\to \mP(\sS)$ is 
 $\Zpq_{\rho}\times_{\sP_{\rho}} F_{\rho}
=\mP(\Lrho_{\widetilde{\hcoY}}^*\sS|_{F_{\rho}})$
since $\Lrho_{\widetilde{\hcoY}}\colon \widetilde{\hcoY}\to \widehat{\hcoY}$ is the blow-up along $\Prt_{\rho}$.

Let $\widetilde{\Zpq}$ be the strict transform of $\widehat{\Zpq}$,
which is nothing but the blow-up of $\widehat{\Zpq}$ along $\Zpq_{\rho}$.
Then $\widetilde{\Zpq}$ is smooth
by Propositions \ref{cla:G25} and \ref{cla:P1bdl}.

$\widetilde{\Zpq}$ can be obtained from $\Zpq_2$ in \cite[\S 4.3]{HoTa4}
by restricting $\Zpq_2$ over a point of $\mP(V)$.
By [ibid., Prop.~4.3.3 and 4.3.4], we deduce the following,
where we denote the transform of $\Prt_{\sigma}$ on
$\widetilde{\hcoY}$ also by
$\Prt_{\sigma}$
since 
$\widehat{\hcoY}$ and $\widetilde{\hcoY}$ are isomorphic near
$\Prt_{\sigma}$:

\begin{prop}
\label{prop:cobu}
The induced morphism $\Lpi_{\widetilde{\Zpq}}\colon \widetilde{\Zpq}\to \widetilde{\hcoY}$ is a conic bundle over $\widetilde{\hcoY}\setminus \Prt_{\sigma}$
and the fiber over a point $y\in \widetilde{\hcoY}\setminus \Prt_{\sigma}$
can be identified with the conic corresponding to $y$.
\end{prop}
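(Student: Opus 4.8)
The plan is to present $\widehat{\Zpq}\to\widehat{\hcoY}$ as a relative conic and to analyse how the blow-up along $\Prt_\rho$ repairs its non-conic fibres. Since $\mathrm{G}(2,V)\subset\mP(\wedge^2V)$ is the Klein quadric cut out by the wedge pairing, which we view as a quadratic form $q$ on $\wedge^2V$, restricting $q$ to the universal subbundle produces a section $s:=q|_{\sS}\in H^0(\widehat{\hcoY},\Sym^2\sS^*)$; under the isomorphism $(\Lpi_{\mP(\sS)})_*\sO_{\mP(\sS)}(2)\simeq\Sym^2\sS^*$ this is exactly the equation defining $\widehat{\Zpq}=\{s=0\}\subset\mP(\sS)$ of Proposition \ref{prop:ci}. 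The fibre of $\Lpi_{\widehat{\Zpq}}$ over $[\mathrm{P}]$ is then the conic $\{s([\mathrm{P}])=0\}\subset\mP(\sS_{[\mathrm{P}]})=\mathrm{P}$, which is a genuine conic unless the quadratic form $s([\mathrm{P}])$ vanishes identically, i.e.\ unless $\mathrm{P}\subset\mathrm{G}(2,V)$; the locus where $s$ vanishes as a section is precisely the union $\Prt_\rho\cup\Prt_\sigma$ of the two families of planes lying on the quadric. Thus away from $\Prt_\rho\cup\Prt_\sigma$ the morphism is already a conic bundle, and because $\Lp_{\widetilde{\hcoY}}$ is an isomorphism off $F_\rho$ and $\widetilde{\Zpq}$ coincides with $\widehat{\Zpq}$ there, this conic bundle structure descends unchanged to $\widetilde{\hcoY}\setminus(F_\rho\cup\Prt_\sigma)$.

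The substance of the argument lies over $F_\rho$. Here I would exploit the numerology $\codim\Prt_\rho=9-3=6=\rank\Sym^2\sS^*$ to show that $s$ is a \emph{regular} section vanishing to order exactly one along $\Prt_\rho$; equivalently, that its first-order term induces a bundle isomorphism $N_{\Prt_\rho/\widehat{\hcoY}}\xrightarrow{\ \sim\ }\Sym^2\sS^*|_{\Prt_\rho}$. Granting this, the pulled-back section factors as $\Lp_{\widetilde{\hcoY}}^*s=e_{F_\rho}\cdot\widetilde{s}$, where $e_{F_\rho}$ is a defining section of $\sO(F_\rho)$ and $\widetilde{s}\in H^0(\widetilde{\hcoY},\Lp_{\widetilde{\hcoY}}^*(\Sym^2\sS^*)\otimes\sO(-F_\rho))$ cuts out exactly the strict transform $\widetilde{\Zpq}$ inside $\mP(\Lp_{\widetilde{\hcoY}}^*\sS)$. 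A point $y\in F_\rho$ over $[\mathrm{P}_0]\in\Prt_\rho$ is a normal direction $v$, and under the isomorphism above $\widetilde{s}(y)$ is the \emph{nonzero} quadratic form attached to $v$; hence the fibre $\{\widetilde{s}(y)=0\}\subset\mathrm{P}_0$ is a genuine conic. In fact $F_\rho\simeq\mP(\Sym^2\sS^*|_{\Prt_\rho})$, so over $F_\rho$ the family $\widetilde{\Zpq}\to\widetilde{\hcoY}$ is nothing but the universal conic in the planes $\mathrm{P}_0$, which is the geometric meaning of ``the conic corresponding to $y$.''

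It remains to explain the two flanks. The locus $\Prt_\sigma$ is genuinely excluded: it is not blown up, $\widehat{\hcoY}$ and $\widetilde{\hcoY}$ are isomorphic near it, and there $s$ still vanishes identically, so the fibre remains the two-dimensional $\sigma$-plane rather than a conic. For the identification of the fibre with the Hilbert-scheme point I would use the blow-up $\hcoY_0\to\widetilde{\hcoY}$ along the strict transform of $\Prt_\sigma$: over $\widetilde{\hcoY}\setminus\Prt_\sigma$ it is an isomorphism onto an open subset of the Hilbert scheme of conics, so the flat family $\widetilde{\Zpq}\to\widetilde{\hcoY}\setminus\Prt_\sigma$ is the restriction of the universal conic and its fibre over $y$ is the conic that $y$ parameterizes. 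I expect the regular-section statement over $\Prt_\rho$ (equivalently the normal-bundle isomorphism) to be the only real obstacle; the most economical route is to import it from the local analysis of \cite{Geom} and to identify $\widetilde{\Zpq}$ with the restriction over a point of $\mP(V)$ of the family $\Zpq_2$ of \cite[\S4.3]{HoTa4}, so that Propositions~4.3.3 and~4.3.4 there yield the statement at once, while the computation above accounts for why it holds.
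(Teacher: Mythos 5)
Your argument is correct in substance, and it is genuinely more than what the paper writes down: the paper's entire proof is the remark that $\widetilde{\Zpq}$ is the restriction of the family $\Zpq_2$ of \cite[\S 4.3]{HoTa4} over a point of $\mP(V)$, followed by a citation of Propositions 4.3.3 and 4.3.4 of that reference. You instead reconstruct the geometry behind that citation: realizing $\widehat{\Zpq}$ as the zero divisor in $\mP(\sS)$ of the section $s\in H^0(\widehat{\hcoY},\Sym^2\sS^*)$ obtained from the Pl\"ucker quadric (equivalent to the presentation in Proposition \ref{prop:ci}), noting that $Z(s)=\Prt_\rho\cup\Prt_\sigma$ is exactly the non-conic locus, and using the factorization $\Lp_{\widetilde{\hcoY}}^*s=e_{F_\rho}\cdot\widetilde{s}$ to see that the strict transform acquires honest conic fibres over $F_\rho$, the fibre over a normal direction $v$ being $\{ds(v)=0\}$. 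This buys a self-contained explanation of \emph{why} the blow-up along $\Prt_\rho$ (and not along $\Prt_\sigma$) produces a conic bundle, which the paper leaves implicit. The one load-bearing step in your independent route is the claim that $s$ vanishes to order exactly one along $\Prt_\rho$, i.e.\ that $ds$ induces an isomorphism $N_{\Prt_\rho/\widehat{\hcoY}}\simeq\Sym^2\sS^*|_{\Prt_\rho}$; the equality $\codim\Prt_\rho=6=\rank\Sym^2\sS^*$ makes this plausible but does not prove it (a priori the scheme $Z(s)$ could be non-reduced along $\Prt_\rho$), and you correctly flag it as the crux to be imported from \cite{Geom} or \cite{HoTa4} --- at which point your proof and the paper's coincide. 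Your identification of the fibre with ``the conic corresponding to $y$'' via the Hilbert scheme $\hcoY_0\to\widetilde{\hcoY}$ being an isomorphism off $\Prt_\sigma$ is also the intended reading.
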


In the subsections \ref{section:P} and \ref{sub:resol},
we also need 
\[
\widetilde{\Zpq}^t:=\widehat{\Zpq}\times_{\widehat{\hcoY}} \widetilde{\hcoY}
\subset \mP(\Lrho_{\widetilde{\hcoY}}^*\sS),
\]
which is the total transform of 
$\widehat{\Zpq}$
by the blow-up $\mP(\Lrho_{\widetilde{\hcoY}}^*\sS)\to \mP(\sS)$
since it contains $\Zpq_{\rho}\times_{\sP_{\rho}} F_{\rho}$.
$\widetilde{\Zpq}^t$ is reduced since  
$\widehat{\Zpq}$ is smooth by Proposition \ref{cla:G25}.

\vspace{5pt}

Here we summerize the constructions of generically conics bundles
in the following diagram:
\begin{equation*}
\label{eq:genconic}
\xymatrix{\mP(\sS)  & \mP(\Lrho_{\widetilde{\hcoY}}^*\sS)\ar[l]  \\
\widehat{\Zpq}\ar[d]\ar@{^{(}->}[u] & \widetilde{\Zpq}^t\ar[l]\ar[d]\ar@{^{(}->}[u] & \widetilde{\Zpq}\ar[dl]\ar@{_{(}->}[l] \\
\widehat{\hcoY} & \widetilde{\hcoY}.\ar[l]}
\end{equation*}

\section{{\bf Plausible kernel $\sP$ inducing homological projective duality}}
\label{section:Corr}

\subsection{Locally free sheaves $\widetilde{\sS}_L$, $\widetilde{\sQ}$, and $\widetilde{\sT}$ on $\widetilde{\hcoY}$.}
We recall that
$\sS$ and $\sQ$ are universal sub- and quotient bundles on
$\widehat{\hcoY}=\mathrm{G}(3,\wedge^2 V)$,
respectively.
We write down the universal exact sequence on $\widehat{\hcoY}$ as follows:
\begin{equation}
\label{eq:univ36}
0\to \sS\to \wedge^2 V\otimes \sO_{\widehat{\hcoY}}\to \sQ\to 0.
\end{equation}
Taking the $\SL(V)$-action into account,
we define 
\[
\sS_L:=\sS\otimes \wedge^4 V^*,\ \sS^*_L:=\sS^*\otimes \wedge^4 V
\]
(note that $\wedge^4 V$ corresponds to $L$ in \cite{DerSym}).

In \cite[\S 4]{DerSym}, we have introduced three important sheaves $\widetilde{\sS}_L$,
$\widetilde{\sQ}$, and $\widetilde{\sT}$
on $\widetilde{\hcoY}$,
which will appear in the locally free resolutions in
Theorems \ref{thm:newker} and \ref{thm:resolY}.
We set  
\[\widetilde{\sS}_L:=
\Lrho_{\widetilde{\hcoY}}^*\sS_L,\
\widetilde{\sQ}:=\Lrho_{\widetilde{\hcoY}}^*\sQ.
\]
We define $\widetilde{\sT}$ as the dual of the 
following locally free sheaf $\widetilde{\Omega}$ fitting
into the
exact sequence;
\begin{equation}
\label{eq:T*}
0\to \widetilde{\Omega}\to V^*\otimes \sO_{\widetilde{\hcoY}} 
\to {\iota_{F_{\rho}}}_*(\Lrho_{\widetilde{\hcoY}}|_{F_{\rho}})^*\sO_{\mP(V)}(1)\to 0,
\end{equation}
where
$\iota_{F_{\rho}}\colon F_{\rho}\hookrightarrow \widetilde{\hcoY}$ is 
the natural closed embedding, and
the map $V^*\otimes \sO_{\widetilde{\hcoY}} 
\to {\iota_{F_{\rho}}}_*(\Lrho_{\widetilde{\hcoY}}|_{F_{\rho}})^*\sO_{\mP(V)}(1)$ is induced from the natural map
$V^*\otimes \sO_{\mP(V)} 
\to \sO_{\mP(V)}(1)$.
We note that $\widetilde{\sT}$ is denoted by $\widetilde{\mathfrak{Q}}$ in \cite{DerSym}.

\subsection{Family of hyperplane sections $\Vs$}

Let $\Vs\subset \widetilde{\hcoY}\times {\hchow}$
be the pull-back of the universal family of hyperplane sections
in $\mP({\ft S}^2 V^*)\times \chow$.
We can consider $\Vs$ to be both the family over $\widetilde{\hcoY}$
of the pull-backs of
hyperplane sections of $\chow$,
and
the family over $\hchow$ of the pull-backs of
hyperplanes in $\mP({\ft S}^2 V^*)$.
We denote by $\iota_{\Vs}$ the natural closed embedding
$\Vs\hookrightarrow \widetilde{\hcoY}\times \hchow$.

\subsection{Definition of $\sP$}\label{section:P}

We set
\begin{equation}
\label{eq:lP}
(\Delta')^t:=\widetilde{\Zpq}^t\times_{\mathrm{G}(2,V)} \hchow.
\end{equation}
Since
$\widetilde{\Zpq}^t\subset \widetilde{\hcoY}\times \mathrm{G}(2,V)$
and
$(\widetilde{\hcoY}\times \mathrm{G}(2,V))\times_{\mathrm{G}(2,V)}
\hchow\simeq \widetilde{\hcoY}\times \hchow$,
we may consider
$(\Delta')^t$ to be contained in $\widetilde{\hcoY}\times \hchow$.
As such, it holds that
\begin{equation}
\label{eq:lxPy}
(\Delta')^t=\{(y, x)\mid [l_x]\in \mathrm{P}_y\}\subset
\widetilde{\hcoY}\times \hchow,
\end{equation}
where $[l_x]=g(x)\in \mathrm{G}(2,V)$ and
$\mathrm{P}_y$ is the plane of $\mP(\wedge^2 V)$ corresponding to
$\Lrho_{\widetilde{\hcoY}}(y)\in \widehat{\hcoY}$.
Namely,
$(\Delta')^t$ is
the pull-back of $\widehat{\Zpq}$
by $\widetilde{\hcoY}\times \hchow\to
\mathrm{G}(3,\wedge^2 V)\times \mathrm{G}(2,V)$.
Then, by Proposition \ref{prop:ci},
$(\Delta')^t$ is the complete intersection in
$\widetilde{\hcoY}\times \hchow$ with respect to
the unique nonzero $\SL(V)$-invariant section
of $H^0(\widetilde{\hcoY}\times \hchow, 
\widetilde{\sQ} 
\boxtimes \sO_{\hchow}(L))$
up to constant.

\begin{defn}[$\Delta'$, $\sJ$ and $\sP$]
\begin{enumerate}[(1)]
\item
We define 
\[
\Delta':=(\Delta')^t\cap \Vs,
\]
and $\sJ\subset \sO_{\Vs}$ to be its ideal sheaf.
In other words,
$\sJ$ is the image of 
the composite of 
the map
$\widetilde{\sQ}^* 
\boxtimes \sO_{\hchow}(-L)\to
\sO_{\widetilde{\hcoY}\times \hchow}$ corresponding to the subscheme 
$(\Delta')^t$
and the natural map
$\sO_{\widetilde{\hcoY}\times \hchow}\to
{\iota_{\Vs}}_*\sO_{\Vs}$.
\item
The definition of $\sJ$
induces a surjection
\[
\widetilde{\sQ}^* 
\boxtimes \sO_{\hchow}|_{\Vs}\to \sJ(L).
\]
The kernel of this map,
namely, the second sygyzy of $\sJ(L)$
is a coherent sheaf of rank $2$ on $\Vs$, and, by
\cite[Proposition 1.1]{H}, is reflexive.
Now we define $\sP$ to be the dual of this kernel.
$\sP$ is also a reflexive sheaf of rank $2$ on $\Vs$. 
The definition gives the following exact sequence:
\begin{equation}
\label{eq:P*}
0\to \sP^*\to 
\widetilde{\sQ}^* 
\boxtimes \sO_{\hchow}|_{\Vs}\to \sJ(L)\to 0.
\end{equation}
\end{enumerate}
\end{defn}

It turns out that 
the coherent sheaf ${\iota_{\Vs}}_*\sP$ admits 
a nice locally free resolution.

\begin{thm}
\label{thm:newker}
The coherent sheaf ${\iota_{\Vs}}_*\sP$ admits the following locally free resolution$:$
\begin{eqnarray}
\label{eqnarray:onVs'}
0\to
\sO_{\widetilde{\hcoY}} 
\boxtimes \Omega^1_{\hchow/\mathrm{G}(2,V)}(-L)
\to
\widetilde{\sT}\boxtimes \sF^*(-H)
\to\\
\widetilde{\sQ} 
\boxtimes \sO_{\hchow}\oplus 
\widetilde{\sS}^*_L\boxtimes
\sO_{{\hchow}}(L-H)
\to
{\iota_{\Vs}}_*\sP\to 0,
\nonumber
\end{eqnarray}
where 
$\Omega^1_{\hchow/\mathrm{G}(2,V)}$ is the relative cotangent bundle
for the morphism $\hchow \to \mathrm{G}(2,V)$.
\end{thm}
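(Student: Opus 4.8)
The plan is to assemble the resolution from four inputs: the complete-intersection description of $(\Delta')^{t}$ (Proposition \ref{prop:ci}), the structure sequence of the divisor $\Vs$, the relative geometry of the projective bundle $\hchow=\mP({\ft S}^{2}\sF)\to\mathrm{G}(2,V)$, and the universal sequences (\ref{eq:univ36}) and (\ref{eq:T*}). The conceptual starting point is an observation dual to (\ref{eq:P*}): away from the locus where the intersection degenerates, $\sP$ is the cokernel of a single map $\sO_{\Vs}(-L)\to\widetilde{\sQ}\boxtimes\sO_{\hchow}|_{\Vs}$. Indeed, applying $\sHom_{\sO_{\Vs}}(-,\sO_{\Vs})$ to (\ref{eq:P*}) and using that $\Delta'$ is generically a local complete intersection of codimension three in $\Vs$, so that $\mathcal{E}xt^{1}(\sJ,\sO_{\Vs})=0$ and $\sHom(\sJ,\sO_{\Vs})=\sO_{\Vs}$, one obtains a clean two-term presentation of $\sP$ on $\Vs$; this already explains the summand $\widetilde{\sQ}\boxtimes\sO_{\hchow}$ in the term that surjects onto ${\iota_{\Vs}}_*\sP$.

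First I would make this global. Using the generality assumptions (\ref{eq:XYcond}), I would verify that the $\SL(V)$-invariant section cutting out $(\Delta')^{t}$ restricts to $\Vs$ with the expected codimension off the exceptional divisor $F_{\rho}$. Tensoring the Koszul complex of $(\Delta')^{t}$ with the two-term divisor sequence $0\to\sO_{\widetilde{\hcoY}}(-M)\boxtimes\sO_{\hchow}(-H)\to\sO_{\widetilde{\hcoY}\times\hchow}\to{\iota_{\Vs}}_*\sO_{\Vs}\to0$ then produces a locally free complex on the product $\widetilde{\hcoY}\times\hchow$ resolving ${\iota_{\Vs}}_*\sP$, whose differentials come from the complete-intersection section. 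Reflexivity of the relevant syzygies is controlled throughout by \cite[Proposition~1.1]{H}, and I would use Grothendieck-Verdier duality (Theorem \ref{cla:duality}) to pass between $\sP$ and $\sP^{*}$.

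The raw complex carries $M$-twists and the exterior powers $\wedge^{2}\widetilde{\sQ},\det\widetilde{\sQ}$ coming from the Koszul differential, so the next step is to rewrite it into the stated shape. The exterior powers of (\ref{eq:univ36}) identify $\wedge^{2}\widetilde{\sQ}$ and $\det\widetilde{\sQ}$ in terms of $\widetilde{\sQ}$ and $\widetilde{\sS}_L^{*}$, where the twist by $\wedge^{4}V$ built into $\widetilde{\sS}_L^{*}$ yields the summand $\widetilde{\sS}_L^{*}\boxtimes\sO_{\hchow}(L-H)$; the relative geometry of $\hchow\to\mathrm{G}(2,V)$ brings in $\Omega^{1}_{\hchow/\mathrm{G}(2,V)}$ and the bundle $\sF$, accounting for the terms $\widetilde{\sT}\boxtimes\sF^{*}(-H)$ and $\sO_{\widetilde{\hcoY}}\boxtimes\Omega^{1}_{\hchow/\mathrm{G}(2,V)}(-L)$; and, most importantly, the defining sequence (\ref{eq:T*}) of $\widetilde{\sT}=\widetilde{\Omega}^{*}$ absorbs the residual $M$-twists together with the $F_{\rho}$-supported contribution, replacing a naive bundle by the non-split sheaf $\widetilde{\sT}$. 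These manipulations are exactly the ones that also feed the Lefschetz collections recorded in \cite{DerSym}.

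The hard part will be the behavior along the exceptional divisor $F_{\rho}$ (and over $\Prt_{\sigma}$), where $\Lpi_{\widetilde{\Zpq}}$ is no longer a conic bundle (Proposition \ref{prop:cobu}) and $(\Delta')^{t}\cap\Vs$ fails to be a clean complete intersection; there the naive cokernel presentation breaks down and $\sP$ is not locally free. Controlling this excess/degenerate locus is precisely what forces $\widetilde{\sT}$ to appear in place of an ordinary twist of $\widetilde{\Omega}^{*}$, and it is the reason the resolution has length three rather than two. The delicate, computational core is therefore to show that the assembled maps form an exact complex, that the auxiliary $\mathcal{E}xt$-sheaves vanish off the expected locus, and that the $F_{\rho}$-correction is captured exactly by (\ref{eq:T*}); I would establish this by a local analysis along $F_{\rho}$, which is the content deferred to Appendix \ref{app:B}.
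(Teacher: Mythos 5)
There is a genuine gap, and it is located at the very first step. You assert that $\Delta'$ is generically a local complete intersection of codimension three in $\Vs$, so that $\sE xt^{1}(\sJ,\sO_{\Vs})=0$ and $\sP$ is generically the cokernel of a single map $\sO_{\Vs}(-L)\to\widetilde{\sQ}\boxtimes\sO_{\hchow}|_{\Vs}$. This is false: although $(\Delta')^{t}$ is a codimension-three complete intersection in $\widetilde{\hcoY}\times\hchow$ (Proposition \ref{prop:ci}), its main component $\Delta$ is automatically contained in the divisor $\Vs$ (Proposition \ref{prop:Deltasmooth}~(1)), so $\Vs$ does \emph{not} cut $(\Delta')^{t}$ properly and $\Delta'=\Delta\cup D$ has codimension two in $\Vs$ along its dominant component $\Delta$. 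Consequently $\sE xt^{1}(\sJ(L),\sO_{\Vs})\simeq\omega_{\Delta}\otimes\omega_{\Vs}^{-1}\otimes\sO_{\hchow}(-L)\neq 0$, and the dual of (\ref{eq:P*}) is the four-term sequence (\ref{eq:P}) of Proposition \ref{prop:dualP*}, not a two-term presentation; the discrepancy lives on all of $\Delta$, which dominates both factors, not merely over $F_{\rho}$ or $\Prt_{\sigma}$ as you suggest. For the same reason your main device — tensoring the Koszul complex of $(\Delta')^{t}$ with the divisor sequence of $\Vs$ — produces a complex that fails to be exact along the whole of $\Delta$ (excess intersection), and nothing in your sketch repairs this.

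The paper's route is structurally different and supplies exactly the ingredient you are missing. It first resolves the ideal sheaf of the single component $\Delta$ (Theorem \ref{thm:resolY}) by pulling back the Koszul resolution of the diagonal of $\mathrm{G}(2,V)$ (Proposition \ref{cla:Delta}) to $\widetilde{\Zpq}\times\hchow$ and pushing it down the generically conic bundle $\widetilde{\Zpq}\to\widetilde{\hcoY}$ via Grothendieck--Verdier duality; this is where $\widetilde{\sT}=\Lpi_{\widetilde{\Zpq}*}\sG$ and $\widetilde{\sS}^{*}_{L}$ actually arise (Proposition \ref{prop:est}), rather than from exterior powers of (\ref{eq:univ36}) as you propose. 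It then compares $\sJ$ with $\sI_{\Delta/\Vs}$ through the correction sheaf $\sK$, shown to be an invertible sheaf on the second component $D\subset F_{\rho}\times\hchow$ (Lemma \ref{lem:sK2}), splices the two into the complex (\ref{eqn:complex}) by an explicit fibrewise linear-algebra verification (Lemma \ref{lem:sK}), and finally dualizes, identifying ${\iota_{\Vs}}_{*}\sP$ with $\sE xt^{1}(\sK_{2},\sO_{\widetilde{\hcoY}\times\hchow})$. Your proposal correctly anticipates that the answer is obtained by dualizing something and that $D$-supported corrections matter, but without the resolution of $\sI_{\Delta}$ and the sheaf $\sK$ there is no path from the Koszul complex of $(\Delta')^{t}$ to the stated three-step resolution.
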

We will show this theorem in the appendix \ref{app:B}.
We remark that (dual) Lefschetz collections in
$\sD^b(\widetilde{\hcoY})$ and $\sD^b(\hchow)$
can be read off from 
the locally free resolution (\ref{eqnarray:onVs'}).
We have shown this fact in \cite[Cor.~3.3 and 5.11]{DerSym}.
Thus we expect $\sP$ will
induce the homological projective duality
between (suitable noncommutative resolutions of) $\hcoY$ and $\chow$
with respect to these (dual) Lefschetz collections.

In our proof of Theorem \ref{thm:main},
we need the restrictions of 
$\sP$ and their locally free resolutions 
(\ref{eqnarray:onVs'}) over fibers of $\Vs\to \hchow$ and 
$\Vs\to \widetilde{\hcoY}$. Here we only state the results postponing their derivations
to the appendix \ref{app:B} (Proposition \ref{prop:restP}). 

For $x\in \hchow$ and $y\in \widetilde{\hcoY}$,
we denote by $\Vs_x$ and $\Vs_y$ 
the fibers of $\Vs\to \hchow$ over $x$ and 
$\Vs\to \widetilde{\hcoY}$ over $y$, respectively.
Let \[
\iota_x\colon \Vs_x\hookrightarrow \hchow,\
\iota_y\colon \Vs_y\hookrightarrow \widetilde{\hcoY}
\] be
the natural inclusions.
We set 
\[
\sP_x:=\sP|_{\Vs_x},\ \sP_y:=\sP|_{\Vs_y}. 
\]

\begin{prop}
\label{prop:restP'}
The locally free resolution $(\ref{eqnarray:onVs'})$
restricts to $\widetilde{\hcoY}$ for any $x\in \hchow$. Namely, the following sequence is exact on $\widetilde{\hcoY}:$
\begin{eqnarray}
\label{eqnarray:onVsx}
0\to
\sO_{\widetilde{\hcoY}}^{\oplus 2} 
\to
\widetilde{\sT}^{\oplus 2}
\to
\widetilde{\sQ} \oplus 
\widetilde{\sS}^*_L
\to
{\iota_x}_*\sP_x\to 0.
\end{eqnarray}
Similarly, for $y\in \widetilde{\hcoY}\setminus \Prt_{\sigma}$,
the following sequence is exact on $\hchow:$ 
\begin{eqnarray}
\label{eqnarray:onVsy}
0\to
\Omega^1_{\hchow/\mathrm{G}(2,V)}(-L)
\to
\sF^*(-H)^{\oplus 4}
\to\\
\sO_{\hchow}^{\oplus 3}\oplus 
\sO_{{\hchow}}(L-H)^{\oplus 3}
\to
{\iota_y}_*\sP_y\to 0.
\nonumber
\end{eqnarray}
\end{prop}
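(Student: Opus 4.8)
The plan is to deduce both exactness statements from a single principle: a finite locally free resolution of a sheaf that is flat over a base restricts, fibrewise, to a locally free resolution of the restriction. First I would record the shapes of the restricted complexes. Since every term of the resolution $(\ref{eqnarray:onVs'})$ is an external tensor product of bundles, restriction to a slice $\widetilde{\hcoY}\times\{x\}$ is automatically a complex of locally free sheaves: fixing $x\in\hchow$ trivialises the line-bundle twists and replaces each bundle on $\hchow$ by its rank, so that $\widetilde{\sQ}\boxtimes\sO_{\hchow}\rightsquigarrow\widetilde{\sQ}$, $\widetilde{\sS}^*_L\boxtimes\sO_{\hchow}(L-H)\rightsquigarrow\widetilde{\sS}^*_L$, $\widetilde{\sT}\boxtimes\sF^*(-H)\rightsquigarrow\widetilde{\sT}^{\oplus2}$ (as $\rank\sF=2$), and $\sO_{\widetilde{\hcoY}}\boxtimes\Omega^1_{\hchow/\mathrm{G}(2,V)}(-L)\rightsquigarrow\sO_{\widetilde{\hcoY}}^{\oplus2}$ (as $\hchow\to\mathrm{G}(2,V)$ has relative dimension $2$); this is exactly the shape of $(\ref{eqnarray:onVsx})$. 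Dually, fixing $y\in\widetilde{\hcoY}$ and using $\rank\widetilde{\sQ}=\rank\widetilde{\sS}^*_L=3$ and $\rank\widetilde{\sT}=4$ (all constant, $\widetilde{\sT}$ being locally free) reproduces $(\ref{eqnarray:onVsy})$.

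With the shapes settled, the restricted complex over $x$ computes the derived restriction of ${\iota_\Vs}_*\sP$ to the slice; it is right-exact with cokernel ${\iota_x}_*\sP_x$ by construction, and its exactness in the remaining degrees is equivalent to the vanishing of the higher $\mathcal{T}or$-sheaves along the slice, that is, to flatness of ${\iota_\Vs}_*\sP$ over $\hchow$. Likewise the statement on $\hchow$ reduces to flatness of ${\iota_\Vs}_*\sP$ over $\widetilde{\hcoY}\setminus\Prt_\sigma$. So the whole proposition reduces to these two flatness assertions. For the first I would use that the universal family of hyperplane sections $\Vs\to\hchow$ is flat, so that by the local criterion it suffices to check that $\sP_x$ is for every $x$ a torsion-free rank-$2$ sheaf on $\Vs_x$ whose Hilbert polynomial is independent of $x$. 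The fibre $(\Delta')^t_x=\{y:[l_x]\in\mathrm{P}_y\}$ of $(\ref{eq:lxPy})$ is, by Proposition $\ref{cla:G25}$, the $\mathrm{G}(2,5)$-fibre over the fixed point $[l_x]\in\mathrm{G}(2,V)$, hence of constant dimension; intersecting with the flat family $\Vs$ and feeding the result into the syzygy description $(\ref{eq:P*})$ of $\sP$ shows that the numerical type of $\sP_x$ does not jump, which gives flatness over the connected reduced base $\hchow$.

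For flatness over $\widetilde{\hcoY}\setminus\Prt_\sigma$ the decisive input is Proposition $\ref{prop:cobu}$: on this open set $\Lpi_{\widetilde{\Zpq}}\colon\widetilde{\Zpq}\to\widetilde{\hcoY}$ is an honest conic bundle, so the fibre $(\Delta')^t_y=\{x:[l_x]\in\mathrm{P}_y\}$ pulls back a genuine conic in $\mathrm{G}(2,V)$ of constant dimension, and the same constancy-of-Hilbert-polynomial argument yields flatness, whence exactness of $(\ref{eqnarray:onVsy})$. By contrast, over $\Prt_\sigma$ the conic degenerates to a $\sigma$-plane, so the fibre dimension, and with it the numerical type of $\sP_y$, jumps; this is precisely why $\Prt_\sigma$ must be excised from the second statement.

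I expect the main obstacle to be the flatness verification itself, specifically controlling the \emph{reflexive}, but not locally free, sheaf $\sP$ near its degeneration loci. One must ensure that restricting to a fibre neither drops the rank of $\sP_x$ (equivalently, that the leftmost arrow of the restricted complex remains injective) nor introduces spurious lower-dimensional torsion into ${\iota_x}_*\sP_x$. Concretely, the delicate point is to show that $\Vs_x$ (resp. $\Vs_y$) meets the non-locally-free locus of $\sP$ properly, so that no embedded components appear; this is exactly where the genericity hypotheses and, for the second statement, the exclusion of $\Prt_\sigma$ enter in an essential way, and is presumably carried out in the appendix $\ref{app:B}$ by the explicit local computation underlying Proposition $\ref{prop:restP}$.
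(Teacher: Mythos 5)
Your reduction of the proposition to the flatness of ${\iota_{\Vs}}_*\sP$ over $\hchow$ (resp.\ over $\widetilde{\hcoY}\setminus\Prt_{\sigma}$) is correct, the rank bookkeeping for the two restricted complexes is right, and this reduction is indeed how the paper proceeds: its proof splits (\ref{eqnarray:onVs'}) into short exact sequences and applies the local flatness criterion \cite[Thm.~22.5]{M} once flatness of $\sP$ (Proposition \ref{prop:flatP}) is available. The gap is in your proposed proof of flatness, which is where all the work lies. First, your geometric input is attached to the wrong object: you invoke constancy of the fibres of $(\Delta')^t$, but in the $\widetilde{\hcoY}$-direction these fibres are \emph{not} equidimensional away from $\Prt_{\sigma}$ — they jump along $F_{\rho}$, where the component $\mP(\Lrho_{\widetilde{\hcoY}}^*\sS|_{F_{\rho}})$ of $\widetilde{\Zpq}^t$ contributes an entire $\rho$-plane instead of a conic. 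What governs $\sP$ is $\Delta'=(\Delta')^t\cap\Vs=\Delta\cup D$, and what the paper actually establishes and uses is the flatness of the two components separately ($\Delta\to\hchow$ in Proposition \ref{prop:flat}, $\Delta\to\widetilde{\hcoY}\setminus\Prt_{\sigma}$ in Proposition \ref{prop:Deltasmooth}, $D\to\hchow$ in Lemma \ref{D}), together with the fact that $\Vs$, $\Delta$ and $D$ are cut out by regular sequences on the fibres (Propositions \ref{lem:DeltaDflat2} and \ref{lem:DeltaDflat}).

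Second, ``feeding the result into the syzygy description (\ref{eq:P*})'' cannot be done directly: $\sP$ is defined as the \emph{dual} of the kernel of $\widetilde{\sQ}^*\boxtimes\sO_{\hchow}|_{\Vs}\to\sJ(L)$, and restriction to a fibre commutes neither with taking that kernel (a $\mathrm{Tor}_1(\sJ(L),\sO_{\Vs_x})$ term intervenes) nor with dualisation, since $\sP$ is only reflexive. The missing idea is Proposition \ref{prop:dualP*}: one dualises (\ref{eq:P*}) globally, computes all the $\sE xt$ sheaves using the decomposition $\Delta'=\Delta\cup D$ and the codimensions of $\Delta$ and $D$, and arrives at the four-term sequence (\ref{eq:P}), which presents $\sP$ as an extension of $\omega_{\Delta}\otimes\omega_{\Vs}^{-1}\otimes\sO_{\hchow}(-L)$ — a sheaf living on the flat family $\Delta$ — by a cokernel of locally free sheaves on $\Vs$; flatness of $\sP$ then follows formally. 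Your alternative, constancy of the Hilbert polynomial of $\sP_x$, would in any case require computing $\chi(\sP_x(t))$ for \emph{every} $x$, which without (\ref{eq:P}) or the very resolution you are trying to restrict is circular; you correctly identify the delicate point but defer exactly the step that constitutes the proof.
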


\subsection{Components of $\Delta'$}
\label{sub:Delta'}
By the very definition of $(\Delta')^t$, it contains
\begin{equation}
\label{eq:defDelta}
\Delta:=\widetilde{\Zpq}\times_{\mathrm{G}(2,V)} \hchow.
\end{equation}
We may consider $\Delta$ to be contained in $\widetilde{\hcoY}\times \hchow$
as we did for $(\Delta')^t$.
Now we give descriptions of $\Delta$, which 
will be needed in our proofs of Theorems \ref{thm:main}
and \ref{thm:IK}
(Lemmas \ref{prop:Delta2descrip} and \ref{Delta1fiber}).
It is convenient to understand the definition of $\Delta$
step by step as summerized in the following diagram:

\vspace{5pt}

\begin{equation}
\label{eq:constDelta}
\xymatrix{
\widetilde{\Zpq}\times \hchow\ar[d]_{\footnotesize{\text{blow-up}}} & \Delta:=\widetilde{\Zpq}\times_{\mathrm{G}(2,V)}\hchow\ar@{_{(}->}[l]
\ar[d]_{\footnotesize{\text{blow-up}}}\ar@{^{(}->}[r] & 
\widetilde{\hcoY}\times\hchow\\
\widehat{\Zpq}\times \hchow\ar[d]_{\footnotesize{\text{$\mathrm{G}(2,5)$-bundle}}} & \widehat{\Delta}:=\widehat{\Zpq}\times_{\mathrm{G}(2,V)} \hchow\ar@{_{(}->}[l]\ar[d]_{\footnotesize{\text{$\mathrm{G}(2,5)$-bundle}}} \\
\mathrm{G}(2,V)\times \hchow\ar[d] & \Delta_G:=\mathrm{G}(2,V)\times_{\mathrm{G}(2,V)} \hchow\simeq \hchow\ar@{_{(}->}[l]\ar[d]\\
   \mathrm{G}(2,V) \times \mathrm{G}(2,V) & \Delta_{0}:=
\mathrm{G}(2,V)\times_{\mathrm{G}(2,V)} \mathrm{G}(2,V)\ar@{_{(}->}[l]}
\end{equation}
We note that $\widehat{\Delta}$ is
a $\mathrm{G}(2,5)$-bundle over $\Delta_G$
since so is $\widehat{\Zpq}$ over $\mathrm{G}(2,V)$ by Proposition \ref{cla:G25}.
In particular, $\widehat{\Delta}$ is smooth.
Then we see that  
${\Delta}$
is the blow-up of
$\widehat{\Delta}$ along 
$\Zpq_{\rho}\times_{\mathrm{G}(2,V)}\hchow$
since $\widetilde{\Zpq}\to \widehat{\Zpq}$ is the blow-up along
$\Zpq_{\rho}$.
Therefore
\begin{equation}
\label{eq:deltasmooth}
\text{${\Delta}$ is smooth}
\end{equation}
since
$\Zpq_{\rho}\times_{\mathrm{G}(2,V)}\hchow$
 is 
a $\mP^1$-subbundle of $\widehat{\Delta}\to \hchow$ over $\hchow$ by Proposition \ref{cla:P1bdl}.
In particular, we have shown 
\begin{prop}
\label{prop:flat}
The fiber of $\Delta$ 
over a point $x\in \hchow$ is isomorphic to the blow-up of $\mathrm{G}(2,5)$ along a conic.
In particular, $\Delta\to \hchow$ is flat.
\end{prop}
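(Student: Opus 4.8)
The plan is to deduce both assertions from the description of $\Delta$ recorded in the diagram (\ref{eq:constDelta}), working fibrewise over $\hchow$. Recall from that diagram that $\widehat{\Delta}=\widehat{\Zpq}\times_{\mathrm{G}(2,V)}\hchow$ is a $\mathrm{G}(2,5)$-bundle over $\hchow$ by Proposition \ref{cla:G25}, that $\Delta$ is the blow-up of $\widehat{\Delta}$ along the center $C:=\Zpq_{\rho}\times_{\mathrm{G}(2,V)}\hchow$, and that $C\to\hchow$ is a $\mP^1$-subbundle of $\widehat{\Delta}$ by Proposition \ref{cla:P1bdl}. So almost all of the needed geometry is already in place, and the task is to organize it.

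First I would identify the fibre of $\Delta\to\hchow$ over a point $x\in\hchow$, writing $g(x)=[l_x]\in\mathrm{G}(2,V)$. Restricting over $x$, the fibre of $\widehat{\Delta}\to\hchow$ is $\Lrho_{\widehat{\Zpq}}^{-1}([l_x])\simeq\mathrm{G}(2,5)$ by Proposition \ref{cla:G25}, while the center restricts to $\Lrho_{\Zpq_{\rho}}^{-1}([l_x])$, which by Proposition \ref{cla:P1bdl} is a conic inside this $\mathrm{G}(2,5)$. The step demanding care is that the formation of the blow-up commutes with restriction to the fibre over $x$. This holds because $C$, being a $\mP^1$-subbundle, is smooth over $\hchow$ and hence regularly embedded in the smooth $\widehat{\Delta}$: its conormal sheaf is then locally free, each power $\mathcal{I}_C^n$ is flat over $\hchow$, and therefore $(\mathrm{Bl}_C\widehat{\Delta})_x=\mathrm{Bl}_{C_x}(\widehat{\Delta}_x)$. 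This yields that $\Delta_x$ is the blow-up of $\mathrm{G}(2,5)$ along a conic, the first assertion.

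For the flatness statement, the point is that all fibres are now identified, hence equidimensional of dimension $\dim\mathrm{G}(2,5)=6$, so that $\dim\Delta=\dim\hchow+6$. Since $\Delta$ is smooth by (\ref{eq:deltasmooth}) and $\hchow\simeq\mP({\ft S}^2\sF)$ is smooth, $\Delta\to\hchow$ is a morphism of smooth varieties all of whose fibres have the expected dimension $\dim\Delta-\dim\hchow$; miracle flatness then gives that $\Delta\to\hchow$ is flat. Alternatively, the isomorphism type of the fibres being constant, one could invoke constancy of the Hilbert polynomial over the integral base $\hchow$.

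I do not anticipate a serious obstacle here, since the essential input is already contained in Propositions \ref{cla:G25} and \ref{cla:P1bdl} together with (\ref{eq:deltasmooth}). The only genuinely delicate point is the base-change behaviour of the blow-up over $\hchow$, which is exactly why I would highlight the flatness—equivalently, the regular embeddedness relative to $\hchow$—of the center $C$ before restricting to a fibre.
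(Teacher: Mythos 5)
Your argument is correct and coincides with the paper's own reasoning, which likewise reads the fiberwise statement off the diagram (\ref{eq:constDelta}) using Propositions \ref{cla:G25} and \ref{cla:P1bdl}: $\widehat{\Delta}\to\hchow$ is a $\mathrm{G}(2,5)$-bundle and $\Delta$ is its blow-up along the $\mP^1$-subbundle $\Zpq_{\rho}\times_{\mathrm{G}(2,V)}\hchow$, whose fibers are conics. Your explicit justification that the blow-up commutes with restriction to fibers (flatness of the center) and the appeal to miracle flatness merely spell out steps the paper leaves implicit.
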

Considering
$\Delta\subset \widetilde{\hcoY}\times \hchow$,
we describe 
the natural morphisms
${\Delta}\to \hchow$
and ${\Delta}\to \widetilde{\hcoY}$ in the following proposition:
\begin{prop}
\label{prop:Deltasmooth}
\begin{enumerate}[$(1)$]
\item
The fiber of ${\Delta}\to \hchow$
over a point $x\in \hchow$ parameterizes $\tau$- and $\rho$-conics containing $[l_x]$
and $\sigma$-planes containing $[l_x]$, where $[l_x]:=g(x)\in \mathrm{G}(2,V)$.
In particular, $\Delta\subset \Vs$.
\item
The fiber of ${\Delta}\to \widetilde{\hcoY}$
over a point $y\in \widetilde{\hcoY}$
is the $\mP^2$-bundle $g^{-1}(q_y)$
if $y\not\in \Prt_{\sigma}$,
where $q_y$ is the conic corresponding to $y$,
and
is the $\mP^2$-bundle $g^{-1}({\rm{P}}_y)$
if $y\in \Prt_{\sigma}$, where ${\rm{P}}_y$ is 
the $\sigma$-plane corresponding to $y$.
In particular,
${\Delta}\to \widetilde{\hcoY}$ is flat
over $\widetilde{\hcoY}\setminus \Prt_{\sigma}$.
\end{enumerate}
\end{prop}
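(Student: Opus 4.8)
The plan is to read off both fibers directly from the fiber-product description $\Delta=\widetilde{\Zpq}\times_{\mathrm{G}(2,V)}\hchow$, using the two projections that realize $\Delta\subset\widetilde{\hcoY}\times\hchow$, the fiber structures of $\widetilde{\Zpq}\to\widetilde{\hcoY}$ and $\widetilde{\Zpq}\to\mathrm{G}(2,V)$ already established, and the fact that $g\colon\hchow=\mP({\ft S}^2\sF)\to\mathrm{G}(2,V)$ is a $\mP^2$-bundle (since ${\ft S}^2\sF$ has rank three). I would prove $(2)$ first, since the inclusion $\Delta\subset\Vs$ in $(1)$ is most cleanly deduced from the fiber description of $(2)$.

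For $(2)$, the fiber of $\Delta\to\widetilde{\hcoY}$ over $y$ is $\Lpi_{\widetilde{\Zpq}}^{-1}(y)\times_{\mathrm{G}(2,V)}\hchow$. For $y\notin\Prt_{\sigma}$, Proposition \ref{prop:cobu} identifies $\Lpi_{\widetilde{\Zpq}}^{-1}(y)$ with the conic $q_y\subset\mathrm{G}(2,V)$; for $y\in\Prt_{\sigma}$, the fiber of $\Lpi_{\widehat{\Zpq}}$ over the corresponding $\sigma$-plane is $\mathrm{P}_y\cap\mathrm{G}(2,V)=\mathrm{P}_y$ itself (a whole $\mP^2$, because a $\sigma$-plane is contained in $\mathrm{G}(2,V)$), and this persists on $\widetilde{\Zpq}$ since $\widehat{\hcoY}$ and $\widetilde{\hcoY}$ are isomorphic near $\Prt_{\sigma}$ (Subsection \ref{Z2Y2}). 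Pulling back along the $\mP^2$-bundle $g$ then yields $g^{-1}(q_y)$, resp.\ $g^{-1}(\mathrm{P}_y)$, as claimed. For flatness I would observe that over $\widetilde{\hcoY}\setminus\Prt_{\sigma}$ every fiber is a $\mP^2$-bundle over a (possibly degenerate) conic, hence of constant dimension $3$; as $\Delta$ is smooth by (\ref{eq:deltasmooth}) and $\widetilde{\hcoY}$ is smooth, miracle flatness applies. Over $\Prt_{\sigma}$ the fiber dimension jumps to $4$, which is exactly why flatness is asserted only on the complement.

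For $(1)$, the fiber of $\Delta\to\hchow$ over $x$ is the fiber of $\widetilde{\Zpq}\to\mathrm{G}(2,V)$ over $g(x)=[l_x]$. Since $\widetilde{\Zpq}\to\widehat{\Zpq}$ is the blow-up along $\Zpq_{\rho}$, Propositions \ref{cla:G25} and \ref{cla:P1bdl} show this fiber is the blow-up of the $\mathrm{G}(2,5)$-fiber of $\widehat{\Zpq}$ along its conic, consistently with Proposition \ref{prop:flat}; composing with $\Lpi_{\widetilde{\Zpq}}$ and invoking the classification of plane conics in $\mathrm{G}(2,V)$ from \cite[\S 4]{Geom}, the resulting points are precisely the $\tau$- and $\rho$-conics and the $\sigma$-planes through $[l_x]$, where the $\rho$-conics are produced by the exceptional divisor that replaces the $\rho$-planes through $[l_x]$ (the blown-up conic). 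For $\Delta\subset\Vs$ I would argue geometrically: for $(y,x)\in\Delta$ with $y\notin\Prt_{\sigma}$ the line $l_x$ lies in the conic $q_y$, so $l_x\subset Q_y$; writing $x_1,x_2$ for the two points spanning $l_x$ and $q,B$ for the quadratic and polar forms of $Q_y$, the identity $q(sx_1+tx_2)=s^2q(x_1)+2stB(x_1,x_2)+t^2q(x_2)$ together with $q(x_1)=q(x_2)=0$ forces $B(x_1,x_2)=0$. This is exactly the conjugacy condition $\langle Q_y,p_x\rangle=0$ defining $\Vs$, where $p_x=x_1\cdot x_2\in{\ft S}^2V$ is the image of $x$ and $Q_y\in{\ft S}^2V^*$ the image of $y$; since $\Delta$ is irreducible (smooth and fibered over $\hchow$ with irreducible fibers by Proposition \ref{prop:flat}) and $\Vs$ is closed, taking closure over $\Prt_{\sigma}$ gives $\Delta\subset\Vs$.

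The hard part will be the conic-type bookkeeping in $(1)$: verifying precisely which planes through $[\wedge^2V_2]$ cut out $\tau$-, $\rho$-, or $\sigma$-conics, and checking that the blow-up along $\Zpq_{\rho}$ converts exactly the $\rho$-planes through $[l_x]$ into $\rho$-conics while leaving the $\sigma$-planes untouched. This depends entirely on the detailed geometry of conics and of $\rho$-/$\sigma$-planes in $\mathrm{G}(2,V)$ developed in \cite[\S 4]{Geom}, so the remaining work is careful translation of those definitions rather than any new estimate.
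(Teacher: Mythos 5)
Your proposal is correct and follows essentially the same route as the paper, which simply unwinds the fiber product $\Delta=\widetilde{\Zpq}\times_{\mathrm{G}(2,V)}\hchow$ and reads off both fiber descriptions from Proposition \ref{prop:cobu}; your polarization computation $B(x_1,x_2)=0$ plus the closure argument is just a direct re-derivation of the orthogonality statement that the paper obtains by citing \cite[Prop.~4.20]{Geom}. The extra details you supply (miracle flatness for the last claim of (2), the dimension count, and the identification of the exceptional divisor over $\Zpq_\rho$ with the $\rho$-conics) are exactly the steps the paper leaves implicit.
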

\begin{proof}
It is easy to derive the asserions
from Proposition \ref{prop:cobu}.
We only note that 
$\Delta\subset \Vs$ by the first asserion of (1) and 
\cite[Prop.~4.20]{Geom}.
\end{proof}

We note that the diagram (\ref{eq:constDelta}) is useful
to construct a locally free resolution of
the ideal sheaf of $\Delta$ in
$\widetilde{\hcoY}\times \hchow$ (Theorem \ref{thm:resolY}).

Now we will obtain the irreducible decomposition of $\Delta'$.
By Proposition \ref{prop:Deltasmooth} (1),
we have $\Delta\subset (\Delta')^t\cap \Vs=\Delta'$.
We will see that $\Delta$ is an irreducible component of $\Delta'$.
We define a subvariety $D$ of 
$F_{\rho}\times \hchow$, which is shown to be another 
irreducible component of $\Delta'$.
We denote a point of $F_{\rho}$ by $[q_{V_1}]$, where 
$q_{V_1}$ is a $\rho$-conic
contained in the $\rho$-plane $\rm{P}_{V_1}$ for some $[V_1]\in \mP(V)$,
and
a point of $\hchow$ by $[\eta]$, where $\eta$ is the $0$-dimensional
subscheme of $\mP(V)$.  
We define the subvariety $D$
of $F_{\rho}\times \hchow$ by
\[
D:=\{([q_{V_1}], [\eta])\mid \Supp \eta \ \text{contains}\ [V_1]\}.
\]  
By the definitions of $D$ and $(\Delta')^t$, we immediately see that
$D\subset (\Delta')^t$.
By \cite[Prop.~4.20]{Geom}, we also see that
$D\subset \Vs$. Therefore we have
\[
D\subset (\Delta')^t\cap\Vs=\Delta'.
\]
We give a description of $D$.
Recall that $E_f$ is the exceptional divisor of $f\colon \hchow\to \chow$.

\begin{lem}
\label{D}
The fiber of 
the natural morphism $D\to F_{\rho}$ over a point $[q_{V_1}]\in F_{\rho}$
is the blow-up of $\mP(V)$ at $[V_1]$.
This fiber is also a $\mP^1$-bundle over the $\rho$-plane
${\rm P}_{V_1}$.
Moreover, $D$ is a $\mP^5$-bundle over 
the double cover of $\hchow$ branched along $E_f$.
In particular, 
$D$ is irreducible and smooth, and
$D\to F_{\rho}$ and 
$D\to \hchow$ are flat.
\end{lem}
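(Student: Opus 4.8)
The two projections $D\to F_{\rho}$ and $D\to \hchow$ control the entire statement, and the key device in my plan is to factor the second one through an auxiliary incidence variety that will be the asserted double cover. Concretely, I would introduce
\[
\widehat{D}:=\{([V_1],[\eta])\mid [V_1]\in \Supp\eta\}\subset \mP(V)\times \hchow,
\]
with its two projections to $\mP(V)=\Prt_{\rho}$ and to $\hchow$. Sending $([q_{V_1}],[\eta])$ to $([V_1],[\eta])$ defines a morphism $D\to \widehat{D}$, and I claim $D\simeq \widehat{D}\times_{\mP(V)}F_{\rho}$: the fibre of $D\to\widehat D$ over $([V_1],[\eta])$ is exactly the set of $\rho$-conics $q_{V_1}$ in the $\rho$-plane $\mathrm{P}_{V_1}$, i.e. the fibre of $F_{\rho}\to \Prt_{\rho}=\mP(V)$ over $[V_1]$. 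Since $F_{\rho}$ parameterizes $\rho$-conics in $\mathrm{P}_{V_1}\simeq \mP^2$, it is a $\mP^5$-bundle over $\mP(V)$, so $D$ is the pull-back of this $\mP^5$-bundle along $\widehat D\to \mP(V)$. Granting this identification, the $\mP^5$-bundle assertion reduces to recognizing $\widehat D\to\hchow$ as the double cover branched along $E_f$.

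For the fibres of $D\to F_{\rho}$, I would fix $[q_{V_1}]$, hence $[V_1]\in\mP(V)$, so that the fibre is $\{[\eta]\in\hchow\mid [V_1]\in\Supp\eta\}$. Writing $[\eta]$ as a line $l\ni [V_1]$ together with a length-two subscheme $\eta\subset l$ with $[V_1]$ in its support, I record the residual point $p$ (so $\eta=[V_1]+p$ on $l$). When $p\neq [V_1]$ the line is recovered as the join of $[V_1]$ and $p$; the locus $p=[V_1]$, i.e. $\eta=2[V_1]$, contributes for each line $l\ni[V_1]$ the double point of $l$ at $[V_1]$. Thus $[\eta]\mapsto p$ realizes the fibre as $\mP(V)$ away from $[V_1]$, with the $\mP^2=\mathrm{P}_{V_1}$ of lines through $[V_1]$ glued in over $[V_1]$; this is precisely $\mathrm{Bl}_{[V_1]}\mP(V)$, giving $(1)$. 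The alternative projection $[\eta]\mapsto l\in\mathrm{P}_{V_1}$ presents the same fibre as the family of length-two subschemes of $l$ through $[V_1]$, a $\mP^1$ over each $l$; this is the $\mP^1$-bundle structure of $(2)$, matching the linear projection $\mathrm{Bl}_{[V_1]}\mP(V)\to \mP(V/V_1)=\mathrm{P}_{V_1}$.

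The step I expect to be the main obstacle is identifying $\widehat D\to\hchow$ with the double cover branched along $E_f$ and checking its smoothness and irreducibility. Over $[\eta]\notin E_f$ the scheme $\eta$ is reduced, giving exactly two choices of support point $[V_1]$, whereas over $E_f$ (where $\eta$ is non-reduced) there is only one; hence the map is finite of degree two, ramified exactly along $E_f$. To make this clean I would realize $\widehat D$ as the universal length-two subscheme $\widetilde\eta\subset \mP(g^*\sF)$ in the universal line $\mP(g^*\sF)\to\hchow$, a relative divisor of degree two, so that $\widetilde\eta\to\hchow$ is a double cover whose branch locus is the discriminant, namely the non-reduced locus $E_f$. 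As $\hchow$ and $E_f$ are smooth, $\widehat D$ is smooth; it is irreducible because the monodromy interchanging the two support points of a generic reduced $\eta$ is transitive (equivalently, $\widetilde\eta$ admits no splitting into two sections, there being no global ordering of the two points). Then $D\simeq\widehat D\times_{\mP(V)}F_{\rho}$ is a $\mP^5$-bundle over $\widehat D$, hence smooth and irreducible, proving $(3)$ and the smoothness and irreducibility in $(4)$. Finally, flatness is a matter of miracle flatness: $D\to\hchow$ factors as the $\mP^5$-bundle $D\to\widehat D$ followed by the finite flat degree-two cover $\widehat D\to\hchow$, while $D\to F_{\rho}$ has all fibres isomorphic to $\mathrm{Bl}_{[V_1]}\mP(V)$, of constant dimension $\dim D-\dim F_{\rho}=3$, with $D$ and $F_{\rho}$ smooth; both flatness statements follow.
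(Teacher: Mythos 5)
Your proof is correct and follows essentially the same route as the paper's: the paper likewise reads off the fibers of $D\to F_{\rho}$ and $D\to \hchow$ directly from the definition (one or two copies of the $\mP^5$-fiber of $F_{\rho}\to \mP(V)$ according to whether $\eta$ is non-reduced or reduced) and obtains the double cover as the Stein factorization of $D\to\hchow$. Your incidence variety $\widehat{D}$ and the identification $D\simeq \widehat{D}\times_{\mP(V)}F_{\rho}$ merely make that Stein factorization explicit, and fill in details (the residual-point description of the fibers of $D\to F_{\rho}$, the monodromy argument for irreducibility, miracle flatness) that the paper dismisses as clear from the definition.
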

\begin{proof}
The first assertion is clear from the definition of $D$.

We consider 
the natural projection from $D$ to $\hchow$. 
Let $[\eta]\in \hchow$.
If $\Supp \eta$ consists of two points $x_1, x_2\in \mP(V)$,
then
the fiber of $D\to \hchow$ over $[\eta]$ is the union of
the fibers of $F_{\rho}\to \mP(V)$ over $x_1$ and $x_2$.
If $\Supp \eta$ consists of one point $x \in \mP(V)$,
then
the fiber of $D\to \hchow$ over $[\eta]$ is 
the fiber of $F_{\rho}\to \mP(V)$ over $x$.
In particular, $D\to \hchow$ is surjective.
The Stein factorization of 
$D\to \hchow$ factors through 
the double cover of $\hchow$ branched along $E_f$.
\end{proof}

\begin{prop}
$\Delta'=\Delta\cup D$ is the irreducible decomposition.
\end{prop}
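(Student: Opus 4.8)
The plan is to intersect with the divisor $\Vs$ the decomposition of $(\Delta')^t$ coming from the two components of the total transform $\widetilde{\Zpq}^t$. Recall that $\widetilde{\Zpq}^t=\widehat{\Zpq}\times_{\widehat{\hcoY}}\widetilde{\hcoY}$ is reduced and decomposes as $\widetilde{\Zpq}^t=\widetilde{\Zpq}\cup E_{\rho}$, where $\widetilde{\Zpq}$ is the strict transform and $E_{\rho}:=\Zpq_{\rho}\times_{\Prt_{\rho}}F_{\rho}$ is the exceptional part, both irreducible of dimension $10$. Base-changing along the smooth ($\mP^2$-bundle) morphism $g\colon\hchow\to\mathrm{G}(2,V)$ preserves this, giving
\begin{equation*}
(\Delta')^t=\Delta\cup\Delta_E,\qquad \Delta_E:=E_{\rho}\times_{\mathrm{G}(2,V)}\hchow .
\end{equation*}
Since $\Delta_E\to E_{\rho}$ is the pull-back of $g$, it is a $\mP^2$-bundle over the irreducible $E_{\rho}$, so $\Delta_E$ is irreducible of dimension $12$, as is $\Delta$. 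Inside $\widetilde{\hcoY}\times\hchow$ one reads off $\Delta_E=\{([q_{V_1}],x)\mid [V_1]\in l_x\}$, where $[q_{V_1}]\in F_{\rho}$ lies over $[V_1]\in\Prt_{\rho}=\mP(V)$ and $[l_x]:=g(x)$.

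Because $\Delta\subset\Vs$ (Proposition \ref{prop:Deltasmooth}(1)), cutting by $\Vs$ yields $\Delta'=\Delta\cup(\Delta_E\cap\Vs)$; and since $\Supp\eta\subset l_x$ gives $D\subset\Delta_E$, while $D\subset\Vs$ is known, we have $D\subseteq\Delta_E\cap\Vs$. Thus the whole statement reduces to the single set-theoretic inclusion
\begin{equation*}
\Delta_E\cap\Vs\subseteq\Delta\cup D. \tag{$\star$}
\end{equation*}
Granting $(\star)$ we obtain $\Delta'\subseteq\Delta\cup D$, hence equality with the known inclusion $\Delta\cup D\subseteq\Delta'$. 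As $\Delta$ is a component of $(\Delta')^t$ contained in $\Vs$, it is a component of $\Delta'$; and $D$ (irreducible of dimension $11$ by Lemma \ref{D}) is a second one, since $D\not\subseteq\Delta$: a general $([q_{V_1}],[\eta])\in D$ has $[l_{\eta}]$ a general line through $[V_1]$, which does not lie on the conic $q_{V_1}$, whereas the fibre of $\Delta\to\widetilde{\hcoY}$ over $[q_{V_1}]$ is $g^{-1}(q_{V_1})$. This gives the irreducible decomposition.

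The crux — and the step I expect to be the main obstacle — is $(\star)$, an incidence computation valid uniformly over $F_{\rho}$. Here it matters that $F_{\rho}$ is disjoint from $\Prt_{\sigma}$: the two families $\Prt_{\rho},\Prt_{\sigma}$ of planes in $\mathrm{G}(2,V)\subset\mP(\wedge^2 V)$ are disjoint in $\widehat{\hcoY}$ and $\Prt_{\sigma}$ is untouched by the blow-up of $\Prt_{\rho}$, so every $y\in F_{\rho}$ has $y\notin\Prt_{\sigma}$ and the fibre of $\Delta\to\widetilde{\hcoY}$ over $y=[q_{V_1}]$ is $g^{-1}(q_{V_1})$ by Proposition \ref{prop:Deltasmooth}(2). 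Now fix $([q_{V_1}],x)\in\Delta_E\cap\Vs$; write $\eta_x=\{p_1,p_2\}$ for the length-two subscheme of the line $l_x\subset\mP(V)$, let $Q_y$ be the quadric attached to $y$ and $B$ its polar bilinear form, so that membership in $\Vs$ reads $B(p_1,p_2)=0$ and membership in $\Delta_E$ reads $[V_1]\in l_x$. Applying $D\subset\Vs$ to the points $\{V_1,p\}$ $(p\in\mP(V))$ of $D$ over $y$ forces $B(V_1,\cdot)\equiv 0$, i.e. $V_1\in\Ker B$.

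If $[V_1]\in\Supp\eta_x$ then $([q_{V_1}],x)\in D$ and we are done. Otherwise $[V_1]$ is distinct from $p_1,p_2$, and the condition $B(p_1,p_2)=0$ together with $V_1\in\Ker B$ and $[V_1]\in l_x$ forces $Q_y(p_1)=0$; then $Q_y(V_1)=B(V_1,p_1)=Q_y(p_1)=0$ show that the line $l_x=\langle V_1,p_1\rangle$ lies on $Q_y$. Hence $l_x$ is a line of $Q_y$ through its vertex $[V_1]$, so $[l_x]\in q_{V_1}$ by the description of $F_{\rho}$ in \cite[\S 4]{Geom}; therefore $x\in g^{-1}(q_{V_1})$ and $([q_{V_1}],x)\in\Delta$. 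This proves $(\star)$ uniformly, for reduced and non-reduced $\eta_x$ and for all ranks of $Q_y$, and with it the proposition. The delicate point throughout is the passage ``a line on $Q_y$ through its vertex is a point of $q_{V_1}$'', which rests on the precise identification of $F_{\rho}$ with the family of $\rho$-conics and their quadric cones in \cite[\S 4]{Geom}.
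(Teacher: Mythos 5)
Your proof is correct and follows essentially the same route as the paper: decompose $(\Delta')^t$ according to the two components of the total transform $\widetilde{\Zpq}^t$, reduce to showing the exceptional piece meets $\Vs$ inside $\Delta\cup D$, and settle that by the case analysis on whether $[V_1]\in\Supp\eta$ using the bilinear form $B$ (your direct computation of $Q_y(p_1)=0$ is just a rephrasing of the paper's observation that $\rank B|_{l_\eta}\le 1$ forces either $l_\eta\subset Q$ or $[V_1]\in\Supp\eta$). The only additions are cosmetic: you derive $V_1\in\Ker B$ from $D\subset\Vs$ rather than directly from the vertex description in \cite[Prop.~4.20]{Geom}, and you explicitly verify $D\not\subseteq\Delta$, which the paper leaves implicit.
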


\begin{proof}
The natural projection $(\Delta')^t\to \widetilde{\Zpq}^t$ is
a $\mP^2$-bundle since so is $\hchow\to \mathrm{G}(2,V)$.
Therefore
it holds that 
\[
(\Delta')^t=(\widetilde{\Zpq}\times_{\mathrm{G}(2,V)} \hchow) \cup
(\mP(\Lrho_{\widetilde{\hcoY}}^*\sS|_{F_{\rho}})\times_{\mathrm{G}(2,V)} \hchow)\]
since
$\widetilde{\Zpq}^t=\widetilde{\Zpq}\cup \mP(\Lrho_{\widetilde{\hcoY}}^*\sS|_{F_{\rho}})$.
We set
\[
D':=
(\mP(\Lrho_{\widetilde{\hcoY}}^*\sS|_{F_{\rho}})\times_{\mathrm{G}(2,V)} \hchow)\cap \Vs.
\]
It is clear by definition that
$\Delta\cup D\subset \Delta'=\Delta\cup D'$.
Thus we have only to show that $D'\subset \Delta\cup D$.
We may consider $D'$ is contained in $F_{\rho}\times \hchow$.
Then, by (\ref{eq:lxPy}), it holds that \[
D'=\{([q_{V_1}], [\eta])\mid 
[l_{\eta}]\in \mathrm{P}_{V_1}, ([q_{V_1}], [\eta])\in \Vs\},
\]
where $l_{\eta}$ is the line of $\mP(V)$ determined by the subscheme $\eta$.
We set $\Supp \eta=\{x_1,x_2\}$, where $x_1$ and $x_2$ may be equal.
Let $Q$ be the quadric determined by $q_{V_1}$ as in \cite[Prop.~4.20]{Geom}
and $B$ a symmetric bi-linear form defining $Q$.
Then
$([\eta], [q_{V_1}])\in \Vs$ means
that $B(x_1,x_2)=0$ by \cite[Prop.~4.20]{Geom}.
Since $[V_1]\in \Sing Q$ and $[V_1]\in l_{\eta}$,
we have $\rank B|_{l_{\eta}}=0, 1$.
If $\rank B|_{l_{\eta}}=0$,
then $l_{\eta}\subset Q$.
Therefore $[l_{\eta}]\in q_{V_1}$ and 
then $([q_{V_1}], [\eta])\in \Delta$ by Proposition \ref{prop:Deltasmooth} (1)
.
If $\rank B|_{l_{\eta}}=1$,
then $B(x_1,x_2)=0$ implies that
$x_1$ or $x_2=[V_1]$, namely,
$([\eta], [q_{V_1}])\in D$.
\end{proof}

\section{{\bf Proof of Theorem \ref{thm:main}}}
\mylabel{section:BC}
Using the coherent sheaf $\sP$,
we define the functor $\Phi_1$ appearing in Theorem \ref{thm:main}.
\begin{defn}
Noting $\widetilde{Y}\times X \subset \Vs$, 
we set
\[
\sP_1:=\sP|_{\widetilde{Y}\times X},
\]
and 
\[
\Phi_1:=\Phi_{\sP_1}\colon \sD^b(X)\to \sD^b(\widetilde{Y}).
\]
\end{defn}

In the subsection \ref{subsection:ff},
we prove the fully faithfulness of the functor $\Phi_1$
verifying
the conditions of Theorem \ref{thm:D} for $\sP_1$.
The arguments of this subsection are the same as those in \cite[\S 8]{HoTa4},
and originally go back to \cite{BC}
except technical differences in Propositions \ref{cla:tower} and \ref{cla:Hom}.

In the subsection \ref{section:full},
we show the fullness of the decomposition in 
Theorem \ref{thm:main}, which completes the proof of the theorem.
This part does not appear in \cite{HoTa4} since
in the Calabi-Yau case, 
fully faithfulness automatically implies equivalence (cf.~Theorem \ref{thm:D}).
In the course of this part,
we also essentially see the relationship between Theorem \ref{thm:main} and 
the main result of \cite{IK} (Lemma \ref{lem:IK}). 
In the proof,
we follow a philosophy of homological projective duality;

let $W$ be a codimension three linear section of $\chow$
containing $X$,
and $S$ the linear section of $\hcoY$ orthogonal to $W$;
\begin{align*}
X&\subset W\subset \chow\\
\hcoY\supset Y&  \supset S.
\end{align*}
Then a strong relationship is expected among the derived categories of
$X$, $Y$, and $W$ and $S$. 

This philosophy is indicated in \cite{K-g7} in a special case,
and in \cite{HPD1} generally. We closely follow the arguments in 
\cite{K-g7, HPD1}.

\subsection{Fully faithfulness of $\Phi_1$}
\label{subsection:ff}

We will show that $\sP_1$ is flat over $X$ in the appendix \ref{app:B}
(Proposition \ref{prop:flatP1}). 
We denote by $\sP_{x;1}$ the restriction of 
$\sP_1$ over the fiber of $\widetilde{Y}\times X\to X$ over $x$.

We verify the condition (ii) for $\sP_1$, i.e.,
the following vanishing:
\begin{equation}
\label{eq:aim}
\Ext^{\bullet}(\sP_{x_1;1}, \sP_{x_2;1})=0\
\text{for any two distinct points $x_1$ and $x_2$ of $X$}.
\end{equation}
The starting point is the following vanishing.
We follow the notation of Proposition \ref{prop:restP'}, and
denote by $(-t)$ the tensor product of
$\sO_{\widetilde{\hcoY}}(-tM)$.

\begin{prop}
\label{prop:Extvan}
For any two points $x_1$ and $x_2$ of $\hchow$,
it holds \[
\Ext^{\bullet}({\iota_{{x_1}}}_*\sP_{x_1}, {\iota_{{x_2}}}_*\sP_{x_2}(-t))=0\,
(1\leq t\leq 5).
\] 
\end{prop}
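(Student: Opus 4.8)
The plan is to reduce the statement to a finite list of vector-bundle cohomology vanishings on $\widetilde{\hcoY}$ by resolving \emph{both} arguments with the locally free resolution (\ref{eqnarray:onVsx}) of Proposition \ref{prop:restP'}. Write $\sE^\bullet$ for the complex $[\sO_{\widetilde{\hcoY}}^{\oplus2}\to\widetilde{\sT}^{\oplus2}\to\widetilde{\sQ}\oplus\widetilde{\sS}^*_L]$, placed in degrees $-2,-1,0$ and quasi-isomorphic to ${\iota_{x_1}}_*\sP_{x_1}$. Since every term of $\sE^\bullet$ is locally free, the hyper-$\Ext$ spectral sequence reads
\[
E_1^{p,q}=H^q\!\bigl(\widetilde{\hcoY},(\sE^{-p})^*\otimes{\iota_{x_2}}_*\sP_{x_2}(-t)\bigr)\;\Rightarrow\;\Ext^{p+q}\!\bigl({\iota_{x_1}}_*\sP_{x_1},{\iota_{x_2}}_*\sP_{x_2}(-t)\bigr),
\]
so it is enough to annihilate each $E_1$-term. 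Resolving ${\iota_{x_2}}_*\sP_{x_2}$ in the same way and twisting by $(-t)$ and by $(\sE^{-p})^*$, a second spectral sequence reduces the whole proposition to the vanishing
\[
H^\bullet\!\bigl(\widetilde{\hcoY},\,\sE_1^*\otimes\sE_2\otimes\sO_{\widetilde{\hcoY}}(-tM)\bigr)=0\qquad(1\leq t\leq5)
\]
for every pair $\sE_1,\sE_2$ taken from the summands $\sO_{\widetilde{\hcoY}},\widetilde{\sT},\widetilde{\sQ},\widetilde{\sS}^*_L$ occurring in the two resolutions. The answer does not depend on $x_1,x_2$, which is exactly why the same bound works for all pairs of points.

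To prove these vanishings I would push forward along the blow-up $\Lrho_{\widetilde{\hcoY}}\colon\widetilde{\hcoY}\to\widehat{\hcoY}=\rG(3,\wedge^2V)$ of the Grassmannian along $\Prt_\rho\simeq\mP(V)$. Because $\widetilde{\sQ}=\Lrho_{\widetilde{\hcoY}}^*\sQ$ and $\widetilde{\sS}^*_L=\Lrho_{\widetilde{\hcoY}}^*\sS^*_L$ are pullbacks, the terms not involving $\widetilde{\sT}$ reduce, after computing $R\Lrho_{\widetilde{\hcoY}*}\sO_{\widetilde{\hcoY}}(-tM)$ by the blow-up formula (using the expression of $M$ in terms of $\Lrho_{\widetilde{\hcoY}}^*\sO_{\widehat{\hcoY}}(1)$ and $F_\rho$ from \cite{Geom}), to the cohomology of homogeneous bundles on $\rG(3,\wedge^2V)$, where Borel--Weil--Bott gives a clean answer. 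As a check that the bound is of the right shape, consider the simplest term $\sO_{\widetilde{\hcoY}}(-tM)$: descending further to the double cover $\hcoY$ of $\mP({\ft S}^2V^*)=\mP^9$ via $\Lrho_{\hcoY}$, one has ${\Lrho_{\hcoY}}_*\sO_{\hcoY}=\sO_{\mP^9}\oplus\sO_{\mP^9}(-2)$, and $\sO_{\mP^9}(-t)\oplus\sO_{\mP^9}(-t-2)$ is acyclic exactly when both $t$ and $t+2$ lie in $[1,9]$, i.e.\ for $1\leq t\leq7$, the sharper bound $t\leq5$ being forced by the nontrivial bundle twists. This is consistent with the fact that the displayed vanishings are precisely the semiorthogonality relations of the dual Lefschetz collection on $\sD^b(\widetilde{\hcoY})$ read off from (\ref{eqnarray:onVs'}) in \cite[Cor.~3.3 and 5.11]{DerSym}.

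The main obstacle is the terms containing $\widetilde{\sT}$. Unlike $\widetilde{\sQ}$ and $\widetilde{\sS}^*_L$, this bundle is \emph{not} a pullback from $\widehat{\hcoY}$: by its defining sequence (\ref{eq:T*}) it differs from a pulled-back bundle by a contribution supported on the exceptional divisor $F_\rho$. For products such as $\widetilde{\sT}^*\otimes\widetilde{\sT}(-tM)$ or $\widetilde{\sT}^*\otimes\widetilde{\sS}^*_L(-tM)$ I would therefore break the computation into a pulled-back part, handled as above, and an $F_\rho$-supported part, whose cohomology must be computed directly on the $\mP^5$-bundle $F_\rho\to\Prt_\rho\simeq\mP(V)$ after restricting the twist $-tM$ to $F_\rho$. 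Controlling this exceptional contribution---rather than the Bott computation on $\rG(3,\wedge^2V)$---is the delicate point; once it is organized so that the two contributions either vanish separately or cancel in the spectral sequence, the required vanishings for $1\leq t\leq5$ follow, establishing the proposition, which is the announced starting point for (\ref{eq:aim}).
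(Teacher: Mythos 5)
Your reduction---resolving both $\iota_{x_1*}\sP_{x_1}$ and $\iota_{x_2*}\sP_{x_2}$ by the locally free resolution (\ref{eqnarray:onVsx}) and running the resulting spectral sequences so that everything comes down to $H^{\bullet}(\widetilde{\hcoY},\sE_1^*\otimes\sE_2(-tM))=0$ for $1\leq t\leq 5$ and all pairs of summands $\sE_i\in\{\sO_{\widetilde{\hcoY}},\widetilde{\sT},\widetilde{\sQ},\widetilde{\sS}^*_L\}$---is exactly the paper's argument, which at that point simply invokes \cite[Thm.~5.1]{DerSym}, whose content is precisely this list of vanishings (including the $\widetilde{\sT}$-terms you correctly single out as the delicate ones). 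So the proposal is correct and takes essentially the same route; the only difference is that you sketch a re-derivation of the cited vanishings via pushforward to $\mathrm{G}(3,\wedge^2 V)$ instead of quoting them.
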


\begin{proof}
The vanishing can be derived in a standard way from
(\ref{eqnarray:onVsx}) and \cite[Thm.~5.1]{DerSym}.
\end{proof}

To step forward, we need cut out $\sP_{x_i;1}$ from $\sP_{x_i}$ ($i=1,2$) in an appropriate way as in the following proposition, which will be derived in
the appendix \ref{app:B} from
Proposition \ref{prop:flatP1}
(the subsection \ref{cutting}):

\begin{prop}
\mylabel{cla:tower}
There exists a ladder of complete intersections 
of $\widetilde{\hcoY}$
by members of $|M_{\widetilde{\hcoY}}|;$ 
\[
Y_1\subset Y_2\subset \cdots \subset Y_6\subset Y_7\
(\dim Y_i=2+i),
\]
and coherent sheaves $\sP_{x_i;j}$ on $Y_j$ for $i=1,2$ and $1\leq j\leq 7$
satisfying the following conditions$:$
\begin{enumerate}[$(1)$]
\item
$Y_1:=\widetilde{Y}$ and $Y_{7}:=\widetilde{\hcoY}$.
\item
$Y_6:=\Vs_{x_1}$ and 
$\sP_{x_1;6}:=\sP_{x_1}$, which is a reflexive sheaf on
$Y_6$ by Proposition $\ref{prop:restP}$.
\item
$Y_5:=\Vs_{x_1}\cap \Vs_{x_2}$,
where the intersection is taken in $\widetilde{\hcoY}$. 
We denote
by $\iota^i_{Y_5}$ the embedding $Y_5\hookrightarrow \Vs_{x_i}$
for $i=1,2$.
$\sP_{x_1;5}:={\iota^{1*}_{Y_5}} \sP_{x_1}$ and 
$\sP_{x_2;5}:={\iota^{2*}_{Y_5}} \sP_{x_2}$ are reflexive on $Y_5$.
\item
We denote
by $\iota_{Y_j}$ the embedding $Y_j\hookrightarrow Y_{j+1}$ for $j\leq 4$.
$\sP_{x_i;j}:=\iota_{Y_j}^*\sP_{x_i;j+1}$ 
is reflexive on $Y_j$ for $j\leq 4$ and $i=1,2$.
\end{enumerate}
\end{prop}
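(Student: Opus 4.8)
The plan is to build the ladder from the top by successive members of $|M_{\widetilde{\hcoY}}|$, to pin down the bottom term as $\widetilde{Y}$, and then to propagate reflexivity downward, using the flatness of $\sP_1$ (Proposition \ref{prop:flatP1}) to compensate for the two sections $\Vs_{x_1},\Vs_{x_2}$ that cannot be chosen general.

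\emph{Construction of the ladder.} Since $x_1\neq x_2$ lie on $X\subset P_3^{\perp}=\mP(L_4^{\perp})\simeq\mP^5$, the two corresponding vectors in the six-dimensional space $L_4^{\perp}$ are linearly independent, and I would complete them to a basis $x_1,x_2,x_3,\dots,x_6$ with $x_3,\dots,x_6$ general. Each $x_k\in L_4^{\perp}\subset{\ft S}^2V$ cuts out a hyperplane $H_{x_k}\subset\mP({\ft S}^2V^*)$ containing $P_3$, and the common zero locus of $H_{x_1},\dots,H_{x_6}$ is $(L_4^{\perp})^{\perp}=L_4$, i.e.\ $P_3$ itself. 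Pulling these hyperplanes back along $\widetilde{\hcoY}\to\hcoY\to\mP({\ft S}^2V^*)$ produces members of $|M_{\widetilde{\hcoY}}|$; I set $Y_7:=\widetilde{\hcoY}$, $Y_6:=\Vs_{x_1}$, $Y_5:=\Vs_{x_1}\cap\Vs_{x_2}$, and define $Y_4,\dots,Y_1$ by successively intersecting with the pull-backs of $H_{x_3},\dots,H_{x_6}$. Because the six forms span $L_4^{\perp}$, each section drops the dimension by one, giving $\dim Y_i=2+i$.

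\emph{Identification $Y_1=\widetilde{Y}$.} By construction $Y_1$ is the scheme-theoretic preimage in $\widetilde{\hcoY}$ of the pull-back $Y$ of $P_3$. I would invoke the description of $\Lrho_{\widetilde{\hcoY}}\colon\widetilde{\hcoY}\to\hcoY$ in \cite[\S 4]{Geom}: it is an isomorphism away from the locus lying over $\nS_2$ and resolves the singularities of $\hcoY$ there. The generality assumptions (\ref{eq:XYcond}), together with the hypothesis of Theorem \ref{thm:main} that the branch locus contains no line, guarantee that the common zero locus $P_3$ of the six hyperplanes meets $\nS_2$ transversely precisely at the ten points $y_1,\dots,y_{10}$ and avoids the deeper determinantal strata; hence $Y_1$ is reduced, carries no spurious component, and the contraction restricts over each $y_i$ to the blow-up of the ordinary double point, introducing $F_i\simeq\mP^1\times\mP^1$. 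Thus $Y_1=\widetilde{Y}$. I expect this identification to be the main obstacle, since the sections $\Vs_{x_1},\Vs_{x_2}$ are forced to contain $P_3$ and one must exclude any unexpected incidence of the resulting complete intersection with the contracted divisor of $\Lrho_{\widetilde{\hcoY}}$; this is exactly where the no-line hypothesis enters.

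\emph{Reflexivity along the ladder.} Condition $(2)$ is Proposition \ref{prop:restP}. For the remaining cases I use the criterion that on a smooth variety a coherent sheaf is reflexive iff it is torsion-free and satisfies Serre's condition $S_2$, equivalently iff it is a second syzygy (cf.\ \cite[Prop.~1.1]{H}). The passage from $Y_6$ to $Y_5$ is the delicate one, since the section $\Vs_{x_2}$ is not general; here I would appeal to the flatness of $\sP_1$ over $X$ (Proposition \ref{prop:flatP1}), which forces the fibrewise restrictions $\sP_{x_1;5},\sP_{x_2;5}$ to behave uniformly with a general member, so that $\Vs_{x_2}$ meets $\Vs_{x_1}$ along a non-zero-divisor of $\sP_{x_i}$ and the $S_2$ property is preserved. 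Granting reflexivity on $Y_5$, the descent to $Y_4,\dots,Y_1$ is a Bertini-type argument: the pull-backs of $H_{x_3},\dots,H_{x_6}$ may be taken general in $|M|$, hence form a regular sequence on the sheaves $\sP_{x_i;j+1}$ and on the terms of the resolution (\ref{eqnarray:onVsx}); restricting that resolution keeps it exact and again displays $\sP_{x_i;j}$ as a second syzygy, whence reflexive. The one point still to check is that the non-locally-free locus of each $\sP_{x_i;j}$ stays of codimension at least two in $Y_j$, which follows from (\ref{eqnarray:onVsx}) together with the vanishing in \cite[Thm.~5.1]{DerSym}.
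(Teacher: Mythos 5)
Your construction of the ladder is the right one (complete $x_1,x_2$ to a basis of the six-dimensional space $L_4^{\perp}$, so that the six corresponding members of $|M_{\widetilde{\hcoY}}|$ cut $\widetilde{Y}$ out of $\widetilde{\hcoY}$), and your overall strategy --- control the codimension of the locus where $\sP_{x_i}$ fails to be locally free and invoke an $S_2$/second-syzygy criterion --- has the right shape. But the two places where real work is required are justified by the wrong tools, and this is a genuine gap rather than a stylistic difference. For the step from $Y_6$ to $Y_5$ you appeal to the flatness of $\sP_1$ over $X$; that is a statement about how the restrictions $\sP_{x;1}$ to $\widetilde{Y}$ vary as $x$ moves in $X$, and it says nothing about whether $\Vs_{x_2}|_{\Vs_{x_1}}$ is a $\sP_{x_1}$-regular divisor. (In the paper the logic runs the other way: both the flatness of $\sP_1$ and the present proposition are consequences of the same underlying regular-sequence statement; see the remark following Proposition \ref{prop:flatP1}.) Likewise, your closing claim that the codimension of the non-locally-free locus of $\sP_{x_i;j}$ is controlled by the resolution (\ref{eqnarray:onVsx}) together with the vanishing in \cite[Thm.~5.1]{DerSym} cannot work: cohomology vanishing gives no information about where a sheaf fails to be locally free. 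A Bertini argument is equally powerless here, because the cutting divisors are not general in $|M|$ --- they all contain the base locus $\widetilde{Y}$ of the sub-system $\mP(L_4^{\perp})\subset |M|$, which is exactly where the intersection with the bad locus must be controlled.

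The missing ingredient is Proposition \ref{lem:DeltaDflat} combined with \cite[Lem.~1.1.13]{HL}. By Proposition \ref{prop:restP}, $\sE xt^{\bullet}({\iota_{x}}_*\sP_{x},\sO_{\widetilde{\hcoY}})$ is nonzero only in degrees $1,2$, with $\sE xt^{1}$ supported on $\Vs_{x}$ and $\sE xt^{2}$ an invertible sheaf on the five-dimensional locus $D|_{\Vs_{x}}$. What has to be proved is that $\widetilde{Y}$ meets $\Delta_{x}$ and $D|_{\Vs_{x}}$ in the expected dimensions ($1$ and $0$, respectively), so that the six sections form a regular sequence on these loci and hence, via the Ext-codimension criterion of \cite[Lem.~1.1.13]{HL}, a $\sP_{x}$-regular sequence along which every intermediate restriction stays reflexive. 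The zero-dimensionality of $D_{x;1}=D|_{\Vs_{x}}\cap\widetilde{Y}$ is precisely where the hypothesis that ${\rm H}$ contains no line is used: a positive-dimensional fiber would force $P_3$ to contain a pencil of singular quadrics whose vertices all pass through a fixed point of $\mP(V)$, hence a line in ${\rm H}$. You instead placed the no-line hypothesis in the identification $Y_1=\widetilde{Y}$; that identification is unproblematic (the paper uses it elsewhere without comment), whereas the dimension count for $D_{x;1}$ is the step your argument leaves unproved.
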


In particular, the choices of $Y_{7}$ and $Y_{6}$ there turn out to be
crucial in Steps 1 and 2 in the following arguments. 

\vspace{5pt}

\noindent\textbf{Step 1 (from $\widetilde{\hcoY}$ to $Y_{5}$).}
In this step, we show \begin{equation}
\Ext_{Y_{5}}^{\bullet-1}(\sP_{x_{1};5},
\sP_{x_{2};5}(-t+1))=0\,(1\leq t\leq 5).\label{eq:step1}\end{equation}

Note that
\[
\Ext^{\bullet}_{Y_7}({\iota_{{x_1}}}_*\sP_{x_1}, {\iota_{{x_2}}}_*\sP_{x_2}(-t))\simeq
\Ext^{\bullet-1}_{Y_6}(\sP_{x_1}, \iota_{{x_1}}^*{\iota_{{x_2}}}_*\sP_{x_2}(-t+1))
\]
by
applying the Grothendieck-Verdier duality (Theorem \ref{cla:duality}) to
the embedding $\iota_{{x_1}}$. The l.h.s. of this equality
is zero for $1\leq t\leq 5$ by Proposition \ref{prop:Extvan}.
Moreover,
since $\iota_{{x_1}}^*{\iota_{{x_2}}}_*\sP_{x_2} \simeq
{\iota^{1}_{V_5}}_*\iota^{2*}_{V_5}\sP_{x_2}\simeq 
{\iota^{1}_{V_5}}_*\sP_{x_2;5}$,
we have
\begin{eqnarray*}
\Ext^{\bullet-1}_{Y_6}(\sP_{x_1}, \iota_{{x_1}}^*{\iota_{{x_2}}}_*\sP_{x_2}(-t+1))\simeq
\Ext^{\bullet-1}_{Y_6}(\sP_{x_1}, {\iota^{1}_{V_5}}_*\sP_{x_2;5}
(-t+1)) \simeq \\
\Ext^{\bullet-1}_{Y_5}({\iota^{1*}_{V_5}}\sP_{x_1}, \sP_{x_2;5}(-t+1))\simeq
\Ext^{\bullet-1}_{Y_5}(\sP_{x_1;5}, \sP_{x_2;5}(-t+1)).
\end{eqnarray*}
Thus (\ref{eq:step1}) follows.

\vspace{5pt}

\noindent\textbf{Step 2 (from $Y_{5}$ to $Y_{4}$).}
In this step, we show \begin{equation}
\Ext_{Y_{4}}^{\bullet-1}(\sP_{x_{1};4},\sP_{x_{2};4}(-t+1))=0\,(1\leq t\leq 4).\label{eq:step2}\end{equation}

Since $\sP_{x_{1};4}\simeq\iota_{Y_{4}}^{*}\sP_{x_{1};5}$, we have
\begin{equation}
\begin{aligned}\Ext_{Y_{4}}^{\bullet-1}(\sP_{x_{1};4},\sP_{x_{2};4}(-t+1)) & \simeq\Ext_{Y_{4}}^{\bullet-1}(\iota_{Y_{4}}^{*}\sP_{x_{1};5},\sP_{x_{2};4}(-t+1))\\
 & \simeq\Ext_{Y_{5}}^{\bullet-1}(\sP_{x_{1};5},\iota_{Y_{4}*}\sP_{x_{2};4}(-t+1)).\end{aligned}
\label{eqnarray:D}\end{equation}
 From (\ref{eqnarray:D}) and the
exact sequence \[
0\to\sP_{x_2;5}(-1)\to\sP_{x_{2};5}\to\iota_{Y_{4}*}\sP_{x_{2};4}\to0\]
we obtain the exact sequence \begin{equation*}
\begin{aligned}\Ext_{Y_{5}}^{\bullet-1}(\sP_{x_{1};5},\sP_{x_{2};5}(-t+1)) & \to\Ext_{Y_{4}}^{\bullet-1}(\sP_{x_{1};4},\sP_{x_{2};4}(-t+1))\\
 & \to\Ext_{Y_{5}}^{\bullet}(\sP_{x_{1};5},\sP_{x_2;5}(-t)),\end{aligned}
\label{eqnarray:E}\end{equation*}
 where,
from (\ref{eq:step1}), the first term vanishes for $1\leq t\leq 5$ and 
the third term vanishes for $1\leq t\leq 4$.
Thus (\ref{eq:step2}) follows.

\vspace{5pt}

\noindent\textbf{Step 3 (from $Y_{4}$ to \dots to $\widetilde{Y}$).}
In this step, we finish the proof of (\ref{eq:aim}). 

In a similar way to the argument of Step 2, we may prove
\begin{equation*}
\Ext_{Y_{i}}^{\bullet-1}(\sP_{x_{1};i},\sP_{x_{2};i}(-t+1))=0\,(1\leq t\leq i)\label{eq:step4}\end{equation*} for $i=3, 2,1$.
In particular, we have $\Ext_{Y}^{\bullet-1}(\sP_{x_{1};1},\sP_{x_{2};1})=0$,
which is (\ref{eq:aim}). \hfill{}$\square$

\vspace{5pt}

Finally we verify the condition (i) of 
Theorem \ref{thm:D}.

\begin{prop}
\label{cla:Hom}
For any point $x\in X$,
 it holds that $\Hom (\sP_{x;1}, \sP_{x;1})\simeq \mC$,
where we recall that $\sP_{x;1}$ is the restriction of $\sP_1$ over $x$.
\end{prop}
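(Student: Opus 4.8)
The plan is to prove $\Hom(\sP_{x;1},\sP_{x;1})\simeq\mC$ by the same cutting-down strategy used for the off-diagonal vanishing in Steps 1--3 above, applied to the ladder $\widetilde{Y}=Y_1\subset\cdots\subset Y_6=\Vs_x\subset Y_7=\widetilde{\hcoY}$ of Proposition~\ref{cla:tower} taken for the single point $x$, but now carrying along the degree-zero term. The essential new feature is that one cannot feed the untwisted case into Proposition~\ref{prop:Extvan}: the identity endomorphism makes $\Hom(\sP_{x;j},\sP_{x;j})$ nonzero, and the task is precisely to show it stays one-dimensional under each restriction. So I would first pin down a base case on $\Vs_x$ and then descend one divisor at a time.

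For the base case I would establish both $\Hom_{\Vs_x}(\sP_x,\sP_x)\simeq\mC$ and the twisted vanishing $\Ext^\bullet_{\Vs_x}(\sP_x,\sP_x(-s))=0$ for $1\le s\le5$. Since $\Vs_x$ is a member of $|M_{\widetilde{\hcoY}}|$, its derived self-intersection splits as $L\iota_x^*{\iota_x}_*\sP_x\simeq\sP_x\oplus\sP_x(-1)[1]$, so Grothendieck--Verdier duality (Theorem~\ref{cla:duality}) for $\iota_x\colon\Vs_x\hookrightarrow\widetilde{\hcoY}$ identifies $\Ext^\bullet_{\widetilde{\hcoY}}({\iota_x}_*\sP_x,{\iota_x}_*\sP_x(-t))$ with $\Ext^{\bullet-1}_{\Vs_x}(\sP_x,\sP_x(-t+1))\oplus\Ext^\bullet_{\Vs_x}(\sP_x,\sP_x(-t))$. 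For $2\le t\le5$ the left-hand side vanishes by the computation underlying Proposition~\ref{prop:Extvan}, which yields the twisted vanishings on $\Vs_x$ over the range $1\le s\le5$; for $t=0$ the degree-zero part of the left-hand side is $\Hom_{\widetilde{\hcoY}}({\iota_x}_*\sP_x,{\iota_x}_*\sP_x)$, and this I would compute to be $\mC$ directly from the locally free resolution~(\ref{eqnarray:onVsx}) together with the cohomology vanishings of \cite[Thm.~5.1]{DerSym}, tracking the single copy of $\mC$ contributed by the identity.

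The descent then mirrors Steps 1--3. For each $j$, the reflexivity and transversality furnished by Propositions~\ref{cla:tower} and~\ref{prop:flatP1} give the exact sequence $0\to\sP_{x;j+1}(-1)\to\sP_{x;j+1}\to{\iota_{Y_j}}_*\sP_{x;j}\to0$ on $Y_{j+1}$ and (by Tor-independence) the adjunction isomorphism $\Ext^\bullet_{Y_{j+1}}(\sP_{x;j+1},{\iota_{Y_j}}_*\sP_{x;j})\simeq\Ext^\bullet_{Y_j}(\sP_{x;j},\sP_{x;j})$. First I would propagate the twisted vanishings by a nested downward induction: assuming $\Ext^\bullet_{Y_{i+1}}(\sP_{x;i+1},\sP_{x;i+1}(-s))=0$ for $1\le s\le i$, the long exact sequence obtained by applying $\Hom_{Y_{i+1}}(\sP_{x;i+1},-)$ to $0\to\sP_{x;i+1}(-s-1)\to\sP_{x;i+1}(-s)\to{\iota_{Y_i}}_*\sP_{x;i}(-s)\to0$ forces $\Ext^\bullet_{Y_i}(\sP_{x;i},\sP_{x;i}(-s))=0$ for $1\le s\le i-1$, the case $i=6$ being the base vanishing above. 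In particular the $s=1$ self-Ext vanishes in degrees $0$ and $1$ at every level, so applying $\Hom_{Y_{j+1}}(\sP_{x;j+1},-)$ to the untwisted sequence gives $\Hom_{Y_{j+1}}(\sP_{x;j+1},\sP_{x;j+1})\simeq\Hom_{Y_j}(\sP_{x;j},\sP_{x;j})$. Starting from $\Hom_{\Vs_x}(\sP_x,\sP_x)\simeq\mC$ and descending to $Y_1=\widetilde{Y}$ yields $\Hom(\sP_{x;1},\sP_{x;1})\simeq\mC$, as desired.

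The routine vanishing statements are, as usual, controlled by \cite[Thm.~5.1]{DerSym} through the resolution~(\ref{eqnarray:onVsx}); the genuine obstacle is the single non-vanishing input, namely showing $\Hom_{\widetilde{\hcoY}}({\iota_x}_*\sP_x,{\iota_x}_*\sP_x)$ is exactly $\mC$ rather than merely finite-dimensional. This forces one to analyze the spectral sequence of~(\ref{eqnarray:onVsx}) carefully enough to see that precisely one class, the identity, survives in total degree zero; all the remaining steps are formal homological algebra identical in spirit to the off-diagonal argument.
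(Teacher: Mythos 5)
Your overall skeleton (cut down along the ladder of Proposition \ref{cla:tower}, propagate twisted vanishings, transport the untwisted $\Hom$ from $\Vs_x$ down to $\widetilde{Y}$) is sound and matches the mechanics the paper reuses from the proof of (\ref{eq:aim}). But the proposal has a genuine gap exactly where you yourself flag ``the genuine obstacle'': you never actually prove that $\Hom_{\widetilde{\hcoY}}({\iota_x}_*\sP_x,{\iota_x}_*\sP_x)\simeq\mC$. Computing this self-$\Hom$ ``directly from (\ref{eqnarray:onVsx})'' is a double application of the resolution: one must evaluate $\Ext^{\bullet}(E_i,{\iota_x}_*\sP_x)$ for each term $E_i$ of the resolution, and several of the resulting groups are nonzero in degree zero ($H^0(\widetilde{\sQ}\otimes\widetilde{\sQ}^*)$, $H^0(\widetilde{\sT}\otimes\widetilde{\sT}^*)$, $H^0(\widetilde{\sS}_L\otimes\widetilde{\sS}_L^*)$, $H^0(\sO_{\widetilde{\hcoY}})$, each contributing a copy of $\mC$). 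Showing that precisely one class survives the spectral sequence requires controlling its differentials, which you defer rather than carry out; deferring it leaves the entire nontrivial content of the proposition unproved.

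The paper sidesteps this with a short trick you are missing. Since $\sP_{x;1}$ is reflexive of rank $2$, one has $\sP_{x;1}\simeq\sP_{x;1}^*\otimes\det\sP_{x;1}$ by \cite[Prop.~1.10]{H}, so the claim is equivalent to $\Hom(\sP^*_{x;1},\sP^*_{x;1})\simeq\mC$. Restricting the defining sequence (\ref{eq:P*}) to $\widetilde{Y}$ and using torsion-freeness gives an injection $\sP^*_{x;1}\hookrightarrow\widetilde{\sQ}^*|_{\widetilde{Y}}$, hence an injection
\[
\Hom(\sP^*_{x;1},\sP^*_{x;1})\hookrightarrow\Hom(\sP^*_{x;1},\widetilde{\sQ}^*|_{\widetilde{Y}})\simeq H^0(\widetilde{Y},\sP_{x;1}\otimes\widetilde{\sQ}^*|_{\widetilde{Y}}).
\]
The right-hand side involves the resolution (\ref{eqnarray:onVsx}) only once, tensored with the locally free sheaf $\widetilde{\sQ}^*$, and the needed inputs are the vanishings of $H^{\bullet}(\widetilde{\sQ}^*(-t))$, $H^{\bullet}(\widetilde{\sT}\otimes\widetilde{\sQ}^*(-t))$, $H^{\bullet}(\widetilde{\sS}_L^*\otimes\widetilde{\sQ}^*(-t))$ for $0\le t\le5$ and of $H^{\bullet}(\widetilde{\sQ}\otimes\widetilde{\sQ}^*(-t))$ for $1\le t\le5$, together with $H^{\bullet}(\widetilde{\sQ}\otimes\widetilde{\sQ}^*)\simeq\mC$ concentrated in degree $0$. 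Only one copy of $\mC$ ever appears, so no differential analysis is needed, and since the identity already gives $\mC\subset\Hom(\sP^*_{x;1},\sP^*_{x;1})$, the injection forces equality. You should replace your black-box base case with this argument (or else actually carry out the spectral-sequence computation you allude to, which is substantially harder).
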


\begin{proof}
Since $\sP_{x;1}$ is reflexive of rank 2 by Proposition \ref{cla:tower}, we have
$\sP_{x;1}\simeq \sP^*_{x;1}\otimes \det \sP_{x;1}$ by \cite[Prop.~1.10]{H}.
Therefore, 
the assertion is equivalent to 
\[
\Hom (\sP^*_{x;1}, \sP^*_{x;1})\simeq \mC.
\]
Since $\sP_{x;1}$ is torsion free,
we obtain an injection $\sP^*_{x;1}\hookrightarrow \widetilde{\sQ}|_{\widetilde{Y}}$
by
restricting
(\ref{eq:P*}) to $\widetilde{Y}$.
Therefore we also obtain
an injection
$\Hom (\sP^*_{x;1}, \sP^*_{x;1})\hookrightarrow
\Hom (\sP^*_{x;1}, \widetilde{\sQ}^*|_{\widetilde{Y}})$.
Note that  
$\Hom (\sP^*_{x;1}, \widetilde{\sQ}^*|_{\widetilde{Y}})\simeq
H^0(\widetilde{Y}, \sP_{x;1}\otimes \widetilde{\sQ}^*|_{\widetilde{Y}})$. 
We can show that the r.h.s.~is isomorphic to $\mC$
by the argument to show (\ref{eq:aim})
using the locally free resolution (\ref{eqnarray:onVsx}) and
\cite[Thm.~5.1]{DerSym}.
Indeed, this follows from
the vanishing of 
$H^{\bullet}(\widetilde{\hcoY},\widetilde{\sQ}^*(-t))$,
$H^{\bullet}(\widetilde{\hcoY},\widetilde{\sT}\otimes\widetilde{\sQ}^*(-t))$,
and $H^{\bullet}(\widetilde{\hcoY},\widetilde{\sS}_L^*\otimes 
\widetilde{\sQ}^*(-t))$ for $0\leq t\leq 5$,
and the vanishing of
$H^{\bullet}(\widetilde{\hcoY},\widetilde{\sQ}\otimes 
\widetilde{\sQ}^*(-t))$ for $1\leq t\leq 5$, and
$H^{\bullet}(\widetilde{\hcoY},\widetilde{\sQ}\otimes 
\widetilde{\sQ}^*)\simeq H^{0}(\widetilde{\hcoY},\widetilde{\sQ}\otimes 
\widetilde{\sQ}^*)\simeq \mC$.
\end{proof}

\subsection{Fullness of the collection in Theorem \ref{thm:main}}
\label{section:full}

In this subsection, we show
\[
\sD_{Y}=\langle \Phi_1(\sD^b(X)), \sO_{\widetilde{Y}}(1), \sO_{\widetilde{Y}}(2)\rangle,
\] which completes our proof of Theorem \ref{thm:main}.

\begin{lem}
\label{lem:subset}

\begin{enumerate}[$(1)$]
\item
The collection
\[
\{\sO_{F_i}(-1,-1)\}_{1\leq i\leq 10},
\sO_{\widetilde{Y}}(1),
\sO_{\widetilde{Y}}(2)
\]
is semi-orthogonal, where we omit the symbol ${\iota_i}_*$ for
$\sO_{F_i}(-1,-1)$ to simplify the notation.
\item
We set
\begin{equation}
\label{eq:lr}
\sC_{{Y}}:=
{\empty^{\perp}\langle \{\sO_{F_i}(-1,-1)\}_{1\leq i\leq 10}\rangle\cap
\langle \sO_{\widetilde{Y}}(1),
\sO_{\widetilde{Y}}(2)\rangle}^{\perp}.
\end{equation}
Then
$\Phi_1(\sD^b(X))\subset \sC_{{Y}}$.
\end{enumerate}
\end{lem}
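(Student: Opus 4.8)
The semi-orthogonality of the collection reduces to a short list of $\Ext$-vanishings. Since the exceptional divisors $F_1,\dots,F_{10}$ are pairwise disjoint, the sheaves ${\iota_i}_*\sO_{F_i}(-1,-1)$ have pairwise disjoint supports, so all $\Ext$ groups among them vanish. For the remaining relations I would use that $F_i$ is contracted to a point by $\widetilde{Y}\to \mP^3$, whence $\sO_{\widetilde{Y}}(k)|_{F_i}\simeq \sO_{F_i}$; therefore, for $k=1,2$,
\[
\Ext^\bullet(\sO_{\widetilde{Y}}(k),{\iota_i}_*\sO_{F_i}(-1,-1))\simeq H^\bullet(F_i,\sO_{F_i}(-1,-1))=0,
\]
because $H^\bullet(\mP^1,\sO_{\mP^1}(-1))=0$ and $F_i\simeq \mP^1\times \mP^1$. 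Finally, writing $\nu\colon \widetilde{Y}\to Y$ for the blow-up and $\pi\colon Y\to \mP^3$ for the double cover, the nodes of $Y$ are rational (canonical) singularities, so $R\nu_*\sO_{\widetilde{Y}}=\sO_Y$ and $\pi_*\sO_Y=\sO_{\mP^3}\oplus \sO_{\mP^3}(-2)$; hence
\[
\Ext^\bullet(\sO_{\widetilde{Y}}(2),\sO_{\widetilde{Y}}(1))\simeq H^\bullet(\widetilde{Y},\sO_{\widetilde{Y}}(-1))\simeq H^\bullet(\mP^3,\sO_{\mP^3}(-1)\oplus \sO_{\mP^3}(-3))=0.
\]
This proves (1).

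\textbf{Part (2): reduction to fibrewise vanishings.} Since $\sP_1$ is flat over $X$ (Proposition \ref{prop:flatP1}), we have $\Phi_1(\sO_x)=\sP_{x;1}$ for every closed point $x\in X$. The subcategory $\sC_{Y}$ is cut out by the two orthogonality conditions $\Ext^\bullet(-,\sO_{F_i}(-1,-1))=0$ and $\Ext^\bullet(\sO_{\widetilde{Y}}(k),-)=0$ ($k=1,2$). My plan is to push these conditions through the adjoints of the Fourier--Mukai functor $\Phi_1$ and then detect vanishing on the spanning class of skyscrapers $\{\sO_x\}_{x\in X}$. Writing $\Phi_1^{L}$ and $\Phi_1^{!}$ for the left and right adjoints (again Fourier--Mukai functors), one has $\Ext^\bullet(\sO_{\widetilde{Y}}(k),\Phi_1(A))\simeq \Ext^\bullet(\Phi_1^{L}\sO_{\widetilde{Y}}(k),A)$ and $\Ext^\bullet(\Phi_1(A),\sO_{F_i}(-1,-1))\simeq \Ext^\bullet(A,\Phi_1^{!}\sO_{F_i}(-1,-1))$. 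These vanish for all $A\in \sD^b(X)$ as soon as $\Phi_1^{L}\sO_{\widetilde{Y}}(k)=0$ and $\Phi_1^{!}\sO_{F_i}(-1,-1)=0$; since skyscrapers form a spanning class, the latter vanishings are equivalent, by adjunction once more, to
\[
H^\bullet(\widetilde{Y},\sP_{x;1}(-k))=\Ext^\bullet(\sO_{\widetilde{Y}}(k),\sP_{x;1})=0\ \ (k=1,2),\qquad \Ext^\bullet(\sP_{x;1},\sO_{F_i}(-1,-1))=0,
\]
for all $x\in X$ and all $i$. Thus the proof of (2) is reduced to these two fibrewise statements.

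\textbf{The cohomological vanishing.} The first statement, $H^\bullet(\widetilde{Y},\sP_{x;1}(-k))=0$ for $k=1,2$, I would establish exactly as in the proof of Proposition \ref{cla:Hom}: restrict the locally free resolution (\ref{eqnarray:onVsx}) of ${\iota_x}_*\sP_x$ from $\widetilde{\hcoY}$ down to $\widetilde{Y}$ along the ladder of Proposition \ref{cla:tower}, and feed the resulting complex built from $\widetilde{\sQ}$, $\widetilde{\sT}$ and $\widetilde{\sS}^*_L$ into the vanishing theorem \cite[Thm.~5.1]{DerSym}. The range of twists handled there ($0\le t\le 5$) comfortably covers $k=1,2$, so this is a routine application of machinery already in place.

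\textbf{The $\Ext$ with $\sO_{F_i}(-1,-1)$: the main obstacle.} The genuinely new input is $\Ext^\bullet(\sP_{x;1},\sO_{F_i}(-1,-1))=0$, which concerns the behaviour of $\sP_{x;1}$ along the exceptional divisors over the nodes. Since $\widetilde{Y}$ is smooth and $F_i\hookrightarrow \widetilde{Y}$ is a smooth Cartier divisor, the adjunction $L\iota_i^\ast\dashv {\iota_i}_\ast$ gives
\[
\Ext^\bullet(\sP_{x;1},{\iota_i}_*\sO_{F_i}(-1,-1))\simeq \Ext^\bullet_{F_i}(L\iota_i^*\sP_{x;1},\sO_{F_i}(-1,-1)),
\]
so everything comes down to identifying the derived restriction $L\iota_i^*\sP_{x;1}$ as a rank-two object on $F_i\simeq \mP^1\times\mP^1$ and checking that $\Ext^\bullet_{F_i}(L\iota_i^*\sP_{x;1},\sO_{F_i}(-1,-1))=0$. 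I expect this to be the hard part: it requires the explicit local description of $\sP$ over the locus of rank-two quadrics $Q_{y_i}$ (the nodes $y_i=\nS_2\cap Y$), where the conic fibre of $\widetilde{\Zpq}\to\widetilde{\hcoY}$ degenerates and $F_i$ records the two planes $\mathrm{P}^{(1)}_i\cup \mathrm{P}^{(2)}_i$. Concretely I would obtain the restriction either from the resolution (\ref{eqnarray:onVsx}), by computing the restrictions to $F_i$ of $\widetilde{\sQ}$, $\widetilde{\sT}$, $\widetilde{\sS}^*_L$ and performing a d\'evissage, or directly from the geometry of the family $\Delta_1\to X$; in either case the aim is to show that the relevant bundles restrict to $F_i$ in bidegrees for which the $\mP^1\times\mP^1$-cohomology twisted by $\sO_{F_i}(-1,-1)$ vanishes. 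Granting this, comparison of the two fibrewise vanishings with the definition of $\sC_{Y}$ yields $\Phi_1(\sD^b(X))\subset \sC_{Y}$.
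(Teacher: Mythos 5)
Your part (1) is correct; your computation of $H^{\bullet}(\widetilde{Y},\sO_{\widetilde{Y}}(-1))$ via $R\nu_*\sO_{\widetilde{Y}}=\sO_Y$ and $\pi_*\sO_Y=\sO_{\mP^3}\oplus\sO_{\mP^3}(-2)$ is a valid (and slightly more elementary) substitute for the paper's Serre-duality-plus-Kawamata--Viehweg argument. Your reduction of part (2) to the two fibrewise vanishings $H^{\bullet}(\widetilde{Y},\sP_{x;1}(-k))=0$ and $\Ext^{\bullet}(\sP_{x;1},\sO_{F_i}(-1,-1))=0$, and your treatment of the first of these via the resolution (\ref{eqnarray:onVsx}), the ladder of Proposition \ref{cla:tower}, and \cite[Thm.~5.1]{DerSym}, also match the paper.

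The gap is in the step you yourself flag as ``the main obstacle'': you do not prove $\Ext^{\bullet}(\sP_{x;1},\sO_{F_i}(-1,-1))=0$, you only propose to compute $L\iota_i^*\sP_{x;1}$ on $F_i\simeq\mP^1\times\mP^1$ and end with ``granting this''. That is precisely the content of the lemma, so the proposal is incomplete at its crux. Moreover the route you sketch is likely harder than you suggest: the restriction of the kernel to $F_i$ genuinely carries nontrivial information (compare Lemma \ref{lem:IK} and Proposition \ref{prop:refine}, where $\Phi_1(\sO_X(-\delta_i))=\sO_{F_i}(0,-1)[-1]$ is extracted from exactly this kind of data), so ``the relevant bundles restrict in bidegrees killed by $\sO_{F_i}(-1,-1)$'' is not something one can expect to fall out term by term from a naive d\'evissage. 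The paper avoids computing any restriction to $F_i$: it observes that $\oplus_i\sO_{F_i}(-1,-1)=\sO_{F_{\widetilde{\hcoY}}}(F_{\widetilde{\hcoY}})|_{\widetilde{Y}}$, lifts the required vanishing to $\Ext^{\bullet+1}_{\widetilde{\hcoY}}({\iota_x}_*\sP_x,\sO_{F_{\widetilde{\hcoY}}}(F_{\widetilde{\hcoY}})(-1-t))=0$ for $0\le t\le 5$ via the descent argument and Grothendieck--Verdier duality, and then splits $\sO_{F_{\widetilde{\hcoY}}}(F_{\widetilde{\hcoY}})$ by the sequence $0\to\sO_{\widetilde{\hcoY}}\to\sO_{\widetilde{\hcoY}}(F_{\widetilde{\hcoY}})\to\sO_{F_{\widetilde{\hcoY}}}(F_{\widetilde{\hcoY}})\to 0$. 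The two resulting vanishings are handled globally on $\widetilde{\hcoY}$: one directly from (\ref{eqnarray:onVsx}) and \cite[Thm.~5.1, Prop.~5.9]{DerSym}, the other by Serre--Grothendieck duality together with the adjunction formula $K_{\widetilde{\hcoY}}=-8M_{\widetilde{\hcoY}}+F_{\widetilde{\hcoY}}$, which converts it back into cohomology of ${\iota_x}_*\sP_x$ twisted by powers of $M$. You would need to either carry out this global duality argument or actually supply the explicit computation of $L\iota_i^*\sP_{x;1}$ that you defer.
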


\begin{proof}
(1) First we see that 
\[
\Ext^{\bullet}(\sO_{\widetilde{Y}}(l), \sO_{F_i}(-1,-1))\simeq
H^{\bullet}(F_i, \sO_{F_i}(-1,-1))=0
\]
for any $l\in \mZ$ since $\sO_{\widetilde{Y}}(l)|_{F_i}=\sO_{F_i}$.
Moreover,
\[
\Ext^{\bullet}(\sO_{\widetilde{Y}}(2), \sO_{\widetilde{Y}}(1))\simeq
H^{\bullet}(\widetilde{Y}, \sO_{\widetilde{Y}}(-1))\simeq
H^{3-\bullet}(\widetilde{Y}, \sO_{\widetilde{Y}}(M+K_{\widetilde{Y}})).
\]
The r.h.s.~vanishes
for $\bullet=0,1,2$
by the Kawamata-Viehweg vanishing theorem since $M$ is nef and big.
For $\bullet=3$, since $K_{\widetilde{Y}}=-2M+\sum_{i=1}^{10} F_i$,
we have
\[
H^{0}(\widetilde{Y}, \sO_{\widetilde{Y}}(M+K_{\widetilde{Y}}))\simeq
H^{0}(\widetilde{Y}, \sO_{\widetilde{Y}}(-M+\sum_{i=1}^{10} F_i))=0.
\]
(2)
Since $\sO_x$ ($x\in X$) are spanning classes of $\sD^b(X)$ (see \cite[Prop.~3.17]{Huy} for example),
it suffices to show 
that 
\[
\sP_{x;1}=\Phi_1(\sO_x)\in \sC_{{Y}},
\]
which is 
equivalent to the conditions 
\begin{enumerate}[(a)]
\item
$H^{\bullet}(\widetilde{Y}, \sP_{x;1}(-t))=0$ for $t=1,2$.
\item
$\Ext^{\bullet}(\sP_{x;1}, \sO_{F_i}(-1,-1))=0$.
\end{enumerate}
(see (\ref{eq:lr})).

First we show the claim (a).
By (\ref{eqnarray:onVsx}) and \cite[Thm.~5.1]{DerSym},
we have \[
H^{\bullet}(\Vs_x, \sP_x(-t))=0
\]
for any $\bullet$, $x\in \hchow$, and $1\leq t \leq 7$. 
Then, by the argument to show (\ref{eq:aim}),
we have $H^{\bullet}(\widetilde{Y}, \sP_{x;1}(-t))=0$ for $t=1,2$.

Secondly we show the claim (b).
By the argument to show (\ref{eq:aim}),
it suffices to verify that
\[
\Ext^{\bullet}_{\Vs_x}(\sP_x, \sO_{F_{\widetilde{\hcoY}}}(F_{\widetilde{\hcoY}})(-t)|_{\Vs_x})=0\ (0\leq t\leq 5),
\] 
where we recall $F_{\widetilde{\hcoY}}$ is the exceptional divisor
for $\Lp_{\widetilde{\hcoY}}\colon \widetilde{\hcoY}\to \hcoY$
since $F_{\widetilde{\hcoY}}|_{\widetilde{Y}}=\sum_{i=1}^{10} F_i$ and $\sO_{F_{\widetilde{\hcoY}}}(F_{\widetilde{\hcoY}})
|_{\widetilde{Y}}=\oplus_{i=1}^{10} \sO_{F_i}(-1,-1)$.
By the Grothendieck-Verdier duality (Theorem \ref{cla:duality}), we have
\[
\Ext^{\bullet}_{\Vs_x}(\sP_x, \sO_{F_{\widetilde{\hcoY}}}(F_{\widetilde{\hcoY}})|_{\Vs_x})\simeq
\Ext^{\bullet+1}_{\widetilde{\hcoY}}({\iota_x}_*\sP_x,\sO_{F_{\widetilde{\hcoY}}}(F_{\widetilde{\hcoY}})(-1)).
\]
Therefore the problem is reduced to verify that
\begin{equation}
\label{eq:PF}
\Ext^{\bullet+1}_{\widetilde{\hcoY}}({\iota_x}_*\sP_x,\sO_{F_{\widetilde{\hcoY}}}(F_{\widetilde{\hcoY}})(-1-t))=0\ (0\leq t\leq 5).
\end{equation}
By the exact sequence
\[
0\to \sO_{\widetilde{\hcoY}}\to
\sO_{\widetilde{\hcoY}}(F_{\widetilde{\hcoY}})\to
\sO_{_{F_{\widetilde{\hcoY}}}}(F_{\widetilde{\hcoY}})\to
0,
\] 
the proof of (\ref{eq:PF}) is reduced to show that
\begin{equation}
\label{eq:b1}
\Ext^{\bullet}({\iota_x}_*\sP_x,\sO_{\widetilde{\hcoY}}(-1-t))=0
\end{equation}
and
\begin{equation}
\label{eq:b2}
\Ext^{\bullet}({\iota_x}_*\sP_x,\sO_{\widetilde{\hcoY}}(F_{\widetilde{\hcoY}})(-1-t))=0.
\end{equation}
As for (\ref{eq:b1}),
the vanishing follows by
(\ref{eqnarray:onVsx})
and \cite[Thm.~5.1]{DerSym};
only for $t=5$, we also need to use \cite[Prop.~5.9]{DerSym}.
As for (\ref{eq:b2}),
we have 
\[
\Ext^{\bullet}({\iota_x}_*\sP_x,\sO_{\widetilde{\hcoY}}(F_{\widetilde{\hcoY}})(-1-t))\simeq
\Ext^{9-\bullet}(\sO_{\widetilde{\hcoY}}(F_{\widetilde{\hcoY}})(-1-t),{\iota_x}_*\sP_x(K_{\widetilde{\hcoY}}))^*
\]
by the Serre-Grothendieck duality.
Moreover, by the adjunction formula 
$K_{\widetilde{\hcoY}}=-8M_{\widetilde{\hcoY}}+F_{\widetilde{\hcoY}}$
(see \cite[\S 4.8]{Geom}),
the r.h.s.~is isomorphic to 
\[
H^{9-\bullet}(\widetilde{\hcoY},{\iota_x}_*\sP_x(t-7))^*,
\]
which vanish by 
(\ref{eqnarray:onVsx})
and \cite[Thm.~5.1]{DerSym}.

Hence we have shown the claim (b).
\end{proof}

By Lemma \ref{lem:subset} and
a general result in \cite{Bondal},
$\sD^b(\widetilde{Y})$ admits
the following semi-orthogonal decomposition:
\begin{equation}
\label{eq:decomp}
\sD^b(\widetilde{Y})=\langle \{\sO_{F_i}(-1,-1)\}_{1\leq i\leq 10}, \sC_{{Y}}, \sO_{\widetilde{Y}}(1),
\sO_{\widetilde{Y}}(2)\rangle.
\end{equation}
Then
\[
\sD_{Y}=\langle \sC_{{Y}}, \sO_{\widetilde{Y}}(1),
\sO_{\widetilde{Y}}(2)\rangle
\]
is nothing but
the categorical resolution 
with respect to the dual Lefschetz decomposition of $F_i$
as in the statement of Theorem \ref{thm:main}.
Therefore we have only to show 
\begin{equation}
\label{eq:full}
\sC_{{Y}}=\Phi_1(\sD^b(X))
\end{equation}
in the decomposition (\ref{eq:decomp}).
\vspace{5pt}

Let $A$ be any object of 
\[
{\empty^{\perp}\langle \{\sO_{F_i}(-1,-1)\}_{1\leq i\leq 10}, \Phi_1(\sD^b(X))\rangle\cap
\langle \sO_{\widetilde{Y}}(1),
\sO_{\widetilde{Y}}(2)\rangle}^{\perp}.
\]
The equality (\ref{eq:full}) follows once we show
\begin{equation}
\label{eq:A0}
A=0.
\end{equation}
The rest of this section is devoted to proving it. 
The following argument is inspired by \cite[\S 5]{K-g7} and \cite[\S 6]{HPD1}.

In the sequel, we take several smooth linear sections $S$ of $\hcoY$ 
such that 
\[
S\subset Y,
\]
and
the orthogonal linear section $W$ of $\chow$
to $S$ has only eight $\frac 12 (1,1,1)$-singularities. 
In particular, $S$ and $W$ satisfy the conditions (\ref{eq:P2}).
Note that
\[
W\supset X.
\]
Since
$S$ does not intersect $\Sing \hcoY$,
we may consider $S\subset \widetilde{Y}$,
and we denote by 
\[
\alpha\colon S\hookrightarrow \widetilde{Y}
\]
the natural inclusion.
By a similar reason, we may consider $X\subset \hW$, and
we denote by 
\[
\beta\colon X\hookrightarrow \hW
\]
the natural inclusion.

We define
\[
\sP_2:=\sP|_{S\times \hW}
\]
and
the associated functor
\[
\Phi_2:=\Phi_{\sP_2}\colon \sD^b(\hW)\to \sD^b(S).
\]
We also use $\sC_S$ and $\sC_W$ as in the statement of Theorem \ref{thm:main2}.
 
We give an outline of the proof.
We chase the following diagram:
\[
\xymatrix{\sD^b(\widetilde{Y})\ar[r]^{\Phi_1^*}\ar[d]_{\alpha^*} & \sD^b(X)\ar[d]^{\beta_*}\\
          \sD^b(S)\ar[r]^{\Phi_2^*} & \sD^b(\hW).}
\]
This diagram is not commutative but below
we may evaluate $\Phi_2^*\alpha^*(A)$ in Step 1, 
$\beta_*\Phi_1^*(A)$ in Step 2, and
their difference in Step 3.
From these,
we deduce $\Phi_2^*\alpha^*(A)=0$ in Step 4.
Until Step 4, we just follow the arguments given in 
\cite[\S 5]{K-g7} and \cite[\S 6]{HPD1}.
In Steps 5--7, we study more detailed geometries.
In Step 5, we show $\alpha^*(A)=0$
computing the functor $\Phi_2$ based on the flop equivalence
$\sD^b(\hW)\simeq \sD^b(Z_S)$ (Proposition \ref{prop:flop} (2)).
Then we deduce $A\in \oplus \sD^b(F_i)$ in Step 6 by \cite[Lem.~4.5]{K-g7}.
In Step 7, we finish showing $A=0$
by studying a relation between $\Ima \Phi_1$ and $\oplus \sD^b(F_i)$ in detail.

\vspace{5pt}

{\bf Step 1.} We show
\begin{equation}
\label{eq:CW}
\Phi_2^*\alpha^*(A)\in \sC_{W},
\end{equation}
which immediately follows from the following lemma:
\begin{lem}
\label{lem:CS}
$\Ima \Phi_2^*\subset \sC_{W}$.
\end{lem}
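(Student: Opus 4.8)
The plan is to convert the asserted containment into a vanishing statement by adjunction. Reading off the semi-orthogonal decompositions (\ref{eq:DW}) and the second line of (\ref{eq:WS}) in Theorem \ref{thm:main2}, the category $\sC_{W}$ is exactly the left orthogonal
\[
\sC_{W}={}^{\perp}\big\langle \{{j_i}_*\sO_{E_i}(-2)\}_{i=1}^{8},\,\sO_{\hW}(-H),\,\sO_{\hW}(-L),\,\sF\big\rangle ,
\]
so an object $P\in\sD^b(\hW)$ lies in $\sC_{W}$ iff $\Hom^{\bullet}(P,C)=0$ for $C$ ranging over the generators ${j_i}_*\sO_{E_i}(-2)$ $(1\leq i\leq 8)$, $\sO_{\hW}(-H)$, $\sO_{\hW}(-L)$ and $\sF$. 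Since $\Phi_2^*$ is the left adjoint of $\Phi_2=\Phi_{\sP_2}$, we have $\Hom^{\bullet}(\Phi_2^*(B),C)\simeq\Hom^{\bullet}(B,\Phi_2(C))$ for every $B\in\sD^b(S)$. If this vanishes for all $B$, then taking $B=\Phi_2(C)$ kills $\mathrm{End}(\Phi_2(C))$, forcing $\Phi_2(C)=0$; conversely $\Phi_2(C)=0$ makes all these groups vanish. Hence it suffices to prove $\Phi_2(C)=0$ for each of the four types of generators $C$.

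Next I would compute $\Phi_2(C)=R\pr_{S*}(\sP_2\otimes\pr_{\hW}^{*}C)$ through the locally free resolution of ${\iota_{\Vs}}_*\sP$ in Theorem \ref{thm:newker}. Restricting that resolution to $S\times\hW$, with exactness guaranteed by the flatness of $\sP$ from Propositions \ref{prop:restP'} and \ref{prop:flatP1}, each term is a box product $\sA_k|_S\boxtimes\sB_k|_{\hW}$ with $\sA_k\in\{\sO_{\widetilde{\hcoY}},\widetilde{\sT},\widetilde{\sQ},\widetilde{\sS}^*_L\}$ and $\sB_k\in\{\Omega^1_{\hchow/\mathrm{G}(2,V)}(-L),\sF^*(-H),\sO_{\hchow},\sO_{\hchow}(L-H)\}$. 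By the projection formula and the Künneth decomposition for the projection $\pr_S$, the contribution of each term to $\Phi_2(C)$ equals $\sA_k|_S\otimes R\Gamma(\hW,\sB_k|_{\hW}\otimes C)$. Thus the problem reduces to the cohomology vanishings $R\Gamma(\hW,\sB_k|_{\hW}\otimes C)=0$ on the threefold $\hW$, for every relevant pair $(\sB_k,C)$.

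To establish these vanishings I would use that $\hW=\hchow\cap f^{-1}(P_2^{\perp})$ is a codimension-three linear section cut out by three members of $|H|$, and invoke the corresponding Koszul resolution of $\sO_{\hW}$ to transfer $R\Gamma(\hW,-)$ to cohomology on $\hchow$, governed by the vanishing package \cite[Thm.~5.1]{DerSym} underlying the dual Lefschetz collection of $\sD^b(\hchow)$. For the locally free generators $C=\sO_{\hW}(-H)$, $\sO_{\hW}(-L)$, $\sF$ these are direct twisted-cohomology computations. For the torsion generators $C={j_i}_*\sO_{E_i}(-2)$ the projection formula reduces the problem to $R\Gamma(E_i,\sB_k|_{E_i}(-2))$ on $E_i\simeq\mP^2$, which I would evaluate using $H|_{E_i}=\sO_{\mP^2}$ and $L|_{E_i}=\sO_{\mP^2}(1)$ from the proof of Lemma \ref{lem:HML}, together with the restrictions of $\sF$ and $\Omega^1_{\hchow/\mathrm{G}(2,V)}$ to $E_i$.

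The main obstacle I expect is the bookkeeping of these twisted vanishings on the singular, blown-up threefold $\hW$: one must keep the Koszul shifts within the ranges where \cite[Thm.~5.1]{DerSym} yields vanishing, handle the delicate ``half-exceptional'' twist $\sO_{\hchow}(L-H)$ (recall $2(H-L)=\sum_{i}E_i$ by (\ref{eq:2HL})), and treat the sheaves supported on the exceptional $\mP^2$'s. Where an individual term of the restricted resolution does not have vanishing cohomology on its own, one must further verify that the hypercohomology spectral sequence computing $R\pr_{S*}$ degenerates to zero, which is again dictated by the same vanishing results.
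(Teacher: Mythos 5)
Your proposal is correct and follows essentially the same route as the paper: reduce by adjunction to showing $\Phi_2$ annihilates the generators ${j_i}_*\sO_{E_i}(-2)$, $\sO_{\hW}(-H)$, $\sO_{\hW}(-L)$, $\sF$ of $\sC_W^{\perp}$, then feed the locally free resolution of the kernel into the vanishing theorems of \cite{DerSym} (for $\hchow$ this is Thm.~3.1 rather than 5.1), treating the exceptional generators by the observation that the resolution's second factors restrict to sums of $\sO_{\mP^2}$ and $\sO_{\mP^2}(1)$ on the exceptional planes. The only real divergence is logistical: the paper works fiberwise over $s\in S$ with the sheaves ${\iota_s}_*\sP_s$ on $\hchow$ and transfers the vanishings down to $\hW$ via the hyperplane-cutting ladder used for (\ref{eq:aim}), whereas you restrict (\ref{eqnarray:onVs'}) to $S\times\hW$ wholesale and apply K\"unneth -- a legitimate variant, except that the exactness of that restricted complex is not a consequence of the flatness statement in Proposition~\ref{prop:flatP1} alone and would need the regular-sequence arguments of the appendix.
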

\begin{proof}
Considering the adjunction, it suffices to show
that $\Phi_2(B)=0$ for any $B\in \sC_{W}^{\perp}$.
By the definition of $\sC_{W}$, we may assume that
\[
B=\sO_{F_i}(-2)\, (1\leq i \leq 8),\, \sO_{\hW}(-H),\, \sO_{\hW}(-L),\,
\text{or}\
\sF.
\]
Let $\hB$ be $\sO_{E_f}(E_f)$ if $B=\sO_{E_i}(-2)$,
and the corresponding locally free sheaf on $\hchow$ if otherwise.
By the argument to show (\ref{eq:aim}), it suffices to show that, for any $s\in S$,
$H^{\bullet}(\hW,\sP_{s;2}\otimes B)=\{0\}$,
where $\sP_{s;2}$ is the restriction of $\sP_2$ over $s$,
and this 
follows by showing 
$H^{\bullet}(\hchow,{j_s}_*\sP_s\otimes \hB(-t))=\{0\}$
for $t=0,1,2$.
If $B=\sO_{E_i}(-2)$, then this follows from (\ref{eqnarray:onVsy})
since
the restrictions of 
$\Omega^1_{\hchow/\mathrm{G}(2,V)}(-L)$,
$\sF^*(-H)$, 
$\sO_{\hchow}$, and
$\sO_{{\hchow}}(L-H)$ to the fibers of $E_f\to v_2(\mP(V))$
are direct sums of $\sO_{\mP^2}$ and $\sO_{\mP^2}(1)$.
Otherwise, the assertion follows from (\ref{eqnarray:onVsy})
and \cite[Thm.~3.1]{DerSym}.
\end{proof}
\noindent {\bf Step 2.} Since $A\in {\empty^{\perp}\langle \Phi_1(\sD^b(X))\rangle}$,
we have $\Phi_1^*(A)=0$ by adjunction. In particular, we have 
\begin{equation}
\label{eq:betaA}
\beta_*\Phi_1^*(A)=0.
\end{equation}
{\bf Step 3.} Now we estimate 
the difference between $\Phi_2^*\alpha^*(A)$ and 
$\beta_*\Phi_1^*(A)$.

To formulate the claim precisely,
we closely follow the argument of \cite[\S 6]{HPD1}.
By the commutative diagram
\begin{equation*}
\label{eq:SXSW}
\xymatrix{S\times X\ar@{^{(}->}[r]^{\beta}\ar@{^{(}->}[d]^{\alpha}& S\times \hW
\ar@{^{(}->}[d]^{\alpha}\\
\widetilde{Y}\times X\ar@{^{(}->}[r]^{\beta} & \widetilde{Y}\times \hW,}
\end{equation*}
we have $\alpha^*\sP_1\simeq \beta^*\sP_2$.
Consider the following cartesian product (cf.~\cite[Diagram (19) in \S 6]{HPD1}):
\begin{equation}
\label{eq:YXSW}
\xymatrix{
(\widetilde{Y}\times X)\cup (S\times \hW)\ar@{^{(}->}[r]^{\ \ \ \ \ \ \ \ i}\ar@{^{(}->}[d]_{\widetilde{\xi}} & \widetilde{Y}\times \hW\ar@{^{(}->}[d]^{\xi}\\
\Vs\ar@{^{(}->}[r]^{\iota_{\Vs}} & \widetilde{\hcoY}\times \hchow,
}
\end{equation}
where both horizontal arrows represent divisorial embeddings.
We set 
\[
\widetilde{\sP}:=\widetilde{\xi}^*\sP.
\] 
Then, by the exact sequence
\[
0\to \sO_{S\times \hW}(0,-1)\to \sO_{(\widetilde{Y}\times X)\cup (S\times \hW)}\to
\sO_{\widetilde{Y}\times X}\to 0,
\]
we have an exact triangle on $\widetilde{Y}\times \hW$
\[
\alpha_*\sP_2(0,-1)\to i_*\widetilde{\sP}\to \beta_*\sP_1,
\]
which in turn gives an exact triangle of functors
from $\sD^b(\hW)$ to $\sD^b(\widetilde{Y})$:
\[
\Phi_1\beta^{!}\to \alpha_*\Phi_2\to \Phi_{i_*\widetilde{\sP}(0,1)},
\]
where $\Phi_{i_*\widetilde{\sP}(0,1)}$ is the functor with the kernel $i_*\widetilde{\sP}(0,1)$.
Moreover, taking the left adjoints of these functors, we obtain
an exact triangle of functors from $\sD^b(\widetilde{Y})$ to $\sD^b(\hW)$:
\begin{equation}
\label{eq:ex-tri}
\Phi^*_{i_*\widetilde{\sP}(0,1)}\to \Phi_2^*\alpha^* \to \beta_*\Phi_1^*
\end{equation}
 (cf.~\cite[Lem.~6.14 and Cor.~6.15]{HPD1}). 
Our task is to estimate $\Phi^*_{i_*\widetilde{\sP}(0,1)}(A)$.

\begin{lem}
\label{lem:Phi(0,1)}
$\Phi^*_{i_*\widetilde{\sP}(0,1)}(A)$ belongs to $\sD^3_{W}(-H)$,
where 
\[
\sD^3_{W}:=
\langle \sO_{\hW}, \sO_{\hW}(H-L), \sF(H)\rangle\subset \sD^b(\hW).
\]

\end{lem}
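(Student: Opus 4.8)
The plan is to evaluate $\Phi^*_{i_*\widetilde{\sP}(0,1)}(A)$ directly from the locally free resolution (\ref{eqnarray:onVs'}) of ${\iota_{\Vs}}_*\sP$, rather than through the difference triangle (\ref{eq:ex-tri}). Since $\Phi^*_{i_*\widetilde{\sP}(0,1)}$ is a left adjoint, it is the Fourier--Mukai functor whose kernel is the derived dual of $i_*\widetilde{\sP}(0,1)$ twisted by $\pr_{\widetilde{Y}}^*\omega_{\widetilde{Y}}[\dim\widetilde{Y}]$. First I would restrict (\ref{eqnarray:onVs'}) along $\xi\colon \widetilde{Y}\times\hW\hookrightarrow \widetilde{\hcoY}\times\hchow$ to obtain a locally free complex on $\widetilde{Y}\times\hW$ resolving the kernel $i_*\widetilde{\sP}(0,1)$; this derived restriction is legitimate because $\widetilde{Y}\times\hW$ is carved out of $\widetilde{\hcoY}\times\hchow$ by regular sequences of members of $|M|$ on $\widetilde{\hcoY}$ and of $|H|$ on $\hchow$, and $\sP$ is flat in the relevant directions (Propositions \ref{prop:restP'} and \ref{prop:flatP1}, along the ladder of Proposition \ref{cla:tower}). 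Applying $R\pr_{\hW*}$ term by term and using Grothendieck--Verdier duality (Theorem \ref{cla:duality}), the transform of $A$ becomes an iterated cone whose building blocks are, for each summand $\widetilde{\mathcal{B}}\boxtimes\mathcal{C}$ of the $(0,1)$-twisted resolution, the object $R\Gamma(\widetilde{Y}, (\widetilde{\mathcal{B}}|_{\widetilde{Y}})^{\vee}\otimes A\otimes\omega_{\widetilde{Y}})[\dim\widetilde{Y}]\otimes_{\mC}(\mathcal{C}|_{\hW})^{\vee}$.

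Next I would read off the four $\hchow$-factors. The defining sequence $0\to \sO_{S\times \hW}(0,-1)\to \sO_{(\widetilde{Y}\times X)\cup (S\times \hW)}\to \sO_{\widetilde{Y}\times X}\to 0$ forces the convention $(0,1)=\sO\boxtimes\sO(H)$, since $X\subset\hW$ is cut by a section of $\sO_{\hW}(H)$. Hence the $(0,1)$-twisted $\hchow$-factors of (\ref{eqnarray:onVs'}) are $\Omega^1_{\hchow/\mathrm{G}(2,V)}(H-L)$, $\sF^*$, $\sO_{\hchow}(H)$ and $\sO_{\hchow}(L)$, whose duals restricted to $\hW$ are $(\Omega^1_{\hchow/\mathrm{G}(2,V)})^{\vee}(L-H)|_{\hW}$, $\sF$, $\sO_{\hW}(-H)$ and $\sO_{\hW}(-L)$. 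The last three are precisely the generators of $\sD^3_{W}(-H)=\langle \sO_{\hW}(-H), \sO_{\hW}(-L), \sF\rangle$, so the contributions of the three right-hand summands of (\ref{eqnarray:onVs'}) are each of the shape (generator) $\otimes_{\mC}$ (a complex of vector spaces) and therefore lie in $\sD^3_{W}(-H)$ with nothing further to check.

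The only term that can leave $\sD^3_{W}(-H)$ is the leftmost summand $\sO_{\widetilde{\hcoY}}\boxtimes\Omega^1_{\hchow/\mathrm{G}(2,V)}(-L)$, whose $\hW$-factor $(\Omega^1_{\hchow/\mathrm{G}(2,V)})^{\vee}(L-H)|_{\hW}$ is not among the generators; its contribution is $R\Gamma(\widetilde{Y}, A\otimes\omega_{\widetilde{Y}})[\dim\widetilde{Y}]\otimes_{\mC}(\Omega^1_{\hchow/\mathrm{G}(2,V)})^{\vee}(L-H)|_{\hW}$, so the lemma reduces to the single vanishing $R\Gamma(\widetilde{Y}, A\otimes\omega_{\widetilde{Y}})=0$. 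Here I would use $\omega_{\widetilde{Y}}=\sO_{\widetilde{Y}}(-2M+\sum_{i=1}^{10}F_i)$ (from $K_{\widetilde{Y}}=-2M+\sum F_i$) and strip off the divisors $F_i$ one at a time by the same short exact sequences used in the proof of Lemma \ref{lem:subset}. The base term $R\Gamma(\widetilde{Y}, A(-2M))$, i.e.\ $\Ext^{\bullet}(\sO_{\widetilde{Y}}(2), A)$, vanishes because $A\in\langle \sO_{\widetilde{Y}}(1),\sO_{\widetilde{Y}}(2)\rangle^{\perp}$, and each graded piece is $R\Gamma(F_i, L\iota_i^*A\otimes\sO_{F_i}(-1,-1))$ (using $M|_{F_i}=\sO_{F_i}$ and $\sO_{F_i}(F_i)=\sO_{F_i}(-1,-1)$); by Serre duality on $F_i\simeq\mP^1\times\mP^1$ this is dual to $\Ext^{\bullet}_{F_i}(L\iota_i^*A,\sO_{F_i}(-1,-1))\simeq \Ext^{\bullet}_{\widetilde{Y}}(A, {\iota_i}_*\sO_{F_i}(-1,-1))$, which is zero since $A\in{}^{\perp}\langle\{\sO_{F_i}(-1,-1)\}\rangle$. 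Therefore $R\Gamma(\widetilde{Y}, A\otimes\omega_{\widetilde{Y}})=0$, the leftmost term vanishes, and $\Phi^*_{i_*\widetilde{\sP}(0,1)}(A)\in \sD^3_{W}(-H)$.

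I expect the genuine difficulty to be concentrated in this last vanishing: the twist bookkeeping must place the leftmost contribution exactly against $\sO_{\widetilde{Y}}(2)$ modulo the $F_i$, so that the two distinct orthogonality conditions cutting out $A$ conspire to kill it, and the $F_i$-correction requires the Serre-duality flip on each exceptional quadric surface. A secondary but real point is the foundational one flagged in the first paragraph, namely that the kernel is supported on the reducible union $(\widetilde{Y}\times X)\cup(S\times\hW)$; one must check that restricting (\ref{eqnarray:onVs'}) to $\widetilde{Y}\times\hW$ introduces no spurious higher $\mathcal{T}or$ and genuinely resolves $i_*\widetilde{\sP}(0,1)$, for which the flatness of Propositions \ref{prop:restP'} and \ref{prop:flatP1} is exactly what is needed.
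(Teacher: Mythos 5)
Your proposal is correct and, at its core, is the same argument as the paper's: both reduce the lemma to the four-term resolution (\ref{eqnarray:onVs'}), observe that the three right-hand $\hchow$-factors dualize precisely to the generators of $\sD^3_W(-H)$ so that only the leftmost term $\sO_{\widetilde{\hcoY}}\boxtimes \Omega^1_{\hchow/\mathrm{G}(2,V)}(-L)$ can obstruct, and kill that term by the single vanishing $\mathrm{RHom}^{\bullet}(\sO_{\widetilde{Y}}(2M-\sum_{i}F_i),A)=0$, obtained from the short exact sequence (\ref{eq:2M}) together with the two orthogonality conditions on $A$ (with the $F_i$-pieces handled by a Serre-duality flip against $\sO_{F_i}(-1,-1)$). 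The only organizational difference is where the base change happens: you pull the resolution down to $\widetilde{Y}\times\hW$ and compute there, whereas the paper pushes $A$ up, factoring $\Phi^*_{i_*\widetilde{\sP}}\simeq j^*\circ\Phi^*_{{\iota_{\Vs}}_*\sP}\circ\xi_*\circ(\otimes\,\sO_{\widetilde{Y}}(6)[-6])$ through the diagram (\ref{eq:618}). The foundational point you flag — that restricting the resolution introduces no spurious higher $\mathcal{T}or$, i.e.\ that $i_*\widetilde{\sP}\simeq L\widetilde{\xi}^*{\iota_{\Vs}}_*\sP$ — is exactly what the paper discharges by declaring the square (\ref{eq:YXSW}) an exact cartesian via \cite[Lem.~2.32]{HPD1}; your appeal to the flatness statements and the regular-sequence ladder is the right substance for that, but the cleanest citation is the exact-cartesian lemma rather than Propositions \ref{prop:restP'} and \ref{prop:flatP1} alone. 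The paper's push-forward formulation buys it the convenience of quoting \cite[Lem.~2.28]{HPD1} verbatim for the term-by-term computation; yours makes the identification of the three harmless generators slightly more transparent. Either way the proof goes through.
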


\begin{proof}
More generally, we show the claim for
\[
A\in 
{\empty^{\perp}\langle \{\sO_{F_i}(-1,-1)\}_{1\leq i\leq 10} \rangle\cap
\langle \sO_{\widetilde{Y}}(1),
\sO_{\widetilde{Y}}(2)\rangle}^{\perp}.
\]
The assertion is equivalent to
\[
\Phi^*_{i_*\widetilde{\sP}}(A)\in \sD^3_{W},
\]
which we will prove by following closely the proof of \cite[Lem.~6.18]{HPD1}.
Consider the following diagram:
\begin{equation}
\label{eq:618}
\xymatrix{
\widetilde{Y}\ar@{=}[d] & \widetilde{Y}\times \hW\ar[r]_q\ar[l]\ar@{^{(}->}[d]_j & \hW\ar@{^{(}->}[d]_j
\\
\widetilde{Y}\ar@{^{(}->}[d]_{\xi} & \widetilde{Y}\times \hchow\ar[r]_q\ar[l]\ar@{^{(}->}[d]_{\xi} & 
\hchow\ar@{=}[d]
\\
\widetilde{\hcoY} & \widetilde{\hcoY}\times \hchow\ar[r]_q\ar[l] & 
\hchow,
}
\end{equation} 
where the vertical arrows are closed embeddings and 
the horizontal arrows are natural projections.
Since the diagram (\ref{eq:YXSW}) is an exact cartesian by \cite[Lem.~2.32]{HPD1}, 
we have \[
i_*\widetilde{\sP}\simeq \widetilde{\xi}^*{{\iota_{\Vs}}}_*\sP
\simeq 
j^*{\xi}^*{{\iota_{\Vs}}}_*\sP.
\]
Therefore, since
the upper-right square of the diagram (\ref{eq:618}) is also an exact cartesian, we have 
$\Phi_{i_*\widetilde{\sP}}\simeq \Phi_{\xi^*{\iota_{\Vs}}_*\sP}\circ j_*$
as functors $\sD^b(\hW)\to \sD^b(\widetilde{Y})$
(see an upper part of p.196 in the proof of \cite[Lem.~6.18]{HPD1} for details).
Taking the left adjoint, we have
$\Phi^*_{{i}_*\widetilde{\sP}}\simeq j^*\circ \Phi^*_{\xi^*{\iota_{\Vs}}_*\sP}$.
Thus it suffices to show
\[
\Phi^*_{\xi^*{{\iota_{\Vs}}_*\sP}}(A)\in \sD^3_{\chow},
\]
where 
$\Phi_{\xi^*{{\iota_{\Vs}}_*\sP}}$ is 
a functor from
$\sD^b(\hchow)$ to $\sD^b(\widetilde{Y})$.
Since the lower-left square is an exact cartesian
by \cite[Lem.~2.32]{HPD1},
we have
\begin{equation}
\label{eq:piphi}
\Phi_{\xi^*{\iota_{\Vs}}_*\sP}\simeq \xi^*\circ \Phi_{{\iota_{\Vs}}_*\sP}.
\end{equation}
(see a lower part of p.196 in the proof of \cite[Lem.~6.18]{HPD1} for details).
Since $\widetilde{Y}$ is a complete intersection 
in $\widetilde{\hcoY}$ by six members of $|M|$,
we have 
$\xi^*=\otimes \sO_{\widetilde{Y}}(-6)[6]\circ \xi^{!}$.
Then, by taking the left adjoint of (\ref{eq:piphi}),
we obtain
$\Phi^*_{\xi^*{\iota_{\Vs}}_*\sP}\simeq \Phi^*_{{\iota_{\Vs}}_*\sP}\circ \xi_*\circ \otimes \sO_{\widetilde{Y}}(6)[-6]$.
Therefore 
it remains to show 
that
$\Phi^*_{{\iota_{\Vs}}_*\sP}\circ \xi_*(A(6))\in
\sD^3_{\hchow}$.
To compute the functor $\Phi^*_{{\iota_{\Vs}}_*\sP}$,
we calculate the Fourier-Mukai functors with the kernels 
appearing in the exact sequence (\ref{eqnarray:onVs'}).
Then, by taking account of \cite[Lem.~2.28]{HPD1},
it suffices to show $H^{\bullet}(\widetilde{\hcoY}, \xi_*(A(6))\otimes \omega_{\widetilde{\hcoY}})=0$.
Note that
\begin{eqnarray*}
H^{\bullet}(\widetilde{\hcoY}, \xi_*(A(6))\otimes \omega_{\widetilde{\hcoY}})
\simeq H^{\bullet}(\widetilde{\hcoY}, 
\xi_*(A(-2M+\sum_{i=1}^{10} F_i)))\simeq\\
H^{\bullet}(\widetilde{Y}, 
A(-2M+\sum_{i=1}^{10} F_i))\simeq
\mathrm{RHom}^{\bullet} (\sO_{\widetilde{Y}}(2M-\sum_{i=1}^{10} F_i), A).
\end{eqnarray*} Consider the exact sequence
\begin{equation}
\label{eq:2M}
0\to \sO_{\widetilde{Y}}(2M-\sum_{i=1}^{10} F_i)\to
\sO_{\widetilde{Y}}(2M)\to \oplus_{i=1}^{10} \sO_{F_i}\to 0.
\end{equation}
Since $A\in \langle \sO_{\widetilde{Y}}(1),
\sO_{\widetilde{Y}}(2)\rangle^{\perp}$,
we have 
$\mathrm{RHom}^{\bullet} (\sO_{\widetilde{Y}}(2M), A)=0$.
Moreover,
since 
\[
A\in {\empty^{\perp}\langle \{\sO_{F_i}(-1,-1)\}_{1\leq i\leq 10} \rangle},
\]
we have
$\mathrm{RHom}^{\bullet} (\sO_{F_i}, A)\simeq 
\mathrm{RHom}^{3-\bullet}(A, \sO_{F_i}(-1-1))=0$,
where the first isomorphism is given by the Serre duality.
Therefore, by the exact sequence (\ref{eq:2M}),
we finally obtain 
$\mathrm{RHom}^{\bullet} (\sO_{\widetilde{Y}}(2M-\sum_{i=1}^{10} F_i), A)=0$.
\end{proof}
\noindent{\bf Step 4.} We derive \[
\Phi_2^*\alpha^*(A)=0.
\]

Indeed, by (\ref{eq:betaA}), (\ref{eq:ex-tri}), and Lemma \ref{lem:Phi(0,1)},
we have
$\Phi_2^*\alpha^*(A)\in \sD^3_{W}(-H)$.
Combining this with (\ref{eq:CW}),
we obtain $\Phi_2^*\alpha^*(A)\in \sC_{W}\cap \sD^3_{W}(-H)$,
which implies that $\Phi_2^*\alpha^*(A)=0$
since $\sD_W=\langle \sD_W^3(-H),\sC_W\rangle$.
\vspace{5pt}

\noindent {\bf Step 5.}
We show $\alpha^*(A)=0$.

Since $A\in \langle \sO_{\widetilde{Y}}(1),
\sO_{\widetilde{Y}}(2)\rangle^{\perp}$,
we have $\alpha^*(A)\in \langle \sO_{S}(1)\rangle^{\perp}=\sC_S$.
Noting this, we show a more general claim;
\begin{equation}
\label{Phi_2}
\text{For $B\in \sC_S$, if $\Phi_2^*(B)=0$, then $B=0$.}
\end{equation}
Note that for $C\in \sD^b(\hW)$,
it holds that $\Hom (B,\Phi_2(C))=\Hom(\Phi_2^*(B),C)=0$ if $\Phi_2^*(B)=0$.
Therefore, if we show 
\begin{equation}
\label{eq:generate}
\text{$\Ima \Phi_2$ generates $\sC_S$},
\end{equation}
then we have $B=0$ as desired in (\ref{Phi_2}).
We show the claim (\ref{eq:generate}).

First we see that $\Phi_2(\sO_{\hW})=\widetilde{\sQ}|_S$,
and in particular, 
\[
\widetilde{\sQ}|_S\in \Ima \Phi_2.
\]
Let $p_1\colon S\times \hW\to S$ be the first projection.
Then the assertion is equivalent to
${p_1}_*\sP_2\simeq \widetilde{\sQ}|_S$ and
$R^{\bullet}{p_1}_*\sP_2=0$ for $\bullet>0$.
We denote by $\sP_{s;2}$ the restriction of $\sP_2$ over a point 
$s\in S$. Since $\sP_2$ is flat over $S$ by Proposition \ref{prop:flatP1}, the problem is reduced to compute
$H^{\bullet}(\hW,\sP_{s;2})$.
By \cite[Thm.~3.1]{DerSym}, we have  
the vanishings of the cohomology groups
\[
H^{\bullet}(\hchow, \Omega^1_{\hchow/\mathrm{G}(2,V)}(-L-tH)),\,
H^{\bullet}(\hchow, \sF^*(-(t+1)H)),\,
H^{\bullet}(\hchow, \sO_{{\hchow}}(L-(t+1)H))
\] for $t=0,1,2$, and
$H^{\bullet}(\hchow, \sO_{{\hchow}}(-tH))$ 
for $t=1,2$.
Thus by (\ref{eqnarray:onVsy}), 
we obtain 
$H^0(\hW,\sP_{s;2})\simeq \mC^3$,
and $H^{\bullet}(\hW,\sP_{s;2})=0$ for $\bullet>0$.
This implies the assertion by taking account of (\ref{eqnarray:onVs'}).

To proceed, we describe the properties of 
\[
\Delta_2:=\Delta|_{S\times \hW}.
\]
We denote by $p\colon \Delta_2\to S$, $q\colon \Delta_2\to \hW$,
$\rho_1\colon Z_S\times \hW\to Z_S$, and
$\rho_2\colon Z_S\times \hW\to \hW$
the natural projections.
We also denote by $\rho\colon \Delta_2\to Z_S$ and
$\pi\colon Z_S\to S$ the natural morphisms factoring $p$,
and by $j\colon \Delta_2 \hookrightarrow Z_S\times \hW$ 
the natural closed embedding.
\[
\xymatrix{& & Z_S \times \hW\ar[ddl]_{\rho_1}\ar[ddr]^{\rho_2} & \\ & & \Delta_2\ar@{^{(}->}[u]_{j}\ar[dl]_{\rho}\ar[dr]^{q}\ar@/^1.5pc/[ddll]^p &  \\&  Z_S\ar[dl]_{\pi} & & \hW & \\ S & & & }\] 

\begin{lem}
\label{prop:Delta2descrip}
$q\colon \Delta_2\to \hW$ is the blow-up along
the flopping curves of $\hW\to \overline{W}$,
and
$\rho\colon \Delta_2\to Z_S$ is 
the blow-up along
the flopped curves of $Z_S\to \overline{W}$,
where we recall that $\overline{W}\subset \mathrm{G}(2,V)$ is the image of 
$Z_S$ and $\hW$.
\end{lem}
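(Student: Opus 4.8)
The plan is to identify $\Delta_2$ with the common resolution $\widehat{W}'$ of the Atiyah flop $\hW\dashrightarrow Z_S$ used in the proof of Lemma \ref{lem:ZSdec}, and then to read off the two asserted blow-up descriptions from the two projections of a fibre product.

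First I would unwind the definition. Since $\Delta=\widetilde{\Zpq}\times_{\mathrm{G}(2,V)}\hchow$ by (\ref{eq:defDelta}), and the $\widetilde{\hcoY}$-coordinate of a point of $\Delta\subset\widetilde{\hcoY}\times\hchow$ is the image of its $\widetilde{\Zpq}$-coordinate under $\Lpi_{\widetilde{\Zpq}}$, restricting the first factor to lie over $S\subset\widetilde{\hcoY}$ and the second to $\hW\subset\hchow$ gives at once
\[
\Delta_2=\widetilde{\Zpq}|_S\times_{\mathrm{G}(2,V)}\hW,\qquad \widetilde{\Zpq}|_S:=\Lpi_{\widetilde{\Zpq}}^{-1}(S).
\]
Because $Z_S\to S$ is a genuine $\mP^1$-bundle (subsection \ref{subsection:SW}), the surface $S$ is disjoint from $\Prt_{\sigma}$, so by Proposition \ref{prop:cobu} the conic bundle $\Lpi_{\widetilde{\Zpq}}\colon\widetilde{\Zpq}\to\widetilde{\hcoY}$ restricts over $S$ to $Z_S\to S$; that is, $\widetilde{\Zpq}|_S\cong Z_S$ compatibly with the maps to $\mathrm{G}(2,V)$. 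Since the images of both $Z_S\to\mathrm{G}(2,V)$ and $g|_{\hW}\colon\hW\to\mathrm{G}(2,V)$ equal the threefold $\overline{W}$, I conclude
\[
\Delta_2\cong Z_S\times_{\overline{W}}\hW,
\]
with $\rho$ and $q$ identified with the two projections.

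Next I would invoke the flop. By Proposition \ref{prop:flop}(2) the contractions $\hW\to\overline{W}$ and $Z_S\to\overline{W}$ are the two small resolutions of an Atiyah flop; equivalently, $\overline{W}$ has exactly $28$ ordinary double points (conifold singularities) at the $[l_{ij}]$, and $\hW,Z_S$ are its two small resolutions, whose common blow-up $\widehat{W}'$ (realized in the proof of Lemma \ref{lem:ZSdec} as the blow-up of $\hW$ along the flopping curves and of $Z_S$ along the flopped curves) resolves all $28$ nodes. The key point is that the fibre product of the two small resolutions of a conifold is exactly this common blow-up. Granting this, $\Delta_2\cong\widehat{W}'$, and the projections $q$ and $\rho$ become the two blow-ups asserted in the statement.

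The main obstacle is precisely the identification $Z_S\times_{\overline{W}}\hW\cong\widehat{W}'$, which I would establish by the standard local computation at a node: analytically near $[l_{ij}]$, $\overline{W}$ is the node $\{xy=zw\}$, the two resolutions are the small resolutions blowing up $(x,z)$ and $(x,w)$, and one checks directly that their fibre product is reduced, irreducible, smooth, and isomorphic to the blow-up of the vertex, whose exceptional divisor is the $\mP^1\times\mP^1$ arising as the product of the two exceptional $\mP^1$'s. Away from the $28$ points both contractions are isomorphisms, so the fibre product is an isomorphism there; a dimension count shows each fibre $\mP^1\times\mP^1$ over a node lies in the closure of this open part, so no spurious components occur, and patching over the $28$ nodes gives $\Delta_2\cong\widehat{W}'$ globally. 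As a consistency check, one sees immediately that $q\colon\Delta_2\to\hW$ is an isomorphism off the flopping curves and has $\mP^1$-fibres over them, exactly as for the blow-up of a codimension-two smooth centre, and symmetrically for $\rho\colon\Delta_2\to Z_S$.
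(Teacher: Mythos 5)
Your proposal is correct and follows essentially the same route as the paper: identify $\widetilde{\Zpq}|_S$ with $Z_S$, rewrite $\Delta_2$ as the fibre product $Z_S\times_{\overline{W}}\hW$, and invoke the standard fact that the fibre product of the two sides of an Atiyah flop is the common blow-up along the flopping/flopped curves. The only differences are cosmetic: the paper justifies $\widetilde{\Zpq}|_S\cong Z_S$ by citing the coincidence of $\widetilde{\Zpq}\to\widetilde{\hcoY}$ and $\Zpq\to\hcoY$ over rank $\geq 3$ points, and simply declares the conifold fibre-product fact ``well-known,'' whereas you supply the local computation at a node.
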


\begin{proof}
We recall that $\widetilde{\Zpq}\to \widetilde{\hcoY}$ and $\Zpq\to \hcoY$ coincide over the locus of rank $\geq 3$ points by \cite[Prop.~4.20]{Geom}.
Therefore the restriction of $\widetilde{\Zpq}$ over $S$ coincides with $Z_S$.
Then, by (\ref{eq:defDelta}), we may consider
\[
\Delta_2=(\widetilde{\Zpq}\times_{\mathrm{G}(2,V)} \hchow)|_{Z_S\times \hW}
=Z_S\times_{\overline{W}}\hW.
\]
Since $\hW\dashrightarrow Z_S$ is an Atiyah's flop by Proposition \ref{prop:flop} (2), it is well-known that
$Z_S\times_{\overline{W}}\hW$ has properties as described in the statement.
\end{proof}

Now we show that, for any object $C\in
\sC_S$, there exists the following exact triangle:
\begin{equation}
\label{eq:finaleq}
H^{\bullet}(S,C)\otimes {\widetilde{\sQ}}|_S\to
\Phi_2(q_*p^*C)\to C[-1]
\to H^{\bullet+1}(S,C)\otimes {\widetilde{\sQ}}|_S.
\end{equation}
This immediately implies 
(\ref{eq:generate}) since $\widetilde{\sQ}|_S\in \Ima \Phi_2$.

The key ingredient to show (\ref{eq:finaleq}) is 
the following exact sequence relating $\sP_2$ and $\Delta_2$,
which is derived in the appendix \ref{app:B} (the subsection \ref{cutting}):
\begin{equation}
\label{eq:ShW}
0\to \sO_{S}\boxtimes \sO_{\hW}(-L)\to
\widetilde{\sQ}|_S\boxtimes \sO_{\hW}\to \sP_2\to 
\omega_{\Delta_2/S}\to 0.
\end{equation}

We derive the exact triangle (\ref{eq:finaleq})
by computing the Fourier-Mukai functors 
from $\sD^b(\hW)$ to $\sD^b(S)$
whose kernels are the terms of
(\ref{eq:ShW}).
\begin{itemize}
\item
$\Phi_{\sO_{S}\boxtimes \sO_{\hW}(-L)}(q_*p^*C)=0$.

Indeed,
we have
$H^{\bullet}(\hW,q_*p^* C\otimes \sO_{\hW}(-L))
\simeq H^{\bullet}(Z_S,\pi^*C\otimes \sO_{Z_S}(-L))$
since $q_*\rho^*$ is an equivalence and $q_*\rho^*\sO_{Z_S}(L)=\sO_{\hW}(L)$.
Then the assertion follows from the following 
chain of isomorphisms:
\begin{eqnarray*}
H^{\bullet}(Z_S,\pi^*C\otimes \sO_{Z_S}(-L))\simeq
H^{\bullet}(Z_S,\pi^*C\otimes \omega_{Z_S})\simeq\\
\mathrm{RHom}^{3-\bullet}(\pi^*C, \sO_{Z_S})^*\simeq 
\mathrm{RHom}^{3-\bullet}(C, \sO_{S})^*\simeq 
\mathrm{RHom}^{\bullet-1}(\omega_S^{-1}, C)=0,
\end{eqnarray*}
where the second isomorphism follows from the duality on $Z_S$,
the third isomorphism follows from fully faithfulness of $\pi^*$ \cite{Or} (note that $\pi$ is a $\mP^1$-bundle), the fourth isomorphism follows from
the duality on $S$, and 
the last equality follows from $C\in \sC_S=\langle\omega_S^{-1}\rangle^{\perp}$.
\item
$\Phi_{\widetilde{\sQ}|_S\boxtimes \sO_{\hW}}(q_*p^*C)=
H^{\bullet}(S,C)\otimes {\widetilde{\sQ}}|_S$.

Indeed, we have
$H^{\bullet}(\hW,q_*p^* C)\simeq H^{\bullet}(Z_S,\pi^*C)$
since $q_*\rho^*$ is an equivalence and $q_*\rho^*\sO_{Z_S}=\sO_{\hW}$.
Moreover,
$H^{\bullet}(Z_S,\pi^* C)\simeq H^{\bullet}(S,C)$
since $\pi\colon Z_S\to S$ is a $\mP^1$-bundle, and then $\pi^*$ is 
fully faithful by \cite{Or}.
\item
$\Phi_{\omega_{\Delta_2/S}}(q_*p^*C)=C[-1]$.

Note that the l.h.s. precisely stands for
$\Phi_{j_*\omega_{\Delta_2/S}}(q_*p^*C)$.
Since $Z_S\dashrightarrow \hW$ is an Atiyah's flop by Proposition \ref{prop:flop},
the functor $q_*\rho^*$ is the Fourier-Mukai functor
with the kernel $j_*\sO_{\Delta_2}$ by \cite{BO}.
By \cite[Lem.~2.2.8]{HPD1},
the left adjoint of $\Phi_{j_*\sO_{\Delta_2}}\colon \sD^b(Z_S)\to \sD^b(\hW)$ is 
the Fourier-Mukai functor with the kernel
\begin{eqnarray*}
(j_*\sO_{\Delta_2})^{\#}:=
R\sH om(j_*\sO_{\Delta_2},\omega_{\hW\times Z_S/Z_S}[3])\simeq\\
j_*R\sH om(\sO_{\Delta_2},j^{!}\omega_{\hW\times Z_S/Z_S}[3])\simeq
j_*\omega_{\Delta_2/Z_S}.
\end{eqnarray*}
Since $\Phi_{j_*\sO_{\Delta_2}}$ is an equivalence,
we have
\[
\Phi_{(j_*\sO_{\Delta_2})^{\#}}\circ \Phi_{j_*\sO_{\Delta_2}}(\pi^*C)\simeq \pi^*C.
\]
Since $\omega_{\Delta_2/S}=\omega_{\Delta_2/Z_S}\otimes \rho^*\omega_{Z_S/S}$,
we have
\begin{align*}
\Phi_{j_*\omega_{\Delta_2/S}}(q_*p^*C)\simeq
\Phi_{j_*\omega_{\Delta_2/S}}\circ \Phi_{j_*\sO_{\Delta_2}}(\pi^*C)\simeq\\
\Phi_{j_*\omega_{\Delta_2/Z_S}\otimes {\rho_1}^*\omega_{Z_S/S}}\circ \Phi_{j_*\sO_{\Delta_2}}(\pi^*C)
\simeq 
\pi_*(\pi^*C\otimes \omega_{Z_S/S})
\simeq C[-1],
\end{align*}
where the last isomorphism follows 
by Theorem \ref{cla:duality} since $Z_S\to S$ is a $\mP^1$-bundle.
\end{itemize}
Therefore we obtain (\ref{eq:finaleq}).

\begin{rem}
Step 5 indicates
that the functor $\Phi_2$ induces
an equivalence between $\sC_{W}$ and $\sC_S$
in Theorem \ref{thm:main2}.
\end{rem}

\vspace{5pt}

\noindent {\bf Step 6.} We show
$A\in \oplus \sD^b(F_i)$.

By Step 5, we have $\alpha^*(A)=0$ for 
any smooth $S\subset \widetilde{Y}$
such that its orthogonal linear section $W$ has only
$\frac 12(1,1,1)$-singularities. 
Then, by \cite[Lem.~4.5]{K-g7} and the following lemma, we see that
the supports of cohomologies of $A$ are contained in $\cup_{i=1}^{10} F_i$.
\begin{lem}
\label{lem:SW}
For any point $y\in \widetilde{Y}\setminus \cup_{i=1}^{10} F_i$,
we may choose a smooth $S$ through $y$
such that its orthogonal linear section $W$ has only
eight $\frac 12(1,1,1)$-singularities.
\end{lem}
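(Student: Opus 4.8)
The plan is to translate the choice of $S$ into a choice of a net of quadrics and then verify the three conditions of (\ref{eq:P2}) by Bertini-type arguments, the whole point being that the point $y$ constrains the net only to pass through one fixed quadric of rank $\geq 3$. Write $\overline{y}:=\Lrho_{\hcoY}(y)\in P_3$; since $y\notin\cup_{i=1}^{10}F_i$ and the $F_i$ lie over $\nS_2\cap P_3=\{y_1,\dots,y_{10}\}$, the quadric $Q_0\subset\mP(V)$ corresponding to $\overline{y}$ has rank $\geq 3$, hence is a reduced irreducible quadric surface. Choosing a smooth $S\subset Y$ through $y$ together with its orthogonal $W$ amounts to choosing a $2$-plane (net) $P_2\subset P_3$ with $\overline{y}\in P_2$ for which (\ref{eq:P2}) holds: then $S$ is the pull-back of $P_2$ and automatically contains $y$, while $W=\chow\cap P_2^{\perp}$. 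The nets $P_2\subset P_3$ through $\overline{y}$ form an irreducible $\mP^2$, so it suffices to show that each condition of (\ref{eq:P2}) holds on a dense open subset of this $\mP^2$ and to pick a net in their common (nonempty) intersection.

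For the smoothness of $\Gamma=\nS_3\cap P_2$, I would apply Bertini to the quartic surface $H=\nS_3\cap P_3$ with the linear system of its plane sections through $\overline{y}$; this system has base locus contained in $\{\overline{y}\}$, and $\overline{y}\notin\Sing H=\{y_1,\dots,y_{10}\}$. Hence a general such section is smooth away from the nodes $y_i$, and after discarding the (at most) ten $\mP^1$'s of planes through some $y_i$ and, in case $\overline{y}\in H$, the single tangent plane $T_{\overline{y}}H$, the curve $\Gamma$ is smooth; equivalently $S$ is a smooth del Pezzo surface of degree two.

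The condition $\Sing\chow\cap P_2^{\perp}=\{8\text{ points}\}$ is the crux, and here the rank hypothesis on $Q_0$ is essential. Using the dual pairing one identifies $\Sing\chow\cap P_2^{\perp}=v_2(\mP(V))\cap P_2^{\perp}$ with the base locus $\Bs(P_2)\subset\mP(V)$ of the net $P_2$ of quadrics; moreover the smoothness of $X$ forces $P_3^{\perp}\cap v_2(\mP(V))=\varnothing$, i.e. the web $P_3$ is base-point-free. Restricting the web to $Q_0$ (which yields a genuine surface precisely because $\rank Q_0\geq 3$) gives a base-point-free net of $(2,2)$-curves on $Q_0$, its base locus being $\Bs(P_3)\cap Q_0=\varnothing$. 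By Bertini a general pencil in this net---equivalently a general net $P_2\ni Q_0$---has reduced base locus, which by Bézout (two members of $|\sO_{Q_0}(2)|$) consists of $4\deg Q_0=8$ distinct points; these are the eight points of $\Sing\chow\cap P_2^{\perp}$.

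Finally, for $\Sing W=\Sing\chow\cap P_2^{\perp}$ I would first argue that $W$ is automatically smooth along $X=W\cap P_3^{\perp}$: since $X$ is smooth and $P_3^{\perp}\subset P_2^{\perp}$ is a hyperplane, a tangent-space count shows that transversality of $P_3^{\perp}$ to $\chow$ along $X$ forces transversality of $P_2^{\perp}$, so $W$ is smooth at each point of $X$. Away from $X$ the family $\{\chow\cap P_2^{\perp}\}$ over our $\mP^2$ has base locus exactly $X$, so Bertini gives that a general member is smooth off $X\cup\Sing\chow$; combined with the preceding paragraph, whose eight points are a transverse intersection with $\Sing\chow$ (hence ordinary $\tfrac12(1,1,1)$-singularities, as noted after (\ref{eq:P2})), this yields $\Sing W=\{w_1,\dots,w_8\}$. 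Intersecting the three dense open conditions on the irreducible $\mP^2$ and choosing any net therein completes the proof. The only genuinely delicate point is the base-locus count, which would fail if $Q_0$ were a pair of planes or a double plane; this degeneration is exactly what $y\notin\cup_{i=1}^{10}F_i$ excludes.
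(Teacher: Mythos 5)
Your proof is correct and follows essentially the same route as the paper's: both reduce the choice of $S$ through $y$ to a general net $P_2\ni\overline{y}$ (equivalently a general pencil $H_1,H_2$ in the system of hyperplane sections of $\chow\cap H_y$ containing $X$), get smoothness of $S$ from Bertini, and obtain the eight reduced points of $v_2(\mP(V))\cap P_2^{\perp}$ from base-point-freeness of the restricted system on $Q_0$ --- resting on $X\cap v_2(\mP(V))=\emptyset$ and $\rank Q_0\geq 3$ --- together with the degree count $4\deg Q_0=8$. You are in fact somewhat more explicit than the paper in checking the remaining condition $\Sing W=\Sing\chow\cap P_2^{\perp}$ away from the eight points.
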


\begin{proof}
By the Bertini theorem, a general $S$ through $y$ is smooth.
Let $H_y\subset \mP({\ft S}^2 V)$ be the hyperplane corresponding to $y$,
and $Q_y\subset \mP(V)$ the quadric corresponding to the image of $y$ in
$\mP({\ft S}^2 V^*)$. Note that $\rank Q_y=3$ or $4$.
Then $H_y$ cut out $Q_y$ from $v_2(\mP(V))$
by the projective duality of $v_2(\mP(V))$.
By the orthogonality between $X$ and $Y$,
$X$ is contained in $H_y$.
Let $\Gamma$ be the linear system of hyperplane sections
of $\chow\cap H_y$ containing $X$.
Then a codimension three linear section $W$ of $\chow$ 
such that $X\subset W$ and its orthogonal linear section of $\widetilde{\hcoY}$
contains $y$ is of the form $W=\chow\cap H_y \cap H_1\cap H_2$ with
$H_1,H_2\in \Gamma$. We have only to show 
such a general $W$ has only
eight $\frac 12(1,1,1)$-singularities, equivalently,
$v_2(\mP(V))\cap H_y\cap H_1\cap H_2=Q_y\cap H_1\cap H_2$ consists of 
eight points for general $H_1$ and $H_2$.
This follows since 
the base locus of $\Gamma$ is $X$ and $X\cap v_2(\mP(V))=\emptyset$.
\end{proof}

Therefore, we have shown $A\in \oplus \sD^b(F_i)$.
\vspace{5pt}

\noindent {\bf Step 7.}
We finish the proof of (\ref{eq:A0}).

For this,
we need to investigate the relationship between $\sD^b(F_i)$ and $\Ima \Phi_1$
(the arguments become related to the result of \cite{IK}.
We continue arguments in the next subsection). 

First we review some classical geometries of $X$.
It is well-known that $X$ has ten elliptic fibrations $\pi_i\colon X\to \mP^1$
($1\leq i\leq 10$), and each fibration has two multiple fibers.
Let $\delta_i$, $\delta'_i$ be the supports of the multiple fibers of $\pi_i$.
In \cite[Lem.~5.13]{IK}, these are described by geometries of $Y$ as follows.
We denote by $\mP(V_3^a)\cup \mP(V_3^b)$ the rank two quadric corresponding to
the singular point $y_i\in Y$.
Let $\mP_k= 
\{\mC^2\mid \mC^2\subset V_3^k\}$ ($k=a,b$),
which is a $\sigma$-plane.
By \cite[Prop.~3.7]{Geom},
the fiber of $Z\to Y$ over $y_i$ is $\mP_a\cup_{1pt}\mP_b$.
Then $\mP_a\cap X$ and $\mP_b\cap X$ are the supports of 
two multiple fibers of an elliptic fibration.
From now on, we denote by $\pi_i$ this elliptic fibration and we set
\[
\delta_i:=\mP_a\cap X,\, \delta'_i:=\mP_b\cap X.
\] 
By this choice, we may consider
\[
\text{$\delta_i$ and $\delta'_i$ correspond to $\sO_{F_i}(1,0)$
and $\sO_{F_i}(0,1)$, respectively.}
\]

\begin{lem}
\label{lem:IK}
\begin{equation}
\label{eq:Ei}
\Phi_1(\sO_X(-\delta_i)), \Phi_1(\sO_X(-\delta'_i))\in \sD^b(F_i),
\end{equation}
and
$\sD^b(F_i)$ admits the following semi-orthogonal decomposition\,$:$
\begin{equation}
\label{eq:decEi}
\sD^b(F_i)=\langle \sO_{F_i}(-1,-1),
\Phi_1(\sO_X(-\delta_i)), \Phi_1(\sO_X(-\delta'_i)),
\sO_{F_i}\rangle.
\end{equation}
\end{lem}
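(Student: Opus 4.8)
The plan is to compute the two transforms $\Phi_1(\sO_X(-\delta_i))$ and $\Phi_1(\sO_X(-\delta'_i))$ explicitly, identify them (up to a common shift) with the line bundles $\sO_{F_i}(-1,0)$ and $\sO_{F_i}(0,-1)$ on $F_i\simeq\mP^1\times\mP^1$, and then read off \eqref{eq:decEi} from the resulting full exceptional collection on $\mP^1\times\mP^1$. The starting point is the local geometry near $y_i$: over the rank-two quadric $\mP(V_3^a)\cup\mP(V_3^b)$ the fibre of $Z\to Y$ is $\mP_a\cup_{1pt}\mP_b$, and after blowing up to $\widetilde{Y}$ the two rulings of $F_i$ are matched with the two families $\mP_a,\mP_b$; this is what makes the chosen correspondence $\delta_i\leftrightarrow\sO_{F_i}(1,0)$, $\delta'_i\leftrightarrow\sO_{F_i}(0,1)$ geometrically meaningful. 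Using the description \eqref{eq:lxPy} of the correspondence together with the fact that $\delta_i=\mP_a\cap X$ consists of the lines lying in the fixed plane $\mP(V_3^a)$, I would first check that for every $x\in\delta_i$ the associated curve $C_x$ is swept into $F_i$, so that $\sP_1|_{\widetilde{Y}\times\delta_i}$ is scheme-theoretically supported over $F_i$. This is the qualitative input forcing both transforms to be $F_i$-supported.

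To make \eqref{eq:Ei} rigorous I would argue fibrewise over $\widetilde{Y}$. Recall that $\cup_i F_i$ lies in the $\Prt_\sigma$-locus, so for $y\notin\cup_i F_i$ the restricted resolution \eqref{eqnarray:onVsy} of $\sP_y$ on $\hchow$ is available by Proposition \ref{prop:restP'}. By base change the stalk $\Phi_1(\sO_X(-\delta_i))|_y$ is the hypercohomology $R\Gamma\bigl(X,(\sP_y|_X)\otimes\sO_X(-\delta_i)\bigr)$, which I would evaluate by feeding \eqref{eqnarray:onVsy} into the vanishing theorems \cite[Thm.~3.1 and 5.1]{DerSym}. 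I expect this group to vanish for all $y$ outside $F_i$, which gives $\Phi_1(\sO_X(-\delta_i))\in\sD^b(F_i)$, and to be one–dimensional in a single degree for $y\in F_i$; combined with the recorded correspondence $\delta_i\leftrightarrow\sO_{F_i}(1,0)$ this pins the transform down as $\sO_{F_i}(-1,0)$, and symmetrically $\Phi_1(\sO_X(-\delta'_i))=\sO_{F_i}(0,-1)$, each up to the same shift.

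Granting these two identifications, \eqref{eq:decEi} is purely formal. The ordered collection $\langle\sO_{F_i}(-1,-1),\sO_{F_i}(-1,0),\sO_{F_i}(0,-1),\sO_{F_i}\rangle$ is exceptional and semi-orthogonal, since the only line bundles $\sO_{F_i}(a,b)$ occurring in the relevant $\Ext$-groups are those with $(a,b)\in\{(0,-1),(-1,0),(-1,-1),(-1,1)\}$, all of which have vanishing cohomology on $\mP^1\times\mP^1$ because one factor has degree $-1$; the diagonal $\Ext$'s are $H^\bullet(F_i,\sO_{F_i})=\mC$ in degree $0$. Since $F_i\simeq\mP^1\times\mP^1$ is a del Pezzo surface, Theorem \ref{thm:delPezzo} guarantees that every full exceptional collection has length four, so an exceptional collection of length four is automatically full; hence the four objects generate $\sD^b(F_i)$ and \eqref{eq:decEi} holds. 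The main obstacle is the fibrewise step in the middle paragraph: controlling $\sP_y|_X$ as $y$ approaches $F_i$, precisely the $\Prt_\sigma$-locus where the flatness of the kernel and the validity of \eqref{eqnarray:onVsy} break down, so that showing $\Phi_1(\sO_X(-\delta_i))$ is a genuine line bundle on $F_i$ rather than a complex carrying extra cohomology requires the careful local analysis over the singular points $y_i$.
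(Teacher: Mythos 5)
Your overall strategy for \eqref{eq:Ei} --- showing that the fibre $R\Gamma(X,\sP_{y;1}\otimes\sO_X(-\delta_i))$ vanishes for every $y\notin F_i$ --- is the same as the paper's, but the step that actually does the work is missing. The twist by $\sO_X(-\delta_i)$ is not the restriction of a line bundle from $\hchow$, so feeding \eqref{eqnarray:onVsy} into the vanishing theorems of \cite{DerSym} says nothing about it by itself. The paper's mechanism is geometric: $\delta_i=X\cap g^{-1}(\mP_a)$ for the $\sigma$-plane $\mP_a$, whose ideal sheaf has the Koszul resolution \eqref{eq:sigma'}; one then writes $X=H_1\cap\dots\cap H_4$ with $H_1$ the hyperplane dual to (the image of) $y$ and $H_2$ the one dual to $y_i$, so that $H_2\supset g^{-1}(\mP_a)$ and $H_1\neq H_2$ precisely because $y\notin F_i$. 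That inequality is what makes the relevant cohomology vanish; without some such input a fibrewise computation cannot distinguish $y\in F_i$ from $y\notin F_i$ (if the vanishing held everywhere the transform would be zero, contradicting full faithfulness). Relatedly, your stated ``main obstacle'' rests on a false premise: $\widetilde{Y}\cap\Prt_{\sigma}=\emptyset$ (this is used explicitly in the proof of Lemma \ref{Delta1fiber}), so the $F_i$ do \emph{not} lie in the $\Prt_{\sigma}$-locus, the resolution \eqref{eqnarray:onVsy} is valid for every $y\in\widetilde{Y}$ including $y\in F_i$, and the difficulty is not a degeneration of the kernel there.

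For \eqref{eq:decEi} you propose to first identify the transforms as $\sO_{F_i}(-1,0)$ and $\sO_{F_i}(0,-1)$ up to shift. That is Proposition \ref{prop:refine}, which the paper proves only later and with substantially more work (via the exact sequence \eqref{eq:PYX} relating $\sP_1$ to $\Delta_1$ and a delicate analysis of $p_*q^*\sO_X(L+\delta'_i)$); knowing that the fibres over points of $F_i$ are one-dimensional does not by itself pin down the bidegree, so ``this pins the transform down as $\sO_{F_i}(-1,0)$'' is not justified. The paper's own route to \eqref{eq:decEi} avoids the identification entirely: Lemma \ref{lem:subset}~(2) gives the orthogonality with $\sO_{F_i}(-1,-1)$ and $\sO_{F_i}$, full faithfulness of $\Phi_1$ gives exceptionality of the two middle objects, and Theorem \ref{thm:delPezzo} upgrades the resulting length-four exceptional collection to a full one. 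Your appeal to Theorem \ref{thm:delPezzo} for fullness is the right final step, but you should adopt the paper's intrinsic argument for the rest of \eqref{eq:decEi} and supply the missing $\sigma$-plane/hyperplane argument for \eqref{eq:Ei}.
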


\begin{proof}
We only treat $\delta_i$ for the claim (\ref{eq:Ei})
by symmetry of $\delta_i$ and $\delta'_i$.
For this claim, it suffices to show that
\[
H^{\bullet}(X, \sP_{y;1}\otimes \sO_X(-\delta_i))=0
\ \text{for any $y\in \widetilde{\hcoY}\setminus F_i$},
\]
where we recall that $\sP_{y;1}$ is the restriction of $\sP_1$ over $y$.
Since $\mP_a$ is a $\sigma$-plane as above,
its ideal sheaf $\sI_{\mP_a}$ has the following locally free resolution:
\begin{equation}
\label{eq:sigma}
0\to \sO_{\mathrm{G}(2,V)}(-1)\to \sF\to \sI_{\mP_a}\to 0.
\end{equation}
By considering $X$ to be contained in $\hchow$,
$\delta_i$ is the intersection between $X$ and $g^{-1}(\mP_a)$.
By (\ref{eq:sigma}), 
the ideal sheaf $\sI_{g^{-1}(\mP_a)}$
of ${g^{-1}(\mP_a)}$ on $\hchow$ has the following
locally free resolution:
\begin{equation}
\label{eq:sigma'}
0\to \sO_{\hchow}(-L)\to \sF\to \sI_{g^{-1}(\mP_a)}\to 0,
\end{equation}
where we have suppressed $g^*$ in $g^*\sF$ for simplicity.
Now we represent $X$ as the complete intersection of four members
$H_1,\dots, H_4$ of $|\sO_{\hchow}(1)|$ such that, by projective duality, 
$H_1$ corresponds to the image on $\hcoY$ of $s$ and 
$H_2$ corresponds to $y_i$, where we recall that $y_i$ is the image of $F_i$
on $Y$.  
This choice of $H_1$ and $H_2$ is crucial for our argument;
$H_1\not= H_2$ since the image on $\hcoY$ of $s$ is different from $y_i$.
Note that $H_2$ contains $g^{-1}(\mP_a)$.
Then, by a similar argument to the proof of (\ref{eq:aim}),
we obtain the desired vanishing of
$H^{\bullet}(X, \sP_{y;1}\otimes \sO_X(-\delta_i))$
by (\ref{eqnarray:onVsy}), (\ref{eq:sigma'})
and \cite[Thm.~5.1]{DerSym}.

Now we derive the decomposition (\ref{eq:decEi}).
By Lemma \ref{lem:subset} (2),
\[
\Phi_1(\sO_X(-\delta_i)), \Phi_1(\sO_X(-\delta'_i))
\in {\empty^{\perp}\langle \sO_{F_i}(-1,-1)\rangle}
\cap \langle \sO_{F_i}\rangle^{\perp}
\] in $\sD^b(F_i)$.
Since we have shown that $\Phi_1$ is fully faithful in the subsection \ref{subsection:ff},
$\Phi_1(\sO_X(-\delta_i))$, and $\Phi_1(\sO_X(-\delta'_i))$ are exceptional objects on
$\sD^b(F_i)$.
Thus we obtain an exceptional collection; 
\begin{equation*}
\sO_{F_i}(-1,-1),
\Phi_1(\sO_X(-\delta_i)), \Phi_1(\sO_X(-\delta'_i)),
\sO_{F_i}.
\end{equation*}
We conclude that this collection is full by Theorem \ref{thm:delPezzo}.
\end{proof}
Now the claim (\ref{eq:A0}) follows from Step 6 and Lemma \ref{lem:IK}, and then we have finished
our proof of the
fullness of the collection in Theorem \ref{thm:main}.
$\hfill\square$
\subsection{Relation with the result of Ingalls and Kuznetsov}
\label{sub:IK}
In this subsection, we give a new proof of \cite[Theorem 4.3]{IK} 
using the functor $\Phi_1$.
For this, we refine Lemma \ref{lem:IK}.

We describe 
\[
\Delta_1:=\Delta|_{\widetilde{Y}\times X}
\]
with the natural projections
\[
\text{$p\colon \Delta_1\to \widetilde{Y}$ and $q\colon \Delta_1\to X$}.
\] 

\begin{lem}
\label{Delta1fiber}
\begin{enumerate}[$(1)$]
\item
Any fiber of $q\colon \Delta_1\to X$ is a tree of $\mP^1$'s,
and
a general fiber is the strict transform
of a copy of $\mP^1$ in $Y$ of degree $1$ with respect to $M$.
In particular, $R^{\bullet} q_*\sO_{\Delta_1}=0$ for $\bullet>0$.
\item
$p\colon \Delta_1\to \widetilde{Y}$ 
is a finite morphism outside a finite set 
of points on $\widetilde{Y}$.
A $0$-dimensional fiber of $p$ is a length six subscheme of $X$.
A positive dimensional irreducible component
 of a fiber is a line or a conic on $X$ with respect to
$\sO_X(L)$.
Moreover, for any point $y\in F_i$,
the fiber of $p$ over $y$ is a $0$-dimensional subscheme 
\[
\eta_y=\eta+\eta',
\]
where $\eta\sim L|_{\delta_i}$ and $\eta'\sim L|_{\delta'_i}$
as divisors on $\delta_i$ and $\delta'_i$, respectively.

\end{enumerate}

\end{lem}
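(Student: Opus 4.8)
The plan is to read off both fibrations from the description of $\Delta$ in Proposition \ref{prop:Deltasmooth}, viewing $\Delta\subset\widetilde{\hcoY}\times\hchow$ so that, for $y\notin\Prt_\sigma$, a pair $(y,x)$ lies in $\Delta$ exactly when $g(x)=[l_x]$ lies on the conic $q_y$ attached to $y$. Restricting the first factor to $\widetilde{Y}$ and the second to $X$ then exhibits $\Delta_1$ as an incidence correspondence between $\widetilde{Y}$ and the congruence $X\subset\mathrm{G}(2,V)$, and via the isomorphism $g|_X$ onto its image the two assertions become statements about (i) the family of quadrics in $P_3$ containing a fixed line of $X$, and (ii) the intersection $q_y\cap X$ of a ruling conic with $X$.

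For part (1) I fix $x\in X$ and invoke Proposition \ref{prop:flop} (1): the quadrics of $P_3$ containing $l_x$ form a pencil, i.e.\ a line $\ell\subset P_3$. A point $y\in\widetilde{Y}$ lies in $q^{-1}(x)$ precisely when its associated pair $(Q_y,q_y)$ has $[Q_y]\in\ell$ and $l_x\in q_y$. Since $l_x$ is fixed, the ruling of each $Q_y$ containing $l_x$ is single-valued, so $q^{-1}(x)$ maps one-to-one onto $\ell$; it is therefore a copy of $\mP^1$ of $M$-degree $1$, $\ell$ being a line in $P_3$ and $M$ the pull-back of $\sO_{P_3}(1)$. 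For general $x$ the line $\ell$ avoids the ten nodes $\nS_2\cap P_3$ and $q^{-1}(x)$ is this $\mP^1$; when $\ell$ passes through a node $y_i$ the strict transform on $\widetilde{Y}$ acquires components in $F_i$, so the fiber becomes a connected nodal rational curve, i.e.\ a tree of $\mP^1$'s. As every fiber is then connected, one-dimensional and of arithmetic genus zero, cohomology and base change (using the flatness furnished by Proposition \ref{prop:flat}) yield $q_*\sO_{\Delta_1}\simeq\sO_X$ and $R^{\bullet}q_*\sO_{\Delta_1}=0$ for $\bullet>0$.

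For part (2) the fiber $p^{-1}(y)$ over $y\in\widetilde{Y}\setminus\bigcup_i F_i$ is the scheme $q_y\cap X$ inside $\mathrm{G}(2,V)$. Since $X$ is a surface and $q_y$ a curve in the fourfold $\mathrm{G}(2,V)$, this is finite except over the finitely many $y$ for which a whole ruling lies in $X$; there a positive-dimensional component appears and is a line or conic of $X$ with respect to $\sO_X(L)$, according as $q_y$ degenerates into two lines one of which lies in $X$, or $q_y$ itself lies in $X$. The length of a general $0$-dimensional fiber is the intersection number $[q_y]\cdot[X]$ in $H^{\ast}(\mathrm{G}(2,V))$, which equals $6$. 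Finally, for $y\in F_i$ the node $y_i$ corresponds to the rank-two quadric $\mP(V_3^a)\cup\mP(V_3^b)$, whose two rulings are the $\sigma$-planes $\mP_a,\mP_b$; as $y$ runs over $F_i$ the conic $q_y$ degenerates compatibly with this splitting, and tracking the two rulings gives $p^{-1}(y)=\eta+\eta'$ with $\eta=q_y\cap\delta_i$ and $\eta'=q_y\cap\delta'_i$, after which a direct computation on $\delta_i=\mP_a\cap X$ and $\delta'_i=\mP_b\cap X$ identifies $\eta\sim L|_{\delta_i}$ and $\eta'\sim L|_{\delta'_i}$.

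The principal obstacle I anticipate is the last part of (2): controlling precisely how the conic $q_y$ and its intersection with $X$ behave as $y$ crosses the exceptional divisor $F_i$, and matching the two limiting pieces $\eta,\eta'$ with the restrictions $L|_{\delta_i},L|_{\delta'_i}$ to the supports of the two multiple fibers. This forces one to combine the local geometry of $\widetilde{Y}\to Y$ over $y_i$, the description of $\Delta\to\widetilde{\hcoY}$ over $\Prt_\sigma$ from Proposition \ref{prop:Deltasmooth} (2), and an intersection-theoretic check consistent with the length-six count $\deg L|_{\delta_i}+\deg L|_{\delta'_i}=6$.
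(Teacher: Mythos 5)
Your overall strategy matches the paper's (read both fibrations off Proposition \ref{prop:Deltasmooth} and Proposition \ref{prop:flop} (1)), but two steps as written do not go through. In part (1), the assertion that the fiber over a special $x$ is ``a connected nodal rational curve, i.e.\ a tree of $\mP^1$'s'' is exactly the point that needs proof, and it is not automatic. When the pencil $\ell$ of quadrics through $l_x$ meets a node of ${\rm H}$, the corresponding quadric $Q=\mP(V_3^a)\cup\mP(V_3^b)$ has rank two, the ``ruling containing $l_x$'' is no longer single-valued, and the extra piece of $q^{-1}(x)$ is the family of degenerate conics $l_a\cup l_b$ containing $[l_x]$. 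To see that this family is a single $\mP^1$ (rather than something larger) one must first show that $l_x$ is \emph{not} contained in $\Sing Q=\mP(V_3^a\cap V_3^b)$; the paper proves this via projective duality between $\chow$ and $\Sing\Hes$ together with the smoothness of $X$, and then observes that $[V_1^a]=l_x\cap\mP(V_3^a\cap V_3^b)$ is forced, so only $l_b$ varies. Your write-up skips this entirely, and the same fact is needed again in part (2) to rule out $l_a\subset\delta_i$ or $l_b\subset\delta'_i$ (equivalently $\mP_a\cap\mP_b\in X$), i.e.\ to know that the fiber over $y\in F_i$ is actually $0$-dimensional.

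The second, more serious problem is your length-six count. You compute it as ``the intersection number $[q_y]\cdot[X]$ in $H^{*}(\mathrm{G}(2,V))$'', but $q_y$ is a curve and $X$ a surface in the fourfold $\mathrm{G}(2,4)$, so their expected intersection is empty and no nonzero intersection number exists there; $q_y\cap X$ is an excess intersection forced by the incidence with $y$. The paper resolves this by passing to $\hchow$: writing $X=H_1\cap H_2\cap H_3\cap H_4$ with $H_1$ the hyperplane dual to the image of $y$, the $\mP^2$-bundle $g^{-1}(q_y)$ (which is $3$-dimensional) lies inside $H_1$, so $p^{-1}(y)=g^{-1}(q_y)\cap H_2\cap H_3\cap H_4$ is a \emph{proper} intersection of degree $\deg {\ft S}^2\sF|_{q_y}=6$. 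Some such device is indispensable; as stated your computation of the length is not valid. The remaining points (the base-change argument for $R^{\bullet}q_*\sO_{\Delta_1}=0$, the identification $\eta\sim L|_{\delta_i}$ from $l_a\cap\delta_i\sim\sO_{\mP_a}(1)|_{\delta_i}$, and the non-ruledness argument for finiteness of the positive-dimensional fibers) are fine once these two gaps are filled.
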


\begin{proof}
(1) 
Let $x$ be a point of $X$.
By Proposition \ref{prop:Deltasmooth} (1),
$q^{-1}(x)$ parameterizes conics in $\mathrm{G}(2,V)$
containing $[l_x]$ and corresponding to quadrics in $P_3$
(we recall that $\widetilde{Y}\cap \Prt_{\sigma}=\emptyset$).
By Proposition \ref{prop:flop} (1),
quadrics in $P_3$ containing $l_x$ form a line in $P_3$.
If this line does not pass through singular points of $H$,
then $q^{-1}(x)$ is isomorphic to this line.
This gives the description of a general fiber of $q$ as in the statement.

Assume that this line passes through at least one singular point
of $H$. 
Let $Q$ be the rank 2 quadric corresponding to this singular point.

We show that $l_x$ is not contained in the singular locus of $Q$. 
Actually, this is a classically well-known result
(cf.~\cite[Ex.~VIII.19 ($H_2$)]{Be}).
We give a proof here for readers' convenience.
By the projective duality between $\chow={\ft S}^2 \mP(V)$ and
$\Sing \Hes={\ft S}^2 \mP(V^*)$,
$Q$ corresponds to a hyperplane $H_Q\subset \mP({\ft S}^2 V)$
tangent to $\chow$ and, moreover, $\chow\cap H_Q$ is singular along ${\ft S}^2 \mP(V_2)$.
Since $f(x)\in \chow$ is contained in ${\ft S}^2 \mP(V_2)$,
$\chow\cap H_Q$ is singular at $f(x)$.
This is a contradiction since
$X$ can be written as $X=\chow\cap H_Q\cap H_1\cap H_2\cap H_3$
with three hyperplanes $H_1,H_2,H_3$, and then would be singular at $f(x)$.

We write $Q=\mP(V_3^a)\cup \mP(V_3^b)$.
We may assume that $l_x\subset \mP(V_3^a)$ and
$l_x\not \subset \mP(V_3^b)$ by the previous paragraph.
Let $l_a\cup l_b$ be a rank two conic corresonding to 
$Q$, 
where $l_k:=\{\mC^2\mid V_1^k\subset \mC^2\subset V_3^k\}\, (k=a,b)$
with 
$1$-dimensional subspaces $V_1^k\subset V_3^a\cap
V_3^b$.
Then $l_a\cup l_b$ contains $[l_x]$ if and only if
$[V_1^a]\in l_x$.
Under this condition, $[V_1^a]$ is determined as 
$[V_1^a]=l_x\cap \mP(V_3^a\cap V_3^b)$.
Therefore the conics  
$l_a\cup l_b$ containing $[l_x]$ form a copy of $\mP^1$
as $l_b$ varies.
Consequently, any component of $q^{-1}(x)$ is a $\mP^1$.

The second assertion in (1) follows from the first.

\vspace{5pt}

\noindent (2)
Let $y$ be a point of $\widetilde{Y}$.
Since $\widetilde{Y}\cap \Prt_{\sigma}=\emptyset$,
the fiber of $\widetilde{\Zpq}\to \widetilde{\hcoY}$ over $y$
is a conic, which we denote by $q_y$ (Proposition \ref{prop:cobu}).
Then, by Proposition \ref{prop:Deltasmooth} (2),
$p^{-1}(y)=g^{-1}(q_y)\cap X$,
where we consider $X\subset \hchow$.

Let $y$ be a point of $\widetilde{Y}$ such that $\dim p^{-1}(y)=0$.
We write $X=H_1\cap H_2\cap H_3\cap H_4$,
where $H_i\in |\sO_{\hchow}(1)|$ $(1\leq i\leq 4)$ and
$H_1$ corresponds to the image of $y$ on $P_3$ by the projective duality.
Then the $\mP^2$-bundle $g^{-1}(q_y)$ is contained in $H_1$, and 
hence $p^{-1}(y)=g^{-1}(q_y)\cap H_2\cap H_3\cap H_4$.
We see the degree of the r.h.s. is six since
$\deg {\ft S}^2 \sF|_{q_y}=6$.

From now on we consider $X\subset \mathrm{G}(2,V)$.
Then $p^{-1}(y)=q_y\cap X$. 

If a positive dimensional subvariety of $\widetilde{Y}$ is contained in 
the subset of $y$'s such that $\dim p^{-1}(y)>0$,
then $X$ is covered by the images of components of $q_y$ for such $y$'s, a contradiction since $X$ is not ruled.
Therefore, there exists at most finite number of such $y$'s. 

Now let $y$ be a point of $F_i$.
Let
$l_a\cup l_b$ be the rank two conic corresonding to 
$y$, where
$l_k:=\{\mC^2\mid V_1^k\subset \mC^2\subset V_3^k\}\, (k=a,b)$
with $3$-dimensional subspaces $V_3^k$ as in Step 7 
of the proof of (\ref{eq:A0}) and 
$1$-dimensional subspaces $V_1^k\subset V_3^a\cap
V_3^b$. 
Then the fiber of $p$ over $y$ is $(l_a\cup l_b)\cap X$.
It is equal to
$(l_a\cap \delta_i)\cup (l_b\cap \delta'_i)$
since $\mP_a\cap X=\delta_i$ and $\mP_b\cap X=\delta'_i$. 
If $l_a\subset \delta_i$ or $l_b\subset \delta'_i$,
then $\mP_a\cap \mP_b\in X$.
This contradicts  
the third paragraph in the proof of Lemma \ref{Delta1fiber} (1)
since $\mP_a\cap \mP_b$ corresponds to
the line contained in $\mP(V_3^a)\cap \mP(V_3^b)$.
Thus $(l_a\cup l_b)\cap X$ 
is a $0$-dimensional subscheme of length six.
Setting $\eta=l_a\cap \delta_i$ and
$\eta'=l_b\cap \delta'_i$,
we obtain the final assertion of the lemma
since $\sO_{\mP_a}(1)|_{\delta_i}=L|_{\delta_i}$ and
$\sO_{\mP_b}(1)|_{\delta'_i}=L|_{\delta'_i}$.
\end{proof}

\begin{rem}
It is possible to construct an example such that $p$ has a positive dimensional fiber.
\end{rem}

In \cite{IK}, Kuznetsov and Ingalls obtained
the following result:
\begin{thm}
\label{thm:IK}
We define the following triangulated subcategories $\sA_X$ and $\sA_Y$ in
$\sD^b(X)$ and $\sD^b(\widetilde{Y})$, respectively\,$:$
\begin{eqnarray*}
\sD^b(X)&=&\langle \{ \sO_X(-\delta_i) \}_{i=1}^{10},\sA_X\rangle,\\
\sD^b(\widetilde{Y})&=&\langle \{\sO_{F_i}(-1,-1)\}_{i=1}^{10},
\{\sO_{F_i}(0,-1)\}_{i=1}^{10}, \sA_Y,\sO_{\widetilde{Y}}(1), \sO_{\widetilde{Y}}(2)\rangle.
\end{eqnarray*}
Then there exists an equivalence $\sA_X\simeq \sA_Y$.
\end{thm}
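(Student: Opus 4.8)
The plan is to push the semi-orthogonal decomposition of $\sD^b(X)$ through the fully faithful functor $\Phi_1$ and then match the resulting decomposition of $\sD^b(\widetilde{Y})$ with the one asserted by Ingalls and Kuznetsov, the only substantial point being a precise identification of the images $\Phi_1(\sO_X(-\delta_i))$. First I would apply $\Phi_1$ to $\sD^b(X)=\langle \{\sO_X(-\delta_i)\}_{i=1}^{10},\sA_X\rangle$. Since $\Phi_1$ is fully faithful (subsection \ref{subsection:ff}), it carries this into a semi-orthogonal decomposition
\[
\Phi_1(\sD^b(X))=\langle \{\Phi_1(\sO_X(-\delta_i))\}_{i=1}^{10},\Phi_1(\sA_X)\rangle .
\]
Combining this with Theorem \ref{thm:main}, in the form $\sD^b(\widetilde{Y})=\langle \{\sO_{F_i}(-1,-1)\}_{i=1}^{10},\sC_Y,\sO_{\widetilde{Y}}(1),\sO_{\widetilde{Y}}(2)\rangle$ together with $\sC_Y=\Phi_1(\sD^b(X))$, I obtain
\[
\sD^b(\widetilde{Y})=\langle \{\sO_{F_i}(-1,-1)\}_{i=1}^{10},\{\Phi_1(\sO_X(-\delta_i))\}_{i=1}^{10},\Phi_1(\sA_X),\sO_{\widetilde{Y}}(1),\sO_{\widetilde{Y}}(2)\rangle .
\]

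Comparing with the target decomposition, everything will match provided the middle block $\{\Phi_1(\sO_X(-\delta_i))\}$ can be replaced by $\{\sO_{F_i}(0,-1)\}$. If I establish $\Phi_1(\sO_X(-\delta_i))\simeq {\iota_i}_*\sO_{F_i}(0,-1)$ up to a shift, I may simply set $\sA_Y:=\Phi_1(\sA_X)$, and full faithfulness of $\Phi_1$ then gives the desired equivalence $\sA_X\simeq \Phi_1(\sA_X)=\sA_Y$; as a by-product the collection $\{\sO_{F_i}(-1,-1)\},\{\sO_{F_i}(0,-1)\},\sO_{\widetilde{Y}}(1),\sO_{\widetilde{Y}}(2)$ is automatically semi-orthogonal and $\sA_Y$ is its complement inside $\sC_Y$.

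The heart of the argument is therefore to identify $\Phi_1(\sO_X(-\delta_i))$. By Lemma \ref{lem:IK} this object already lies in $\sD^b(F_i)$ and is exceptional, sitting as a middle term of the full exceptional collection $\langle \sO_{F_i}(-1,-1),\Phi_1(\sO_X(-\delta_i)),\Phi_1(\sO_X(-\delta'_i)),\sO_{F_i}\rangle$ on $F_i\simeq \mP^1\times \mP^1$. I would pin it down using the refined fibre description of Lemma \ref{Delta1fiber}(2): over $y\in F_i$ the fibre of $p\colon \Delta_1\to \widetilde{Y}$ is the length-six subscheme $\eta_y=\eta+\eta'$ with $\eta\sim L|_{\delta_i}$ and $\eta'\sim L|_{\delta'_i}$. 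Writing $\Phi_1(\sO_X(-\delta_i))$ as the push-forward to $\widetilde{Y}$ of $\sP_1\otimes \pr_X^*\sO_X(-\delta_i)$ and restricting over $F_i$, the computation reduces to cohomology along these fibres; the interaction of the twist $\sO_X(-\delta_i)$ with the component $\eta\sim L|_{\delta_i}$, together with the recorded correspondence $\delta_i\leftrightarrow \sO_{F_i}(1,0)$ and $\delta'_i\leftrightarrow \sO_{F_i}(0,1)$, should single out the bidegree and yield ${\iota_i}_*\sO_{F_i}(0,-1)$ up to a shift. Since $F_i$ is a del Pezzo surface, Theorem \ref{thm:delPezzo} guarantees that the exceptional objects in play are honest sheaves, which legitimises reading off the answer from the fibrewise cohomology.

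I expect this fibrewise identification to be the main obstacle. The difficulty is twofold: one must control the restriction of the reflexive kernel $\sP_1$ along the exceptional divisor $F_i$, exactly where the map $\Delta_1\to\widetilde{Y}$ degenerates from a finite morphism into a length-six fibre (cf. Lemma \ref{Delta1fiber}(2)); and one must track the precise $\mP^1\times\mP^1$-bidegree carefully enough to distinguish $\sO_{F_i}(0,-1)$ from $\sO_{F_i}(-1,0)$, i.e. to see that it is $\delta_i$ rather than $\delta'_i$ that is responsible for the $(0,-1)$-twist. Once this is secured, the comparison of the two decompositions is formal and the equivalence $\sA_X\simeq \sA_Y$ follows immediately.
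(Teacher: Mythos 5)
Your proposal is correct and follows essentially the same route as the paper: the paper deduces Theorem \ref{thm:IK} from Theorem \ref{thm:main} by pushing the decomposition $\sD^b(X)=\langle \{\sO_X(-\delta_i)\},\sA_X\rangle$ through $\Phi_1$ and proving the identification $\Phi_1(\sO_X(-\delta_i))=\sO_{F_i}(0,-1)[-1]$ (Proposition \ref{prop:refine}), exactly the step you isolate as the heart of the matter. Be aware that this identification is the bulk of the work: it is not a purely fibrewise cohomology count but requires the four-term resolution (\ref{eq:PYX}) of $\sP_1$ by $\omega_{\Delta_1/\widetilde{Y}}$, several pushforward and duality computations, and a final normalisation of the bidegree via vanishing of $H^{\bullet}(F_i,\sA^*(-1,-1))$ — all consistent with the difficulties you anticipate.
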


Refining Lemma \ref{lem:IK} as in the following proposition,
we deduce from Theorem \ref{thm:main} that $\Phi_1$ induces an equivalence $\sA_X\to \sA_Y$,
which immediately gives another proof of Theorem \ref{thm:IK}.
\begin{prop}
\label{prop:refine}
\[
\Phi_1(\sO_X(-\delta_i))=\sO_{F_i}(0,-1)[-1],\,
\Phi_1(\sO_X(-\delta'_i))=\sO_{F_i}(-1,0)[-1].
\]
\end{prop}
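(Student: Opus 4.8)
The plan is to combine Lemma~\ref{lem:IK} with the intrinsic structure of $\sD^b(F_i)\simeq\sD^b(\mP^1\times\mP^1)$ to reduce the identification to a short list of candidates, and then to pin down the precise objects by a direct Fourier--Mukai computation. First I would set $G_i:=\Phi_1(\sO_X(-\delta_i))$ and $G'_i:=\Phi_1(\sO_X(-\delta'_i))$, which by Lemma~\ref{lem:IK} are exceptional objects of $\sD^b(F_i)$ completing the full exceptional collection $\langle\sO_{F_i}(-1,-1),G_i,G'_i,\sO_{F_i}\rangle$. A direct check of the six cohomology vanishings on $F_i\simeq\mP^1\times\mP^1$ shows that $\langle\sO_{F_i}(-1,-1),\sO_{F_i}(0,-1),\sO_{F_i}(-1,0),\sO_{F_i}\rangle$ is also a full exceptional collection; in particular $G_i,G'_i$ both lie in the two-sided orthogonal $\sC:={}^{\perp}\langle\sO_{F_i}(-1,-1)\rangle\cap\langle\sO_{F_i}\rangle^{\perp}=\langle\sO_{F_i}(0,-1),\sO_{F_i}(-1,0)\rangle$.

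Next I would exploit that $\sO_{F_i}(0,-1)$ and $\sO_{F_i}(-1,0)$ are completely orthogonal in $\sD^b(F_i)$ (both $\Ext^\bullet(\sO_{F_i}(0,-1),\sO_{F_i}(-1,0))$ and $\Ext^\bullet(\sO_{F_i}(-1,0),\sO_{F_i}(0,-1))$ vanish), so $\sC$ is equivalent to the derived category of two reduced points; in such a category every exceptional object is a shift of one of the two generators. Hence $G_i$ and $G'_i$ are shifts of $\sO_{F_i}(0,-1)$ and $\sO_{F_i}(-1,0)$, one of each. To constrain the shifts I would use that $\Phi_1$ is fully faithful (Theorem~\ref{thm:main}) and that, on the Enriques surface $X$, the two half-fibres satisfy $\delta_i-\delta'_i\sim K_X$, so $\Ext^\bullet_X(\sO_X(-\delta_i),\sO_X(-\delta'_i))=H^\bullet(X,\omega_X)=\mC[-2]$. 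Feeding this through the divisorial restriction formula $L\iota_i^*\iota_{i*}(-)\simeq(-)\oplus(-)\otimes\sO_{F_i}(1,1)[1]$ (using $\sO_{F_i}(F_i)=\sO_{F_i}(-1,-1)$) shows that $\Ext^\bullet_{\widetilde{\hcoY}}(\iota_{i*}\sO_{F_i}(0,-1),\iota_{i*}\sO_{F_i}(-1,0))$ is concentrated in degree $2$ for both orderings, which forces the two shifts of $G_i$ and $G'_i$ to coincide.

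It remains to determine which generator is which and the common shift, and this is the computational heart of the proof. Here I would refine the vanishing argument in the proof of Lemma~\ref{lem:IK}: for $y\in F_i$ I would compute the fibres of $Rp_*(\sP_1\otimes q^*\sO_X(-\delta_i))$, i.e.\ $R\Gamma(X,\sP_{y;1}\otimes\sO_X(-\delta_i))$, from the restricted resolution $(\ref{eqnarray:onVsy})$, the $\sigma$-plane resolution $(\ref{eq:sigma'})$ of $\sI_{g^{-1}(\mP_a)}$, and the vanishing theorem \cite[Thm.~5.1]{DerSym}. The asymmetry between $\delta_i$ and $\delta'_i$ is built into this computation through $\mP_a$ versus $\mP_b$, and is controlled by Lemma~\ref{Delta1fiber}(2), where the fibre of $p$ over $y\in F_i$ splits as $\eta+\eta'$ with $\eta\sim L|_{\delta_i}$ and $\eta'\sim L|_{\delta'_i}$. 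This is what should force $G_i=\sO_{F_i}(0,-1)[-1]$ rather than $\sO_{F_i}(-1,0)$, with the surviving cohomology sitting in a single degree yielding the shift $[-1]$; the symmetric computation then gives $G'_i=\sO_{F_i}(-1,0)[-1]$.

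The main obstacle I anticipate is exactly this last step: extracting from the rank-two reflexive kernel $\sP$ both the correct ruling direction---the ``swap'' whereby the $(1,0)$-adapted half-fibre $\delta_i$ transforms to the $(0,-1)$-bundle---and the precise cohomological degree giving $[-1]$. The categorical reduction is formal, but the bookkeeping in the Fourier--Mukai integral, tracking which of $\mP_a,\mP_b$ produces the nonzero contribution and in what degree, is delicate; the fibre description of Lemma~\ref{Delta1fiber}(2) together with the $\mP_a$-adapted resolution $(\ref{eq:sigma'})$ are the decisive inputs.
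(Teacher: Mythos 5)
Your categorical reduction is correct and is genuinely different from what the paper does. The paper never passes through the two-sided orthogonal $\sC={}^{\perp}\langle \sO_{F_i}(-1,-1)\rangle\cap\langle\sO_{F_i}\rangle^{\perp}\simeq \sD^b(\mathrm{pt})\oplus\sD^b(\mathrm{pt})$; instead it invokes Theorem \ref{thm:delPezzo} only to know that $\sA:=R^1{p_1}_*(\sP_1\otimes p_2^*\sO_X(-\delta_i))$ is locally free on $F_i$, and then computes everything by hand from the four-term sequence (\ref{eq:PYX}) tensored with $p_2^*\sO_X(-\delta_i)$. Your observations that semi-orthogonality of the pair $(G_i,G'_i)$ forces them to be shifts of \emph{distinct} generators of the completely orthogonal pair $\sO_{F_i}(0,-1),\sO_{F_i}(-1,0)$, and that $\Ext^\bullet_X(\sO_X(-\delta_i),\sO_X(-\delta'_i))\simeq H^\bullet(X,\omega_X)=\mC[-2]$ together with $L\iota_i^*\iota_{i*}(-)\simeq(-)\oplus(-)(1,1)[1]$ forces the two shifts to be equal, are both correct (I checked: $\Ext^k_{\widetilde{Y}}(\iota_{i*}A,\iota_{i*}B)=\Ext^k_{F_i}(A,B)\oplus\Ext^{k-1}_{F_i}(A(1,1),B)=\mC$ exactly in degree $2$ for either ordering). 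This buys a cleaner derivation of "one line bundle each, same shift" than the paper's, which obtains these facts only as by-products of the pushforward computation.

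The gap is in your final step, and it is not merely a matter of bookkeeping. Computing the derived fibres $R\Gamma(X,\sP_{y;1}\otimes\sO_X(-\delta_i))$ pointwise for $y\in F_i$ cannot distinguish $\sO_{F_i}(0,-1)$ from $\sO_{F_i}(-1,0)$: both candidates have one-dimensional fibres everywhere on $F_i$, so any pointwise dimension count is blind to the ruling direction, which is precisely the content of the proposition. What is needed is a \emph{relative} statement over $F_i$, and this is what the paper's Step 5 supplies: it identifies $\sA^*(-1,-1)$ as the cokernel of the evaluation map $H^0(\widetilde{Y},p_*q^*\sO_X(L+\delta'_i))\otimes\sO_{\widetilde{Y}}\to p_*q^*\sO_X(L+\delta'_i)$ (via the dualized sequences (\ref{eqn:LP1''}) and (\ref{eqn:LP2''})), shows using the decomposition $\eta_y=\eta+\eta'$ of Lemma \ref{Delta1fiber}(2) that the one-dimensional cokernel comes entirely from the $\eta\subset\delta_i$ part and is therefore \emph{constant as $y$ moves along the ruling corresponding to $\delta'_i$}, whence $\sA^*(-1,-1)=\sO_{F_i}(a,0)$, and finally pins down $a=-1$ from $H^\bullet(F_i,\sA^*(-1,-1))=0$ for $\bullet=0,1$. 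Likewise the shift $[-1]$ is not read off from a single fibre but from the global vanishing ${p_1}_*(\sP_1\otimes p_2^*\sO_X(-\delta_i))=0=R^2{p_1}_*(\sP_1\otimes p_2^*\sO_X(-\delta_i))$ established in Steps 1--2. So your plan points at the right inputs, but as stated it stops exactly where the real work begins; you would need to upgrade the fibrewise computation to the family version over $F_i$ to extract the ruling direction.
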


\begin{proof}
By symmetry, we have only to show the claim for $\delta_i$.
We denote by 
\[ 
\text{$p_1\colon \widetilde{Y}\times X\to \widetilde{Y}$
and $p_2 \colon \widetilde{Y}\times X\to X$},
\]
the natural projections. 

We compute $\Phi_1(\sO_X(-\delta_i))$ explicitly
by using the descriptions of $\Delta_1$
as in Lemma \ref{Delta1fiber}, and
the following exact sequence relating $\sP_1$ and $\Delta_1$,
which is derived in the appendix \ref{app:B} (the subsection \ref{cutting}):
\begin{equation}
\label{eq:PYX}
0\to \sO_{\widetilde{Y}}\boxtimes \sO_{X}(-L)\to
\widetilde{\sQ}|_{\widetilde{Y}}\boxtimes \sO_{X}\to \sP_1\to 
\omega_{\Delta_1/\widetilde{Y}}\otimes \omega^{-1}_{X}\otimes \sO_{X}(-L)
\to 0.
\end{equation}
We split (\ref{eq:PYX}) $\otimes p_2^*\sO_X(-\delta_i)$ as follows:
\begin{eqnarray}
0\to \sO_{\widetilde{Y}}\boxtimes \sO_{X}(-L-\delta_i)\to
\widetilde{\sQ}|_{\widetilde{Y}}\boxtimes \sO_{X}(-\delta_i)
\to \sC\to 0,\label{eqn:LP1}\\
0\to \sC\to \sP_1\otimes \sO_X(-\delta_i)\to 
\omega_{\Delta_1/\widetilde{Y}}\otimes \sO_{X}(-L-\delta'_i)
\to 0,\label{eqn:LP2}
\end{eqnarray}
where
we use $K_X=\delta'_i-\delta_i$ in (\ref{eqn:LP2}).

\vspace{5pt}

\noindent{\bf Step 1.} We will derive 
the following short exact sequence 
by computing ${p_1}_*$ of (\ref{eqn:LP1}).
\begin{align}
\label{eqn:LP1'}
0\to
R^1 {p_1}_* \sC\to 
H^0(X,\sO_X(L+\delta'_i))^*\otimes \sO_{\widetilde{Y}}\to\\
H^0(X,\sO_X(\delta'_i))^*\otimes 
\widetilde{\sQ}|_{\widetilde{Y}}\to 0.\nonumber 
\end{align}

Indeed, by (\ref{eqn:LP1}), we obtain ${p_1}_*\sC=0$
since
$H^{\bullet}(X,\sO_X(-L-\delta_i))\simeq 
H^{\bullet}(X,\sO_X(-\delta_i))=0$ for $\bullet=0,1$.
By the Serre duality,
we have
\[
H^2(X,\sO_X(-L-\delta_i))\simeq H^0(X,\sO_X(L+\delta'_i))^*,\,
H^2(X,\sO_X(-\delta_i))\simeq H^0(X,\sO_X(\delta'_i))^*
\]
since $K_X=\delta'_i-\delta_i$.
Consider the map 
\begin{equation}
\label{eq:Phimap}
H^2(X,\sO_X(-L-\delta_i))\otimes \sO_{\widetilde{Y}}\to
H^2(X,\sO_X(-\delta_i))\otimes \widetilde{\sQ}|_{\widetilde{Y}}
\end{equation}
obtained by taking ${p_1}_*$ of (\ref{eqn:LP1}).
It is easy to see by the Serre duality 
that this map is dual to the map
\begin{equation}
\label{eq:H0}
H^0(X,\sO_X(\delta'_i))\otimes 
\widetilde{\sQ}^*|_{\widetilde{Y}}\to
H^0(X,\sO_X(L+\delta'_i))\otimes \sO_{\widetilde{Y}}
\end{equation}
induced from
the map 
\begin{equation}
\label{eq:QL}
\widetilde{\sQ}^*|_{\widetilde{Y}}\boxtimes 
\sO_X\to
\sO_X(L)\boxtimes \sO_{\widetilde{Y}},
\end{equation}
which is obtained by taking the dual of (\ref{eq:PYX}).
We see that the cokernel of (\ref{eq:H0}) is a locally free sheaf
since the map (\ref{eq:H0}) at the fiber of any point $y\in \widetilde{Y}$
gives three linearly independent members of $|L+\delta'_i|$.
Therefore the map (\ref{eq:Phimap}) is surjective, and then we have $R^2 {p_1}_*\sC=0$ from (\ref{eqn:LP1}).
Now we have obtained (\ref{eqn:LP1'}).

\vspace{5pt}

\noindent{\bf Step 2.} We will derive 
the following exact sequence
by computing ${p_1}_*$ of (\ref{eqn:LP2}):

\begin{align}
\label{eqn:LP2'}
&0\to (p_*q^*\sO_{X}(L+\delta'_i))^*\to R^1 {p_1}_* \sC\to\\
&R^1 {p_1}_* (\sP_1\otimes {p_2}^*\sO_X(-\delta_i))\to 
R^1 p_*
\{
\omega_{\Delta_1/\widetilde{Y}}\otimes  \sO_{X}(-L-\delta'_i)\}
\to 0.\nonumber
\end{align}
Moreover we will obtain
\begin{equation}
\label{eq:Phi1}
\Phi_1(\sO_X(-\delta_i))=R^1 {p_1}_* (\sP_1\otimes {p_2}^*\sO_X(-\delta_i))[-1].
\end{equation}

Indeed, by Lemma \ref{Delta1fiber} (2), we can describe 
$R^{\bullet} p_*
\{
\omega_{\Delta_1/\widetilde{Y}}\otimes  \sO_{X}(-L-\delta'_i)\}$
as follows:
\begin{itemize}
\item $R^2 p_*
\{
\omega_{\Delta_1/\widetilde{Y}}\otimes  \sO_{X}(-L-\delta'_i)\}=0$
since any fiber of $p$ has dimension $\leq 1$. 
\item
\begin{equation}
\label{eq:odim}
\dim \Supp 
R^1 p_*
\{
\omega_{\Delta_1/\widetilde{Y}}\otimes  \sO_{X}(-L-\delta'_i)\}=0
\end{equation}
since
the support is contained in the union of the images of
positive dimensional fibers of $p$. 
\item
$p_*\{
\omega_{\Delta_1/\widetilde{Y}}\otimes  \sO_{X}(-L-\delta'_i)\}
\simeq 
(p_*q^*\sO_{X}(L+\delta'_i))^*$.

Indeed,
this isomorphism holds outside 
the union of the images of
positive dimensional fibers of $p$
by the relative duality (Theorem \ref{cla:duality}).
Then, actually this isomorphism holds all over $\widetilde{Y}$
since the sheaves on the both sides are reflexive 
by the proof of \cite[Cor.~1.7]{H}.
\end{itemize}
Then, by taking ${p_1}_*$ of (\ref{eqn:LP2}),
we have
$R^2 {p_1}_*(\sP_1\otimes {p_2}^*\sO_X(-\delta_i))=0$
(note that
we have already shown $R^2 {p_1}_*\sC=0$).
We also have
${p_1}_*(\sP_1\otimes {p_2}^*\sO_X(-\delta_i))=0$ since
it is at most a torsion sheaf by Lemma \ref{lem:IK} and
$(p_*q^*\sO_{X}(L+\delta'_i))^*$ is torsion free (note also that we have already
shown ${p_1}_*\sC=0$).
Therefore we obtain (\ref{eqn:LP2'}) and (\ref{eq:Phi1}).

\vspace{10pt}

We set 
\begin{equation}
\label{sA}
\sA:=R^1 {p_1}_* (\sP_1\otimes {p_2}^*\sO_X(-\delta_i)).
\end{equation}
Now the problem is reduced to compute $\sA$ explicitly, which will be done in Step 5 below.

\vspace{5pt}

\noindent{\bf Step 3.} 
We compute the duals of (\ref{eqn:LP1'}) and (\ref{eqn:LP2'}).

As for (\ref{eqn:LP1'}), we immediately obtain
\begin{eqnarray}
\label{eqn:LP1''}
0\to
H^0(X,\sO_X(\delta'_i))\otimes 
\widetilde{\sQ}^*|_{\widetilde{Y}}
\to 
H^0(X,\sO_X(L+\delta'_i))\otimes \sO_{\widetilde{\hcoY}}\to\\
(R^1 {p_1}_* \sC)^*\to 0.\nonumber
\end{eqnarray}

As for (\ref{eqn:LP2'}), we have
\begin{eqnarray}
\label{eqn:LP2''}
0\to (R^1 {p_1}_* \sC)^*\to
p_*q^*\sO_{X}(L+\delta'_i)\to
\sE xt^1_{\widetilde{Y}}(\sA,\sO_{\widetilde{Y}})\to 0.
\end{eqnarray}
Indeed, this follows by noting
\begin{itemize}
\item
$p_*q^*\sO_{X}(L+\delta'_i)$ is a reflexive sheaf on $\widetilde{Y}$
by Lemma \ref{Delta1fiber} (2) and the proof of \cite[Cor.~1.7]{H}.

\item $R^1 {p_1}_* \sC$ is a locally free sheaf on $\widetilde{Y}$
by (\ref{eqn:LP1'}), and then
$\sE xt^1 (R^1 {p_1}_* \sC,\sO_{\widetilde{Y}})=0$.
\item
$\sE xt^{\bullet}(R^1 p_*
\{
\omega_{\Delta_1/\widetilde{Y}}\otimes  \sO_{X}(-L-\delta'_i)\},
\sO_{\widetilde{Y}})=0$ for $\bullet <3$ by (\ref{eq:odim}) and \cite[Cor.~3.5.11]{BH}.
\end{itemize}

\vspace{5pt}

\noindent{\bf Step 4.}
We will prove
the composite
\[
H^0(X,\sO_X(L+\delta'_i))\otimes \sO_{\widetilde{\hcoY}}\to\\
(R^1 {p_1}_* \sC)^*\to
p_*q^*\sO_{X}(L+\delta'_i)
\]
induced from (\ref{eqn:LP1''}) and
(\ref{eqn:LP2''}) coincides with
the natural map 
\begin{equation}
\label{eq:natkey}
H^0(\widetilde{Y}, p_*q^*\sO_{X}(L+\delta'_i))
\otimes
\sO_{\widetilde{\hcoY}}
\to
p_*q^*\sO_{X}(L+\delta'_i)
\end{equation}
up to
a linear isomorphism of 
$H^0(\widetilde{Y}, p_*q^*\sO_{X}(L+\delta'_i))$ onto itself.

Indeed, in (\ref{eqn:LP1''}), we have
$H^{\bullet}(\widetilde{Y},\widetilde{\sQ}^*|_{\widetilde{Y}})=0$ for
any $\bullet$ since
$H^{\bullet}(\widetilde{\hcoY},\widetilde{\sQ}^*(-t))=0$ for
$0\leq t\leq 6$ by \cite[Thm.5.1, Prop.~5.9]{DerSym}.
Therefore the map 
\begin{equation}
\label{eq:isom}
H^0(X,\sO_X(L+\delta'_i))\to H^0(\widetilde{Y},(R^1 {p_1}_* \sC)^*)
\end{equation}
induced from (\ref{eqn:LP1''})
is an isomorphism.
Note that
\[
H^0(\widetilde{Y}, p_*q^*\sO_{X}(L+\delta'_i))
\simeq
H^0(\Delta_1, q^*\sO_{X}(L+\delta'_i))
\simeq 
H^0(X, \sO_{X}(L+\delta'_i)),
\]
where the second isomorphism follows from 
$R^{\bullet} q_*\sO_{\Delta_1}=0$ for $\bullet>0$ 
(Lemma \ref{Delta1fiber} (1)). 
Since 
the map 
\begin{equation}
\label{eq:inj}
H^0(\widetilde{Y},(R^1 {p_1}_* \sC)^*)\to
H^0(\widetilde{Y},p_*q^*\sO_{X}(L+\delta'_i))\simeq
H^0(X, \sO_{X}(L+\delta'_i))
\end{equation}
induced from (\ref{eqn:LP2''}) is injective,
the composite of (\ref{eq:isom}) and (\ref{eq:inj}) is an 
isomorphism. This implies the assertion.

\vspace{5pt}

\noindent {\bf Step 5.} Now we compute $\sA$ as in (\ref{sA}) and finish the proof of the proposition.

Note that $\sA$ is a locally free sheaf on $F_i$ by Lemma \ref{lem:IK} and 
Theorem \ref{thm:delPezzo}.
Therefore, by duality, we have
\[
\sE xt^1_{\widetilde{Y}}(\sA,\sO_{\widetilde{Y}})\simeq
\sH om_{F_i}(\sA,\sO_{F_i}(F_i))\simeq \sA^*(-1,-1),
\]
where $\sA^*$ means the dual of $\sA$ as an $\sO_{F_i}$-module.
By (\ref{eqn:LP2''}) and Step 4,
$\sA^*(-1,-1)$ is the cokernel
of (\ref{eq:natkey}).
Let $y \in F_i$ be a point. Then the fiber 
$p\colon \Delta_1\to \widetilde{\hcoY}$ over $y$
is the $0$-dimensional subscheme $\eta_y=\eta+\eta'$ of degree six
described in Lemma \ref{Delta1fiber} (2).
We will show that the natural map
$H^0(X,\sO_X(L+\delta'_i))\to H^0(\eta',\sO_{\eta'}(L+\delta'_i))$ is surjective, and
the natural map
$H^0(X,\sO_X(L+\delta'_i))\to H^0(\eta,\sO_{\eta}(L+\delta'_i))$ has
one dimensional cokernel.

Since $H-L=\delta'_i-\delta_i$,
we have 
\begin{equation}
\label{eq:HL}
H+\delta_i=L+\delta'_i.
\end{equation}

We show the assertion for $\eta'$.
By (\ref{eq:HL}),
we have the exact sequence:
\[
0\to \sO_{X}(L)\to \sO_X(L+\delta'_i)\to \sO_{\delta'_i}(H)\to 0
\]
since $\delta_i\cap \delta'_i=\emptyset$.
This induce a surjection
$H^0(X,\sO_X(L+\delta'_i))\to H^0(\delta'_i,\sO_{\delta'_i}(H))$
since $H^1(X,\sO_X(L))=0$ by the Kodaira vanishing theorem.
We consider the exact sequence
\[
0\to \sO_{\delta'_i}(H-L)\to\sO_{\delta'_i}(H)\to \sO_{\eta'}(H)\to 0,
\] 
where we note that $L|_{\delta'_i}\sim \eta'$.
Since $(H-L)|_{\delta'_i}$ is a torsion divisor,
we have 
$H^{\bullet}(\delta'_i, (H-L)|_{\delta'_i})=0$ for $\bullet=0,1$.
Thus the induced map $H^0(\delta'_i,\sO_{\delta'_i}(H))
\to H^0(\eta',\sO_{\eta'}(H))$ is an isomorphism, and then
the induced map
$H^0(X,\sO_X(L+\delta'_i))\to H^0(\delta'_i,\sO_{\delta'_i}(H))
\to H^0(\eta',\sO_{\eta'}(H))$ is surjective. 

We show the assertion for $\eta$.
By (\ref{eq:HL}),
we have the exact sequence:
\[
0\to \sO_{X}(H)\to \sO_X(L+\delta'_i)\to \sO_{\delta_i}(L)\to 0.
\]
This induce a surjection
$H^0(X,\sO_X(L+\delta'_i))\to H^0(\delta_i,\sO_{\delta_i}(L))$
since $H^1(X,\sO_X(H))=0$ by the Kodaira vanishing theorem.
We consider the exact sequence
\[
0\to \sO_{\delta_i}\to\sO_{\delta_i}(L)\to \sO_{\eta}(L)\to 0,
\] 
where we note that $L|_{\delta_i}\sim \eta$.
Thus the induced map $H^0(\delta_i,\sO_{\delta_i}(L))
\to H^0(\eta,\sO_{\eta}(L))$ has one dimensional cokernel,
and then so does 
the induced map
$H^0(X,\sO_X(L+\delta'_i))\to H^0(\delta_i,\sO_{\delta_i}(L))
\to H^0(\eta,\sO_{\eta}(L))$. 

In particular, $\sA$ is an invertible sheaf on $F_i$.
Moreover, if $y$ moves on a fiber corresponding to 
$\delta'_i$, then the fiber of $\sA$ at $y$ does not change.
Therefore $\sA^*(-1,-1)=\sO_{F_i}(a,0)$ with some $a\in \mZ$.
As we have seen above,
(\ref{eqn:LP2''}) induces an isomorphism
$H^0(\widetilde{Y},(R^1 {p_1}_* \sC)^*)\simeq
H^0(\widetilde{Y}, p_*q^*\sO_{X}(L+\delta'_i))$,
and, by (\ref{eqn:LP1''}),
we have
\[
H^{\bullet}(\widetilde{Y},(R^1 {p_1}_* \sC)^*)=0\, (\bullet=1,2,3).
\]
Moreover, by the Leray spectral sequence for $p$,
$H^1(\widetilde{Y}, p_*q^*\sO_{X}(L+\delta'_i))$
is contained in 
$H^1(\Delta_1, q^*\sO_{X}(L+\delta'_i))$, and,
by $R^{\bullet} q_*\sO_{\Delta_1}=0$ for $\bullet>0$ 
(Lemma \ref{Delta1fiber} (1)), 
the latter is isomorphic to
$H^1(X, \sO_{X}(L+\delta'_i))$,
which is zero by the Kodaira vanishing theorem.
Thus we have $H^1(\widetilde{Y}, p_*q^*\sO_{X}(L+\delta'_i))=0$.
Therefore, by (\ref{eqn:LP2''}),
we obtain
$H^{\bullet}(F_i,\sA^*(-1,-1))=0$ for $\bullet=0,1$.
Thus we have $a=-1$, which in turn shows 
$\sA=\sO_{F_i}(0,-1)$. 

\end{proof}

\appendix

\section{{\bf Locally free resolution of ${\iota_{\Vs}}_*\sP$}}
\label{app:B}

\subsection{Locally free resolutions of the ideal sheaves of $\Delta$}
\label{sub:resol}

The aim of this subsection is 
to construct locally free resolutions of 
the ideal sheaves of $\Delta$ in $\widetilde{\hcoY}\times
\hchow$ and $\Vs$.

\begin{thm}
\label{thm:resolY}
\begin{enumerate}[$(1)$]
\item
The ideal sheaf $\sI$ of $\Delta$ in $\widetilde{\hcoY}\times \hchow$ has 
the following $\mathrm{SL}(V)$-equivariant
locally free resolution\,$:$
\begin{eqnarray}
\label{eqnarray:fin5}
0\to
\widetilde{\sS}_L\boxtimes
\sO_{{\hchow}}(H-L)\to
\widetilde{\sT}^*\boxtimes \sF(H)
\to \\
\sO_{\widetilde{\hcoY}} 
\boxtimes \ft{ S}^2 \sF(H+L) \oplus
\widetilde{\sQ}^*(M) \boxtimes \sO_{\hchow}(H)
\to\nonumber \\
\sI(M+H+L)\to 0,
\nonumber
\end{eqnarray}
where the symbol $g^*$ for $\sF^*$ and ${\ft S}^2 \sF^*$ is omitted,
$M+H+L$ means the twist by $\sO_{\widetilde{\hcoY}}(M)\boxtimes 
\sO_{\hchow}(H+L)$, 
and we follow Convention $\ref{conv:chow}$.
\item
Set $\sI_{\Delta/\Vs}:=\sI/\sI_{\Vs}$, the ideal sheaf of $\Delta$
in $\Vs$.
Then ${\iota_{\Vs}}_*\sI_{\Delta/\Vs}$ has the following $\SL(V)$-equivariant
locally free resolution on $\Vs:$
\begin{eqnarray}
\label{eqnarray:onVs}
0\to
\widetilde{\sS}_L\boxtimes
\sO_{{\hchow}}(H-L)\to
\widetilde{\sT}^*\boxtimes \sF(H)
\to \\
\sO_{\widetilde{\hcoY}} 
\boxtimes T_{\hchow/\mathrm{G}(2,V)}(L) 
\oplus
\widetilde{\sQ}^*(M) 
\boxtimes \sO_{\hchow}(H)
\to\nonumber \\
{\iota_{\Vs}}_*\sI_{\Delta/\Vs}(M+H+L)\to 0,
\nonumber
\end{eqnarray}
where 
$T_{\hchow/\mathrm{G}(2,V)}$ is the relative tangent bundle
for the morphism $\hchow \to \mathrm{G}(2,V)$.
\end{enumerate}
\end{thm}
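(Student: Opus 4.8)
The plan is to establish (1) and then deduce (2) from it. For the deduction, observe that $\Vs$ is the divisor of class $M+H$ in $\widetilde{\hcoY}\times\hchow$, so after the global twist by $M+H+L$ the ideal sheaf $\sI_{\Vs}$ becomes $\sO_{\widetilde{\hcoY}}\boxtimes\sO_{\hchow}(L)$, and the structure sequence $0\to\sI_{\Vs}\to\sI\to{\iota_{\Vs}}_*\sI_{\Delta/\Vs}\to0$ presents ${\iota_{\Vs}}_*\sI_{\Delta/\Vs}$ as a cokernel. The relative Euler sequence of $\hchow=\mP({\ft S}^2\sF)\to\rG(2,V)$,
\[
0\to\sO_{\hchow}(L)\to{\ft S}^2\sF(H+L)\to T_{\hchow/\rG(2,V)}(L)\to0,
\]
exhibits $\sO_{\widetilde{\hcoY}}\boxtimes\sO_{\hchow}(L)$ as a locally split subsheaf of the summand $\sO_{\widetilde{\hcoY}}\boxtimes{\ft S}^2\sF(H+L)$ of the free term $\sO_{\widetilde{\hcoY}}\boxtimes{\ft S}^2\sF(H+L)\oplus\widetilde{\sQ}^*(M)\boxtimes\sO_{\hchow}(H)$ of (\ref{eqnarray:fin5}); I would first check that this subsheaf is exactly the ($\SL(V)$-equivariant) lift of $\sI_{\Vs}(M+H+L)\hookrightarrow\sI(M+H+L)$. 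Quotienting the whole resolution (\ref{eqnarray:fin5}) by this subsheaf---placed in the rightmost free degree---replaces ${\ft S}^2\sF(H+L)$ by $T_{\hchow/\rG(2,V)}(L)$ and leaves the higher terms untouched; the long exact homology sequence attached to the resulting short exact sequence of complexes then shows the quotient complex is acyclic with cokernel ${\iota_{\Vs}}_*\sI_{\Delta/\Vs}(M+H+L)$, which is precisely (\ref{eqnarray:onVs}).

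For (1) the starting point is Proposition \ref{prop:ci}. Pulling the defining section of $\widehat{\Zpq}$ back along $\widetilde{\hcoY}\times\hchow\to\mathrm{G}(3,\wedge^2V)\times\rG(2,V)$ realizes $(\Delta')^t$ as the zero scheme of the canonical $\SL(V)$-invariant section of $\widetilde{\sQ}\boxtimes\sO_{\hchow}(L)$. A dimension count (using (\ref{eq:deltasmooth}) and Lemma \ref{D}) shows that $(\Delta')^t$, and each of its two components $\Delta$ and $\widetilde{D}:=\mP(\Lrho_{\widetilde{\hcoY}}^*\sS|_{F_\rho})\times_{\rG(2,V)}\hchow$, has codimension three; hence this section is regular, $(\Delta')^t$ is a Gorenstein (indeed local complete intersection) subscheme, and its Koszul complex resolves $\sO_{(\Delta')^t}$. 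The term $\widetilde{\sQ}^*\boxtimes\sO_{\hchow}(-L)$ of (\ref{eqnarray:fin5}), together with its exterior powers, is exactly the complete-intersection part coming from this section. Since $(\Delta')^t=\Delta\cup\widetilde{D}$ is the decomposition of a codimension-three complete intersection into its two Cohen--Macaulay components meeting in higher codimension, $\Delta$ and $\widetilde{D}$ are geometrically linked by $(\Delta')^t$. I would therefore obtain the resolution of $\sI_\Delta$ by liaison: choose a comparison morphism from the Koszul complex of $(\Delta')^t$ to a locally free resolution of $\sO_{\widetilde{D}}$, and take the ($\omega$-twisted) dual of its mapping cone, which resolves $\sO_\Delta$ and, after truncation, $\sI_\Delta$.

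The remaining input is a locally free resolution of the linked component $\widetilde{D}$, which is supported over the exceptional divisor $F_\rho$ of $\Lrho_{\widetilde{\hcoY}}$. This is where the sheaf $\widetilde{\sT}$ performs its designed function: its defining sequence (\ref{eq:T*}) encodes $F_\rho$ together with the tautological map $V^*\otimes\sO\to{\iota_{F_\rho}}_*(\Lrho_{\widetilde{\hcoY}}|_{F_\rho})^*\sO_{\mP(V)}(1)$, and combining it with the projective-bundle description of $\widetilde{D}$ (the global analogue of Lemma \ref{D}) produces a resolution whose terms are assembled from $\widetilde{\sT}^*$, $\widetilde{\sS}_L$ and $\sF$, following \cite{DerSym}. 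Feeding this into the liaison mapping cone yields precisely the terms $\widetilde{\sT}^*\boxtimes\sF(H)$ and $\widetilde{\sS}_L\boxtimes\sO_{\hchow}(H-L)$, as well as the extra generator $\sO_{\widetilde{\hcoY}}\boxtimes{\ft S}^2\sF(H+L)$ that cuts $\Delta$ out of $(\Delta')^t$. All differentials are forced up to scalar by $\SL(V)$-equivariance, the relevant $\Hom$-spaces between these Schur-functor bundles being one-dimensional, so it remains to verify that the composites vanish and that the complex is exact; for exactness I would apply the Buchsbaum--Eisenbud acyclicity criterion, bounding below the codimensions of the loci where the differentials drop rank, or equivalently check exactness fibrewise over $\hchow$ using the flatness of $\Delta\to\hchow$ (Proposition \ref{prop:flat}) and the fibrewise model as the blow-up of $\rG(2,5)$ along a conic.

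The hard part will be the linkage/blow-up step of the third paragraph: separating $\Delta$ from $\widetilde{D}$ inside the complete intersection $(\Delta')^t$ and keeping track of all twists. The delicate behaviour occurs along $F_\rho$, where the section of $\widetilde{\sQ}\boxtimes\sO_{\hchow}(L)$ ceases to cut out $\Delta$ alone; controlling this degeneration is exactly what forces the replacement of $V^*\otimes\sO$ by $\widetilde{\sT}^*$ and the appearance of the extra ${\ft S}^2\sF$-generator, and it is where the $\SL(V)$-equivariant bookkeeping must be carried out with care.
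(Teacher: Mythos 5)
Your reduction of (2) to (1) is essentially the paper's own: the paper likewise identifies $\sI_{\Vs}(M+H+L)\simeq\sO_{\widetilde{\hcoY}}\boxtimes\sO_{\hchow}(L)$ with the unique $\SL(V)$-equivariant subsheaf of the summand $\sO_{\widetilde{\hcoY}}\boxtimes\ft{S}^2\sF(H+L)$ and then replaces that summand by $T_{\hchow/\mathrm{G}(2,V)}(L)$ via the relative Euler sequence. For (1), however, your route is genuinely different. The paper never performs liaison inside $(\Delta')^t$: it starts from the Koszul resolution of the diagonal of $\mathrm{G}(2,V)\times\mathrm{G}(2,V)$ by $\wedge^{\bullet}(\sG^*\boxtimes\sF)$, pulls it back to $\widetilde{\Zpq}\times\hchow$, and pushes it forward along the generically conic fibration to $\widetilde{\hcoY}\times\hchow$ using relative Grothendieck--Verdier duality with $K_{\widetilde{\Zpq}/\widetilde{\hcoY}}=M-L$; the three direct images $\Lpi_{\widetilde{\Zpq}*}\sO_{\widetilde{\Zpq}}(L)$, $\Lpi_{\widetilde{\Zpq}*}\sG$, $\Lpi_{\widetilde{\Zpq}*}(\ft{S}^2\sG(-1))$ are identified with $\widetilde{\sS}_L^*$, $\widetilde{\sT}$, $\widetilde{\sQ}(-M)$ by comparison with the complete intersection $\widetilde{\Zpq}^t$, which is exactly where $\widetilde{\sT}$ and the $F_\rho$-correction enter. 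Your starting data are all available in the paper: $(\Delta')^t$ is the zero scheme of the invariant section of $\widetilde{\sQ}\boxtimes\sO_{\hchow}(L)$, it is reduced of pure codimension three with the two smooth components $\Delta$ and $\widetilde{D}=\mP(\Lrho_{\widetilde{\hcoY}}^*\sS|_{F_\rho})\times_{\mathrm{G}(2,V)}\hchow$, and the summand $\widetilde{\sQ}^*(M)\boxtimes\sO_{\hchow}(H)$ of (\ref{eqnarray:fin5}) is indeed the Koszul generator, so geometric linkage is a legitimate framework here.

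The substantive gap is the input you defer to the end: an explicit $\SL(V)$-equivariant locally free resolution of $\sO_{\widetilde{D}}$ (equivalently of $\omega_{\widetilde{D}}$) on $\widetilde{\hcoY}\times\hchow$ whose dual mapping cone, after cancelling redundant summands, yields exactly $\widetilde{\sT}^*\boxtimes\sF(H)$ and $\widetilde{\sS}_L\boxtimes\sO_{\hchow}(H-L)$. No such resolution is constructed in \cite{DerSym}, and producing it is not easier than the theorem itself: $\widetilde{D}$ lies inside the divisor $F_\rho\times\hchow$, so the naive resolution (the Koszul complex of the flag condition over $F_\rho$ composed with $0\to\sO(-F_\rho)\to\sO\to\sO_{F_\rho}\to0$) has terms that are locally free only on $F_\rho\times\hchow$, and repackaging them into bundles on the whole product is precisely the difficulty that the sheaf $\widetilde{\sT}$ of (\ref{eq:T*}) is designed to absorb. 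You would also still need to verify that the relevant equivariant $\Hom$-spaces are one-dimensional and check exactness (your Buchsbaum--Eisenbud/fibrewise plan is fine for that part). So the proposal is a coherent alternative outline, but its central step is asserted rather than carried out, whereas the paper's pushforward argument avoids resolving the residual component altogether.
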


\begin{rem}
The twist by
$\sO_{\widetilde{\hcoY}}(M)\boxtimes 
\sO_{\hchow}(H+L)$ turns out to be convenient in the 
proof of Theorem \ref{thm:newker} below.
\end{rem} 

The proof of Theorem \ref{thm:resolY} is almost identical with that of
\cite[Thm.~5.1.3]{HoTa4}, so we only give its outline below.

We recall the diagram (\ref{eq:constDelta}).
The starting point is the following locally free resolution of the 
ideal sheaf $\sI_0$
of $\Delta_0$ in $\mathrm{G}(2,V)\times \mathrm{G}(2,V)$ (cf.~\cite[Prop.~5.1.1]{HoTa4}).
\begin{prop}
\label{cla:Delta}
The ideal sheaf $\sI_0$
has the following Koszul resolution\,$:$
\begin{equation}
\label{eq:delta0}
0\to \wedge^4 ({\sG}^*\boxtimes {\sF})\to \wedge^3 ({\sG}^*\boxtimes {\sF})
\to \wedge^2 ({\sG}^*\boxtimes {\sF}) \to {\sG}^*\boxtimes {\sF}\to 
\sI_0\to 0.
\end{equation}
\end{prop}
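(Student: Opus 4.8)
The plan is to realize $\Delta_0$, the diagonal of $\mathrm{G}(2,V)\times\mathrm{G}(2,V)$, as the zero scheme of a regular section of the rank four bundle $\sG\boxtimes\sF^{*}$, and then to read off (\ref{eq:delta0}) from the associated Koszul complex. Here I regard $\sF$ (the rank two universal subbundle) as living on the second factor and $\sG$ (the rank two universal quotient bundle) on the first factor. First I would form the canonical section $s$: composing the tautological inclusion $\pr_2^{*}\sF\hookrightarrow V\otimes\sO_{\mathrm{G}(2,V)^{2}}$ with the tautological surjection $V\otimes\sO_{\mathrm{G}(2,V)^{2}}\twoheadrightarrow\pr_1^{*}\sG$ gives a bundle map $\pr_2^{*}\sF\to\pr_1^{*}\sG$, that is, a section $s\in H^{0}(\sHom(\pr_2^{*}\sF,\pr_1^{*}\sG))=H^{0}(\sG\boxtimes\sF^{*})$. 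This section is manifestly $\SL(V)$-equivariant, since both tautological maps are.

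Next I would identify the zero scheme of $s$. Over a point $([U_1],[U_2])$ the section $s$ is the composite $U_2\hookrightarrow V\twoheadrightarrow V/U_1$, which vanishes precisely when $U_2\subseteq U_1$; as both subspaces are two-dimensional, this forces $U_1=U_2$, so set-theoretically the zero locus of $s$ is exactly $\Delta_0$. Since $\Delta_0$ is smooth of codimension $\dim\mathrm{G}(2,4)=4=\rank(\sG\boxtimes\sF^{*})$, the section attains the expected codimension and is therefore regular. To see the zero scheme is reduced, and hence equal to $\Delta_0$, I would observe that along the diagonal the vertical derivative of $s$ induces an isomorphism from the normal bundle $T_{\mathrm{G}(2,V)}=\sF^{*}\otimes\sG$ onto $(\sG\boxtimes\sF^{*})|_{\Delta_0}$, so that the zero scheme is smooth, in particular reduced.

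Finally, regularity of $s$ makes the Koszul complex of $s$, with differentials given by contraction, a locally free resolution of $\sO_{\Delta_0}$; writing $\sE:=\sG\boxtimes\sF^{*}$ so that $\sE^{*}=\sG^{*}\boxtimes\sF$, its term in degree $k$ is $\wedge^{k}\sE^{*}=\wedge^{k}(\sG^{*}\boxtimes\sF)$. Truncating off the final arrow $\sO_{\mathrm{G}(2,V)^{2}}\to\sO_{\Delta_0}$, whose kernel is by definition $\sI_0$, yields exactly the displayed resolution (\ref{eq:delta0}), and $\SL(V)$-equivariance persists because every arrow is built from the universal sequences. The one step needing genuine care is the regularity and reducedness of the zero scheme; this is the familiar description of the diagonal of a Grassmannian as the vanishing of a section of $\sS^{*}\otimes\sQ$ separated across the two factors, and is carried out exactly as in \cite[Prop.~5.1.1]{HoTa4}.
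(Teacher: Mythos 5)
Your proof is correct and is exactly the standard argument the paper relies on (it gives no proof here, deferring to \cite[Prop.~5.1.1]{HoTa4}): realize $\Delta_0$ as the reduced zero scheme of the canonical regular section of the rank-four bundle $\sHom(\pr_2^*\sF,\pr_1^*\sG)=\sG\boxtimes\sF^*$ and truncate the resulting Koszul complex. The key checks — that the zero locus is set-theoretically the diagonal, that codimension equals rank so the section is regular, and that the normal-bundle identification $T_{\mathrm{G}(2,V)}\simeq\sF^*\otimes\sG$ forces reducedness — are all present and correct.
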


Let $\sI_{\widetilde{\zpq}}$ be the ideal sheaf of
${\Delta}_{\widetilde{\zpq}}$ on $\widetilde{\Zpq}\times \hchow$.
By pulling back the locally free resolution (\ref{eq:delta0}) to $\widetilde{\Zpq}\times \hchow$,  
we see that $\sI_{\widetilde{\zpq}}$ has 
the following locally free resolution, where we omit the symbols of the pull-backs:
\begin{equation}
\label{eqnarray:I''}
0\to 
\wedge^4 ({\sG}^*\boxtimes \sF)\to 
\wedge^3 ({\sG}^*\boxtimes \sF)\to 
\wedge^2 ({\sG}^*\boxtimes \sF)\to {\sG}^*\boxtimes \sF \to  
\sI_{\widetilde{\zpq}}\to 0.
\end{equation}

We recall that 
we denote the transform of $\Prt_{\sigma}$ on
$\widetilde{\hcoY}$ also by
$\Prt_{\sigma}$.
We set \[
\widetilde{\Zpq}^o:=\widetilde{\Zpq}\setminus \Lpi_{\widetilde{\Zpq}}^{-1}(\Prt_{\sigma}),\,
\widetilde{\hcoY}^o:=\widetilde{\hcoY}\setminus \Prt_{\sigma}.
\]
By Proposition \ref{prop:cobu},
$\widetilde{\Zpq}^o\to \widetilde{\hcoY}^o$
is a conic bundle and a fiber of $\check{\Lpi}_{\widetilde{\Zpq}}$
is a non $\sigma$-conic on $\mathrm{G}(2,V)$.
Now
we calculate the pushforward of (\ref{eqnarray:I''})
by
$\check{\Lpi}_{\widetilde{\Zpq}}:=\Lpi_{\widetilde{\Zpq}}\times \id_{\hchow}\colon
\widetilde{\Zpq}\times \hchow \to \widetilde{\hcoY}\times \hchow$ over 
its flat locus $\widetilde{\hcoY}^o$.

{\it Until the end of this subsection,
 we consider only on $\widetilde{\hcoY}^o\times {\hchow}$
to calculate the higher direct images 
for $\check{\Lpi}_{\widetilde{\Zpq}}$.
To simplify the notation, we abbreviate the symbols for the restriction.}

Then we obtain the following exact sequence on
the locus $\widetilde{\hcoY}^o\times {\hchow}$:
\begin{eqnarray}
\label{eqnarray:I'}
0\to R^1 \check{\Lpi}_{\widetilde{\Zpq} *}
\wedge^4 ({\sG}^*\boxtimes \sF)
\to  
R^1 \check{\Lpi}_{\widetilde{\Zpq} *}
\wedge^3 ({\sG}^*\boxtimes \sF)\to \\
R^1 \check{\Lpi}_{\widetilde{\Zpq} *}
\wedge^2 ({\sG}^*\boxtimes \sF)
\to \sI^o\to 0,\nonumber
\end{eqnarray}
where $\sI^o$ is the ideal sheaf of $\Delta^o$ and is equal to 
$\check{\Lpi}_{\widetilde{\Zpq} *} \sI_{\widetilde{\zpq}}$.

By Proposition \ref{cla:duality} for 
the morphism
$\widetilde{\Zpq}^o\times {\hchow}\to \widetilde{\hcoY}^o\times {\hchow}$,
we have
\[
R^1 \check{\Lpi}_{\widetilde{\Zpq} *}
\wedge^i ({\sG}^*\boxtimes \sF)
\simeq 
\big(\check{\Lpi}_{\widetilde{\Zpq} *}\{
\wedge^i ({\sG}\boxtimes \sF^*)\otimes
\omega_{\widetilde{\Zpq}\times {\hchow}/
\widetilde{\hcoY}\times {\hchow}}\}\big)^*.
\]
Note that 
\[
\omega_{\widetilde{\Zpq}\times {\hchow}/
\widetilde{\hcoY}\times {\hchow}}=
\mathrm{pr}_2^*\omega_{\widetilde{\Zpq}/
\widetilde{\hcoY}}
=\omega_{\widetilde{\Zpq}/\widetilde{\hcoY}}\boxtimes
\sO_{{\hchow}}\simeq
\sO_{\widetilde{\Zpq}}(M-L)\boxtimes \sO_{\hchow},
\]
where the second isomorphism follows from 
the formula of 
the relative canonical divisor $K_{\widetilde{\Zpq}/\widetilde{\hcoY}}$:
\[
K_{\widetilde{\Zpq}/\widetilde{\hcoY}}=
M-L
\]
(cf.~\cite[Prop.~4.5.1 (3)]{HoTa4}).
Thus we have
\begin{align*}
R^1 \check{\Lpi}_{\widetilde{\Zpq} *}
\wedge^i ({\sG}^*\boxtimes \sF)
&\simeq \label{duality} \\
\big(\check{\Lpi}_{\widetilde{\Zpq} *}\{
\wedge^i ({\sG}\boxtimes \sF^*)&\otimes  
(\sO_{\widetilde{\Zpq}}(-L) \boxtimes \sO_{{\hchow}})\}\otimes
(\sO_{\widetilde{\hcoY}}(M)\boxtimes \sO_{{\hchow}})\big)^*. \nonumber
\end{align*}
We write down this more explicitly.
Note that, by \cite[Exercise 6.11]{FH},
it holds that
\[
\wedge^i ({\sG}\boxtimes \sF^*)
\simeq \bigoplus_{\lambda} \ft{\Sigma}^{\lambda}
{\sG}\boxtimes \ft{\Sigma}^{\lambda'} \sF^*,
\]
where $\lambda$ are partitions of $i$
with at most $2$ rows and column,
and $\lambda'$ is the partitions dual to $\lambda$. 

Now the exact sequence (\ref{eqnarray:I'})$\otimes 
\sO_{\widetilde{\hcoY}}(M)\boxtimes 
\sO_{\hchow}(H+L)$ 
on
the locus $\widetilde{\hcoY}^o\times {\hchow}$
is presented as follows:
\begin{eqnarray*}
\label{eqnarray:fin}
0\to(\Lpi_{\widetilde{\Zpq} *}\sO_{\widetilde{\Zpq}}(L))^*\boxtimes
\sO_{{\hchow}}(H-L)
\to(\Lpi_{\widetilde{\Zpq} *}\sG)^*
\boxtimes
\sF(H)
\to\\
\sO_{\widetilde{\hcoY}}\boxtimes \ft{ S}^2 \sF(H+L) 
  \oplus
  (\Lpi_{\widetilde{\Zpq} *}(\ft{ S}^2 {\sG}(-1)))^*\boxtimes
\sO_{{\hchow}}(H)\nonumber \\
\to
\sI^o(M+H+L)\to 0.
\nonumber
\end{eqnarray*}

We would like to compute the following sheaves explicitly:
\begin{equation}
\label{eq:sheaves}
\Lpi_{\widetilde{\Zpq} *}\sO_{\widetilde{\Zpq}}(L),\,
\Lpi_{\widetilde{\Zpq} *}{\sG},\,
\Lpi_{\widetilde{\Zpq} *}(\ft{ S}^2 \sG(-1)).
\end{equation}
For this, we estimate these sheaves using
$\widetilde{\Zpq}^t$ constructed in the subsection \ref{Z2Y2}.
Let 
$\Lpi_{\widetilde{\Zpq}^t}\colon \widetilde{\Zpq}^t\to \widetilde{\hcoY}$ and
$\widetilde{\rho}^t\colon \widetilde{\Zpq}^t\to \mathrm{G}(2,V)$
be the natural morphisms.
We set $\widetilde{G}:=\mathrm{G}(2,V)\times \widetilde{\hcoY}$.
$\widetilde{\Zpq}^t$ has a better description in $\widetilde{G}$
than $\widetilde{\Zpq}$.
Namely,
$\widetilde{\Zpq}^t$
is the complete intersection 
in $\widetilde{G}$
with respect to a section of
$\widetilde{\sQ}\boxtimes \sO_{\mathrm{G}(2,V)}(1)$
by Proposition \ref{prop:ci}, and then
the sheaf $\sO_{\widetilde{\Zpq}^t}$
has the following Koszul resolution as a $\sO_{\widetilde{G}}$-module:
\begin{equation*}
\mylabel{eqnarray:KoszulZ2}
0\to
\sE_3\to \sE_2\to \sE_1\to
\sO_{\widetilde{G}}\to
\sO_{\widetilde{\Zpq}^t}
\to 0,
\end{equation*}
where we set
\[
\sE_i:=
\wedge^i \widetilde{\sQ}^* 
\boxtimes
\sO_{\mathrm{G}(2,V)}(-i)  
\
\text{for}\ i=0,1,2,3.
\]
Using this Koszul resolution, we show
(cf.~\cite[Lem.~5.6.2]{HoTa4})
\begin{lem}
\label{cla:est}
\begin{enumerate}[$(i)$]
\item
$\Lpi_{{\widetilde{\Zpq}^t} *}\sO_{\widetilde{\Zpq}^t}(L)\simeq 
\widetilde{\sS}^*_L$,
\item
$\Lpi_{\widetilde{\Zpq}^t *}{\sG}\simeq
V\otimes \sO_{\widetilde{\hcoY}}$, and
\item
$\Lpi_{\widetilde{\Zpq}^t *}(\ft{ S}^2 {\sG}(-1))\simeq 
\widetilde{\sQ}(-M-F_{\rho}),$ where we omit the symbol of the pull-back
$(\widetilde{\rho}^t)^*$ in the l.h.s.
\end{enumerate}
\end{lem}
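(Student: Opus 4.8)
The plan is to factor $\Lpi_{\widetilde{\Zpq}^t}$ as the closed embedding $\widetilde{\Zpq}^t\hookrightarrow \widetilde{G}=\rG(2,V)\times\widetilde{\hcoY}$ followed by the projection $\pr_{\widetilde{\hcoY}}\colon \widetilde{G}\to\widetilde{\hcoY}$, and to compute each direct image by pushing the Koszul resolution of $\sO_{\widetilde{\Zpq}^t}$ down to $\widetilde{\hcoY}$. Each of the three sheaves $\sO_{\widetilde{\Zpq}^t}(L)$, $\sG$, and $\ft{S}^2\sG(-1)$ is the restriction of a bundle on $\widetilde{G}$ of the form $\pr_{\rG(2,V)}^*\sN$, so tensoring the Koszul complex $\sE_\bullet$ by this bundle gives a resolution of the corresponding pushforward onto $\widetilde{G}$. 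Since the factor $\widetilde{\sQ}^*$ in each $\sE_i=\wedge^i\widetilde{\sQ}^*\boxtimes\sO_{\rG(2,V)}(-i)$ is pulled back from $\widetilde{\hcoY}$, the projection formula and the Künneth formula reduce the higher direct images term by term to
\[
R^\bullet\pr_{\widetilde{\hcoY}*}\big(\wedge^i\widetilde{\sQ}^*\boxtimes(\sO_{\rG(2,V)}(-i)\otimes\sN)\big)\simeq \wedge^i\widetilde{\sQ}^*\otimes H^\bullet(\rG(2,V),\sO_{\rG(2,V)}(-i)\otimes\sN)\otimes\sO_{\widetilde{\hcoY}},
\]
so everything is governed by cohomology on $\rG(2,V)$. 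This mirrors the argument of \cite[Lem.~5.6.2]{HoTa4}.

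First I would compute the groups $H^\bullet(\rG(2,V),\sO_{\rG(2,V)}(-i)\otimes\sN)$ for $i=0,1,2,3$ by Bott's theorem. In each case most of these vanish, so the hypercohomology spectral sequence of the pushed-forward Koszul complex degenerates and only one or two terms survive. For (i) the survivors are $\wedge^2 V^*\otimes\sO_{\widetilde{\hcoY}}$ (from $i=0$) and $\widetilde{\sQ}^*$ (from $i=1$), joined by the differential $d_1$ induced from the universal inclusion; dualizing the universal sequence (\ref{eq:univ36}) pulled back to $\widetilde{\hcoY}$ identifies its cokernel with $\widetilde{\sS}^*$, and the fact that a genuine pushforward has no negative direct images forces $d_1$ to be injective. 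For (ii) the non-vanishing contributions collapse to a single cohomology group which Bott's theorem computes to be $V$, giving $V\otimes\sO_{\widetilde{\hcoY}}$.

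The delicate case is (iii). There the computation leaves a single surviving term built from $\wedge^2\widetilde{\sQ}^*$ tensored with a one-dimensional cohomology group; using $\wedge^2\widetilde{\sQ}^*\simeq\widetilde{\sQ}\otimes(\det\widetilde{\sQ})^{-1}$ for the rank-three bundle $\widetilde{\sQ}$ turns this into $\widetilde{\sQ}\otimes(\det\widetilde{\sQ})^{-1}$ up to a trivial twist. The main obstacle is then a bookkeeping of line bundles: one must identify $\det\widetilde{\sQ}=\Lrho_{\widetilde{\hcoY}}^*\sO_{\widehat{\hcoY}}(1)$ with $M+F_\rho$ on $\widetilde{\hcoY}$, which is where the exceptional divisor $F_\rho$ of the blow-up $\widetilde{\hcoY}\to\widehat{\hcoY}$ enters, and which I would read off from the description of this blow-up and of $M$ in \cite[\S 4]{Geom}. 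The same care is needed to track $\SL(V)$-weights throughout: the identifications of the $H^\bullet$ of Plücker bundles carry a factor of $\wedge^4 V$, and it is precisely this factor that upgrades $\widetilde{\sS}^*$ to $\widetilde{\sS}_L^*$ in (i) and produces the exact $-M-F_\rho$ twist in (iii). I expect the Bott computations to be routine; the genuine work lies in keeping these determinantal twists consistent.
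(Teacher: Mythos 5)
Your proposal is correct and follows exactly the route the paper intends: the paper's proof consists of pointing at the Koszul resolution $0\to\sE_3\to\sE_2\to\sE_1\to\sO_{\widetilde{G}}\to\sO_{\widetilde{\Zpq}^t}\to 0$ on $\widetilde{G}=\mathrm{G}(2,V)\times\widetilde{\hcoY}$ and citing the analogous computation in \cite[Lem.~5.6.2]{HoTa4}, which is precisely your scheme of tensoring by the pullback from $\mathrm{G}(2,V)$, applying the projection/K\"unneth formulas, and evaluating the surviving Bott cohomology groups. Your flagged subtleties (the $\wedge^4 V$ equivariant twist giving $\widetilde{\sS}_L^*$, and the identification $\det\widetilde{\sQ}=M+F_{\rho}$ in (iii)) are indeed the only nontrivial bookkeeping points, and they work out as you predict.
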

These are good estimates of the sheaves in (\ref{eq:sheaves}). 
Indeed,
for any sheaf $\sB$ on $\mP(\Lrho_{\widetilde{\hcoY}}^*\sS)$, we have a natural map
$\Lpi_{{\widetilde{\Zpq}^t} *}(\sB|_{\widetilde{\Zpq}^t})\to \Lpi_{\widetilde{\Zpq} *}(\sB|_{\widetilde{\Zpq}})$ on $\widetilde{\hcoY}$,
which is isomorphic outside $F_{\rho}$.
Moreover, if $\Lpi_{\widetilde{\Zpq}^t *}(\sB|_{\widetilde{\Zpq}^t})$ is locally free,
then the map is injective. Note that
this is the case for each sheaf as in (\ref{eq:sheaves})
by Lemma \ref{cla:est} (i)--(iii).

Finally we obtain (cf.~\cite[Prop.~5.6.4]{HoTa4})
\begin{prop}
\mylabel{prop:est}
\[
\Lpi_{\widetilde{\Zpq} *}\sO_{\widetilde{\Zpq}}(L)
\simeq \widetilde{\sS}^*_L,\
\Lpi_{\widetilde{\Zpq} *}{\sG}\simeq
\widetilde{\sT},\
\Lpi_{\widetilde{\Zpq} *}(\ft{ S}^2 \sG(-1))
\simeq
\widetilde{\sQ}(-M).
\]
\end{prop}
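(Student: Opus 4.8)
The plan is to upgrade the estimates of Lemma \ref{cla:est} from the total transform $\widetilde{\Zpq}^t$ to the strict transform $\widetilde{\Zpq}$ by analysing the discrepancy of the two pushforwards along the exceptional divisor $F_{\rho}$, running parallel to \cite[Prop.~5.6.4]{HoTa4}. Recall that $\widetilde{\Zpq}^t=\widetilde{\Zpq}\cup E$ with $E:=\mP(\Lrho_{\widetilde{\hcoY}}^{*}\sS|_{F_{\rho}})$ the component lying over $F_{\rho}$, so restriction to the component $\widetilde{\Zpq}$ gives a surjection $\sO_{\widetilde{\Zpq}^t}\twoheadrightarrow\sO_{\widetilde{\Zpq}}$ whose kernel is the ideal $\sI_{\widetilde{\Zpq}/\widetilde{\Zpq}^t}\simeq\sO_{E}(-D)$, where $D:=\widetilde{\Zpq}\cap E$ is the family of $\rho$-conics of Proposition \ref{cla:P1bdl} sitting inside the $\mP^{2}$-fibres of $E\to F_{\rho}$. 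For each of the three sheaves $\sB\in\{\sO_{\widetilde{\Zpq}}(L),\,\sG,\,{\ft S}^{2}\sG(-1)\}$, all pulled back from the ambient $\mP(\Lrho_{\widetilde{\hcoY}}^{*}\sS)$, tensoring this ideal sequence by $\sB$ and pushing forward along $\Lpi_{\widetilde{\Zpq}^t}$ produces the natural comparison map $\Lpi_{\widetilde{\Zpq}^t *}(\sB|_{\widetilde{\Zpq}^t})\to\Lpi_{\widetilde{\Zpq}*}(\sB|_{\widetilde{\Zpq}})$. Since $E$ lies over $F_{\rho}$, the two families of conics agree off $F_{\rho}$ (Proposition \ref{prop:cobu}), so this map is an isomorphism there; as the source is locally free by Lemma \ref{cla:est}, the map is therefore injective, and the entire problem reduces to computing its cokernel, a sheaf supported on the divisor $F_{\rho}$.

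First I would treat case (i), where the estimate and the claimed answer coincide ($\widetilde{\sS}^{*}_{L}$), so that the content is the surjectivity of the injection $\widetilde{\sS}^{*}_{L}\hookrightarrow\Lpi_{\widetilde{\Zpq}*}\sO_{\widetilde{\Zpq}}(L)$. I would establish this by cohomology and base change over $F_{\rho}$: the correction term is governed by $R^{\bullet}\Lpi_{E*}\big(\sO_{E}(L)\otimes\sO_{E}(-D)\big)$, and restricting to a plane fibre of $E\to F_{\rho}$ and using that $D$ cuts out a conic there, one checks that no sections of $\sO_{\widetilde{\Zpq}}(L)$ on the fibre appear beyond those already accounted for on $\widetilde{\Zpq}^t$ and that the relevant higher direct images vanish. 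Hence the cokernel is zero and the comparison map is an isomorphism.

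For cases (ii) and (iii) the genuine pushforward is strictly larger than the estimate along $F_{\rho}$, and the task is to identify the resulting extension precisely. In case (iii) I expect $\Lpi_{E*}\big(\sO_{E}(-D)\otimes{\ft S}^{2}\sG(-1)\big)$ to account for exactly an $\sO(F_{\rho})$-twist, so that the injection $\widetilde{\sQ}(-M-F_{\rho})\hookrightarrow\widetilde{\sQ}(-M)$ becomes an equality after pushforward; concretely one compares ranks and determinants and checks that the cokernel on $F_{\rho}$ matches $\widetilde{\sQ}(-M)|_{F_{\rho}}$. In case (ii) the cokernel should be recognised as the correction appearing in the defining sequence (\ref{eq:T*}): dualising (\ref{eq:T*}), and using that its quotient is torsion supported on $F_{\rho}$, yields a canonical injection $V\otimes\sO_{\widetilde{\hcoY}}\hookrightarrow\widetilde{\sT}$ whose cokernel is ${\iota_{F_{\rho}}}_{*}\sExt^{1}$ of that quotient; matching this with the comparison map then gives $\Lpi_{\widetilde{\Zpq}*}\sG\simeq\widetilde{\sT}$.

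The hard part will be precisely this fibrewise analysis over $F_{\rho}$: one must pin down the $\rho$-conic $D$ inside the plane bundle $E$, compute the relative cohomology of $\sB(-D)$ together with its higher direct images, and then recognise the resulting torsion sheaf on $F_{\rho}$ as the correction dictated by (\ref{eq:T*}) and by the $\sO(F_{\rho})$-twist. Everything else is formal: the flatness of $\widetilde{\Zpq}$ over $\widetilde{\hcoY}\setminus\Prt_{\sigma}$, base change, and the Grothendieck--Verdier duality already used to produce the estimates of Lemma \ref{cla:est}. As the authors note, the computation runs in parallel with \cite[Prop.~5.6.4]{HoTa4}, from which the detailed identifications may be transported.
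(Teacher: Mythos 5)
Your proposal takes essentially the same route as the paper: starting from the estimates of Lemma \ref{cla:est} on the total transform $\widetilde{\Zpq}^t$, using the natural comparison map to $\Lpi_{\widetilde{\Zpq}*}$ (injective because the source is locally free and the map is an isomorphism off $F_{\rho}$), and identifying the cokernel supported on $F_{\rho}$ — which is precisely the argument the paper sketches and then defers to \cite[Prop.~5.6.4]{HoTa4}. The fibrewise analysis over $F_{\rho}$ that you outline (the conic $D$ in the $\mP^2$-fibres of $E\to F_{\rho}$, vanishing in case (i), the $\sO(F_{\rho})$-twist in case (iii), and the dual of (\ref{eq:T*}) in case (ii)) is the content of that cited proof, so the approach is correct and not genuinely different.
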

Now we have obtained the following locally free resolution of
$\sI^o(M+H+L)$:
\begin{eqnarray}
\label{eqnarray:fin2}
0\to
\widetilde{\sS}_L \boxtimes
\sO_{{\hchow}}(H-L)\to
\widetilde{\sT}^*\boxtimes \sF(H)
\to \\
\sO_{\widetilde{\hcoY}} 
\boxtimes \ft{ S}^2 \sF(H+L) \oplus
\widetilde{\sQ}^*(M) \boxtimes \sO_{\hchow}(H)
\to\nonumber \\
\sI^o(M+H+L)\to 0.
\nonumber
\end{eqnarray}

Let $\iota_{\widetilde{\hcoY}^o}$ be the open immersion 
${\widetilde{\hcoY}^o}\times {\hchow}
\hookrightarrow \widetilde{\hcoY}\times {\hchow}$.
As in \cite[\S 5.8]{HoTa4}, we see that
$\iota_{\widetilde{\hcoY}^o *}\sI^o=\sI$ and 
the locally free resolution (\ref{eqnarray:fin2}) extends
to (\ref{eqnarray:fin5}).

Now we complete an outline of our proof of Theorem \ref{thm:resolY} (1).

\vspace{5pt}

Next we consider Theorem \ref{thm:resolY} (2).
The ideal sheaf $\sI_{\Vs}$
of $\Vs$ on $\widetilde{\hcoY}\times{\hchow}$ is isomorphic to $\sO_{\widetilde{\hcoY}}(-M)\boxtimes\sO_{{\hchow}}(-H)$.
The injection $\sO_{\widetilde{\hcoY}}(-M)\boxtimes\sO_{{\hchow}}(-H)\to\sO_{\widetilde{\hcoY}\times{\hchow}}$
is $\mathrm{SL}(V)$-equivariant since
$\Vs$ has a natural $\mathrm{SL}(V)$-action. 
We note that
\begin{align*}
\Hom (\sO_{\widetilde{\hcoY}}(-M)\boxtimes\sO_{{\hchow}}(-H),\sO_{\widetilde{\hcoY}\times{\hchow}})&\simeq\\
\Hom(\sO_{\widetilde{\hcoY}}(-M),\sO_{\widetilde{\hcoY}})
\otimes \Hom(\sO_{{\hchow}}(-H),\sO_{{\hchow}})&\simeq
\Hom (\ft{S}^{2}V, \ft{S}^{2}V),
\end{align*}
and
$\Hom (\ft{S}^{2}V, \ft{S}^{2}V)\simeq \ft{S}^{2}V\otimes\ft{S}^{2}V^{*}$
contains a unique one-dimensional representation, which is generated
by the identity element.
Thus the above injection is induced
from the identity element 
of $\Hom (\ft{S}^{2}V, \ft{S}^{2}V)$
up to constant.

We have an $\mathrm{SL}(V)$-equivariant map \[
\sO_{\widetilde{\hcoY}}(-M)\boxtimes\sO_{{\hchow}}(-H)\to\sO_{\widetilde{\hcoY}}(-M)\boxtimes\ft{S}^{2}\sF,\]
 which is induced from the inclusion $\sO_{\mP(\ft{S}^{2}\sF)}(-1)\to \ft{S}^{2}\sF$, where we omit the symbol $g^*$ for $\ft{S}^{2}\sF$.
Therefore we have an $\mathrm{SL}(V)$-equivariant map \[
\sO_{\widetilde{\hcoY}}(-M)\boxtimes\sO_{{\hchow}}(-H)\to\sO_{\widetilde{\hcoY}}(-M)\boxtimes\ft{S}^{2}\sF\oplus\widetilde{\sQ}^{*}\boxtimes\sO_{\hchow}(-L)\to\sI\hookrightarrow\sO_{\widetilde{\hcoY}\times{\hchow}}.\]
It is easy to verify this is nonzero.
Therefore, by the uniqueness of such a map, its image coincides with $\sI_{\Vs}$.
Then it is easy to 
obtain a locally free sheaf of ${\iota_{\Vs}}_*\sI_{\Delta/\Vs}$
from (\ref{eqnarray:fin5}) by replacing
$\ft{S}^2\sF(H+L)$ with
$\ft{S}^2\sF(H+L)/\sO_{\hchow}(L)$. 
Now we consider the relative Euler sequence
associated to the projective bundle $\hchow=\mP({\ft S}^2 \sF)$: 
\begin{equation*}
\label{eq:relEu}
0\to \sO_{\hchow}(-H) \to \ft{S}^2\sF  
\to T_{\hchow/\mathrm{G}(2,V)}(-H) \to 0.
\end{equation*}
Then
\begin{equation}
\label{eq:TL}
\ft{S}^2\sF(H+L)/\sO_{\hchow}(L)\simeq 
T_{\hchow/\mathrm{G}(2,V)}(L),
\end{equation}
hence we obtain (\ref{eqnarray:onVs}). 
$\hfill\square$
\subsection{Locally free resolution of ${\iota_{\Vs}}_*\sP$}


To show Theorem \ref{thm:newker}, 
we start from some preliminary constructions.
We set
\begin{equation}
\label{eq:sK}
\sK:=\mathrm{Coker}\, \big(\sJ(M+H+L)\hookrightarrow \sI_{\Delta/\Vs}(M+H+L)\big).
\end{equation}
Then we have the following commutative diagram
with exact rows and column:

\vspace{5pt}

\begin{equation}
\label{eq:K}
\xymatrix{
& & & 0\ar[d] & 0\ar[d] & \\
&  &  & \sC_1(M+H)\ar[r]\ar[d] & {\iota_{\Vs}}_*\sJ(M+H+L)\ar[r]\ar[d] & 0\\
0\ar[r] & \sC_2\ar[r]& 
\sC_3\ar[r]\ar[d]
 & \sC_4 \oplus \sC_1(M+H)\ar[r]\ar[d] & 
{\iota_{\Vs}}_*\sI_{\Delta/\Vs}(M+H+L)\ar[r]\ar[d] & 0\\
& & 
\sC_3\ar[r]
 & \sC_4 \ar[r]\ar[d] &
{\iota_{\Vs}}_*\mathcal{K}\ar[r]\ar[d] & 0\\
& & & 0 & 0,}
\end{equation}
where we set 
\begin{eqnarray*}
\sC_1:=\widetilde{\sQ}^*\boxtimes 
\sO_{\hchow},\,
\sC_2:=\widetilde{\sS}_L\boxtimes
\sO_{\hchow}(H-L),\\
\sC_3:=\widetilde{\sT}^*\boxtimes \sF(H),\,
\sC_4:=\sO_{\widetilde{\hcoY}} 
\boxtimes 
T_{\hchow/\mathrm{G}(2,V)}(L)
\end{eqnarray*}
for simplicity of notation,
and the first row comes from the definition of $\sJ$,
the second row is exactly (\ref{eqnarray:onVs}),
and the third row are derived from
a simple diagram chasing.

Now we will extend the third row of this diagram to a certain complex.
By the second row, we have the map $\sC_2\to \sC_3$.
Moreover, by \cite[Rem.~3.4 (3), Rem.~5.12 (2)]{DerSym}, 
we obtain a nonzero unique $\mathrm{SL}(V)$-equivariant map
$\sC_1\to \sC_3$ up to constant.
Therefore we obtain a map
$\sC_1\oplus \sC_2\to \sC_3$.

\begin{lem}
\label{lem:sK}
The third row of the diagram $(\ref{eq:K})$ and
the map $\sC_1\oplus \sC_2\to \sC_3$ obtained above
induce a complex\,$;$
\begin{eqnarray}
\label{eqn:complex}
0\to
\sC_1\oplus \sC_2
\to
\sC_3 \to \sC_4 \to 
{\iota_{\Vs}}_*\sK \to 0,
\end{eqnarray}
which is exact except that
the kernel of the map $\sC_3\to \sC_4$
does not coincide with
$\sC_1\oplus \sC_2$.
\end{lem}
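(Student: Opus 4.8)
The statement packages three assertions about the four-term sequence (\ref{eqn:complex}): that it is a complex, that it is exact at the three outer spots $\sC_1\oplus\sC_2$, $\sC_4$ and ${\iota_{\Vs}}_*\sK$, and that exactness genuinely fails at $\sC_3$. The plan is to read off as much as possible directly from the diagram (\ref{eq:K}) and to reduce the new content to the behaviour of the two maps $\sC_2\to\sC_3$ and $\sC_1\to\sC_3$ feeding into $\sC_3$.

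First I would dispose of the right-hand end. The third row of (\ref{eq:K}), namely $\sC_3\xrightarrow{\psi}\sC_4\to{\iota_{\Vs}}_*\sK\to0$, is exact by the very construction of that row as the quotient of the inclusion of the first row into the second; this gives exactness of (\ref{eqn:complex}) at $\sC_4$ and surjectivity onto ${\iota_{\Vs}}_*\sK$ for free. Here $\psi$ is the composite of the map $\sC_3\to\sC_4\oplus\sC_1(M+H)$ from the second row (\ref{eqnarray:onVs}) with the projection to $\sC_4$. It therefore remains only to analyse the left-hand map $\phi=(\sC_1\to\sC_3)\oplus(\sC_2\to\sC_3)$: to check $\psi\circ\phi=0$, that $\phi$ is injective, and that $\operatorname{im}\phi\neq\ker\psi$. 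For the $\sC_2$-component everything is inherited from the exact second row: $\sC_2\to\sC_3$ is injective and the composite $\sC_2\to\sC_3\to\sC_4\oplus\sC_1(M+H)$ vanishes, so in particular $\psi|_{\sC_2}=0$.

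The $\sC_1$-component is the crux. The map $\sC_1\to\sC_3$ is the unique (up to scalar) $\SL(V)$-equivariant map supplied by \cite[Rem.~3.4(3), Rem.~5.12(2)]{DerSym}, and $\psi$ is $\SL(V)$-equivariant since (\ref{eqnarray:onVs}) is an equivariant resolution; hence the composite $\sC_1\to\sC_3\xrightarrow{\psi}\sC_4$ is equivariant. I would therefore show it vanishes by a representation-theoretic argument: by the Künneth formula
\[
\Hom_{\SL(V)}(\sC_1,\sC_4)=\bigl(H^0(\widetilde{\hcoY},\widetilde{\sQ})\otimes H^0(\hchow,T_{\hchow/\mathrm{G}(2,V)}(L))\bigr)^{\SL(V)},
\]
with $H^0(\widetilde{\hcoY},\widetilde{\sQ})\cong\wedge^2V$ (inherited from $\widehat{\hcoY}=\mathrm{G}(3,\wedge^2V)$), and the task is to verify that $\wedge^2V$ does not occur in $H^0(\hchow,T_{\hchow/\mathrm{G}(2,V)}(L))$ as an $\SL(V)$-module, which one computes from the relative Euler sequence of $\hchow=\mP({\ft S}^2\sF)$ together with the cohomology computations of \cite[Thm.~3.1]{DerSym}. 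This yields $\Hom_{\SL(V)}(\sC_1,\sC_4)=0$, so $\psi\circ(\sC_1\to\sC_3)=0$ and (\ref{eqn:complex}) is genuinely a complex.

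Finally, injectivity of $\phi$ and the failure at $\sC_3$ are a rank count plus a generic-rank verification. All four sheaves $\sC_1,\dots,\sC_4$ are locally free and ${\iota_{\Vs}}_*\sK$ is torsion, with ranks $3,3;\,8;\,2;\,0$; since $\sC_4\to{\iota_{\Vs}}_*\sK$ is generically zero, $\psi$ is generically surjective and $\ker\psi$ has generic rank $6$. Because the bundles are torsion-free, $\phi$ is injective as soon as it has generic rank $6$, which I would check at the generic point (or on a general fibre of $\Vs\to\hchow$) using the explicit equivariant maps: the $\sC_2$-image already contributes rank $3$, and one sees the $\sC_1$-image meets it trivially there. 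Then $\operatorname{im}\phi$ and $\ker\psi$ are both of generic rank $6$, so $\ker\psi/\operatorname{im}\phi$ is a torsion sheaf; identifying its support as a piece of the extra component $D$ of $\Delta'$ shows it is nonzero, which is exactly the assertion that the kernel of $\sC_3\to\sC_4$ does not coincide with $\sC_1\oplus\sC_2$. I expect the main obstacle to be the equivariant vanishing $\Hom_{\SL(V)}(\sC_1,\sC_4)=0$ (equivalently the complex condition for the $\sC_1$-strand), since this is where the representation theory of $\SL(V)$ and the geometry of $\hchow$ genuinely enter; the remaining points are diagram chases and rank bookkeeping.
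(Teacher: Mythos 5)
Your proposal is correct in substance and reaches all the assertions of the lemma, but it takes a genuinely different route at the one step where real work is needed. The paper proves that $\sC_1\to\sC_3\to\sC_4$ is a complex by an explicit fibrewise computation: it writes out the map $\sC_3\to\sC_4$ in coordinates at a point $(y,x)$ with $y\notin F_\rho$, exhibits its six-dimensional kernel $A$ by a basis, and checks by hand that the image of $\sC_1\to\sC_3$ lands in $A$. You instead observe that the composite is $\SL(V)$-equivariant and kill it globally by showing $\Hom_{\SL(V)}(\sC_1,\sC_4)=0$; this is valid, since $H^0(\widetilde{\hcoY},\widetilde{\sQ})\simeq\wedge^2V$ and the relative Euler sequence gives $H^0(\hchow,T_{\hchow/\mathrm{G}(2,V)}(L))\simeq{\ft S}^2V^*$, and $\wedge^2V\otimes{\ft S}^2V^*$ contains no trivial summand. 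Your argument is shorter and conceptually cleaner for this lemma; what it gives up is the explicit description of $\ker(\sC_3\to\sC_4)$ at a point, which the paper reuses immediately afterwards (in the proof of Lemma \ref{lem:sK2} (1)) to identify the support of $\sK$ as $D$ and to pin down exactly where $\sC_3\to\sC_4$ fails to be surjective — so the coordinate computation is not wasted effort in the paper's economy. For injectivity of $\sC_1\oplus\sC_2\to\sC_3$ your plan (generic rank count at a general point of $\widetilde{\hcoY}\times\hchow$) is the same as the paper's claim (b), though you leave the actual verification that the two rank-three images meet trivially unexecuted; this is precisely where the paper does another explicit basis computation. Two small corrections: the cokernel $\ker(\sC_3\to\sC_4)/\mathrm{im}(\sC_1\oplus\sC_2)$ is the sheaf $\sK_2$, which by Lemma \ref{lem:sK2} (2) is supported on all of $\Vs$ with generic rank $2$ there, not on a piece of $D$ (it is $\sK$, not $\sK_2$, that lives on $D$); and the paper in fact defers the verification that exactness genuinely fails at $\sC_3$ to that later lemma, so you need not establish it here.
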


\begin{rem}
It is useful to split (\ref{eqn:complex}) into 
the following three short exact sequences:
\begin{equation}
\label{eq:rem}
\begin{cases}
0\to \sC_1\oplus \sC_2\to \sK_1\to \sK_2\to 0,\\
0\to \sK_1 \to \sC_3\to\sK_3\to 0,\\
0\to \sK_3 \to 
\sC_4\to
{\iota_{\Vs}}_*\sK\to 0,
\end{cases}
\end{equation}
where we define 
$\sK_1$ to be the kernel of the map $\sC_3\to \sC_4$,
$\sK_2$ to be the cokernel of the inclusion $\sC_1\oplus \sC_2\hookrightarrow \sK_1$,
and $\sK_3$ to be
the kernel of the map
$\sC_4\to {\iota_{\Vs}}_*\sK$. 
\end{rem}

\begin{proof}
By the diagram (\ref{eq:K}), we have only to show
the following claims (a) and (b):
\begin{enumerate}[(a)]
\item 
\begin{eqnarray}
\label{eqnarray:fin6}
\sC_1
\to
\sC_3\to \sC_4
\end{eqnarray}
is a complex (note that
$\sC_2
\to
\sC_3\to \sC_4$ is a complex by Theorem \ref{thm:resolY} (2)).
\item
\begin{equation}
\label{eq:map2}
\sC_1\oplus \sC_2
\to
\sC_3
\end{equation}
is injective.
\end{enumerate}
{\bf Proof of the claim (a).}

It suffices to show the composite (\ref{eqnarray:fin6}) is a $0$-map
at the generic point of $\widetilde{\hcoY}\times \hchow$
since the target 
$\sC_4$ of (\ref{eqnarray:fin6})
is locally free, hence is torsion free.
Therefore we only consider points $(y,x)$ of 
$\widetilde{\hcoY}\times \hchow$
such that 
$\widetilde{\hcoY}\to \widehat{\hcoY}$ is isomorphic at $y$, namely, $y\not \in F_{\rho}$.
Then the fiber of the sheaf $\widetilde{\sT}^*$ at $y$ is isomorphic to $V^*$.
Note that a point $x\in \hchow$ corresponds to 
a pair $(V_2, U_1)$ of 
$[V_2]\in G(2,V)$ and one-dimensional subspace
$U_1\simeq \mC\subset {\ft S}^2 V_2$.

\vspace{5pt}

\noindent {\bf Step 1.}
We calculate the map $\sC_3\to \sC_4$
at $(y,x)$.

For this, we treat
the twisted map $\sC_3(-H+L)\to \sC_4(-H+L)$ instead.
Note that the fiber of $\sF^*$ is $V_2^*$, and
the fiber of 
$T_{\hchow/\mathrm{G}(2,V)}(-H+2L)$
is 
${\ft S}^2 V_2^*/(U_1\otimes (\wedge^2 V_2^*)^{\otimes 2})$
since
$T_{\hchow/\mathrm{G}(2,V)}(-H+2L)\simeq \ft{ S}^2 \sF^*/\sO_{\hchow}(-H+2L)$
by (\ref{eq:TL})．
Now we take a basis $e_1$, $e_2$ of $V_2$
and extend it to a basis $e_1,\dots, e_4$ of $V$.
Note that
$\Hom(\widetilde{\sT}^*, \sO_{\widetilde{\hcoY}})\simeq V$
and $\Hom(\sF^*, \ft{ S}^2 \sF^*/\sO_{\hchow}(-H+2L))\simeq V^*$ 
by \cite[Rem.~3.4 (3), Rem.~5.12 (2)]{DerSym}.
Then
the map $\sC_3\to \sC_4$ is the unique nonzero $\SL(V)$-equivariant map
corresponding to the identity
of $\Hom (V, V)\simeq V^*\otimes V$ up to constant.
Note that, at each fiber,
the natural map $\Hom(\widetilde{\sT}^*, \sO_{\widetilde{\hcoY}})\otimes \widetilde{\sT}^*\to \sO_{\widetilde{\hcoY}}$ is 
the canonical projection $V\otimes V^*\to \mC$,
and, by $\Hom(\sF^*, \ft{ S}^2 \sF^*/\sO_{\hchow}(-H+2L))
\simeq \Hom(\sF^*, \ft{ S}^2 \sF^*)$, the map
\[
\Hom(\sF^*, \ft{ S}^2 \sF^*/\sO_{\hchow}(-H+2L))\otimes \sF^*\to \ft{ S}^2 \sF^*/\sO_{\hchow}(-H+2L)
\]
is the composite
\begin{equation}
\label{eq:fiber1}
V^*\otimes V_2^*\to V_2^*\otimes V_2^*\to {\ft S}^2 V_2^*\to
{\ft S}^2 V_2^*/(U_1\otimes (\wedge^2 V_2^*)^{\otimes 2}),
\end{equation}
where the first map in (\ref{eq:fiber1}) is induced from the natural surjection
$V^*\to V_2^*$, and 
the second map is the canonical projection.
Therefore, at each fiber, 
the map $\sC_3(-H+L)\to \sC_4(-H+L)$ is defined by 
\begin{align}
\label{eqnarray:V2}
& V^*\otimes V_2^*\ni e_i^*\otimes e_j^*\mapsto\\
& (\sum_{k=1}^{4} e_k\otimes e_k^*)\otimes e_i^*\otimes e_j^*\in
(V\otimes V^*)\otimes (V^*\otimes V_2^*)
\mapsto \nonumber \\
& \begin{cases}
0 : i=3,4,\, j=1,2\\
1\otimes e_i^*e_j^* : i=1,2,\, j=1,2
\end{cases}
\in \mC\otimes {\ft S}^2 V^*_2. 
\nonumber
\end{align}
In other words,
$V^*\otimes V_2^*\to \mC\otimes {\ft S}^2 V_2^*\simeq {\ft S}^2 V_2^*$ coincides with the natural projection.
In particular, it is surjective, and hence
the support of the cokernel of
the map $\sC_3\to \sC_4$ is contained in $F_{\rho}\times \hchow$.
We postpone to determine the support until Lemma \ref{lem:sK2}.

\vspace{5pt}

\noindent {\bf Step 2.}
We determine the kernel of 
the map $\sC_3(-H+L)\to \sC_4(-H+L)$
at $(y,x)$.
We denote a generator of $U_1$
by $a(e_1)^2+b(e_1e_2)+c(e_2)^2$.
It is easy to see that this corresponds to the element 
$c(e_1^*)^2-b(e_1^*e_2^*)+a(e_2^*)^2$
of $U_1\otimes (\wedge^2 V_2^*)^{\otimes 2}\subset {\ft S}^2 V_2^*$
by the natural pairing
${\ft S}^2 V_2\times {\ft S}^2 V_2\subset (V_2\otimes V_2)\times (V_2\otimes V_2)
\to \wedge^2 V_2\otimes \wedge^2 V_2$．
Thus it is the image of 
$c(e_1^*\otimes e_1^*)-b(e_1^*\otimes e_2^*)+a(e_2^*\otimes e_2^*)
\in V^*\otimes V_2^*$ by the map (\ref{eqnarray:V2}).
Consequently,
the kernel of the map $\sC_3(-H+L)\to \sC_4(-H+L)$ at $(y,x)$ 
is a six-dimensional vector space
with a basis
\[
e_3^*\otimes e_1^*,\,
e_3^*\otimes e_2^*,\,
e_4^*\otimes e_1^*,\,
e_4^*\otimes e_2^*,
\]
\[
e_1^*\otimes e_2^*-e_2^*\otimes e_1^*,\,
c(e_1^*\otimes e_1^*)-b(e_1^*\otimes e_2^*)+a(e_2^*\otimes e_2^*),
\]
which
we denote this by $A$.

\vspace{5pt}

\noindent {\bf Step 3.}
Now we will complete
the proof of the claim (a).

We also consider the twisted map
\begin{equation}
\label{eq:twist}
\sC_1(-H+L)\to \sC_3(-H+L)\to \sC_4(-H+L)
\end{equation}
instead.
Note that, at the point $(y,x)$, we have
\[
\text{$\widetilde{\sQ}^*\subset \wedge^2 V^*$ and
$\sO_{\hchow}(-H+L)\simeq 
U_1\otimes \wedge^2 V_2^*$.}
\]
Therefore, to show (\ref{eq:twist})
is a $0$-map,
 we have only to see
the image of the map
\begin{equation}
\label{eq:U1}
\wedge^2 V^*\otimes
(U_1\otimes \wedge^2 V_2^*)\to
V^*\otimes V_2^*
\end{equation}
is contained in $A$.
By writing down the map (\ref{eq:U1}) explicitly like (\ref{eqnarray:V2}),
we see that the image of an element
\[
(e_i^*\otimes e_j^*-e_j^*\otimes e_i^*)\otimes
(a(e_1)^2+b(e_1e_2)+c(e_2)^2)\otimes (e_1^*\wedge e_2^*)\in
\wedge^2 V^*\otimes
(U_1\otimes \wedge^2 V_2^*)\ (i<j)
\]
by the map (\ref{eq:U1})
is
\begin{equation}
\label{eq:im}
\delta_{1i} e^*_j\otimes (ae_2^*-\frac{b}{2}e_1^*)+
\delta_{2i} e^*_j \otimes (\frac{b}{2}e_2^*-ce_1^*)
-\delta_{2j} e^*_i \otimes (\frac{b}{2}e_2^*-ce_1^*) \in V^*\otimes V_2^*
\end{equation}
($\delta_{lm}$ is Kronecker's delta),
thus, is contained in $A$ as desired.
Now we have proved that
(\ref{eqnarray:fin6}) is a complex.

\vspace{5pt}

\noindent {\bf Proof of the claim (b).}

By twisting $\sO_{\hchow}(-H)$,
we show
\begin{equation}
\label{eq:map2'}
\widetilde{\sQ}^* \boxtimes \sO_{\hchow}(-H)
\oplus
\widetilde{\sS}_L\boxtimes
\sO_{{\hchow}}(-L)
\to
\widetilde{\sT}^*\boxtimes \sF
\end{equation}
is injective.
 It suffices to show this at a general point since the sourse of this map
is locally free, hence is torsion free.

Let $W\subset \wedge^2 V^*$ be the fiber of $\widetilde{\sQ}^*$
at a point of $\widetilde{\hcoY}$.
Then, since the fiber of $\widetilde{\sS}_L^*$ is $(\wedge^2 V^* /W)^*$,
the fiber of $\widetilde{\sS}_L$ is 
the orthogonal space to $W$ with respect to the natural pairing
\[
\wedge^2 V^*\times \wedge^2 V^*\to \wedge^4 V^*.
\]
Thus we denote by 
$W^{\perp}$ the fiber of $\widetilde{\sS}_L$.
Now we take a basis $e_1,\dots, e_4$ of $V$ and consider 
\[
W=\langle e^*_1\wedge e_2^*,
e^*_1\wedge e_4^*-e^*_2\wedge e_3^*, e^*_3\wedge e_4^*\rangle.
\]
Then we have
\[
W^{\perp}=\langle e^*_2\wedge e_4^*,
e^*_2\wedge e_3^*+e^*_1\wedge e_4^*, e^*_1\wedge e_3^*\rangle.
\]
This $W$ corresponds to the point $y$
of $\widetilde{\hcoY}$ associated to the pair of 
the rank $4$ quadric $x_1x_4-x_2x_3=0$ and a family of lines on it,
where $x_1,\dots, x_4$ is the coordinate of $V$ associated to the basis
$e_1,\dots, e_4$. This $y$ is a general point of $\widetilde{\hcoY}$.
Take a point $x\in \hchow$ associated to a pair $(V_2, U_1)$ of 
$[V_2]\in G(2,V)$ and one-dimensional subspace
$U_1\subset {\ft S}^2 V_2$.
By generality, we assume that $x\not \in E_f$.
Therefore 
we can choose a basis $p=\sum_{i=1}^{4} p_i e_i$, $q=\sum_{i=1}^{4} q_i e_i$ 
of $V_2$ such that $U_1=\mC pq$.
Similarly to the computations in the proof of the claim (a), 
we can describe the images of 
$\widetilde{\sQ}^* \boxtimes \sO_{\hchow}(-H)
\to
\widetilde{\sT}^*\boxtimes \sF$ and
$\widetilde{\sS}_L\boxtimes
\sO_{{\hchow}}(-L)
\to
\widetilde{\sT}^*\boxtimes \sF$ at $(y,x)$
as follows:
\begin{itemize}
\item
The image of 
$W\otimes U_1 \to
V^*\otimes V_2$ is the subspace with a basis
\[
e_2^*\otimes (p_1 q+q_1 p)-
e_1^*\otimes (p_2 q+q_2 p),
\]
\[
e_4^*\otimes (p_1 q+q_1 p)-e_1^*\otimes (p_4 q+q_4 p)
-e_3^*\otimes (p_2 q+q_2 p)+e_2^*\otimes (p_3 q+q_3 p),
\]
\[
e_4^*\otimes (p_3 q+q_3 p)-
e_3^*\otimes (p_4 q+q_4 p).
\]
\item
The image of $W^{\perp} \otimes \wedge^2 V^* \to V^*\otimes V_2$
is the subspace with a basis
\[
e_4^*\otimes (p_2 q-q_2 p)-
e_2^*\otimes (p_4 q-q_4 p),
\]
\[
e_3^*\otimes (p_2 q-q_2 p)-e_2^*\otimes (p_3 q-q_3 p)
+e_4^*\otimes (p_1 q-q_1 p)-e_1^*\otimes (p_4 q-q_4 p),
\]
\[
e_3^*\otimes (p_1 q-q_1 p)-
e_1^*\otimes (p_3 q-q_3 p).
\]
\end{itemize}
Therefore, after elementary calculations, we conclude
that, if 
$p_3\not =q_4$ and $p_1 q_4+p_4 q_1\not =p_2 q_3+p_3 q_4$, then
the image of $W\otimes U_1\oplus W^{\perp} \otimes \wedge^2 V^* \to
V^*\otimes V_2$ is a $6$-dimensional vector space.
Therefore (\ref{eq:map2'}) is injective.
\end{proof}

We need more detailed 
descriptions of $\sK$ and $\sK_2$.

\begin{lem}
\label{lem:sK2}
\begin{enumerate}[$(1)$]
\item $\sK$ is an invertible sheaf on the variety $D$,
where we recall that $D$ is an irreducible component of 
$\Delta'$ $($the subsection $\ref{sub:Delta'})$.
\item The cokernel $\sK_2$ of the inclusion map 
$\sC_1\oplus \sC_2\hookrightarrow \sK_1$ 
is a coherent sheaf on $\sV$ of generically rank $2$.
\end{enumerate}
\end{lem}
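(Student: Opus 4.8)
The plan is to prove the two assertions by fiberwise computation, continuing the analysis of the maps $\sC_1\oplus\sC_2\to\sC_3\to\sC_4$ begun in the proof of Lemma \ref{lem:sK}. For (1), I would start from the identification ${\iota_{\Vs}}_*\sK=\mathrm{Coker}(\sC_3\to\sC_4)$ furnished by the third row of (\ref{eq:K}), together with the fact, already established in Step~1 of the proof of Lemma \ref{lem:sK}, that this cokernel is supported on $F_{\rho}\times\hchow$. It then remains to recompute the fiber of $\sC_3\to\sC_4$ at a point $(y,x)$ with $y=[q_{V_1}]\in F_{\rho}$ and $x$ corresponding to a pair $(V_2,U_1)$. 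The only change from (\ref{eqnarray:V2}) is that, by the defining sequence (\ref{eq:T*}), the fiber of $\widetilde{\sT}^*$ at such a $y$ is the subspace $(V/V_1)^*\subset V^*$ of forms vanishing on $V_1$, not all of $V^*$; so the twisted map becomes $(V/V_1)^*\otimes V_2^*\to{\ft S}^2V_2^*/\big(U_1\otimes(\wedge^2V_2^*)^{\otimes2}\big)$ and factors through the restriction $(V/V_1)^*\to V_2^*$. If $V_1\not\subset V_2$ this restriction is surjective, hence so is the map and $\sK=0$; if $V_1\subset V_2$ its image is the line $(V_2/V_1)^*$, the image of the full map is $\langle e_1^*e_2^*,(e_2^*)^2\rangle$, and this surjects onto the two-dimensional target unless the quotient line $U_1\otimes(\wedge^2V_2^*)^{\otimes2}$ lies in $\langle e_1^*e_2^*,(e_2^*)^2\rangle$. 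By Step~2 of the proof of Lemma \ref{lem:sK} the latter holds exactly when $[V_1]\in\Supp\eta$, i.e.\ when $(y,x)\in D$.

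Thus $\sC_3\to\sC_4$ has corank $0$ off $D$ and corank exactly $1$ along $D$. Since $D$ is smooth and irreducible by Lemma \ref{D}, the cokernel of a map of vector bundles of locally constant corank $1$ along $D$ is an invertible sheaf on $D$; equivalently, writing $\sK\cong\sI_{\Delta\cap D/D}(M+H+L)$ via $\sJ=\sI_{\Delta'/\Vs}$ and the decomposition $\Delta'=\Delta\cup D$, the corank computation says exactly that $\Delta\cap D$ is an effective Cartier divisor on the smooth $D$. This proves (1).

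For (2) I would first eliminate $\sC_2$. The map $\sC_2\hookrightarrow\sC_3$ from the resolution (\ref{eqnarray:onVs}) lands in $\sK_1$, and off $\Vs$ the second row of (\ref{eq:K}) is the short exact sequence $0\to\sC_2\to\sC_3\to\sC_4\oplus\sC_1(M+H)\to0$; projecting to the second summand identifies $\sK_1/\sC_2$ with $\sC_1(M+H)$ off $\Vs$, and the same identification holds at a general point of $\Vs$, where $\sK_1$ still has rank $6$. Hence $\sK_2\cong\mathrm{Coker}\big(\psi\colon\sC_1\to\sK_1/\sC_2\cong\sC_1(M+H)\big)$, $\psi$ being the composite of the equivariant map $\sC_1\to\sC_3$ with the above projection. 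A direct check with the explicit maps should show $\psi$ is an isomorphism off $\Vs$, so that $\sK_2$ is supported on $\Vs$, and that the rank of $\psi$ drops to $1$ along $\Vs$; this yields that $\sK_2$ is a coherent sheaf on $\Vs$ of generic rank $3-1=2$.

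The genuinely delicate point is this last rank drop in (2). The rank balance in (\ref{eq:rem}) by itself only shows $\sK_2$ is torsion on the ambient product and is consistent with several values of the rank along $\Vs$; pinning it to $2$ requires tracking the precise degeneration of the $\SL(V)$-equivariant map $\psi$ along $\Vs$, for which I would use the explicit descriptions of the relevant equivariant maps in \cite[Rem.~3.4(3), Rem.~5.12(2)]{DerSym} already invoked in the proof of claim~(b) of Lemma \ref{lem:sK}. By contrast, the computation in (1) is routine once the correct fiber $(V/V_1)^*$ of $\widetilde{\sT}^*$ over $F_{\rho}$ is used; there the only care needed is to match the linear-algebra condition with the geometric description $[V_1]\in\Supp\eta$ of $D$.
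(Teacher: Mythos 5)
Your part (1) is correct and follows the paper's own route: both arguments observe that $\sK$ is supported on $F_{\rho}\times\hchow$, redo the fiberwise computation of $\sC_3\to\sC_4$ at $(y,x)$ with the fiber of $\widetilde{\sT}^*$ replaced by $(V/V_1)^*$, and identify the corank-one locus with $D$ via the condition $c=0$, i.e.\ $[V_1]\in\Supp\eta$.

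Part (2) is where you depart from the paper, and the departure opens a genuine gap. The identification $\sK_1/\sC_2\cong\sC_1(M+H)$ on which your reduction rests holds only off $\Vs$: the second row of (\ref{eq:K}) exhibits $\sK_1/\sC_2$ as the kernel of the surjection $\sC_1(M+H)\to{\iota_{\Vs}}_*\sJ(M+H+L)$, so the inclusion $\sK_1/\sC_2\hookrightarrow\sC_1(M+H)$ has cokernel ${\iota_{\Vs}}_*\sJ(M+H+L)$, which has generic rank one along $\Vs$. Since the whole computation concerns behaviour at the generic point of $\Vs$, this discrepancy cannot be ignored: $\mathrm{Coker}(\sC_1\to\sC_1(M+H))$ and $\sK_2$ differ exactly by this rank-one piece, and your count ``$3-1=2$'' does not account for it. Moreover, the two inputs you actually need --- that $\psi$ is an isomorphism off $\Vs$ and that its degeneration along $\Vs$ has precisely the right size --- are deferred to ``a direct check with the explicit maps should show''; but those two facts \emph{are} the content of the statement, so as written nothing has been proved.

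For comparison, the paper sidesteps any further pointwise analysis. It first notes $\sK_2$ is torsion because $\rank\sK_1=6=\rank\sC_1+\rank\sC_2$; it then dualizes the three short exact sequences (\ref{eq:rem}) and uses part (1) (so that only $\sE xt^4$ of ${\iota_{\Vs}}_*\sK$ survives, $D$ being smooth of codimension four) to show that $\sE xt^i(\sK_2,\sO_{\widetilde{\hcoY}\times\hchow})$ can be nonzero only for $i=1,2$; finally it computes $\det\sK_2=2\Vs$ from the determinants of the locally free terms in (\ref{eq:rem}), which simultaneously pins the codimension-one support to $\Vs$ and the generic rank to two. If you want to keep your approach, you must both carry out the explicit degeneration computation for the equivariant map $\sC_1\to\sC_3$ along $\Vs$ and feed the ${\iota_{\Vs}}_*\sJ(M+H+L)$ correction into the rank count; the determinant argument is substantially cheaper.
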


\begin{proof}~

\noindent (1)
As we have already observed in the proof of 
Lemma \ref{lem:sK} (Step 1 in the proof of the claim (a)),
the support of $\sK$ is contained in
$F_{\rho}\times \hchow$.
We take a point $x\in \hchow$ 
associated to a pair $(V_2, U_1)$ of 
$[V_2]\in G(2,V)$ and one-dimensional subspace
$U_1\subset {\ft S}^2 V_2$.
We also take a point $y$ of 
$F_{\rho}$ lying over a point $[V_1]\in \Prt_{\rho}\simeq \mP(V)$.
Then the image of $\widetilde{\sT}^*\to V^*\otimes \sO_{\widetilde{\hcoY}}$ at $y$
is $(V/V_1)^*$．
We can proceed in the sequel by following the argument of the proof of
the claim (a) with replacing $V^*$ with $(V/V_1)^*$.
For example, we consider
the map $(V/V_1)^*\otimes V_2^*\to {\ft S}^2 V_2^*$ instead of 
(\ref{eqnarray:V2}).
We also follow the notation there.
Then we see that 
the map $\sC_3\to \sC_4$ is surjective in the case where $V_1\not \subset V_2$.
Suppose that  
$V_1\subset V_2$.
Then, by letting $e_1$ be a basis of $V_1$,
the image of the map $\sC_3\to \sC_4$ is generated by the images of 
$e_1^* e_2^*$, $(e_2^*)^2$ in
${\ft S}^2 V_2^*/(U_1\otimes (\wedge^2 V_2^*)^{\otimes 2})$.
It implies that 
the map $\sC_3\to \sC_4$ is surjective when $c\not =0$, and 
the cokernel of the map $\sC_3\to \sC_4$ is one-dimensional when 
$c=0$.
The condition $c=0$ means that
the point of $\hchow$
corresponds to 
a $0$-dimensional subscheme
whose support contains the point $[V_1]$, namely, $(y,x)\in D$.
Therefore we have shown the claim (1).

\vspace{5pt}

\noindent (2)
First we note that 
$\sK_2$ is a torsion sheaf on $\widetilde{\hcoY}\times \hchow$.
Indeed, $\sK_1$ is a sheaf on $\widetilde{\hcoY}\times \hchow$ generically of rank $6$ since $\sC_3\to \sC_4$ is generically surjective by the claim (a)
in the proof of Lemma \ref{lem:sK}, and 
$\rank \sC_3=8$ and $\rank \sC_4=2$.
Therefore $\sK_2$ is a torsion sheaf since $\rank \sC_1=\rank \sC_2=3$.

Now taking the duals of the short exact sequences (\ref{eq:rem})
in the remark after
Lemma \ref{lem:sK}, we obtain the following exact sequences:
\begin{align*}
&\begin{cases}
0\to \sC_4^*\to \sK_3^*\to \sE xt^{1}({\iota_{\Vs}}_*\sK,\sO_{\widetilde{\hcoY}\times \hchow})\to 0,\\ 
\sE xt^i(\sK_3,\sO_{\widetilde{\hcoY}\times \hchow})\simeq \sE xt^{i+1}({\iota_{\Vs}}_*\sK,\sO_{\widetilde{\hcoY}\times \hchow}) \ (i\geq 1),
\end{cases}\\
&\begin{cases}
0\to \sK^*_3\to \sC_3^*\to \sK^*_1\to \sE xt^1(\sK_3,\sO_{\widetilde{\hcoY}\times \hchow})\to 0,\\
\sE xt^i(\sK_1,\sO_{\widetilde{\hcoY}\times \hchow})\simeq \sE xt^{i+1}(\sK_3,\sO_{\widetilde{\hcoY}\times \hchow})\ (i\geq 1),
\end{cases}\\
&\begin{cases}
0\to \sK^*_1\to \sC_1^*\oplus \sC_2^*\to 
\sE xt^1(\sK_2,\sO_{\widetilde{\hcoY}\times \hchow})\to 
\sE xt^1(\sK_1,\sO_{\widetilde{\hcoY}\times \hchow})\to 0,\\
\sE xt^i(\sK_2,\sO_{\widetilde{\hcoY}\times \hchow})
\simeq \sE xt^{i}(\sK_1,\sO_{\widetilde{\hcoY}\times \hchow})\ (i\geq 2).
\end{cases}
\end{align*}
Note that $\sE xt^{i}({\iota_{\Vs}}_*\sK,\sO_{\widetilde{\hcoY}\times \hchow})\not =0$ only for $i=4$ since ${\iota_{\Vs}}_*\sK$ is the invertible sheaf on $D$ by (1), and $D$ is smooth and of codimension 4 in $\widetilde{\hcoY}\times \hchow$ by Lemma \ref{D} (see \cite[Cor.~3.5.11]{BH} for example). 
Therefore we see that
$\sE xt^{2}(\sK_2,\sO_{\widetilde{\hcoY}\times \hchow})\simeq 
\sE xt^{4}({\iota_{\Vs}}_*\sK,\sO_{\widetilde{\hcoY}\times \hchow})$, and
$\sE xt^{i}(\sK_2,\sO_{\widetilde{\hcoY}\times \hchow})\not =0$ only for $i=2$
and possibly $i=1$.
Moreover, we see that 
we can compute $\det \sK_2=2\Vs$ as a sheaf on 
$\widetilde{\hcoY}\times \hchow$ by using
the exact sequences (\ref{eq:rem}). 
Thus
$\sE xt^{1}(\sK_2,\sO_{\widetilde{\hcoY}\times \hchow})$
is actually nonzero and is supported on $\Vs$, and 
$\sK_2$ is generically of rank 2 on $\Vs$.
\end{proof}

We show that the locally free resolution as stated in 
Theorem $\ref{thm:newker}$
will be obtained by taking the dual of the complex as in Lemma \ref{lem:sK}.

\begin{proof}[{\bf Proof of Theorem $\ref{thm:newker}$}]~

By the proof of
Lemma \ref{lem:sK2} (2),
we obtain the following exact sequence: 
\begin{eqnarray}
0\to
\sO_{\widetilde{\hcoY}} 
\boxtimes \Omega^1_{\hchow/\mathrm{G}(2,V)}(-L)
\to
\widetilde{\sT}\boxtimes \sF^*(-H)
\to\\
\widetilde{\sQ} 
\boxtimes \sO_{\hchow}\oplus 
\widetilde{\sS}^*_L\boxtimes
\sO_{{\hchow}}(L-H)
\to
\sE xt^1 (\sK_2, \sO_{\widetilde{\hcoY}\times \hchow})
\to 0.
\nonumber
\end{eqnarray}
Therefore it remains to show
that ${\iota_{\Vs}}_*\sP\simeq \sE xt^1 (\sK_2, \sO_{\widetilde{\hcoY}\times \hchow})$.

By Lemma \ref{lem:sK}, the commutative diagram (\ref{eq:K})
is extended in the following one
with exact row and column except the third row:

\vspace{5pt}

\begin{equation}
\label{eq:extend}
\xymatrix{
& & & 0\ar[d] & 0\ar[d] & \\
&  &  & \sC_1(M+H)\ar[r]\ar[d] & {\iota_{\Vs}}_*\sJ(M+H+L)\ar[r]\ar[d] & 0\\
0\ar[r] & \sC_2\ar[r]\ar[d]& 
\sC_3\ar[r]\ar[d]
 & \sC_4 \oplus \sC_1(M+H)\ar[r]\ar[d] & 
{\iota_{\Vs}}_*\sI_{\Delta/\Vs}(M+H+L)\ar[r]\ar[d] & 0\\
0\ar[r] & \sC_1\oplus \sC_2\ar[r]& 
\sC_3\ar[r]
 & \sC_4 \ar[r]\ar[d] &
{\iota_{\Vs}}_*\mathcal{K}\ar[r]\ar[d] & 0\\
& & & 0 & 0.}
\end{equation}

From this diagram, we will construct a surjection $\sK_2\to \sP^*
(M+H)$.
Note that the diagram induces
a surjection from the kernel $\sK_1$ of 
the map $\sC_3\to \sC_4$
to the kernel of the composite 
\begin{eqnarray*}
0\oplus \sC_1(M+H)\to
\sC_4\oplus
\sC_1(M+H)
\to
{\iota_{\Vs}}_*\sI_{\Delta/\Vs}(M+H+L).
\end{eqnarray*}
The latter is also the kernel of
$\sC_1(M+H)\to 
{\iota_{\Vs}}_*\sJ(M+H+L)$.
Therefore, by (\ref{eq:P*}), we have a surjection
$\sK_1\to \sP^*(M+H)$.
Now we verify that
the composite of
\[
\sC_1\oplus \sC_2\to\sC_3\to
\sC_1(M+H)
\]
from the diagram (\ref{eq:extend})
and the restriction map $\sC_1(M+H)
\to
\sC_1(M+H)|_{\Vs}$ is a $0$-map.
Indeed, the map 
$\sC_2\to\sC_3\to
\sC_1(M+H)|_{\Vs}$ is clearly a $0$-map since it comes from
the second row of the diagram (\ref{eq:extend}).
By construction,
the map $\sC_1\to \sC_3\to \sC_1(M+H)$ is $\SL(V)$-equivariant.
Since the uniqueness of such a map 
(\cite[Lem.~5.10]{DerSym}),
this is 
the natural map which is obtained
from $\sO_{\widetilde{\hcoY}\times \hchow}(-\Vs)\to
\sO_{\widetilde{\hcoY}\times \hchow}$ by tensoring
$\sC_1(M+H)$.
Thus $\sC_1\to\sC_3\to
\sC_1(M+H)|_{\Vs}$ is also a $0$-map.

Consequently, we obtain a surjection
$\sK_2\simeq \sK_1/(\sC_1\oplus \sC_2)\to
\sP^*(M+H)$.
Let $\delta$ be the kernel of the surjection
$\sK_2\to \sP^*
(M+H)$.
By Lemma \ref{lem:sK2} (2),
$\sK_2$ is a coherent sheaf on $\Vs$ and is generically of rank $2$,
and $\sP^*$ has the same property by construction,
thus the support of $\delta$ is properly contained in $\Vs$.
Hence we have
$\sE xt^1(\sP^*
(M+H), \sO_{\widetilde{\hcoY}\times \hchow})
\simeq
\sE xt^1(\sK_2, \sO_{\widetilde{\hcoY}\times \hchow})$.
Finally, by Theorem \ref{cla:duality}, we have 
$\sE xt^1(\sP^*
(M+H), \sO_{\widetilde{\hcoY}\times \hchow})
\simeq {\iota_{\Vs}}_*\sP$
since $\sP$ is reflexive on $\Vs$.
\end{proof}

\subsection{Additional descriptions of $\sP$}
Now we collect some results about $\sP$, which are used in the section \ref{section:Corr}.

Applying Lemma \ref{lem:sK2},
we can compute the dual of (\ref{eq:P*}) explicitly,
in which $\sP$ and $\Delta$ are more directly related.
\begin{prop}
\label{prop:dualP*}
The dual of $(\ref{eq:P*})$ induces
the following exact sequence\,$:$
\begin{equation}
\label{eq:P}
0\to \sO_{\widetilde{\hcoY}}\boxtimes \sO_{\hchow}(-L)|_{\Vs}\to
\widetilde{\sQ}\boxtimes \sO_{\hchow}|_{\Vs}\to \sP\to \omega_{\Delta}\otimes \omega_{\Vs}^{-1}\otimes \sO_{\hchow}(-L)\to 0.
\end{equation}
Moreover,
$\sE xt^i ({\iota_{\Vs}}_*\sP,\sO_{\widetilde{\hcoY}\times \hchow})$ is 
nonzero only for $i=1,2$, and
$\sE xt^1 ({\iota_{\Vs}}_*\sP,\sO_{\widetilde{\hcoY}\times \hchow})\simeq 
{\iota_{\Vs}}_*(\sP^*)(M+H)$ and
$\sE xt^2 ({\iota_{\Vs}}_*\sP,\sO_{\widetilde{\hcoY}\times \hchow})$
is an invertible sheaf on $D$.
\end{prop}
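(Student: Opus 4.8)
The plan is to treat the two assertions separately: the exact sequence $(\ref{eq:P})$ is obtained by dualizing $(\ref{eq:P*})$ over $\Vs$, and the $\sE xt$-computation is carried out on the smooth ambient $X:=\widetilde{\hcoY}\times\hchow$ via Grothendieck--Verdier duality (Theorem \ref{cla:duality}), with Lemma \ref{lem:sK2} controlling the behaviour along the extra component $D$. First I would apply $\sHom_{\sO_{\Vs}}(-,\sO_{\Vs})$ to $(\ref{eq:P*})$. Since $\widetilde{\sQ}^*\boxtimes\sO_{\hchow}|_{\Vs}$ is locally free and $\sP$ is reflexive (so $\sP^{**}\simeq\sP$), this yields
\[
0\to \sHom(\sJ(L),\sO_{\Vs})\to \widetilde{\sQ}\boxtimes\sO_{\hchow}|_{\Vs}\to \sP\to \sE xt^1(\sJ(L),\sO_{\Vs})\to 0 .
\]
As every component of $\Delta'$ has codimension $\geq 2$ in $\Vs$ and $\Vs$ is $S_2$ (being a hypersurface in the smooth $X$), one has $\sHom(\sJ,\sO_{\Vs})\simeq\sO_{\Vs}$, so the left-hand term equals $\sO_{\widetilde{\hcoY}}\boxtimes\sO_{\hchow}(-L)|_{\Vs}$, matching the first arrow of $(\ref{eq:P})$.

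The substantive point of the first part is the identification of $\sE xt^1(\sJ(L),\sO_{\Vs})$. I would use the inclusion $\sJ\hookrightarrow\sI_{\Delta/\Vs}$ whose cokernel is $\sK$; by Lemma \ref{lem:sK2}$(1)$ it is supported on $D$, of codimension $3$ in $\Vs$, so $\sE xt^{<3}(\sK,\sO_{\Vs})=0$ and therefore $\sE xt^1(\sJ,\sO_{\Vs})\simeq\sE xt^1(\sI_{\Delta/\Vs},\sO_{\Vs})$; in other words $D$ is invisible in this degree. From $0\to\sI_{\Delta/\Vs}\to\sO_{\Vs}\to\sO_{\Delta}\to 0$, the smoothness of $\Delta$ of codimension $2$ (\ref{eq:deltasmooth}), and relative duality for the Gorenstein $\Vs$, this is $\sE xt^2(\sO_{\Delta},\sO_{\Vs})\simeq\omega_{\Delta}\otimes\omega_{\Vs}^{-1}$. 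Twisting by $-L$ gives the fourth term of $(\ref{eq:P})$ and completes the first assertion.

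For the $\sE xt$-sheaves I would work on $X$. The locally free resolution $(\ref{eqnarray:onVs'})$ of ${\iota_{\Vs}}_*\sP$ has length $3$, which forces $\sE xt^{j}_X({\iota_{\Vs}}_*\sP,\sO_X)=0$ for $j\geq 3$; and it vanishes for $j=0$ since ${\iota_{\Vs}}_*\sP$ is torsion on the integral variety $X$. Hence only $j=1,2$ survive. Because $\Vs$ is a Cartier divisor cut out by a section of $\sO_{\widetilde{\hcoY}}(M)\boxtimes\sO_{\hchow}(H)$, Grothendieck--Verdier duality gives $\iota_{\Vs}^{!}\sO_X\simeq\sO_{\Vs}(\Vs)[-1]$ with $\sO_{\Vs}(\Vs)=\sO_{\Vs}(M+H)$, whence $\sE xt^{j}_X({\iota_{\Vs}}_*\sP,\sO_X)\simeq{\iota_{\Vs}}_*\big(\sE xt^{j-1}_{\Vs}(\sP,\sO_{\Vs})(M+H)\big)$. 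For $j=1$ this is ${\iota_{\Vs}}_*(\sP^*(M+H))$, the stated $\sE xt^1$; and $\sE xt^{i}_{\Vs}(\sP,\sO_{\Vs})=0$ for $i\geq 2$, consistent with the length of the resolution.

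It remains to show $\sE xt^1_{\Vs}(\sP,\sO_{\Vs})$ is invertible on $D$, and this is where I expect the main difficulty. Dualizing $(\ref{eq:P})$ over $\Vs$ and using $\sE xt^2_{\Vs}(\sP,\sO_{\Vs})=0$, the $\sE xt$-long exact sequence presents $\sE xt^1_{\Vs}(\sP,\sO_{\Vs})$ as the kernel of a twist of a surjection $\sO_{\Delta'}\to\sO_{\Delta}$ (with $\sE xt^1$ of the locally free part contributing $\sO_{\Delta'}(L)$ and $\sE xt^2$ of the $\Delta$-supported term $\omega_{\Delta}\otimes\omega_{\Vs}^{-1}(-L)$ contributing $\sO_{\Delta}(L)$). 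Such a surjection is an isomorphism at the generic point of $\Delta$, so its kernel is supported on $D$ and equals the ideal sheaf of $\Delta\cap D$ in $D$; since $D$ is smooth of codimension $4$ in $X$ (Lemma \ref{D}) and $\Delta\cap D$ is a Cartier divisor on it, this ideal is invertible on $D$, yielding the claim for $\sE xt^2_X({\iota_{\Vs}}_*\sP,\sO_X)$. The delicate steps to verify are that $\Vs$ is Gorenstein (hence that the relative-duality identifications and $\sHom(\sJ,\sO_{\Vs})=\sO_{\Vs}$ hold) and that $\Delta\cap D$ is genuinely a Cartier divisor in $D$, so that the kernel above is a line bundle rather than merely a rank-one torsion-free sheaf; both should follow from the explicit bundle structures of $\Delta$ and $D$ recorded in the subsection \ref{sub:Delta'}.
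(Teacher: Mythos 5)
Your argument is correct and, for the exact sequence (\ref{eq:P}), follows the paper's proof essentially verbatim: dualize (\ref{eq:P*}), use the codimension-three support of $\sK$ (Lemma \ref{lem:sK2} (1)) to identify $\sE xt^1(\sJ(L),\sO_{\Vs})$ with $\sE xt^1(\sI_{\Delta/\Vs}(L),\sO_{\Vs})$, and compute the latter as $\omega_{\Delta}\otimes\omega_{\Vs}^{-1}\otimes\sO_{\hchow}(-L)$ from the smoothness and codimension of $\Delta$. For the $\sE xt$-sheaves you organize things differently: the paper never invokes the resolution (\ref{eqnarray:onVs'}), but instead chases the long exact sequences relating $\sE xt^{\bullet}(\sJ(L),\sO_{\Vs})$, $\sE xt^{\bullet}(\sI_{\Delta/\Vs}(L),\sO_{\Vs})$ and $\sE xt^{\bullet}(\sK(-M-H),\sO_{\Vs})$, obtaining both the vanishing range and the identification $\sE xt^1(\sP^*,\sO_{\Vs})\simeq \sE xt^3(\sK(-M-H),\sO_{\Vs})$, which is invertible on $D$ by \cite[Cor.~3.5.11]{BH}; you instead read off the vanishing from the length of (\ref{eqnarray:onVs'}) (legitimate and arguably cleaner, since Theorem \ref{thm:newker} is proved independently of this proposition) and then dualize (\ref{eq:P}) to present $\sE xt^1_{\Vs}(\sP,\sO_{\Vs})$ as $\ker\bigl(\sO_{\Delta'}(L)\to\sO_{\Delta}(L)\bigr)$. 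Both routes work. The one step you flag as delicate --- invertibility of that kernel on $D$ --- does not in fact require showing that $\Delta\cap D$ is Cartier on $D$: the kernel is $(\sI_{\Delta/\Vs}/\sJ)(L)$, which by the definition (\ref{eq:sK}) is exactly $\sK(-M-H)$, and Lemma \ref{lem:sK2} (1) already asserts that $\sK$ is an invertible sheaf on $D$. (You should also note that the connecting map $\sO_{\Delta'}(L)\to\sO_{\Delta}(L)$ is, up to a nonzero scalar, the restriction map, which holds because $\Hom(\sO_{\Delta'},\sO_{\Delta})\simeq H^0(\Delta,\sO_{\Delta})\simeq\mC$ by the irreducibility of $\Delta$ from (\ref{eq:deltasmooth}).) With that substitution your proof closes with no gap.
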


\begin{proof}
Taking the dual of (\ref{eq:P*}), we obtain
\begin{equation}
\label{eq:lem1}
0\to \sO_{\widetilde{\hcoY}}\boxtimes \sO_{\hchow}(-L)|_{\Vs}\to
\widetilde{\sQ}\boxtimes \sO_{\hchow}|_{\Vs} \to \sP\to 
\sE xt^1(\sJ(L),\sO_{\Vs}) \to 0,
\end{equation}
and 
\begin{equation}
\label{eq:lem2}
\text{$\sE xt^i(\sP^*,\sO_{\Vs})\simeq \sE xt^{i+1} (\sJ(L), \sO_{\Vs})$ for $i\geq 1$}.
\end{equation}
By the definition of $\sK$ (see (\ref{eq:sK})),
we have the exact sequence
\begin{eqnarray}
\label{eqn:IJ}
\sE xt^i(\sK(-M-H),\sO_{\Vs})\to \sE xt^i(\sI_{\Delta/\Vs}(L),\sO_{\Vs})\to\\
 \sE xt^i(\sJ(L),\sO_{\Vs})\to \sE xt^{i+1}(\sK(-M-H),\sO_{\Vs})\nonumber
\end{eqnarray}
for any $i$.
By Lemma \ref{lem:sK2} (1) and \cite[Cor.~3.5.11]{BH},
we have 
\begin{eqnarray}
\label{eqn:lem'}
\text{$\sE xt^i(\sK(-M-H),\sO_{\Vs})\not =0$ only for $i=3$, and}\\
\text{$\sE xt^3(\sK(-M-H),\sO_{\Vs})$ is an invertible sheaf on $D$}.\nonumber
\end{eqnarray}
Therefore
\begin{equation}
\label{eq:lem3}
\text{$\sE xt^i(\sI_{\Delta/\Vs}(L),\sO_{\Vs})\simeq \sE xt^i(\sJ(L),\sO_{\Vs})$
for $i\not =2,3$}.
\end{equation}
On the other hand,
by the exact sequence
\[
0\to \sI_{\Delta/\Vs}(L)\to \sO_{\Vs}(L)\to \sO_{\Delta}(L)\to 0,
\]
we have 
\begin{equation}
\label{eq:lem4}
\text{$\sE xt^i(\sI_{\Delta/\Vs}(L),\sO_{\Vs})=0$ ($i\geq 2$),\
$\sE xt^1(\sI_{\Delta/\Vs}(L),\sO_{\Vs})\simeq 
\sE xt^2(\sO_{\Delta}(L),\sO_{\Vs})$}
\end{equation}
since $\Delta$ is smooth and of codimension two in $\Vs$
(Proposition \ref{prop:flat}).
We note that, by the standard computation of dualizing sheaf of $\Delta$
(Theorem \ref{cla:duality}),
we have 
\[
\sE xt^1(\sI_{\Delta/\Vs}(L),\sO_{\Vs})\simeq 
\sE xt^2(\sO_{\Delta}(L),\sO_{\Vs})\simeq
\omega_{\Delta}\otimes \omega_{\Vs}^{-1}\otimes
\sO_{\hchow}(-L).
\]  
Therefore we obtain the first assertion by (\ref{eq:lem1}), (\ref{eq:lem3}) and (\ref{eq:lem4}).
By (\ref{eq:lem2}), (\ref{eq:lem3}) and (\ref{eq:lem4}),
we have
$\sE xt^i(\sP^*,\sO_{\Vs})=0$ for $i\geq 3$.
By (\ref{eq:lem2}), (\ref{eqn:IJ}), (\ref{eqn:lem'}), and (\ref{eq:lem4}),
we also have $\sE xt^2(\sP^*,\sO_{\Vs})=0$, and
$\sE xt^1(\sP^*,\sO_{\Vs})$ is an invertible sheaf on $D$.
Since $\sP$ is reflexive on $\Vs$, we have
$\sP\simeq \sP^*(\det \sP)$
by \cite[Prop.~1.10]{H}, where $\det \sP$ is the 
invertible sheaf $(\det \widetilde{\sQ}+L)|_{\Vs}$ on $\Vs$ by (\ref{eq:P*}).
Therefore we have
$\sE xt^i(\sP,\sO_{\Vs})\simeq \sE xt^i(\sP^*,\sO_{\Vs})\otimes \det \sP$.  
Finally, by the Grothendieck-Verdier duality (Theorem \ref{cla:duality}),
we have
\[
\sE xt^i({\iota_{\Vs}}_*\sP,\sO_{\widetilde{\hcoY}\times \hchow})
\simeq 
{\iota_{\Vs}}_*\sE xt^{i-1}(\sP,\sO_{\Vs}(M+H)).
\]
Therefore we obtain the second assertion.
\end{proof}

We can read off
the following assertion about 
$\sE xt^{\bullet}(\sJ,\sO_{\widetilde{\hcoY}\times \hchow})$ from the proof of 
Proposition \ref{prop:dualP*} above.

\begin{prop}
\label{prop:Delta'}
$\sE xt^{\bullet}(\sJ,\sO_{\widetilde{\hcoY}\times \hchow})$
is nonzero only for $\bullet=1,2,3$.
$\sE xt^{1}(\sJ,\sO_{\widetilde{\hcoY}\times \hchow})$,
$\sE xt^{2}(\sJ,\sO_{\widetilde{\hcoY}\times \hchow})$, and
$\sE xt^{3}(\sJ,\sO_{\widetilde{\hcoY}\times \hchow})$
are invertible sheaves on $\Vs$, $\Delta$, and $D$,
respectively.
\end{prop}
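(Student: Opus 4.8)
The plan is to obtain Proposition \ref{prop:Delta'} with no extra geometric input, by transporting the $\sE xt$-sheaves over $\Vs$ already computed in the proof of Proposition \ref{prop:dualP*} to $\sE xt$-sheaves over $\widetilde{\hcoY}\times\hchow$ via Grothendieck--Verdier duality along the divisorial embedding $\iota_{\Vs}$.

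First I would apply Theorem \ref{cla:duality} to the closed embedding $\iota_{\Vs}\colon\Vs\hookrightarrow\widetilde{\hcoY}\times\hchow$. Since $\Vs$ is a Cartier divisor with $\sI_{\Vs}\simeq\sO_{\widetilde{\hcoY}}(-M)\boxtimes\sO_{\hchow}(-H)$, its relative dualizing sheaf is $\omega_{\Vs/\widetilde{\hcoY}\times\hchow}\simeq\sO_{\Vs}(M+H)$ and the relative dimension is $-1$; this is exactly the situation already used at the end of the proof of Proposition \ref{prop:dualP*}. Writing $\sJ=\sJ(L)\otimes\sO_{\Vs}(-L)$ and using the projection formula, the duality isomorphism becomes, for every $i$,
\[
\sE xt^{i}({\iota_{\Vs}}_*\sJ,\sO_{\widetilde{\hcoY}\times\hchow})\simeq
{\iota_{\Vs}}_*\big(\sE xt^{i-1}(\sJ(L),\sO_{\Vs})\otimes\sO_{\Vs}(M+H+L)\big).
\]
Thus the whole statement is reduced to identifying the sheaves $\sE xt^{j}(\sJ(L),\sO_{\Vs})$ on $\Vs$ and then twisting back.

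These sheaves are precisely the ones appearing in the proof of Proposition \ref{prop:dualP*}, so I would simply read them off. The leftmost term of the exact sequence (\ref{eq:lem1}) gives $\sE xt^{0}(\sJ(L),\sO_{\Vs})=\sH om(\sJ(L),\sO_{\Vs})\simeq\sO_{\Vs}(-L)$; the isomorphisms (\ref{eq:lem3}) and (\ref{eq:lem4}) give $\sE xt^{1}(\sJ(L),\sO_{\Vs})\simeq\omega_{\Delta}\otimes\omega_{\Vs}^{-1}\otimes\sO_{\hchow}(-L)$, an invertible sheaf on $\Delta$; and the relation (\ref{eq:lem2}), combined with the conclusions drawn there that $\sE xt^{1}(\sP^{*},\sO_{\Vs})$ is invertible on $D$ while $\sE xt^{i}(\sP^{*},\sO_{\Vs})=0$ for $i\geq 2$, gives $\sE xt^{2}(\sJ(L),\sO_{\Vs})\simeq\sE xt^{1}(\sP^{*},\sO_{\Vs})$ invertible on $D$ and $\sE xt^{j}(\sJ(L),\sO_{\Vs})=0$ for $j\geq 3$. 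Feeding these four facts into the displayed duality isomorphism and absorbing the invertible twist shows that $\sE xt^{1}$, $\sE xt^{2}$, and $\sE xt^{3}$ are invertible sheaves on $\Vs$, $\Delta$, and $D$ respectively. The case $i=0$ vanishes because ${\iota_{\Vs}}_*\sJ$ is a torsion sheaf on $\widetilde{\hcoY}\times\hchow$, and the cases $i\geq 4$ vanish by the reduction above.

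Since everything is a formal consequence of work already done, I do not expect a genuine obstacle; the only points requiring care are purely bookkeeping: matching the index shift and the $\sO_{\Vs}(M+H+L)$-twist in the duality isomorphism against the three invertibility claims, and confirming that the vanishing of $\sH om({\iota_{\Vs}}_*\sJ,\sO_{\widetilde{\hcoY}\times\hchow})$ correctly accounts for the degree $\bullet=0$ term.
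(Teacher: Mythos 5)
Your argument is correct and is exactly what the paper intends: the paper gives no independent proof, merely noting that the proposition can be read off from the proof of Proposition \ref{prop:dualP*}, and your route --- harvesting $\sH om(\sJ(L),\sO_{\Vs})\simeq\sO_{\Vs}(-L)$ from (\ref{eq:lem1}), $\sE xt^{1}$ from (\ref{eq:lem3})--(\ref{eq:lem4}), $\sE xt^{2}$ and the higher vanishing from (\ref{eq:lem2}) and (\ref{eqn:IJ})--(\ref{eqn:lem'}), then transporting by Grothendieck--Verdier duality along $\iota_{\Vs}$ with the shift $[-1]$ and twist $\sO_{\Vs}(M+H)$ --- is the same computation carried out for $\sP$ at the end of that proof. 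The index shift and twist bookkeeping you flag checks out, so nothing further is needed.
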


Using (\ref{eq:P}), we derive flatness of $\sP$ from that of $\Delta$ (Propositions \ref{prop:flat} and \ref{prop:Deltasmooth}).

\begin{prop}
\label{prop:flatP}
$\sP$ is flat over $\hchow$ and over $\widetilde{\hcoY}\setminus \Prt_{\sigma}$
\end{prop}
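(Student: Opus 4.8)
The plan is to read off both flatness statements from the four-term exact sequence (\ref{eq:P}), using the flatness of $\Delta$ already recorded in Propositions \ref{prop:flat} and \ref{prop:Deltasmooth}~(2). Write $\sD:=\omega_{\Delta}\otimes\omega_{\Vs}^{-1}\otimes\sO_{\hchow}(-L)$ for the rightmost term of (\ref{eq:P}) and set $\sN:=\operatorname{im}\bigl(\widetilde{\sQ}\boxtimes\sO_{\hchow}|_{\Vs}\to\sP\bigr)$, so that (\ref{eq:P}) splits into
\[
0\to\sO_{\widetilde{\hcoY}}\boxtimes\sO_{\hchow}(-L)|_{\Vs}\xrightarrow{\,s\,}\widetilde{\sQ}\boxtimes\sO_{\hchow}|_{\Vs}\to\sN\to0
\]
and
\[
0\to\sN\to\sP\to\sD\to0 .
\]
Let $T$ denote either $\hchow$ or $\widetilde{\hcoY}\setminus\Prt_{\sigma}$. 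If I can show that $\sN$ and $\sD$ are both flat over $T$, then the long exact sequence of $\mathrm{Tor}$ applied to the second display gives at once that $\sP$ is flat over $T$, which is the assertion.

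The flatness of $\sD$ is the cheap input: $\sD$ is a line bundle on the smooth variety $\Delta$ (here I use (\ref{eq:deltasmooth})), and a line bundle on $\Delta$ is $T$-flat exactly when $\Delta$ is, which holds by Proposition \ref{prop:flat} over $\hchow$ and by Proposition \ref{prop:Deltasmooth}~(2) over $\widetilde{\hcoY}\setminus\Prt_{\sigma}$. The two flanking sheaves of the first display are locally free on $\Vs$, hence flat over $T$ as soon as $\Vs\to T$ is flat; and the latter is clear because $\Vs$ is cut out of $\widetilde{\hcoY}\times\hchow$ by the incidence section of $M\boxtimes H$, whose restriction to any fibre of either projection is a nonzero section of a line bundle on the integral variety $\widetilde{\hcoY}$ (resp.\ $\hchow$), hence a non-zero-divisor.

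The only real point is the flatness of $\sN$. Since the two flanking sheaves of the first display are $T$-flat, the local criterion of flatness identifies $\mathrm{Tor}_1^{T}(\sN,k(t))$ with the kernel of the fibrewise map $\bar s:=s\otimes k(t)\colon \sO_{\hchow}(-L)|_{\Vs_t}\to\widetilde{\sQ}|_{\Vs_t}$, so $\sN$ is flat at $t$ iff $\bar s$ is injective. Now $s$ is the restriction to $\Vs$ of the $\SL(V)$-invariant section of $\widetilde{\sQ}\boxtimes\sO_{\hchow}(L)$ cutting out $(\Delta')^t$, so its zero scheme on $\Vs$ is $\Delta'=\Delta\cup D$. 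The fibre $\Vs_t$ is a Cartier divisor in the smooth irreducible variety $\widetilde{\hcoY}$ (resp.\ $\hchow$), hence Cohen--Macaulay and pure of dimension $\dim\Vs_t$, with associated points exactly the generic points of its components. A map out of the line bundle $\sO_{\hchow}(-L)|_{\Vs_t}$ is injective precisely when its zero locus $\Delta'\cap\Vs_t$ contains no such associated point, i.e.\ no component of $\Vs_t$; and this is forced by a dimension count. Over $\hchow$ one has $\dim\Vs_x=8$, while $\dim\Delta_x=6$ (blow-up of $\rG(2,5)$ along a conic, Proposition \ref{prop:flat}) and $\dim D_x=5$ (Lemma \ref{D}); over $\widetilde{\hcoY}\setminus\Prt_{\sigma}$ one has $\dim\Vs_y=5$, while $\dim\Delta_y=3$ (Proposition \ref{prop:Deltasmooth}~(2)) and $\dim D_y\le3$ (Lemma \ref{D}). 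In every case $\Delta'\cap\Vs_t$ has dimension strictly smaller than that of each component of $\Vs_t$, so $\bar s$ is injective and $\sN$ is $T$-flat.

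Feeding the flatness of $\sN$ and $\sD$ into the $\mathrm{Tor}$ sequence of $0\to\sN\to\sP\to\sD\to0$ then finishes the proof. The main obstacle is precisely the flatness of $\sN$: everything rests on the fact that the defining section $s$ does not degenerate along a whole component of any fibre of $\Vs\to T$, and this is exactly where the explicit fibre descriptions of $\Delta\to T$ and of $D$ (via Proposition \ref{prop:flat}, Proposition \ref{prop:Deltasmooth}, and Lemma \ref{D}) are indispensable; the remaining steps are formal homological algebra.
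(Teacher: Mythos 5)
Your proof is correct and follows essentially the same route as the paper: split (\ref{eq:P}) into the same two short exact sequences, apply the local criterion of flatness (Matsumura, Thm.~22.5) to the first one, and feed the flatness of the line bundle on $\Delta$ (Propositions \ref{prop:flat} and \ref{prop:Deltasmooth}) into the second. The only difference is that you spell out, via the dimension count on $\Delta'=\Delta\cup D$, why the first map stays injective on fibers, a point the paper merely asserts.
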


\begin{proof}
We split (\ref{eq:P}) into two short exact sequences;
\[
0\to \sO_{\widetilde{\hcoY}}\boxtimes \sO_{\hchow}(-L)|_{\Vs}\to
\widetilde{\sQ}\boxtimes \sO_{\hchow}|_{\Vs}\to \sA\to 0,
\]
and 
\[
0\to \sA\to \sP\to \omega_{\Delta}\otimes \omega_{\Vs}^{-1}\otimes \sO_{\hchow}(-L)\to 0,
\]
where $\sA$ is the cokernel of 
$\sO_{\widetilde{\hcoY}}\boxtimes \sO_{\hchow}(-L)|_{\Vs}\to
\widetilde{\sQ}\boxtimes \sO_{\hchow}|_{\Vs}$.
By the first exact sequence and \cite[Thm.~22.5]{M},
we see that $\sA$ is flat over $\hchow$ and $\widetilde{\hcoY}$
since $\sO_{\widetilde{\hcoY}}\boxtimes \sO_{\hchow}(-L)|_{\Vs}\to
\widetilde{\sQ}\boxtimes \sO_{\hchow}|_{\Vs}$ is injective when
it is restricted to the fibers over $\hchow$ and $\widetilde{\hcoY}$.
Therefore the assertion follows in a standard way from flatness of $\sA$ and
Propositions \ref{prop:flat} and \ref{prop:Deltasmooth}.
\end{proof}

\subsection{Cutting $\Delta$, $D$ and $\sP$}
\label{cutting}
The aim of this subsection is
to derive the properties of 
the several restrictions of $\sP$
which we quote in the subsections
\ref{section:P}, \ref{section:full} and \ref{sub:IK}.

We take $\widetilde{Y}$ and $X$ as in the subsection 
\ref{section:XY}, and
$S$ and $\hW$ as in the subsection \ref{subsection:SW}.
We recall that 
\[
\Delta_1=\Delta|_{\widetilde{Y}\times X},\,
\Delta_2=\Delta|_{S\times \hW}
\]
and
now we also set \[
D_1=D|_{\widetilde{Y}\times X},\,
D_2=D|_{S\times \hW},
\]
where $D$ is an irreducible component of 
$\Delta'$ $($the subsection $\ref{sub:Delta'})$.
We denote by $\Delta_{x;1}$ and $D_{x;1}$
the fibers of $\Delta_1\to X$ and $D_1\to X$ over $x\in X$ respectively, and
similarly for the fibers over a point of $\widetilde{Y}$, and for $\Delta_2$ and $D_2$.

\begin{prop}
\label{lem:DeltaDflat2}
\begin{enumerate}[$(1)$]
\item
For any $s\in S$,
$\Delta_{s;2}$ is cut out from
the fiber of $\Delta\to \widetilde{\hcoY}$ over $s$
by regular sequences.
For any $s\in \Gamma$, $D_{s;2}$ is cut out from
the fiber of $D\to F_{\rho}$ over $s$ by regular sequences,
where
we recall that $\Gamma$ is the ramification locus of the double cover
$S\to P_2$.
\item
$\Delta_2$ and $D_2$ are cut out from
$\Delta$ and $D$, respectively by regular sequences.
\end{enumerate}
\end{prop}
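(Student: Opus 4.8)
The plan is to reduce both assertions to a codimension count, using the standard fact that on a Cohen--Macaulay scheme a collection of local equations forms a regular sequence exactly when its common zero locus has codimension equal to the number of independent equations. All the schemes occurring here are smooth, hence Cohen--Macaulay: $\Delta$ by Proposition \ref{prop:flat}, $D$ by Lemma \ref{D}, and the fibers of $\Delta\to\widetilde{\hcoY}$ and of $D\to F_{\rho}$ by Proposition \ref{prop:Deltasmooth}~(2) and Lemma \ref{D} respectively. Since both $S\times\hW$ and $\widetilde{Y}\times X$ lie in $\Vs$, the orthogonality between $S$ and $\hW$ already forces one relation among the members of $|M|$ and $|H|$ cutting out $S\times\hW$; the whole content of the proposition is therefore to check that, once this single relation has been absorbed, the remaining equations drop the dimension by exactly the expected amount on $\Delta$, on $D$, and on their fibers.

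For assertion (1) I would argue fiberwise. Fix $s\in S$; since $S\subset\widetilde{Y}$ and $\widetilde{Y}\cap\Prt_{\sigma}=\emptyset$, Proposition \ref{prop:Deltasmooth}~(2) identifies the fiber of $\Delta\to\widetilde{\hcoY}$ over $s$ with the $\mP^2$-bundle $g^{-1}(q_s)$ over the conic $q_s$, a Cohen--Macaulay threefold. This fiber already lies in the member of $|H|$ conjugate to the quadric attached to $s$, so cutting by the members of $|H|$ that define $\hW$ amounts to imposing the effective remaining conditions, and I would verify that these cut out $\Delta_{s;2}$ of the expected dimension one --- the value read off from the description $\Delta_2=Z_S\times_{\overline{W}}\hW$ of Lemma \ref{prop:Delta2descrip}. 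The transversality needed is supplied by the generality hypotheses \eqref{eq:P2} on $W$. The case of $D_{s;2}$ for $s\in\Gamma$ is parallel, now with the fiber of $D\to F_{\rho}$, which is the blow-up of $\mP(V)$ at a point by Lemma \ref{D}.

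For assertion (2) I would globalize via flatness. Because $\Delta\to\widetilde{\hcoY}$ is flat over $\widetilde{\hcoY}\setminus\Prt_{\sigma}\supset S$ (Proposition \ref{prop:Deltasmooth}~(2)) and $S$ is a complete intersection of members of $|M|$ in $\widetilde{\hcoY}$, flat pullback shows these members restrict to a regular sequence on $\Delta$, cutting out the family $\Delta|_S$ over $S$. Cutting $\Delta|_S$ further by the equations of $\hW$ is then a regular sequence, because it is so on each fiber by assertion (1) and the fibers have constant dimension; equivalently $\Delta_2$ has the expected codimension in $\Delta$ (here $\dim\Delta=12$, while $\dim\Delta_2=3$ by Lemma \ref{prop:Delta2descrip}). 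The identical scheme applies to $D_2$, using flatness of $D\to F_{\rho}$ to cut over the curve $S\cap F_{\rho}$ (corresponding to $\Gamma$) and flatness of $D\to\hchow$ to impose $\hW$, both from Lemma \ref{D}. As this is formally the argument carried out in \cite{HoTa4}, I would only indicate the modifications.

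The main obstacle is the fiberwise transversality in assertion (1), and specifically its behavior over the rank-three locus $\Gamma$. There $S$ meets $F_{\rho}$, the conic $q_s$ degenerates, and the component $D$ of $\Delta'$ becomes relevant, so one must rule out that any member of $|H|$ contains an entire component of a fiber of $\Delta\to\widetilde{\hcoY}$ or of $D\to F_{\rho}$ --- the one failure that would break the regular-sequence property. Excluding this is precisely what the generality assumptions \eqref{eq:P2} (the eight $\tfrac12(1,1,1)$-singularities of $W$ and the smoothness of $\Gamma$) are designed to guarantee, and it is the only place where more than a formal dimension count is needed.
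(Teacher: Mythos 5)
Your overall architecture coincides with the paper's: first restrict $\Delta$ and $D$ over $S$ and $\Gamma$ using equidimensionality of $\Delta\to\widetilde{\hcoY}$ and $D\to F_{\rho}$ together with the Cohen--Macaulay criterion (Matsumura, Thm.~17.4(iii)), then use flatness over $S$ (resp.\ $\Gamma$) to reduce assertion (2) to the fiberwise dimension count of assertion (1); and for $\Delta_{s;2}$ the count does follow, exactly as you say, from $\Delta_2\simeq Z_S\times_{\overline{W}}\hW$ being the blow-up of the $\mP^1$-bundle $Z_S\to S$ along the flopped curves. Up to there your proposal is sound and essentially identical to the paper.

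The genuine gap is the claim that the case of $D_{s;2}$ for $s\in\Gamma$ is ``parallel'' and that the needed dimension bound is ``precisely what the generality assumptions (\ref{eq:P2}) are designed to guarantee.'' It is not: the fiber of $D\to F_{\rho}$ over $s$ is a threefold (the blow-up of $\mP(V)$ at the vertex $[V_1]$ of the rank-three quadric), and one must show that intersecting it with the two effective members of $|H|$ cutting $\hW$ out of $\Vs_s$ drops the dimension to $1$; there is no flop description available here, and no formal transversality statement in (\ref{eq:P2}) rules out a two-dimensional component. The paper closes this by a specific contradiction argument: if $\dim D_{s;2}=2$ then its image in $\overline{W}$ is the whole $\rho$-plane $\mathrm{P}_{V_1}$; the strict transform $\mathrm{P}'_{V_1}\subset Z_S$ cannot contain infinitely many fibers of $Z_S\to S$ (those fibers would map into $\Gamma$, which has genus $3$ and so is not dominated by the rational surface $\mathrm{P}'_{V_1}$), hence $\mathrm{P}'_{V_1}\to S$ is dominant, hence every quadric of $P_2$ passes through $[V_1]$, hence $[V_1]\in\Bs P_2=v_2(\mP(V))\cap P_2^{\perp}$, i.e.\ $[V_1]$ is one of the $w_i$ --- which contradicts Step 5 of the proof of Proposition \ref{prop:flop}(2). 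This chain (in particular the use of the genus of $\Gamma$ and the identification of $\Bs P_2$) is the actual content of the proposition and is absent from your proposal; you have correctly located where the difficulty sits but not supplied the argument that resolves it.
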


\begin{proof}
By \cite[Thm.~17.4 (iii)]{M}, 
$\Delta|_{S\times \hchow}$ and $D|_{\Gamma\times \hchow}$ are cut out from
$\Delta$ and $D$, respectively by regular sequences
since $\Delta\to \widetilde{\hcoY}$ and 
$D\to F_{\rho}$ are equidimensional, and
$S$ and $\Gamma$ are 
cut out from
$\widetilde{\hcoY}$ and $F_{\rho}$, respectively by regular sequences.

Therefore, for (2), we have only to show that
$\Delta_2$ and $D_2$ are cut out from
$\Delta|_{S\times \hchow}$ and $D|_{\Gamma\times \hchow}$, 
respectively by regular sequences.
Then, by the flatness of 
$\Delta|_{S\times \hchow}\to S$ and $D|_{\Gamma\times \hchow}\to \Gamma$,
it suffices to show the assertion (1) by
\cite[Cor.~to Thm.~22.5]{M}.
Let $s\in S$ be a point. Since $\hW$ is cut out regularly by
2 members of $|H|$ from $\Vs_s$,
it suffices to show that
$\Delta_{s;2}$ of $\Delta_2\to S$ over $s$
is $1$-dimensional,
and  
$D_{s;2}$ of $D_2\to \Gamma$ over $s$ (if $s\in \Gamma$)
is $1$-dimensional (here we use 
the descriptions of fibers of 
$\Delta\to \widetilde{\hcoY}$ and 
$D\to F_{\rho}$ as in Proposition \ref{prop:Deltasmooth} (2) and
Lemma \ref{D}, and
\cite[Thm.~17.4 (iii)]{M}).

As for $\Delta_2$,
it is isomorphic to 
the blow-up
of $Z_S$ along all the flopped curves
by Lemma \ref{prop:Delta2descrip}.
Since $Z\to S$ is a $\mP^1$-bundle, we see that
any fiber of $\Delta_2\to S$ is $1$-dimensional.

As for $D_2$, we assume by contradiction that
$D_{s;2}\subset \hW$ is $2$-dimensional.
Let $[V_1]$ be the vertex of the rank $3$ quadric corresponding to $s$. 
By the definition of $D$ and $\dim D_{s;2}=2$,
we see that the image $\overline{D}_{s;2}\subset \overline{W}$
of $D_{s;2}$ coincides with the $\rho$-plane $\mathrm{P}_{V_1}$.
Let $\mathrm{P}'_{V_1}\subset Z_S$ be the strict transform of $\mathrm{P}_{V_1}$. If $\mathrm{P}'_{V_1}$ contains a fiber $q$ of $Z_S\to S$,
then $q$ parameterizes lines in $\mP(V)$ through $[V_1]$, hence
the image of $q$ on $S$ is contained in $\Gamma$ and corresponds to a rank $3$ quadric with $[V_1]$ the vertex. Since the genus of $\Gamma$ is three, $\Gamma$ is not the image of 
the rational surface $\mathrm{P}'_{V_1}$.
Therefore $\mathrm{P}'_{V_1}$ contains at most a finite number of 
fibers of $Z_S\to S$, and hence $\mathrm{P}'_{V_1}\to S$ is dominant.
This means that any fiber of $Z_S\to S$ contains at least 
one point corresponding to 
a line through $[V_1]$. Therefore any quadric in $P_2$ passes through $[V_1]$.
Since $\Bs P_2$ coincides with $v_2(\mP(V))\cap P^{\perp}_2$,
$[V_1]$ is one of $w_i$'s.
This is a contradiction as in Step 5 of the proof of 
Proposition \ref{prop:flop} (2). 
\end{proof}

\begin{prop}
\label{lem:DeltaDflat}
\begin{enumerate}[$(1)$]
\item
For any $x\in X$,
$\Delta_{x;1}$ and $D_{x;1}$ are cut out from
the fibers of $\Delta\to \hchow$ and
$D\to\hchow$
over $x$, respectively,
by regular sequences.
\item
$\Delta_1$ and $D_1$ are cut out from
$\Delta$ and $D$, respectively by regular sequences.
\end{enumerate}

\end{prop}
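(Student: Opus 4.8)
The plan is to follow the proof of Proposition \ref{lem:DeltaDflat2}, but with the roles of the two factors interchanged: there one cuts first along $\widetilde{\hcoY}$ and then reduces the $\hchow$-direction to a fibrewise statement, whereas here assertion (1) concerns the fibres over $X\subset\hchow$, so I would cut first along $\hchow$ and reduce the $\widetilde{\hcoY}$-direction to fibres. For (2), restrict first along $X\hookrightarrow\hchow$: since $\Delta\to\hchow$ and $D\to\hchow$ are flat with equidimensional fibres (Proposition \ref{prop:flat} and Lemma \ref{D}) and $X$ is cut out of $\hchow$ by four members of $|H|$ forming a regular sequence, \cite[Thm.~17.4 (iii)]{M} shows that $\Delta|_{\widetilde{\hcoY}\times X}$ and $D|_{\widetilde{\hcoY}\times X}$ are cut out of $\Delta$ and $D$ by regular sequences, and both are again flat over $X$. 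It then remains to cut along $\widetilde{Y}\hookrightarrow\widetilde{\hcoY}$; by this flatness and \cite[Cor.~to Thm.~22.5]{M}, this last cut is regular exactly when it is regular on each fibre over $x\in X$, which is precisely assertion (1). Thus (2) follows from (1).

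For (1) the essential inputs are a Cohen--Macaulayness statement and a dimension count. For every $x\in X$ the fibre $\Delta_x$ of $\Delta\to\hchow$ is the blow-up of $\mathrm{G}(2,5)$ along a conic (Proposition \ref{prop:flat}), hence smooth, in particular Cohen--Macaulay, of dimension six; likewise $D_x$, being a fibre of the flat morphism $D\to\hchow$ whose total space $D$ is smooth (Lemma \ref{D}), is Cohen--Macaulay of dimension five. The point that makes the codimensions match is that, by the orthogonality of $X$ and $Y$, the hyperplane section $\Vs_x$ contains $\widetilde{Y}$ whenever $x\in X$: the condition $x\in\chow\cap P_3^{\perp}$ forces $P_3$ into the hyperplane of $\mP({\ft S}^2 V^*)$ dual to $x$. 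Since $\Delta\subset\Vs$ and $D\subset\Vs$, both $\Delta_x$ and $D_x$ already lie in $\Vs_x$, so $\widetilde{Y}$ meets them in the five further members of $|M|$ that cut $\widetilde{Y}$ out of $\Vs_x$, rather than the naive six. On the Cohen--Macaulay fibres these five elements therefore form a regular sequence as soon as they drop the dimension by five, i.e.\ as soon as $\dim\Delta_{x;1}=1$ and $\dim D_{x;1}=0$. For $\Delta$ this is immediate: by Lemma \ref{Delta1fiber} (1) the fibre $\Delta_{x;1}$ is a tree of $\mP^1$'s, so $\dim\Delta_{x;1}=1=6-5$.

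The hard part will be the vanishing $\dim D_{x;1}=0$, and I would prove it by contradiction in the spirit of Step 5 of the proof of Proposition \ref{prop:flop} (2) and of the treatment of $D_{s;2}$ in Proposition \ref{lem:DeltaDflat2}. Writing $x=[\eta_x]$, a point of $D_{x;1}$ is a $\rho$-conic $q_{V_1}$ with $[V_1]\in\Supp\eta_x$, so that $[V_1]$ is one of the at most two points of $\Supp\eta_x$, and with $[q_{V_1}]\in\widetilde{Y}$. By \cite[Prop.~4.20]{Geom} such a $q_{V_1}$ corresponds to a quadric in $P_3$ that is a cone with vertex $[V_1]$, hence of rank at most three. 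If $D_{x;1}$ were positive-dimensional, then for one of the finitely many choices of $[V_1]$ the $\mP^5$-fibre of $F_\rho$ over $[V_1]$ would meet $\widetilde{Y}$ in a curve, giving a one-parameter family of quadrics in $P_3$ all singular at $[V_1]$. Such quadrics form a linear subspace of $P_3$ contained in $\nS_3$, so a positive-dimensional family would contain a line in $\nS_3\cap P_3$, contradicting the standing assumption in (\ref{eq:XYcond}) --- exactly the hypothesis flagged in Section \ref{section:XY} as needed for the flatness of the kernel. This gives $\dim D_{x;1}=0$, completing (1) for $D$ and hence (2).
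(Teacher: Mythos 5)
Your proposal is correct and follows essentially the same route as the paper: reduce (2) to (1) via the flatness of $\Delta\to\hchow$ and $D\to\hchow$ exactly as in Proposition \ref{lem:DeltaDflat2}, then for (1) observe that $\Vs_x\supset\widetilde{Y}$ for $x\in X$ so that only the fibre dimensions $\dim\Delta_{x;1}=1$ (Lemma \ref{Delta1fiber}) and $\dim D_{x;1}=0$ need checking, the latter by the same contradiction with the assumption in (\ref{eq:XYcond}) that ${\rm H}$ contains no line. The only (cosmetic) divergence is that you spell out the Cohen--Macaulayness and dimension count that the paper leaves implicit.
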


\begin{proof}
Let $x\in X$ be a point.
Arguing as in the proof of Proposition
\ref{lem:DeltaDflat2},
it suffices to show that
the fiber $\Delta_{x;1}$ of $\Delta_1\to X$ over $x$
is $1$-dimensional, and
the fiber $D_{x;1}$ of $D_1\to X$ over $x$
is $0$-dimensional
since $\widetilde{Y}$ is cut out regularly by
5 members of $|H|$ from $\Vs_x$.

As for $\Delta_1$, 
the assertion follows by Lemma \ref{Delta1fiber}.

We consider $D_1\to X$.
Let $x_1+x_2$ be the $0$-cycle corresponding to $x$. 
By the definition of $D$,
the fiber of $D\to \hchow$ over $x$
consists of points corresponding to $\rho$-conics
contained in the $\rho$-plane ${\rm P}_{x_1}$ or ${\rm P}_{x_2}$.
By \cite[Prop.~4.20]{Geom},
such $\rho$-conics correspond to singular quadrics with
$x_1$ or $x_2$ contained in the vertices.
If $D_{x;1}$ is positive dimensional,
then $P_3\subset \mP({\ft S}^2 V^*)$ contains a pencil of singular quadrics
with $x_1$ or $x_2$ contained in the vertices.
In particular, $H$ contains a line, a contradiction to the assumption
(\ref{eq:XYcond}).
\end{proof}

In the sequel, we use the notation
defined above Proposition
\ref{prop:restP'}.

\begin{prop}
\label{prop:restP}
$\sP_x$ is a reflexive sheaf on $\Vs_x$.
$\sE xt^i ({\iota_x}_*\sP_x,\sO_{\widetilde{\hcoY}})$ is 
nonzero only for $i=1,2$, and
$\sE xt^1 ({\iota_x}_*\sP_x,\sO_{\widetilde{\hcoY}})\simeq 
{\iota_x}_*(\sP_x^*)(M)$ and
$\sE xt^2 ({\iota_x}_*\sP_x,\sO_{\widetilde{\hcoY}})$
is an invertible sheaf on $D|_{\Vs_x}$.
The locally free resolution $(\ref{eqnarray:onVs'})$
restricts to $\widetilde{\hcoY}$, namely, the following is exact on $\widetilde{\hcoY}:$
\begin{eqnarray}
\label{eqnarray:onVsx'}
0\to
\sO_{\widetilde{\hcoY}}^{\oplus 2} 
\to
\widetilde{\sT}^{\oplus 2}
\to
\widetilde{\sQ} \oplus 
\widetilde{\sS}^*_L
\to
{\iota_x}_*\sP_x\to 0.
\end{eqnarray}

A similar assertion holds for $\sP_y$ with $y\in \widetilde{\hcoY}\setminus \Prt_{\sigma}$. In particular, we have the following exact sequence on $\hchow:$ 
\begin{eqnarray}
\label{eqnarray:onVsy'}
0\to
\Omega^1_{\hchow/\mathrm{G}(2,V)}(-L)
\to
\sF^*(-H)^{\oplus 4}
\to\\
\sO_{\hchow}^{\oplus 3}\oplus 
\sO_{{\hchow}}(L-H)^{\oplus 3}
\to
{\iota_y}_*\sP_y\to 0.
\nonumber
\end{eqnarray}

\end{prop}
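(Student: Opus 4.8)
The plan is to obtain every assertion by base-changing the global results on $\widetilde{\hcoY}\times\hchow$ — the resolution of Theorem \ref{thm:newker} and the $\sE xt$-computation of Proposition \ref{prop:dualP*} — along the fibre inclusion $\widetilde{\hcoY}\times\{x\}\hookrightarrow\widetilde{\hcoY}\times\hchow$ (and similarly $\{y\}\times\hchow$). The essential inputs are the flatness of $\sP$ over $\hchow$ and over $\widetilde{\hcoY}\setminus\Prt_\sigma$ (Proposition \ref{prop:flatP}) together with the fact that a point of the smooth variety $\hchow$ (resp.\ $\widetilde{\hcoY}$) is cut out by a regular sequence, so that all the relevant base-change sheaves behave well.

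First I would treat the restriction of the resolution. Since $\sP$, hence ${\iota_{\Vs}}_*\sP$, is flat over $\hchow$, and the closed embedding $\widetilde{\hcoY}\times\{x\}\hookrightarrow\widetilde{\hcoY}\times\hchow$ is regular (the pull-back of a regular sequence cutting out $x\in\hchow$), a regular sequence on the base is $\sP$-regular by flatness, so the higher $\mathcal{T}or$-sheaves $\mathcal{T}or_{>0}({\iota_{\Vs}}_*\sP,\sO_{\widetilde{\hcoY}\times\{x\}})$ vanish. Consequently tensoring the resolution $(\ref{eqnarray:onVs'})$ with $\sO_{\widetilde{\hcoY}\times\{x\}}$ keeps it exact; identifying the restricted terms — each factor on the $\hchow$-side is locally free and restricts to a trivial bundle of the appropriate rank ($\sF^*$ and $\Omega^1_{\hchow/\mathrm{G}(2,V)}$ having rank $2$, and $\sO_{\hchow}$, $\sO_{\hchow}(L-H)$ rank $1$) — yields exactly $(\ref{eqnarray:onVsx'})$. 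The statement $(\ref{eqnarray:onVsy'})$ for $y\in\widetilde{\hcoY}\setminus\Prt_\sigma$ follows verbatim, now using flatness of $\sP$ over $\widetilde{\hcoY}\setminus\Prt_\sigma$.

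Next I would deduce reflexivity and the $\sE xt$-computation. The sheaf $\sP$ is reflexive on $\Vs$, and $\Vs$, being a divisor in the smooth product, is Cohen--Macaulay hence $S_2$; thus $\sP$ is $S_2$ and torsion free. The fibre $\Vs_x$ is cut out of $\Vs$ by the $\sP$-regular sequence coming from $x$ (flatness, cf.\ Proposition \ref{lem:DeltaDflat}), so $\Vs_x$ stays Cohen--Macaulay and $\sP_x$ stays $S_2$ and torsion free, hence reflexive by \cite[Prop.~1.1]{H}. Given reflexivity, I would compute $\sE xt^\bullet_{\widetilde{\hcoY}}({\iota_x}_*\sP_x,\sO_{\widetilde{\hcoY}})$ by dualizing the length-three locally free resolution $(\ref{eqnarray:onVsx'})$: applying Grothendieck--Verdier duality (Theorem \ref{cla:duality}) to the codimension-one embedding $\iota_x\colon\Vs_x\hookrightarrow\widetilde{\hcoY}$ gives $\sE xt^1({\iota_x}_*\sP_x,\sO_{\widetilde{\hcoY}})\simeq{\iota_x}_*\bigl(\sP_x^*\otimes\omega_{\Vs_x/\widetilde{\hcoY}}\bigr)\simeq{\iota_x}_*(\sP_x^*)(M)$, since $\omega_{\Vs_x/\widetilde{\hcoY}}=\sO_{\Vs_x}(\Vs_x)=\sO(M)|_{\Vs_x}$, and reduces $\sE xt^2$ to the obstruction $\sE xt^1_{\Vs_x}(\sP_x,\omega_{\Vs_x/\widetilde{\hcoY}})$ to reflexivity, supported on $D|_{\Vs_x}$. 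The precise statement that this $\sE xt^2$ is an invertible sheaf on $D|_{\Vs_x}$ I would obtain by comparing with Proposition \ref{prop:dualP*} across the same flat regular embedding: $D$ is cut out regularly of the correct codimension (Proposition \ref{lem:DeltaDflat}), so $D|_{\Vs_x}$ remains smooth of codimension four in $\widetilde{\hcoY}$, and \cite[Cor.~3.5.11]{BH} applies exactly as in the proof of Proposition \ref{prop:dualP*}; the vanishing of $\sE xt^i$ for $i\neq1,2$ is likewise inherited. The sheaf $\sP_y$ is handled symmetrically over $\widetilde{\hcoY}\setminus\Prt_\sigma$.

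The main obstacle I expect is precisely the bookkeeping in this last commutation of duality with fibre restriction: one must verify that the supports and codimensions of the $\sE xt$-sheaves are unchanged under cutting by the regular sequence, so that, for instance, $\sE xt^2$ remains concentrated on $D|_{\Vs_x}$ without acquiring spurious components, and that $D|_{\Vs_x}$ is itself regularly cut of the predicted codimension. All of this is guaranteed by the flatness of $\sP$ and by the regular-sequence statements of Propositions \ref{lem:DeltaDflat} and \ref{lem:DeltaDflat2}, but assembling them into a single clean base-change argument is the delicate point of the proof.
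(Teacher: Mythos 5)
Your overall strategy coincides with the paper's: restrict the global resolution (\ref{eqnarray:onVs'}) using the flatness of $\sP$ (Proposition \ref{prop:flatP}) and the local criterion of flatness, and transport the $\sE xt$-computation of Proposition \ref{prop:dualP*} along the regular embedding furnished by Proposition \ref{lem:DeltaDflat}. The identification of the restricted terms of the resolution and the Grothendieck--Verdier step for $\Vs_x\subset\widetilde{\hcoY}$ are fine.

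The one step that does not work as written is your derivation of reflexivity: you claim that since $\Vs_x$ is cut out of $\Vs$ by a $\sP$-regular sequence, ``$\sP_x$ stays $S_2$ and torsion free, hence reflexive.'' Quotienting by a regular element drops both the depth of the module and the dimension of its support by exactly one, so the condition $S_2$ degrades to $S_1$ under such a cut, and torsion can appear (already $(x,y)/x(x,y)$ over $k[x,y]$ has the torsion element $\bar x$); nothing about regularity of the sequence alone prevents this. What actually saves the argument --- and what the paper invokes --- is the $\sE xt$-codimension characterization of reflexivity, \cite[Lem.~1.1.13]{HL}: one needs to know that $\sE xt^1({\iota_{\Vs}}_*\sP,\sO_{\widetilde{\hcoY}\times\hchow})$ and $\sE xt^2({\iota_{\Vs}}_*\sP,\sO_{\widetilde{\hcoY}\times\hchow})$ are supported exactly on $\Vs$ and on the smooth codimension-four subvariety $D$ (Proposition \ref{prop:dualP*}), and that these supports are themselves cut by the fibre over $x$ in the expected codimension (Proposition \ref{lem:DeltaDflat}), so that the codimension inequalities characterizing reflexivity persist after restriction. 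You do flag exactly this as ``the delicate point'' at the end, so all the ingredients are on the table; but the $S_2$-preservation claim should be deleted and replaced by the \cite{HL} criterion, since as stated it is the step that would fail.
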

\begin{proof}
We only show the assertions for $\sP_x$. 
The first assertion follows from the second assertion of Proposition \ref{prop:dualP*} and \cite[Lem.~1.1.13]{HL} since $\Vs$ and $D$ are restricted by regular sequences
to the fiber of $\Vs\to \hchow$ over $x$.

Note that the ideal sheaf $\sJ_x$ 
of $\Delta'|_{\Vs_x}$ is $\sJ\otimes \sO_{\Vs_x}$
by Proposition \ref{prop:Delta'} and [ibid.]
since 
$\Vs$, $\Delta$ and $D$ are restricted by regular sequences
to the fiber of $\Vs\to \hchow$ over $x$.
Therefore (\ref{eq:P*}) induces an exact sequence
\[
0\to \sP^*_x\to \widetilde{\sQ}^*|_{\Vs_x}\to \sJ_x\to 0.
\]
Using this, we obtain the assertions for
$\sE xt^{\bullet} ({\iota_x}_*\sP_x,\sO_{\widetilde{\hcoY}})$
in a similar way to the proof of 
Proposition \ref{prop:dualP*}.

By splitting (\ref{eqnarray:onVs'}) into two short exact sequences,
we can prove the second assertion in a very similar way to 
the proof of Proposition \ref{prop:flatP} by \cite[Thm.~22.5]{M}
using the flatness of $\sP$.
\end{proof}

We recall that
\[
\sP_1:=\sP|_{\widetilde{Y}\times X},\,
\sP_2:=\sP|_{S\times \hW}.
\]

\begin{prop}
\label{prop:flatP1}
$\sP_1$ is flat over $X$, and $\sP_2$ is flat over $S$.
\end{prop}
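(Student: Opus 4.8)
The plan is to imitate the proof of Proposition \ref{prop:flatP}, deducing the flatness of $\sP_1$ over $X$ (resp.\ of $\sP_2$ over $S$) from the four-term exact sequence (\ref{eq:P}) of Proposition \ref{prop:dualP*}, after restriction to $\widetilde{Y}\times X$ (resp.\ to $S\times\hW$). Recall that (\ref{eq:P}) splits into the two short exact sequences
\[
0\to \sO_{\widetilde{\hcoY}}\boxtimes \sO_{\hchow}(-L)|_{\Vs}\to \widetilde{\sQ}\boxtimes \sO_{\hchow}|_{\Vs}\to \sA\to 0
\]
and
\[
0\to \sA\to \sP\to \omega_{\Delta}\otimes \omega_{\Vs}^{-1}\otimes \sO_{\hchow}(-L)\to 0,
\]
where $\sA$ is the cokernel in the first line. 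The two cases are symmetric: for $\sP_1$ I use that $\sP$ is flat over $\hchow$ and then cut down in the $\widetilde{\hcoY}$-direction by the six members of $|M|$ defining $\widetilde{Y}$, while for $\sP_2$ I use that $\sP$ is flat over $\widetilde{\hcoY}\setminus\Prt_{\sigma}$ and then cut down in the $\hchow$-direction by the members of $|H|$ defining $\hW$. For the latter I first note $S\cap\Prt_{\sigma}=\emptyset$, which holds because the net $P_2$ contains no quadric of rank $\le 2$ by the smoothness of $\Gamma=\nS_3\cap P_2$ in (\ref{eq:P2}); hence the flatness of $\sP$ over $\widetilde{\hcoY}\setminus\Prt_{\sigma}$ from Proposition \ref{prop:flatP} is available along $S$.

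The first step is to check that the restriction of $\sA$ stays flat over the base. Restricting the first short exact sequence, the defining map is induced by a nonzero section of $\widetilde{\sQ}(L)$, which (as already used in the proof of Proposition \ref{prop:flatP} on the fibres over $\hchow$ and over $\widetilde{\hcoY}$) remains injective on each fibre over $X$ (resp.\ $S$); its source and target are restrictions of sheaves flat over the base. By the local criterion \cite[Thm.~22.5]{M}, in the form that the cokernel of a fibrewise-injective map of base-flat sheaves is base-flat, the restriction of $\sA$ to $\widetilde{Y}\times X$ (resp.\ $S\times\hW$) is flat over $X$ (resp.\ $S$).

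The second step is to show that the last term $\omega_{\Delta}\otimes\omega_{\Vs}^{-1}\otimes\sO_{\hchow}(-L)$, an invertible sheaf on $\Delta$, restricts to a base-flat sheaf. Its restriction is an invertible sheaf supported on $\Delta_1=\Delta|_{\widetilde{Y}\times X}$ (resp.\ $\Delta_2=\Delta|_{S\times\hW}$). By Propositions \ref{lem:DeltaDflat} and \ref{lem:DeltaDflat2}, $\Delta_1$ and $\Delta_2$ are cut out from $\Delta$ by regular sequences whose fibres over $X$ and $S$ have the expected dimension; combined with the flatness of $\Delta$ over $\hchow$ (Proposition \ref{prop:flat}) and the description of $\Delta_2$ as $Z_S\times_{\overline{W}}\hW$ with $Z_S\to S$ a $\mP^1$-bundle (Lemma \ref{prop:Delta2descrip}), this yields that $\Delta_1\to X$ and $\Delta_2\to S$ are flat, so the invertible sheaf is flat over the base. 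Restricting the second short exact sequence then exhibits $\sP_1$ (resp.\ $\sP_2$) as an extension of two base-flat sheaves, whence it is flat over the base.

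The delicate point, and the main obstacle, is that restricting the four-term sequence (\ref{eq:P}) from $\Vs$ down to $\widetilde{Y}\times X$ (resp.\ $S\times\hW$) need not be exact a priori, and base-flatness is exactly the vanishing of the relevant $\mathrm{Tor}_1$. This is precisely what the regular-sequence statements of Propositions \ref{lem:DeltaDflat} and \ref{lem:DeltaDflat2} control: they guarantee that the cutting sections form regular sequences on the fibres, so that the restrictions of the two short exact sequences remain exact and no spurious torsion appears. The conclusion then follows in the standard way via \cite[Cor.~to Thm.~22.5]{M}, exactly as in the proof of Proposition \ref{prop:flatP}.
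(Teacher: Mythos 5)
Your proposal is a genuinely different route from the paper's. The paper does not restrict the resolution (\ref{eq:P}) at all: it first observes that $\sP\times_{\hchow}X$ is flat over $X$ by base change from Proposition \ref{prop:flatP}, and then shows directly that the members of $|M|$ cutting $\widetilde{Y}$ out of $\widetilde{\hcoY}$ form an ${\iota_x}_*\sP_x$-regular sequence for every $x$, using the computation of $\sE xt^{\bullet}({\iota_x}_*\sP_x,\sO_{\widetilde{\hcoY}})$ in Proposition \ref{prop:restP} together with \cite[Lem.~1.1.13]{HL} (which converts the codimension bounds on the supports $\Vs_x$ and $D\vert_{\Vs_x}$ of those $\sE xt$ sheaves, controlled by Proposition \ref{lem:DeltaDflat}~(1), into the required depth), and concludes by \cite[Cor.~to Thm.~22.5]{M}. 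Your term-by-term restriction of (\ref{eq:P}) is heavier but buys a more transparent picture of where flatness of $\sP_1$ comes from, namely from flatness of $\Delta_1\to X$ and of the cokernel $\sA$.

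There is, however, one point where your argument as written is not yet a proof. You treat the passage from $\Vs$ to $\widetilde{Y}\times X$ as a single regular-sequence cut, but $\widetilde{Y}\times X$ has codimension $9$ in $\Vs$ while it is defined by the $10$ linear equations of $P_3$ and $P_3^{\perp}$; it is not a complete intersection in $\Vs$, so "the cutting sections form regular sequences" has no direct meaning there, and the vanishing of $\mathrm{Tor}_1^{\sO_{\Vs}}\bigl(\omega_{\Delta}\otimes\omega_{\Vs}^{-1}\otimes\sO_{\hchow}(-L),\,\sO_{\widetilde{Y}\times X}\bigr)$ does not follow from Proposition \ref{lem:DeltaDflat} by a one-step Koszul argument. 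The restriction must be factored exactly as the paper factors it: first base-change along $X\hookrightarrow\hchow$, which preserves exactness of (\ref{eq:P}) because every term is flat over $\hchow$ ($\sP$ and $\sA$ by Proposition \ref{prop:flatP} and its proof, $\sO_{\Delta}$ by Proposition \ref{prop:flat}); then cut fibrewise by the $5$ members of $|M|$ that carve $\widetilde{Y}$ out of $\Vs_x$, which form a regular sequence on $\sO_{\Delta_x}$ precisely by Proposition \ref{lem:DeltaDflat}~(1) and on $\sO_{\Vs_x}$ since $\Vs_x$ is a hypersurface in $\widetilde{\hcoY}$. The same factorization is needed for your first step: the injectivity of $\sO\boxtimes\sO_{\hchow}(-L)\to\widetilde{\sQ}\boxtimes\sO_{\hchow}$ on the fibre $\widetilde{Y}\times\{x\}$ is not the injectivity on $\Vs_x$ used in Proposition \ref{prop:flatP}; it holds because the zero locus of the section on $\widetilde{Y}\times\{x\}$ is $\Delta_{x;1}\cup D_{x;1}$, which is at most one-dimensional by Proposition \ref{lem:DeltaDflat}~(1) and so cannot contain the integral threefold $\widetilde{Y}$. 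With these two repairs your argument goes through.
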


\begin{proof}
Proofs for both assertions are similar, hence we only consider
$\sP_1$.

By Proposition \ref{prop:flatP}, $\sP\times_{\hchow} X$ is flat over $X$.
Let $x$ be any point of $\hchow$.
By Proposition \ref{lem:DeltaDflat} (1),  
the descriptions of 
$\sE xt^{\bullet} ({\iota_x}_*\sP_x,\sO_{\widetilde{\hcoY}})$
in Proposition \ref{prop:restP}, 
and \cite[Lem.~1.1.13]{HL},
$\widetilde{Y}$ is cut out from $\widetilde{\hcoY}$ by
$\sP_x$-regular sequences.
Therefore $\sP_1$ is flat over $X$ by \cite[Cor.~to Thm.~22.5 (2)$\Rightarrow$(1)]{M}.
\end{proof}

We note that it is possible to derive Proposition \ref{cla:tower}
by the proof of Proposition \ref{prop:flatP1}.

\begin{proof}[{\bf Deriving (\ref{eq:ShW}) and (\ref{eq:PYX})}]
We only derive (\ref{eq:PYX}).
Note that the ideal sheaf $\sJ_1$ 
of $\Delta'|_{\widetilde{Y}\times X}$ is $\sJ\otimes \sO_{\widetilde{Y}\times X}$
by Proposition \ref{prop:Delta'} and \cite[Lem.~1.1.13]{HL}
since 
$\Vs$, $\Delta$ and $D$ are restricted by regular sequences
by Proposition \ref{lem:DeltaDflat} (2).
Therefore (\ref{eq:P*}) induces an exact sequence
\[
0\to \sP^*_1\to \widetilde{\sQ}^*|_{\widetilde{Y}\times X}\to \sJ_1(L)\to 0.
\]
Taking the dual of this exact sequence,
we obtain (\ref{eq:PYX}) in the same way 
to derive (\ref{eq:P}).
\end{proof}


\vspace{1cm}
\noindent {\footnotesize Department of Mathematics, Gakushuin University, 
Toshima-ku,Tokyo 171-8588,$\,$Japan }{\footnotesize \par}

\noindent {\footnotesize e-mail address: hosono@math.gakushuin.ac.jp} 

\vspace{5pt}

\noindent {\footnotesize Graduate School of Mathematical Sciences,
University of Tokyo, Meguro-ku,Tokyo 153-8914,$\,$Japan }{\footnotesize \par}

\noindent {\footnotesize e-mail address: takagi@ms.u-tokyo.ac.jp} 

\end{document}